\documentclass[12pt]{amsart}
\usepackage{a4wide}
\usepackage{color}
\usepackage{amsmath,amssymb}
\usepackage{amsthm}
\usepackage{paralist} 
\usepackage{graphicx}  
\usepackage{epstopdf}
\usepackage{bbm}
\usepackage{caption}
\usepackage{fancyhdr}
\usepackage{indentfirst}
\usepackage{booktabs}
\usepackage{verbatim}
\usepackage{tcolorbox}
\usepackage{mathtools}
\usepackage{bm}
\usepackage{wasysym}
\usepackage{empheq}
\usepackage{tabularx}
\usepackage[font=small,labelfont=bf]{caption} 
\usepackage{chngcntr}
\usepackage{paralist}
\usepackage{algorithm}
\usepackage{algpseudocode}
\usepackage{hyperref}

\usepackage{subfig}

\usepackage{caption}

\newtheorem{theorem}{Theorem}[section]

\newtheorem{lemma}[theorem]{Lemma}
\newtheorem{proposition}[theorem]{Proposition}

\newtheorem{assumption}{Assumption}
\newtheorem{definition}[theorem]{Definition}
\newtheorem{remark}{Remark}

\newcommand{\E}{\mathbb{E}}

\newcommand{\R}{\mathbb{R}}

\newcommand{\CB}{\mathcal{B}}
\newcommand{\CE}{\mathcal{E}}

\newcommand{\CP}{\mathcal{P}}

\newcommand{\dv}{\,d\rho_{t}(v)}

\newcommand{\LA}{\Bigl\langle}
\newcommand{\RA}{\Bigr\rangle}
\newcommand{\con}{v_{\alpha}\big(\rho_{t}\big)}
\newcommand{\conti}{v_{\alpha}\big(\rho_{\ti}\big)}
\newcommand{\ener}{\mathcal{V}\big(\rho_{t}\big)}
\newcommand{\rt}{\rho_{t}}
\newcommand{\rti}{\rho_{\ti}}
\newcommand{\ti}{T_{\alpha,\epsilon}}
\newcommand{\dvt}{\,d\rho_{t}(v)}
\newcommand{\dvti}{\,d\rho_{\ti}(v)}
\newcommand{\wt}{\omega_{\alpha}}

\newcommand{\ec}{\mathcal{E}^{0}_{r}}
\newcommand{\er}{\mathcal{E}_{r}}
\newcommand{\etu}{\mathcal{E}^{\tilde{u}}_{r}}
\newcommand{\dH}{\,dH_{d-1}(v)}
\newcommand{\argmin}{\operatornamewithlimits{argmin}}

\newcommand{\vit}{V^{i,N}_{\tau}}
\newcommand{\viti}{V^{i,N}_{t}}
\newcommand{\emconst}{v_{\alpha}\big(\hat{\rho}^{N}_{\tau}\big)}
\newcommand{\dempt}{\hat{\rho}_{\tau}^{N}}
\newcommand{\dempti}{\hat{\rho}_{t}^{N}}

\newcommand{\xs}{v^*}
\newcommand{\bxs}{v_{\alpha}\big(\hat{\rho}\big)}

\newcommand{\Xj}{X^j}

\newcommand{\nb}{\nabla}

\newcommand{\ds}{\displaystyle}

\newcommand{\qd}{\quad}

\newcommand{\h}{\hat}

\newcommand{\rl}{\right\rVert}

\def\mE{\mathcal{E}}

\def\mN{\mathcal{N}}

\def\l{\left}
\def\r{\right}

\def\h{\hat}
\def\t{\tilde}

\def\R{\mathbb{R}}
\def\E{\mathbb{E}}

\def\mN{\mathcal{N}}

\def\g{\gamma}
\def\lam{\lambda}

\def\t_i{{t_i}}

\def\g{\gamma}

\def\s{\sigma}

\def\l{\left}
\def\r{\right}

\def\ll{\left\lVert}
\def\rl{\right\rVert}

\def\Xj{V^j}

\def\hv{\h{v}}

\def\e{\epsilon}
\def\a{\alpha}
\def\mE{\mathcal{E}}
\def\t{\tilde}

\newcommand{\dist}{\operatorname{dist}}

\title[Consensus method for constrained optimization]{An interacting particle consensus method for constrained global optimization}


\author[J. A. Carrillo]{Jos\'e A. Carrillo\textsuperscript{1}}
\thanks{\textsuperscript{1} Mathematical Institute, University of Oxford,  United Kingdom. 
(carrillo@maths.ox.ac.uk)}
\address{(JAC) Mathematical Institute, University of Oxford,  United Kingdom.}
\email{carrillo@maths.ox.ac.uk}

\author[S. Jin]{Shi Jin\textsuperscript{2}}
\thanks{\textsuperscript{2} School of Mathematical Sciences, Institute of Natural Sciences, MOE-LSC, Shanghai Jiao Tong University, Shanghai, 200240, P. R. China.
(shijin-m@sjtu.edu.cn)}
\address{(SJ) School of Mathematical Sciences, Institute of Natural Sciences, MOE-LSC, Shanghai Jiao Tong University, Shanghai, 200240, P. R. China.} \email{shijin-m@sjtu.edu.cn}

\author[H. Zhang]{Haoyu Zhang\textsuperscript{3}}
\thanks{\textsuperscript{3} Department of Mathematics, University of California-San Diego, La Jolla, California 92093, USA.
(haz053@ucsd.edu)}
\address{(HZ) Department of Mathematics, University of California-San Diego, La Jolla, California 92093, USA.} \email{haz053@ucsd.edu}

\author[Y. Zhu]{Yuhua Zhu\textsuperscript{4} \textsuperscript{$\dagger$}}
\thanks{\textsuperscript{4}  Department of Statistics and Data Science, University of California, Los Angeles, USA (yuz244@ucsd.edu)} 
\address{(YZ)  Department of Statistics and Data Science, University of California, Los Angeles, USA}
\email{yuhuazhu@ucla.edu}

\thanks{All authors contributed equally to this work and are listed alphabetically.}
\thanks{\textsuperscript{$\dagger$} Corresponding author.}

\subjclass[2020]{90C56; 65C35; 35Q70; 82C22; 35Q84}
\keywords{constrained optimization; gradient-free methods; nonconvex optimization; consensus-based-optimization; asymptotic convergence analysis; mean-field limit}


\begin{document}
\maketitle
\begin{abstract}
This paper presents a particle-based optimization method designed for addressing minimization problems with equality constraints, particularly in cases where the loss function exhibits non-differentiability or non-convexity. The proposed method combines components from consensus-based optimization algorithm with a newly introduced forcing term directed at the constraint set. A rigorous mean-field limit of the particle system is derived, and the convergence of the mean-field limit to the constrained minimizer is established. Additionally, we introduce a stable discretized algorithm and conduct various numerical experiments to demonstrate the performance of the proposed method. 
\end{abstract}

\section{Introduction}

In this paper, we are concerned with the following minimization problem with equality constraint,
\begin{align*}
	\min_{v \in \mathbb{R}^{d}} \quad & \mathcal{E}(v) \\
	\text{s.t.} \quad & {g(v) = 0.}
\end{align*}
The above optimization problems have widespread application across various domains. For example, in supply chain optimization, equality constraints play a pivotal role in maintaining a balance between demand and supply \cite{MNS}; astronomers employ constrained optimization to calculate
spacecraft trajectories, adhering to the laws of physics and orbital equations \cite{CSTCX,YCLG}; in structural design, engineers optimize dimensions of beams, columns, or trusses while ensuring that the
structural equilibrium equations are satisfied as equality constraints \cite{GV}{}.  In this paper we deal with the cases when the objective function can be {\it non-convex} and {\it non-differentiable}. 

Traditional algorithms like the  Lagrange Multipliers \cite{RTy} and the Alternating Direction Method of Multipliers (ADMM) \cite{WO} lack guarantees of converging to the global
constrained minimizer when dealing with non-convex or non-differentiable loss functions $ \mathcal{E}(v) $. A new framework is required to effectively handle such cases, and recently, a class of gradient-free methods called consensus-based optimization (CBO) methods \cite{bailo2024cbx,CCTT,PTTM,TPBS} have emerged as promising approaches for handling non-convex and non-differentiable loss functions. Motivated by the well-known Laplace's principle \cite{BO,DZ,M}, they are decentralized and gradient-free algorithms that leverage the power of information sharing and cooperation among individual particles.  However, it is important to highlight that much of the existing work has focused on the unconstrained case, see \cite{CCTT,CJLZ,CJL,FKR,FRRS,GHV,GP,HJK2,HQ,HQR,RK,RKGF,TW}{}. We refer the readers to survey articles \cite{CTV,GHPQ,T} for a more detailed and complete summary. For the efficient implementation of CBO, we refer to the CBX packages developed for Python and Julia \cite{bailo2024cbx}.

Limited work has been done for the constrained case. The primary challenge lies in reconciling the
CBO model’s tendency to drive agents towards the global minimizer with the need for agents to
remain within the constraint set and converge to the constrained minimizer. Currently, there are
mainly two approaches. One involves projection onto the hypersurface \cite{albers2019ensemble,FHPS2,FHPS1,FHPS3}.  However, this method requires computing the {derivatives of} the distance function $\dist(\Gamma, v)=\inf\{\|v-u\|_2\mid u\in\Gamma\}$ with $\Gamma$ representing the constraint set. Extending this method to handle general multiple equality constraints is not straightforward. In cases where the constraint set $\Gamma$ is complicated, this computation {about} $\dist(\Gamma, v)$ becomes infeasible.  Another method introduces constraints as a penalization term in the objective function \cite{BHP,CTV}, transforming it into an unconstrained problem for CBO. However, the convergence is sensitive to the landscape of the objective function and the penalization constant, which makes it difficult to achieve high accuracy.

In this paper, we introduce a third strategy for constrained CBO along with convergence analysis and numerical experiments.  Instead of performing projection onto the constraint set or adding penalty terms, we propose a novel approach that combines the classical unconstrained CBO algorithm with gradient descent on the function $ G(v)={g^2(v)} $, serving as a forcing term to the constraint set. Importantly, we do not require the differentiability of the target function $ \mathcal{E} $ and only need a mild differentiability condition on $ G $. Compared with the other two constrained CBO methods {(projected CBO and penalized CBO)}, our method applies to general equality constraints, achieves faster convergence, and has consistently more stable performance as shown in Figures \ref{fig:cbo_obj} and \ref{fig:converge}.

\subsection{Contributions}
Our main contributions are threefold. Firstly, we introduce a new CBO-based method for solving constrained optimization problems, with possibly non-convex and non-differentiable objective functions. This method can accommodate a wide range of equality constraints, including the ability to handle multiple constraints concurrently. Secondly, we provide rigorous theoretical guarantees for the continuous-in-time model of the proposed method. {Specifically, we establish the mean-field limit of the method and provide a detailed analysis of its convergence behavior within this limit, using a new quantitative Laplace Principle that differs from the unconstrained version in \cite{FKR}. This principle enables us to quantify the contributions of the consensus dynamics and gradient descent, providing a framework for handling constraint sets. See Remark \ref{innovation} for the details.}  Thirdly, we present a stable discretized algorithm designed to approximate the dynamics of the continuous-in-time model efficiently. {Notably, this algorithm handles the stiff term of order $O(\epsilon^{-1})$ without requiring the time step to approach zero when $\epsilon$ becomes very small.}

\subsection{Organizations}The paper is structured as follows. Section \ref{sec:dynamics} provides an introduction to the continuous-in-time stochastic differential equations, which serves as the model for the proposed method. Following that, Section \ref{sec:meanfield} studies the well-posedness of the introduced SDEs and explores their mean-field limit. In Section \ref{longtime}, we analyze the convergence properties of the method by establishing the long-time behavior of the mean-field limit. This includes demonstrating, under appropriate assumptions, the convergence of the mean-field limit model to the constrained minimizer. Section \ref{sec:numerical} details the implementation of the algorithm, accompanied by a series of numerical experiments showcasing its performance. Finally, Section \ref{sec:conclusion} offers a comprehensive summary of the findings presented in this paper.

\subsection{Notations}
We use $\mathcal{C}^k_b(\mathbb{R}^d)$ and $\mathcal{C}^k_c(\mathbb{R}^d)$ to denote the space of $k$-times continuous differentiable functions defined on $\mathbb{R}^d$ that are bounded and compactly supported respectively. The space $\mathcal{C}_{*}^2$ is defined as 
\begin{align*}
	\mathcal{C}^{2}_{*}(\mathbb{R}^{d}):=
	\left\{
	\begin{array}{c}
		\phi\in\mathcal{C}^{2}(\mathbb{R}^{d})|\text{ }|\partial_{x_k}\phi(x)|\leq C(1+|x_{k}|)\\ \text{ and }\\
		\sup_{x\in \mathbb{R}^{d}}|\partial_{x_kx_k}\phi(x)|<\infty
		\text{ for all $k=1,2,...,d$}
	\end{array}
	\right\}.
\end{align*}
When $X$ and $Y$ are topological spaces, we use $\mathcal{C}(X,Y)$ to denote the space of continuous functions mapping from $X$ to $Y$. 
When $X$ is a topological space, $\mathcal{P}(X)$ denotes the space of all the Borel probability measure, which is equipped with the Levy-Prokhorov metric. Given $1\leq p< \infty$, $\mathcal{P}_p(\mathbb{R}^d)$ is the collection of all probability measures on $\mathbb{R}^d$ with finite $p$-th moment, which is equipped with the Wasserstein-$p$ distance, denoted by $W_p(\cdot,\cdot)$. If $\rho$ is a probability measure, $\rho^{\otimes 
	N}$ denotes the probability space obtained by coupling $\rho$ independently $N$ times.

$\|\cdot\|_p $ denotes the usual $ l^{p} $ vector norm in the Euclidean space, $ \|\cdot\|_{L^1({\rho})} $ denotes $ L^{1} $ norm of a function with respect to $\rho$ and $ |\cdot| $ denotes the absolute value of a real number. $ B^\infty(x,r) $ denotes the closed $l^\infty$ ball centered at $ x $ with radius $ r .$ $\mathbb{I}_d$ denotes the $d\times d$ identity matrix. When $u$ is a vector, $\text{diag}(u)$ denotes the diagonal matrix with $u$ being the diagonal. When $\phi$ is a function and $\mu$ is a measure, $\langle \phi,\mu\rangle$ denotes the pairing between, i.e., $\int \phi \,d\mu$. When $u$ and $v$ are vectors, $\langle u,v\rangle $ denotes the inner product in the Euclidean space.

Throughout this paper, we use the symbols $C$ and $L$ to represent generic positive uniform constants. It is important to note that these constants may take on different values in different sections or parts of this paper.

\section{The dynamics of the constrained consensus-based optimization algorithm}\label{sec:dynamics}

In this section, we carry out the continuous-in-time dynamics of our method. The practical discretized algorithm will be introduced in Section \ref{sec:numerical}.

Consider the following constrained optimization problem,
\begin{equation}\label{opt}
	\min_{v \in \mathbb{R}^d} \ \mathcal{E}(v) \quad \text{s.t.} \quad {g(v)=0}.
\end{equation}
Here, we require the function  $ g(x) $ to be first-order differentiable. It is noteworthy that Problem (\ref{opt}) can be reformulated equivalently as follows:
\begin{equation}\label{opt1}
	\min_{v \in \mathbb{R}^d} \ \mathcal{E}(v) \quad \text{s.t.} \quad G(v) = 0.
\end{equation}
where $G(v)={g^2(v)}$. 
Our method will be based on formulation (\ref{opt1}).

To start with, we take $ N $ particles $ V^{1,N},V^{2,N},...,V^{N,N} $, which are independently sampled from a common initial law $ \rho_{0} $ at initialization. Here we use $ V^{{i,N}}_{t}$ for the location of the $ i $-th particle at time $ t $ and $
	d	\hat{\rho}_{t}^{N}(v)=\tfrac{1}{N}\sum_{i=1}^{N}\delta_{V^{i,N}_{t}}(v)$ to denote the empirical measure. The goal of the dynamics is to encourage the measure $ d\hat{\rho}_{t}^{N} $ to converge to the measure $ \delta_{v^{*}} $, which is the Dirac measure at the solution of the constrained optimization problem (\ref{opt1}). Now we propose the dynamics of the $ i $-th particle, which follows the below stochastic differential equation:
    \begin{equation}\label{constraincon1}
\begin{gathered}
	dV^{i,N}_{t} = -\lambda \left(V^{i,N}_{t} - v_{\alpha}\left(\hat{\rho}_{t}^N\right)\right)\,dt - \dfrac{1}{\epsilon} \nabla G\left(V^{i,N}_{t}\right)\,dt + \sigma D^{i,N}_{t}\,dB^{i,N}_{t},
\end{gathered}
\end{equation}
with  $V^{i,N}_{0} \sim \rho_{0}$, where 
\begin{equation}\label{avg}
	v_{\alpha}(\hat{\rho}^{N}_{t})=\int v \cdot \dfrac{\wt(v)}{\|\wt\|_{L^{1}(\hat{\rho}_{t}^{N})}}\,d\hat{\rho}_{t}^{N}.
\end{equation}

The dynamics are driven by three distinct terms. The first and third terms are inherited from classical consensus-based optimization methods, while the second term is crafted as a forcing term to enforce the constraint. We will now explain  each of them in sequence.

The first drift term $ -\lambda \big(V^{i,N}_{t}-v_{\alpha}(\hat{\rho}_{t})\big)\,dt $ is formulated to guide all particles toward the consensus point $ v_{\alpha}(\hat{\rho}_{t}^{N}) $. This consensus point is strategically chosen as a location where the function is likely to achieve a small value. It is defined through a Gibbs-type distribution \eqref{avg}
where the weight $ \wt $ is defined as $\wt(v)=e^{-\alpha \mathcal{E}(v)}$. Here $ \lambda $ controls the force magnitude driving the particles towards the consensus point $v_\alpha(\hat{\rho_t})$.

The choice of the consensus point is inspired by the well-known Laplace's principle \cite{BO,DZ,M}. According to this principle, for any absolutely continuous probability measure $ \rho $ on $ \mathbb{R}^{d} $, one has $ \lim_{\alpha \to \infty} \left( -\tfrac{1}{\alpha} \log \textstyle\int \wt(v) d\rho(v) \right) = \inf_{v \in \mathrm{supp}(\rho)} \mathcal{E}(v). $ It is expected that the consensus point $ v_{\alpha}(\hat{\rho}_{t}^{N}) $ serves as a reasonable smooth approximation of $ \argmin_{i=1,...,N}\mathcal{E}(V^{i,N}_{t}) $ when $ \alpha $ is sufficiently large. Consequently, the particles are gathered to a location where $ \mathcal{E}(v) $ attains a small value. 

The diffusion term $ \sigma D^{i,N}_{t}\,dB^{i,N}_{t}$ encourages particles to explore the landscape of $ \mathcal{E}(v) $,
where $ D^{i,N}_{t} $ is a $ d\times d $ matrix function that determines the way in which particles explore the landscape and $ \{B_{t}^{i,N} \}_{i=1,...,N}$ are independent Wiener processes. There are different choices for the matrix function $ D^{i,N}_{t}, $ see \cite{PTTM,CJLZ}. 
In this paper, we use the anisotropic exploration defined as, 
\begin{gather}\label{aniso}
	D^{i,N}_{t}=\text{diag}\left(V^{i,N}_{t} - v_{\alpha}\left(\hat{\rho}_{t}^{N}\right)\right).
\end{gather}
It is first introduced in \cite{CJLZ}{}, which aims to address the curse of dimensionality.


The third term $- \tfrac{1}{\epsilon} \nabla G\big(V^{i,N}_{ t}\big)\,d t, $ addresses the constraint $ \{G=0\} $. Since $ 0 $ is the minimum of the non-negative function $ G(v) $, finding the constraint $ \{G(v)=0\} $ is the same as minimizing $ G(v) $. Therefore, we propose the third term as a gradient descent of $ G(v) $, allowing $ G(v) $ to be minimized during the algorithm's progression. Here $ \epsilon>0 $ is a parameter that controls the magnitude of this term. When $ \epsilon $ is small, this term will encourage particles to concentrate around $ \{G=0\} $. These ideas were used in kinetic equations for swarming including alignment terms of Cucker-Smale type in order to derive kinetic models on the sphere such as the Viczek-Fokker-Planck model, see \cite{BC1,BC2,BC3,ABCD} for instance. {Inevitably, taking $\epsilon$ small can introduce numerical stiffness and instability, so this term must be handled carefully in the discretization. In Section~\ref{sec:numerical}, a stable discrete scheme is derived to treat this term.} 

Before we proceed to the theoretical analysis of the model, we first present a {preliminary numerical} result in Figures \ref{fig:cbo_obj} and \ref{fig:converge} to illustrate the superior performance of the proposed interacting particle system \eqref{constraincon1} compared to the projected CBO system \cite{FHPS2} and the penalized CBO system \cite{BHP} on a two-dimensional Ackley function (shown in Figure \ref{fig:cbo_obj}) with different constraints. 

We defer algorithmic formulation to Section \ref{sec:numerical}, and details of the {preliminary} experiments to Appendix \ref{appendix:fig0}. 
Our method achieves a $100\%$ success rate in finding the unconstrained minimizer and demonstrates the fastest convergence rate in all {three cases as shown in Figure~\ref{fig:cbo_obj} (b)}. The projected CBO performs similarly to our method when the constraint is a circle, but it is not directly applicable to parabolic curves\footnote{{
For parabolic curve $v_1^2=v_2$, each evaluation involves computing the closest-point projection by solving a cubic optimality condition and selecting the minimizing real root. This per-iteration overhead is considerably higher than for the other approaches, so we omit this experiment in the comparison.}}. In contrast, the Penalized CBO exhibits a significantly lower success rate due to two main reasons: First, when the constrained minimizer is not a local minimizer of the objective function, the global minimizer $v_p^*$ of the penalized objective function usually differs from the constrained minimizer $v^*$. Second, although it is possible to increase the penalty sufficiently to reduce the distance $\|v_p^* - v^*\|_2$, the landscape is dominated by the penalized term, making the objective function resemble a minor perturbation around the penalty. Consequently, it becomes more challenging for the optimization method to locate the global minimizer, and leads to a longer time for CBO to converge.

\begin{figure}
	\centering
	\subfloat[Ackley function.]{\includegraphics[width=0.32\textwidth]{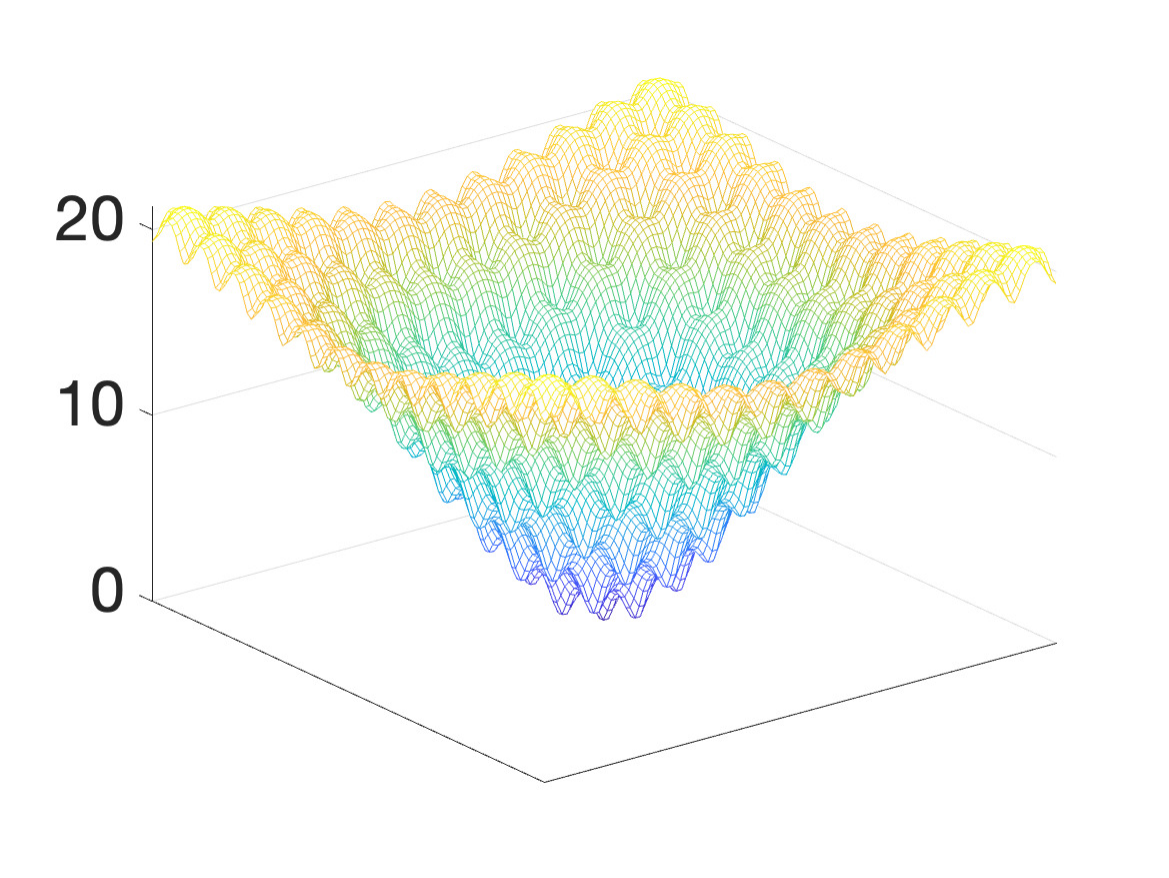}}
	\subfloat[The success rate and averaged Euclidean distance to the constrained minimizer.]{\includegraphics[width=0.6\textwidth]{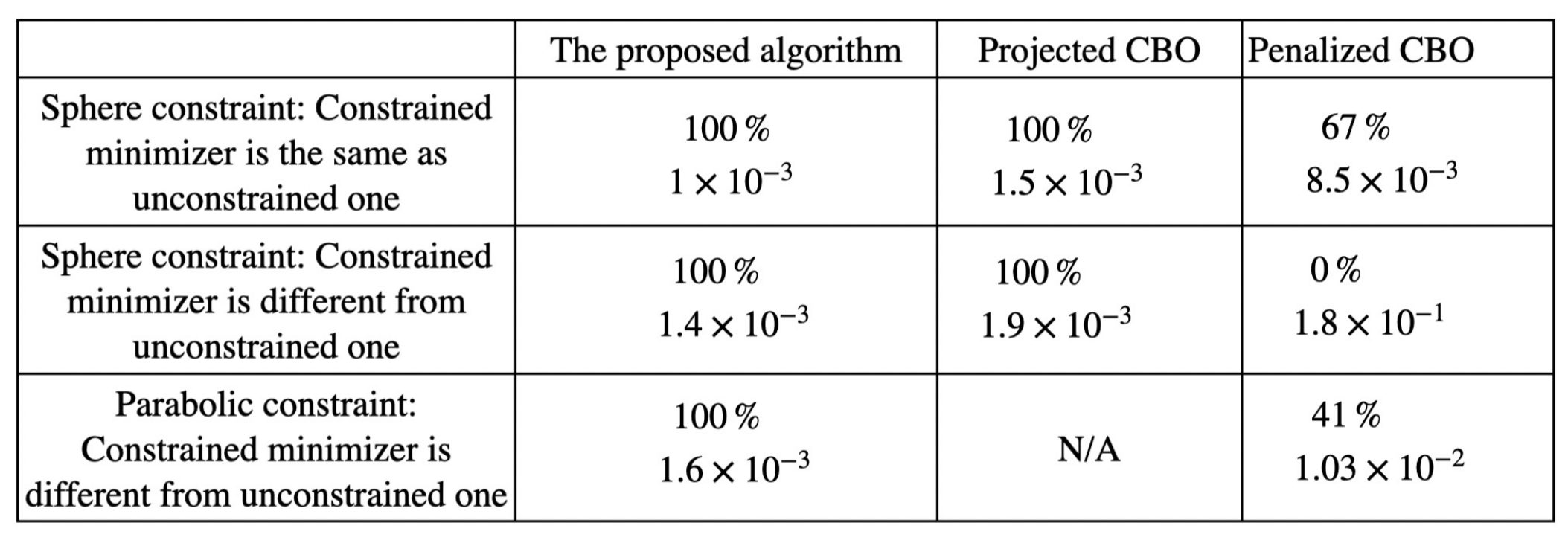}}	
	\caption{Objective function and success rate of three constrained CBO methods.}
	\label{fig:cbo_obj}

	\vspace{0.5ex}
\end{figure}

\begin{figure}
	\centering
	\subfloat[Constrained on Sphere: The constrained minimizer is the same as the unconstrained minimizer.]{\includegraphics[width=0.35\textwidth]{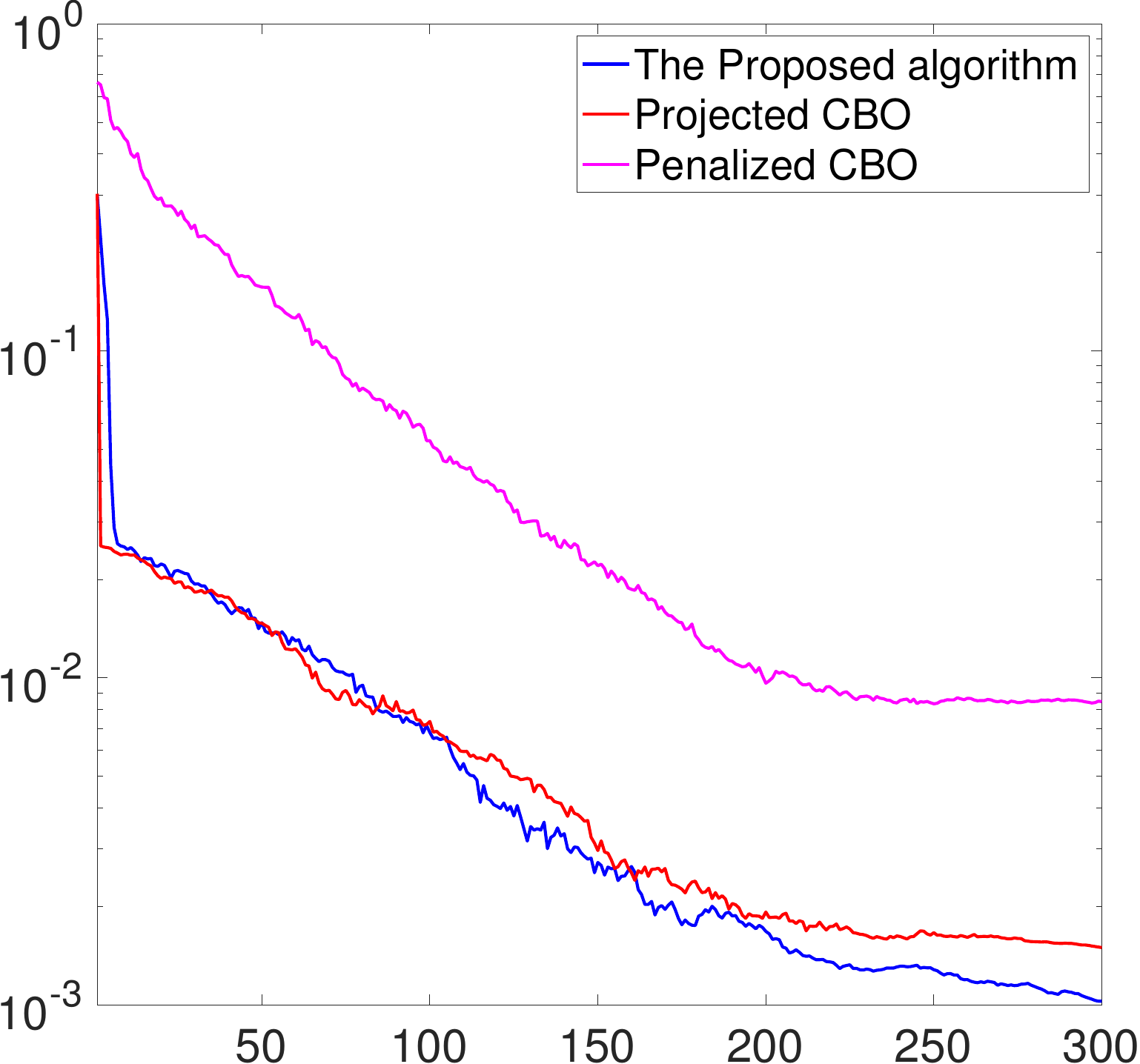}} \label{fig:cbo_const}\quad 
	\subfloat[Constrained on Sphere: The constrained minimizer is NOT the same as the unconstrained minimizer.]{\includegraphics[width=0.35\textwidth]{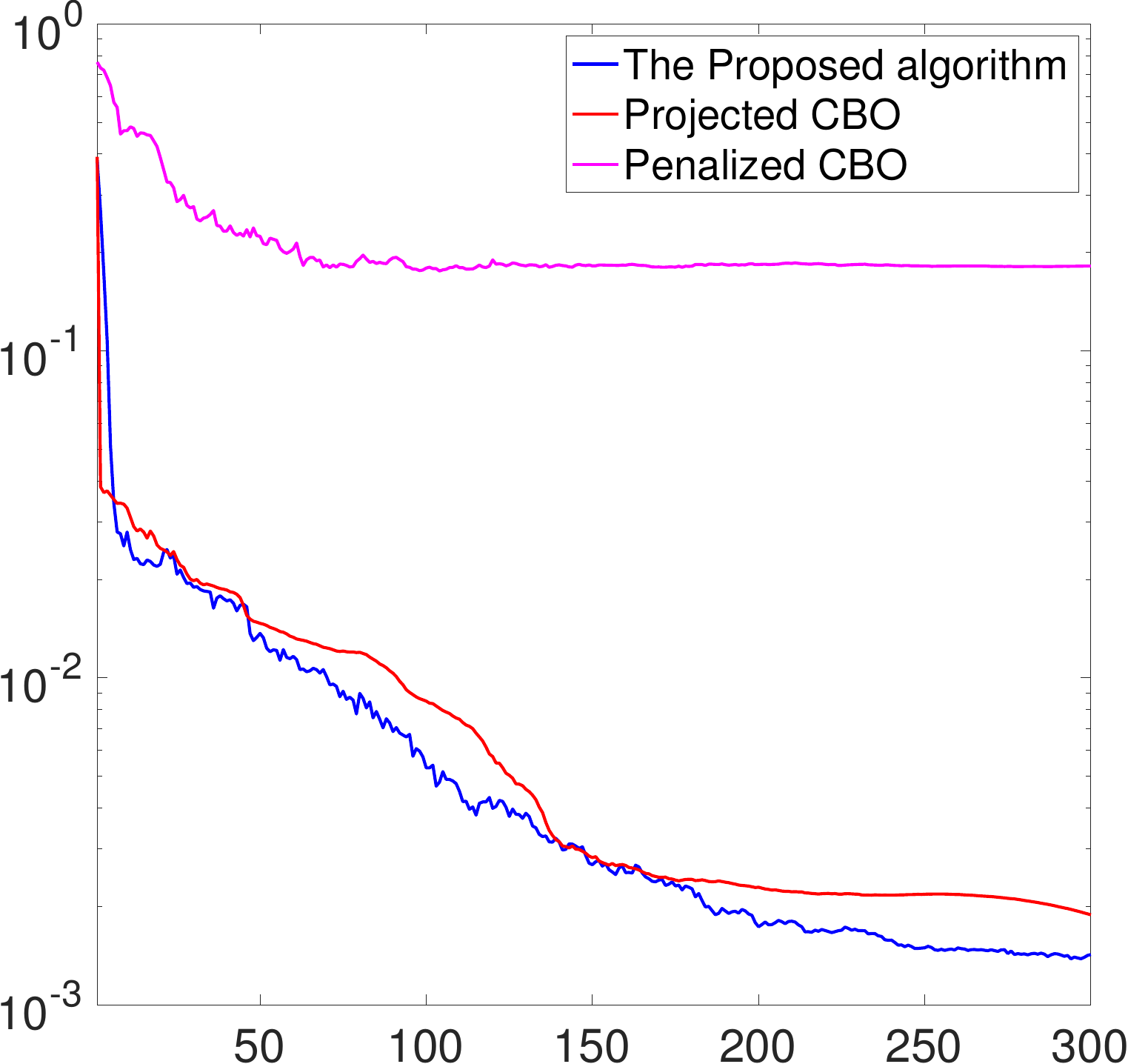}}\\
	\subfloat[Constrained on parabolic curve: The constrained minimizer is NOT the same as the unconstrained minimizer.]{\includegraphics[width=0.35\textwidth]{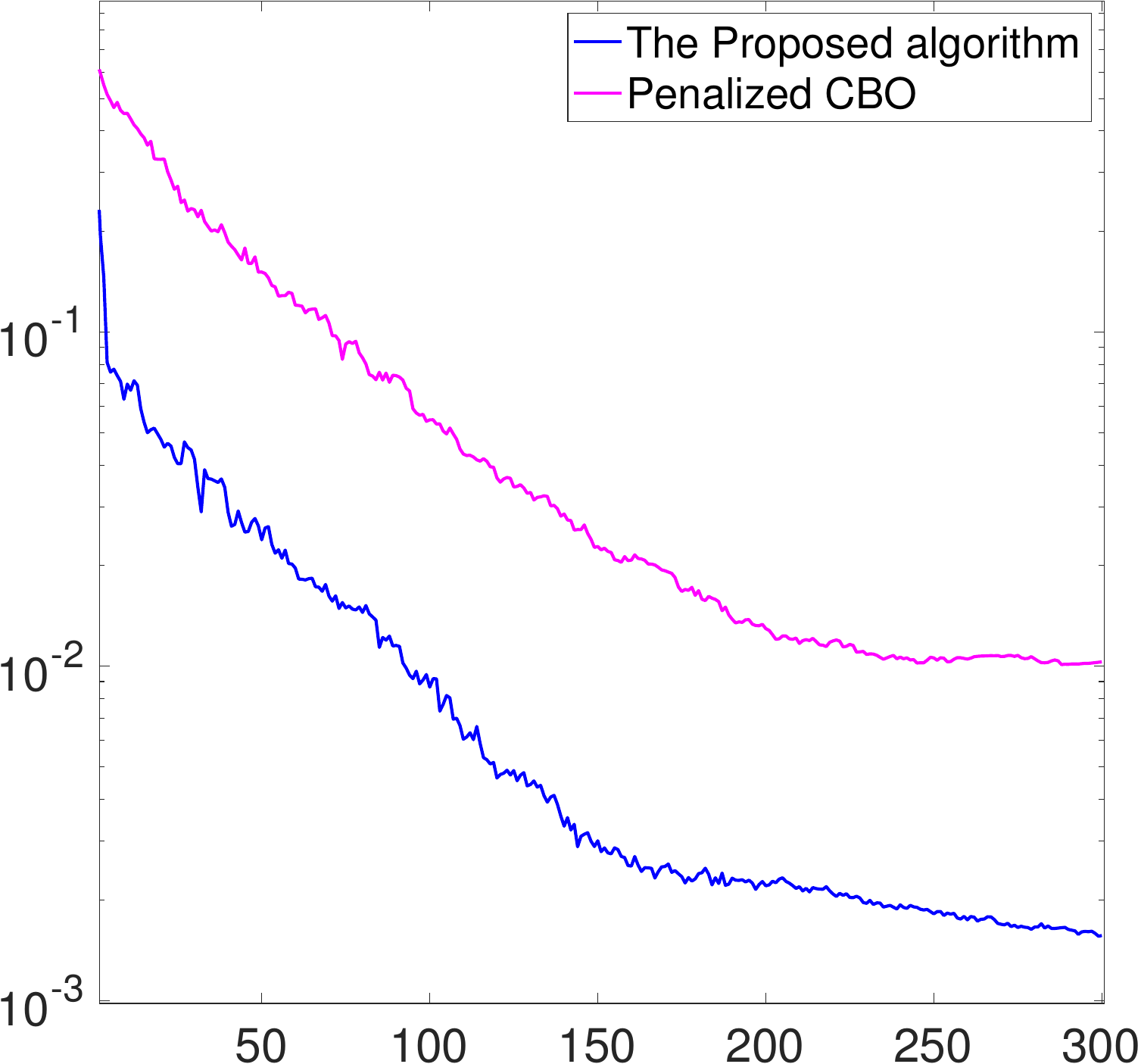}}
	\caption{The averaged distance to the true constrained minimizer over $100$ simulations.}
	\label{fig:converge}
\end{figure}

\section{Well-posedness and Mean-field limit}\label{sec:meanfield}

In this section we study some theoretical properties of   the particle system   described by Equation (\ref{constraincon1}). We consider anisotropic diffusion (\ref{aniso}) in both Section \ref{sec:meanfield} and Section \ref{longtime}. Consequently, for $i=1,...,N$, the system defined by Equation (\ref{constraincon1}) transforms into the following form:
\begin{equation}\label{constraincon}
	\begin{aligned}
		dV^{i,N}_{t} &= -\lambda \left(V^{i,N}_{t} - v_{\alpha}\left(\hat{\rho}_{t}^N\right)\right)\,dt - \dfrac{1}{\epsilon} \nabla G\left(V^{i,N}_{t}\right)\,dt \\
		&\quad + \sigma \, \text{diag}\left(V^{i,N}_{t} - v_{\alpha}\left(\hat{\rho}_{t}^{N}\right)\right)\,dB^{i,N}_{t}, \quad V^{i,N}_{0} \sim \rho_{0},
	\end{aligned}
\end{equation}

When the number of particles $ N $ is large enough, one could study the mean-field limit as $ N \rightarrow \infty$. This limit yields an equation that characterizes the macroscopic behavior of the particles, specifically their density distribution. The investigation of the mean-field equation reveals the long-term dynamics of the particle system, which is related to the convergence of the particle system or the optimization method.  However, prior to this analysis in Section \ref{longtime}, it is necessary to establish the existence of the mean-field limit. In this section, we  establish the well-posedness of Equation (\ref{constraincon}), its mean-field limit, and the well-posedness of the resultant mean-field model. 

Throughout this section, we make the following assumptions.
\begin{assumption}\label{assump1}
	(1) The function $ \mathcal{E} $ is bounded with $ \inf \mathcal{E}=\underline{\mathcal{E}} $ and $ \sup\mathcal{E}=\overline{\mathcal{E}} $.\\
	(2) There exist positive numbers $ L $ and $ C $ such that $ \forall u,v\in\mathbb{R}^{d} $, 
	\begin{gather*}
		\|\CE(u)-\CE(v)\|_2\leq L(\|u\|_2+\|v\|_2)\|u-v\|_2,
		\\
		\CE(u)-\underline{\CE}\leq C(1+\|u\|_2^{2}).
	\end{gather*}
	(3) There exists $ L>0 $ such that $ \forall u,v\in\mathbb{R}^{d} $,  \begin{gather*}\|\nabla G(u) - \nabla G(v)\|_2\leq L \|u-v\|_2.\end{gather*}
	(4) There exists $ C>0 $ such that $ \forall u\in\mathbb{R}^{d} $, 
	\begin{gather*}
		\|\nabla G(u)\|_2\leq C\|v\|_2.
	\end{gather*}
\end{assumption}
Briefly speaking, in Assumption \ref{assump1} (1) and (2), we assume the loss function $\mathcal{E}$ is bounded, locally Lipschitz and with at most quadratic growth. In Assumption \ref{assump1} (3) and (4), we assume the gradient of the function $G$ is globally Lipschitz and with at most linear growth.
Now we establish the well-posedness of the interacting particle system (\ref{constraincon}) in the following theorem.

\begin{theorem}\label{wellposedmicro}
	(Proof in Appendix \ref{appenmicro}) For any $ N\in\mathbb{N} $, the stochastic differential equation (\ref{constraincon}) has a unique strong solution $ \{V^{i,N}_{t}|t\geq 0\} _{i=1}^{N}$  for any initial condition $ V^{i,N}_{0} $ satisfying $ \mathbb{E}[\|V^{i,N}_{0}\|_2^{2}] <\infty$.
\end{theorem}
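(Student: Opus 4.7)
The plan is to recast (\ref{constraincon}) as a single Itô SDE on $\R^{Nd}$ for the joint process $\mathbf{V}_{t} = (V^{1,N}_{t},\ldots,V^{N,N}_{t})$, with drift $b(\mathbf{V})\in\R^{Nd}$ and diffusion $\Sigma(\mathbf{V})\in\R^{Nd\times Nd}$ built from the three terms on the right-hand side, and then invoke the classical strong existence/uniqueness theorem for SDEs with locally Lipschitz coefficients after establishing a global a priori second-moment bound. Thus the proof splits into three parts: (i) local Lipschitz continuity of $b$ and $\Sigma$, (ii) linear-growth/second-moment estimates guaranteeing non-explosion, and (iii) a standard truncation argument combining (i) and (ii).

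For (i), the term $-\tfrac{1}{\epsilon}\nabla G(V^{i,N}_{t})$ is globally Lipschitz directly by Assumption \ref{assump1}(3). The delicate contribution is the consensus point $v_{\alpha}(\hat{\rho}^{N}_{t})$, which is a ratio of the nonlocal quantities $\sum_{j}V^{j,N}_{t}e^{-\alpha\mathcal{E}(V^{j,N}_{t})}$ and $\sum_{j}e^{-\alpha\mathcal{E}(V^{j,N}_{t})}$. Using Assumption \ref{assump1}(1), the weight $\omega_{\alpha}(v)=e^{-\alpha\mathcal{E}(v)}$ is pinched between $e^{-\alpha\overline{\mathcal{E}}}$ and $e^{-\alpha\underline{\mathcal{E}}}$, so the denominator is bounded below by $e^{-\alpha\overline{\mathcal{E}}}$ uniformly in the configuration. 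Combining this with the local-Lipschitz bound on $\mathcal{E}$ from Assumption \ref{assump1}(2), a quotient-rule computation shows that $\mathbf{V}\mapsto v_{\alpha}(\hat{\rho}^{N})$ is Lipschitz on every ball of $\R^{Nd}$, with constant depending on the ball radius but independent of $N$. The diffusion $\sigma\,\mathrm{diag}(V^{i,N}_{t}-v_{\alpha}(\hat{\rho}^{N}_{t}))$ therefore is also locally Lipschitz, so the SDE has a unique local strong solution up to any explosion time.

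For (ii), I would apply Itô's formula to $\sum_{i=1}^{N}\|V^{i,N}_{t}\|_{2}^{2}$ and take expectations. The bound $\|v_{\alpha}(\hat{\rho}^{N}_{t})\|_{2}\leq e^{\alpha(\overline{\mathcal{E}}-\underline{\mathcal{E}})}\frac{1}{N}\sum_{j}\|V^{j,N}_{t}\|_{2}$ gives linear growth of the consensus drift in the full configuration, Assumption \ref{assump1}(4) (reading the right-hand side as $C\|u\|_{2}$) yields linear growth of the gradient term, and the anisotropic diffusion contributes a term controlled by $\|V^{i,N}_{t}\|_{2}^{2}+\|v_{\alpha}(\hat{\rho}^{N}_{t})\|_{2}^{2}$. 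Collecting terms and applying Grönwall's inequality produces, for each finite horizon $T>0$, an estimate
\begin{equation*}
\sup_{t\in[0,T]}\frac{1}{N}\sum_{i=1}^{N}\E\!\left[\|V^{i,N}_{t}\|_{2}^{2}\right]\leq C(T,\alpha,\sigma,\lambda,\epsilon)\,\frac{1}{N}\sum_{i=1}^{N}\E\!\left[\|V^{i,N}_{0}\|_{2}^{2}\right],
\end{equation*}
which is finite by the hypothesis on the initial law.

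For (iii), I would introduce the stopping times $\tau_{M}=\inf\{t\geq 0:\|\mathbf{V}_{t}\|_{2}\geq M\}$. On $[0,\tau_{M}]$ the coefficients of the SDE agree with a globally Lipschitz truncation, which admits a unique strong solution by the classical theorem of Itô. The moment bound from (ii) together with Markov's inequality gives $\P(\tau_{M}\leq T)\to 0$ as $M\to\infty$ for every $T$, hence $\tau_{M}\nearrow\infty$ almost surely, yielding both global existence and pathwise uniqueness of the strong solution. The main obstacle is the local Lipschitz estimate for $v_{\alpha}(\hat{\rho}^{N}_{t})$: its nonlocal, nonlinear dependence on $\mathbf{V}$ requires exploiting the uniform upper and lower bounds on $\omega_{\alpha}$ carefully so that the denominator is controlled independently of $N$ and the resulting Lipschitz constant depends only on the ball radius and the local Lipschitz modulus of $\mathcal{E}$.
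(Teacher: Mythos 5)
Your proposal is correct and sound, but it takes a genuinely different route from the paper. The paper also concatenates the system into one SDE on $\R^{Nd}$, but rather than doing the local-Lipschitz plus linear-growth plus truncation argument from scratch, it establishes a one-sided coercivity (Lyapunov-type) condition
\begin{equation*}
2V\cdot\Big(-\lambda F_{N}(V)-\tfrac{1}{\epsilon}L_{N}(V)\Big)+\sigma^{2}\,\mathrm{trace}\big(M_{N}M_{N}^{T}\big)(V)\leq\tilde{b}_{N}\|V\|^{2},
\end{equation*}
borrowing the estimates $-2\lambda V\cdot F_{N}(V)\leq 2\lambda\sqrt{N}\|V\|^{2}$ and $\mathrm{trace}(M_{N}M_{N}^{T})(V)\leq 4N\|V\|^{2}$ from Theorem~2.1 of \cite{CCTT}, adding $-\tfrac{2}{\epsilon}V\cdot L_{N}(V)\leq\tfrac{2}{\epsilon}\|V\|^{2}$ via Assumption~\ref{assump1}(4), and then citing Theorem~3.1 of \cite{D} directly. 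Your approach is more self-contained: you re-derive the local Lipschitz continuity of $v_{\alpha}(\hat{\rho}^{N})$ using the uniform bounds $e^{-\alpha\overline{\mathcal{E}}}\leq\omega_{\alpha}\leq e^{-\alpha\underline{\mathcal{E}}}$ from Assumption~\ref{assump1}(1) to control the denominator, the local Lipschitz modulus of $\mathcal{E}$ from Assumption~\ref{assump1}(2) for the numerator, establish the a priori second-moment bound via It\^o and Gr\"onwall, and close with the standard stopping-time localization. What the paper buys by leaning on \cite{CCTT} and \cite{D} is brevity (the local Lipschitz verification is implicit in those references); what your route buys is an explicit, transparent derivation that does not depend on matching the hypotheses of the cited external theorem, at the cost of a few more lines. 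Both are valid proofs.
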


By letting the number of agents $ N\rightarrow\infty $ in the model (\ref{constraincon}), the mean-field limit of the model is formally given by the following SDE
\begin{gather}\label{SDE}
	d\bar{V}_{t}=-\lambda\Big(\bar{V}_{t}-v_{\alpha}\big(\rho_{t}\big)\Big)\,dt-\dfrac{1}{\epsilon}\nabla G\,dt+\sigma\text{diag}\bigg(\bar{V}_{t} - v_{\alpha}\big(\rho_{t}\big)\bigg)\,dB_{t}.
\end{gather}
Then the corresponding Fokker-Planck equation is
\begin{gather}\label{FPK}
	\partial_{t} \rho_{t} = \lambda \, \text{div} \left( \left( v - v_{\alpha} \left( \rho_{t} \right) + \dfrac{1}{\epsilon} \nabla G \right) \rho_{t} \right) + 
	\dfrac{\sigma^{2}}{2} \sum_{k=1}^{d} \partial_{x_k x_k} \left( \left( v - v_{\alpha} \left( \rho_{t} \right) \right)_{k}^{2} \rho_{t} \right).
\end{gather}
Next, we will prove the above equations (\ref{SDE}), (\ref{FPK}) are well-posed, and they model the mean-field limit. For the corresponding Fokker-Planck equation, we in particular study its weak solution, which is defined as follows.

\begin{definition}\label{weakdef}
	We say $ \rho_{t}\in\mathcal{C} \left([0,T],\mathcal{P}_{4}\left(\mathbb{R}^{d}\right)\right)$ is a weak solution to (\ref{FPK}) if\\
	(i) It admits continuity in time in $ \mathcal{C}^{'}_{b} $ topology, i.e., $\left\langle \phi, \rho_{t_{n}}\right\rangle \rightarrow \left\langle \phi,\rho_{t}\right\rangle$, $ \forall \phi\in\mathcal{C}_{b}\big(\mathbb{R}^{d}\big) $ and $ t_{n}\rightarrow t $.\\
(ii) For all $\phi \in \mathcal{C}^2_c(\mathbb{R}^d)$, it holds that
\begin{align*}
\frac{d}{dt}\langle \phi, \rho_t \rangle &= 
-\lambda \langle (v - \con) \cdot \nabla \phi, \rho_t \rangle
- \frac{1}{\epsilon} \langle \nabla G \cdot \nabla \phi, \rho_t \rangle \\
&\quad + \frac{\sigma^2}{2} \sum_{k=1}^d \langle (v - \con)_k^2 \partial_{kk} \phi, \rho_t \rangle,
\end{align*}
\end{definition}
\vspace{-2ex}
\begin{remark} In the Definition \ref{weakdef} (ii), the test function space is $ \mathcal{C}^{2}_{c}(\mathbb{R}^{d}) $.  We could extend $ \mathcal{C}^{2}_{c}(\mathbb{R}^{d}) $ to a larger space $ \mathcal{C}^{2}_{*}(\mathbb{R}^{d}) $ as explained in Appendix \ref{explain}, which will be used in the proof later.  $ \mathcal{C}^{2}_{*}(\mathbb{R}^{d}) $ is defined below.
	\begin{align*}
	\mathcal{C}^{2}_{*}(\mathbb{R}^{d}):=\big\{&\phi\in\mathcal{C}^{2}(\mathbb{R}^{d})\mid|\partial_{k}\phi(x)|\leq C(1+|x_{k}|)\\
    &\text{ and }\sup_{x\in \mathbb{R}^{d}}|\partial_{kk}\phi(x)|<\infty \text{ for all $k=1,2,...,d$}\big\}.
	\end{align*}
	In other words, if $ \rho_{t} \in\mathcal{C} \left([0,T],\mathcal{P}_{4}\left(\mathbb{R}^{d}\right)\right) $ solves equation (\ref{FPK}) in the weak sense as in Definition \ref{weakdef}, then the equality in Definition \ref{weakdef} will hold for any test function $ \phi\in  \mathcal{C}_{*}^{2}\big(\mathbb{R}^{d}\big)$. 
\end{remark}
Now we state the well-posedness result of (\ref{SDE}) and (\ref{FPK}).
\begin{theorem}\label{meanfieldwellposed}
	(Proof in Appendix \ref{appenmean}) Let $ \CE $ satisfy Assumption \ref{assump1} and $ \rho_{0}\in\CP_{4}\left(\mathbb{R}^{d}\right) $. Then given $T>0$, there exists a unique nonlinear process $ \bar{V} \in \mathcal{C}\left([0,T],\mathbb{R}^{d}\right)$, satisfying (\ref{SDE}) with initial distribution $ \bar{V}_{0}\sim \rho_{0} $
	in the strong sense, and $ \rho_{t}=\text{Law}\left(\bar{V}_{t}\right)\in \mathcal{C}\left([0,T],\CP_{4}\left(\mathbb{R}^{d}\right)\right) $ satisfies the corresponding Fokker-Planck equation (\ref{FPK}) in the weak sense with $ \lim_{t\rightarrow 0}\rho_{t}=\rho_{0} $.
\end{theorem}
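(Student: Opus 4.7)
My plan is to run the standard Leray--Schauder fixed-point argument from the CBO literature (cf.\ \cite{CCTT}, \cite{CJLZ}) adapted to include the extra forcing $-\frac{1}{\epsilon}\nabla G$. The idea is to decouple the nonlinearity in $v_\alpha(\rho_t)$: fix a curve $u_\cdot\in\mathcal{C}([0,T],\mathcal{P}_4(\mathbb{R}^d))$ and solve the linearized SDE
\begin{equation*}
d\bar V_t^u = -\lambda\Bigl(\bar V_t^u - v_\alpha(u_t)\Bigr)\,dt - \frac{1}{\epsilon}\nabla G(\bar V_t^u)\,dt + \sigma\,\mathrm{diag}\Bigl(\bar V_t^u - v_\alpha(u_t)\Bigr)\,dB_t,\qquad \bar V_0^u\sim \rho_0,
\end{equation*}
with $t\mapsto v_\alpha(u_t)$ regarded as a prescribed deterministic function. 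Since $\nabla G$ is globally Lipschitz with linear growth by Assumption \ref{assump1}(3)--(4) and the remaining drift and diffusion coefficients are affine in $\bar V_t^u$, classical SDE theory yields a unique strong solution for every $u_\cdot$. I would then define $\mathcal{T}(u)_t := \mathrm{Law}(\bar V_t^u)$; any fixed point $\rho = \mathcal{T}(\rho)$ will be the sought nonlinear process, and its law will solve (\ref{FPK}) weakly.

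The two critical ingredients are uniform fourth-moment bounds and stability of $v_\alpha(\cdot)$. A uniform bound $\sup_{t\in[0,T]}\mathbb{E}\|\bar V_t^u\|_2^4<\infty$ follows from It\^o applied to $\|\bar V_t^u\|_2^4$ together with Gr\"onwall, after controlling $\|v_\alpha(u_t)\|_2$ by the second moment of $u_t$; this bound comes from Jensen's inequality plus a two-sided estimate on the normalizing constant $\|\omega_\alpha\|_{L^1(u_t)}$, which uses Assumption \ref{assump1}(1)--(2) (the quadratic-growth estimate $\mathcal{E}-\underline{\mathcal{E}}\leq C(1+\|\cdot\|_2^2)$ gives a ball-wise lower bound on $\omega_\alpha$, boundedness of $\mathcal{E}$ gives the trivial upper bound). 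The same two-sided control on the denominator, combined with local Lipschitzness of $\mathcal{E}$, yields the key local stability estimate
\begin{equation*}
\|v_\alpha(\mu)-v_\alpha(\nu)\|_2\leq C\bigl(\mathrm{4th\,moments\,of\,}\mu,\nu\bigr)\,W_2(\mu,\nu),
\end{equation*}
uniformly on bounded balls of $\mathcal{P}_4(\mathbb{R}^d)$; this is the standard anisotropic-CBO stability bound.

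With these in hand, I would restrict $\mathcal{T}$ to a closed convex set $\mathcal{K}_T\subset\mathcal{C}([0,T],\mathcal{P}_4(\mathbb{R}^d))$ cut out by a uniform fourth-moment bound, verify equicontinuity of $t\mapsto\bar V_t^u$ (so that $\mathcal{T}(\mathcal{K}_T)$ is relatively compact in the Wasserstein-$2$ topology on $[0,T]$) and continuity of $u\mapsto \mathcal{T}(u)$ via a Gr\"onwall estimate using the stability of $v_\alpha$, and then invoke Leray--Schauder to produce a fixed point $\rho_\cdot$. Uniqueness is handled by synchronous coupling: starting two candidate nonlinear solutions from the same initial condition and Brownian motion, It\^o applied to $\|\bar V_t^1-\bar V_t^2\|_2^2$, the Lipschitzness of $\nabla G$ and of $\mathrm{diag}(\cdot-v_\alpha)$, together with the stability of $v_\alpha$ (and $W_2(\rho_t^1,\rho_t^2)^2\leq \mathbb{E}\|\bar V_t^1-\bar V_t^2\|_2^2$), produces a closed Gr\"onwall inequality forcing $\bar V^1\equiv \bar V^2$. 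Finally, the weak formulation (\ref{FPK}) in the sense of Definition \ref{weakdef} is obtained by applying It\^o to $\phi(\bar V_t)$ for $\phi\in\mathcal{C}_c^2(\mathbb{R}^d)$ and taking expectations, with the moment bounds ensuring the stochastic integral is a true martingale.

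The main obstacle, in my view, is not the SDE theory but the stability of $v_\alpha$: bounding $\|\omega_\alpha\|_{L^1(\mu)}^{-1}$ uniformly over measures sharing a common fourth-moment bound is exactly where the quadratic-growth assumption on $\mathcal{E}$ and the choice to work in $\mathcal{P}_4$ rather than $\mathcal{P}_2$ become essential; once that is secured, Assumption \ref{assump1}(3)--(4) ensures that the new $\frac{1}{\epsilon}\nabla G$ term contributes only a linear perturbation to the Gr\"onwall inequalities and does not interfere with the fixed-point scheme.
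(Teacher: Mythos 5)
Your overall strategy is sound and would yield the theorem, but you and the paper make a genuinely different choice of where to decouple the nonlinearity, and this affects which fixed-point theorem is available. The paper freezes the \emph{consensus curve}: one fixes $u_\cdot\in\mathcal{C}([0,T],\mathbb{R}^d)$, solves the linearized SDE with $u_t$ replacing $v_\alpha(\rho_t)$, and defines $T(u)_t := v_\alpha\bigl(\mathrm{Law}(\bar V_t^u)\bigr)$, so that $T$ maps the \emph{Banach space} $\mathcal{C}([0,T],\mathbb{R}^d)$ into itself. Compactness is established by showing $\mathrm{Im}\,T\subset\mathcal{C}^{0,1/2}([0,T],\mathbb{R}^d)$ via fourth-moment estimates, It\^o isometry, and the stability Lemma~\ref{stability}, and existence then follows immediately from the Leray--Schauder theorem (Theorem~11.3 of Gilbarg--Trudinger), after an a~priori bound on fixed points of $\theta T$ obtained from the weak Fokker--Planck formulation. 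You instead freeze the \emph{law curve}: your map $\mathcal{T}$ sends $u_\cdot\in\mathcal{C}([0,T],\mathcal{P}_4(\mathbb{R}^d))$ to $t\mapsto\mathrm{Law}(\bar V_t^u)$. This is also a standard route in the CBO literature, but $\mathcal{C}([0,T],\mathcal{P}_4(\mathbb{R}^d))$ is not a Banach space (the Wasserstein metric does not come from a norm on the ambient space of signed measures), so the Leray--Schauder theorem you invoke does not apply as stated. You would need either to pass to a Schauder--Tychonoff argument on a convex, narrowly compact subset of the space of finite measures (using that on sets with uniformly bounded fourth moments, Wasserstein-$2$ convergence agrees with narrow convergence), or to reformulate the map to land in a genuine Banach space as the paper does. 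The rest of your outline—the two-sided control of $\|\omega_\alpha\|_{L^1}^{-1}$ via Assumption~\ref{assump1}(1)--(2), the $W_2$-stability of $v_\alpha$ as the crux, fourth-moment propagation, the linear-growth/Lipschitz control of $\frac{1}{\epsilon}\nabla G$ via Assumption~\ref{assump1}(3)--(4), synchronous coupling for uniqueness, and It\^o plus a true-martingale argument for the weak Fokker--Planck equation—matches the paper's ingredients precisely, and the added gradient term does indeed enter only as a benign linear perturbation in each Gr\"onwall estimate, exactly as you predict.
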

Then we are ready to present the result showing that (\ref{SDE}), (\ref{FPK}) indeed characterize the mean-field limit of the particle system.

\begin{theorem}\label{meanfieldthm}
	(Proof in Appendix \ref{appenmeanfield}) Let $ \mathcal{E} $ satisfy Assumption \ref{assump1} and $ \rho_{0}\in\mathcal{P}_{4}\left(\mathbb{R}^{d}\right) $. For any $ N\geq2 $, assume that $ \{(\viti)\}_{i=1}^{N} $ is the unique solution to (\ref{constraincon}) with $ \rho_{0}^{\otimes N} $ distributed initial data $ \{V_{0}^{i,N}\}_{i=1}^{N} $. Then the limit (denoted by $ \rho_{t} $) of the sequence $ \{\dempti\}_{N\in\mathbb{N}}$, 
	as $N \to \infty$ exists. Moreover, $ \rho_{t} $ is deterministic and it is the unique weak solution to the corresponding Fokker-Planck equation (\ref{FPK}) of the mean-field model.
\end{theorem}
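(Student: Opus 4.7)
The plan is to prove the mean-field limit via a synchronous coupling argument, the standard strategy for CBO-type particle systems. For each $N$, I would introduce auxiliary processes $\bar{V}^{i}_t$, $i=1,\dots,N$, each solving the nonlinear SDE (\ref{SDE}) driven by the same Brownian motion $B^{i,N}_t$ and started from the same initial sample as $V^{i,N}_t$. By Theorem \ref{meanfieldwellposed}, these processes are pathwise unique and each has law $\rho_t$, the weak solution of the Fokker-Planck equation (\ref{FPK}). Writing $\bar{\rho}^N_t := \frac{1}{N}\sum_{i=1}^N \delta_{\bar{V}^i_t}$ for the empirical measure of this i.i.d. system, the goal reduces to showing $\sup_{t\le T}\mathbb{E}\|V^{i,N}_t - \bar{V}^i_t\|_2^2 \to 0$ as $N\to\infty$; combining this with $\mathbb{E}[W_2^2(\bar{\rho}^N_t,\rho_t)]\to 0$ via the triangle inequality yields $\mathbb{E}[W_2^2(\hat{\rho}^N_t,\rho_t)]\to 0$, which, in view of the uniqueness from Theorem \ref{meanfieldwellposed}, identifies the limit as deterministic and equal to $\rho_t$.

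The argument rests on two auxiliary estimates. First, a uniform-in-$N$ fourth-moment bound $\sup_{t\le T}\bigl(\mathbb{E}\|V^{i,N}_t\|_2^{4} + \mathbb{E}\|\bar{V}^{i}_t\|_2^{4}\bigr) \le C_T$, obtained by Itô's formula applied to $\|\cdot\|_2^{4}$, using the at-most-linear growth of $\nabla G$ (Assumption \ref{assump1}(4)) together with the fact that the weighted mean $v_{\alpha}(\mu)$ is controlled by the second moment of $\mu$, since the weight $\omega_{\alpha}$ is pinched between $e^{-\alpha\overline{\mathcal{E}}}$ and $e^{-\alpha\underline{\mathcal{E}}}$ by Assumption \ref{assump1}(1). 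Second, a quantitative stability estimate for the consensus map: for any $\mu,\nu \in \mathcal{P}_{4}(\mathbb{R}^d)$ whose fourth moments are bounded by a common constant $M$,
\begin{equation*}
\|v_{\alpha}(\mu) - v_{\alpha}(\nu)\|_2 \le C_{\alpha,M}\, W_{2}(\mu,\nu),
\end{equation*}
which follows by splitting the difference into numerator and denominator contributions of (\ref{avg}), then using the local Lipschitz bound on $\mathcal{E}$ (Assumption \ref{assump1}(2)), the two-sided bound on $\omega_{\alpha}$, and Cauchy-Schwarz against the optimal $W_2$ coupling.

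With these tools in hand, I would apply Itô's formula to $\|V^{i,N}_t - \bar{V}^{i}_t\|_2^{2}$ and use the global Lipschitz property of $\nabla G$ (Assumption \ref{assump1}(3)) together with the stability of $v_\alpha$ to obtain
\begin{equation*}
\mathbb{E}\|V^{i,N}_t - \bar{V}^{i}_t\|_2^{2} \le C \int_0^t \Bigl( \mathbb{E}\|V^{i,N}_s - \bar{V}^{i}_s\|_2^{2} + \mathbb{E}\|v_{\alpha}(\hat{\rho}^N_s) - v_{\alpha}(\rho_s)\|_2^{2} \Bigr)\,ds.
\end{equation*}
Splitting $v_\alpha(\hat{\rho}^N_s)-v_\alpha(\rho_s) = [v_\alpha(\hat{\rho}^N_s) - v_\alpha(\bar{\rho}^N_s)] + [v_\alpha(\bar{\rho}^N_s) - v_\alpha(\rho_s)]$, the first bracket is bounded via the stability estimate and $W_2^2(\hat{\rho}^N_s,\bar{\rho}^N_s) \le \frac{1}{N}\sum_j \|V^{j,N}_s - \bar{V}^j_s\|_2^{2}$, which by exchangeability equals $\mathbb{E}\|V^{i,N}_s - \bar{V}^i_s\|_2^{2}$ in expectation; the second bracket is a law-of-large-numbers term for i.i.d.\ samples from $\rho_s$, of order $O(1/N)$ thanks to the uniform fourth-moment bound and the explicit ratio form of $v_\alpha$. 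Grönwall's inequality then closes the estimate.

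The main obstacle I expect is the stability estimate for $v_{\alpha}(\cdot)$: although the boundedness of $\mathcal{E}$ keeps the denominator $\|\omega_{\alpha}\|_{L^1(\cdot)}$ away from zero uniformly, the constants that appear depend exponentially on $\alpha$ and polynomially on the moments, so one must argue carefully that they remain uniform on $[0,T]$, which is why the fourth-moment bound must be established first. The componentwise anisotropic diffusion $\text{diag}(v - v_\alpha(\cdot))$ in the Itô calculation requires careful bookkeeping across coordinates but is otherwise routine.
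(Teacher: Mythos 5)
Your proposal takes a genuinely different route from the paper. You propose the synchronous-coupling (propagation-of-chaos) argument: introduce $N$ i.i.d.\ auxiliary McKean processes $\bar V^i_t$ driven by the same Brownian motions, control $\mathbb{E}\|V^{i,N}_t-\bar V^i_t\|_2^2$ via It\^o plus the stability estimate for $v_\alpha$, and absorb the law-of-large-numbers error $\mathbb{E}\|v_\alpha(\bar\rho^N_s)-v_\alpha(\rho_s)\|_2^2 = O(1/N)$ before closing with Gr\"onwall. The paper instead uses a compactness approach: it proves uniform fourth-moment bounds (Lemma~\ref{momentbound}), introduces the martingale functional $F_\phi$ on path-measure space and shows $\mathbb{E}|F_\phi(\hat\rho^N)|^2\le C/N$ (Proposition~\ref{prop}), establishes tightness of $\{\mathcal{L}(\hat\rho^N)\}$ via the Aldous criterion (Theorem~\ref{tight}), extracts convergent subsequences through Skorokhod's representation, identifies every subsequential limit as a weak solution of~(\ref{FPK}), and then invokes uniqueness of that solution (Lemma~\ref{unique}) to conclude the whole sequence converges to a deterministic limit. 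Your coupling argument is not only valid under the same Assumption~\ref{assump1} (boundedness and local Lipschitzness of $\mathcal{E}$ feed the stability estimate; global Lipschitz and linear growth of $\nabla G$ control the extra drift; the two-sided bound on $\omega_\alpha$ keeps the denominator away from zero), it yields a quantitative $O(1/\sqrt{N})$ rate in $W_2$ on $[0,T]$, which the paper's qualitative compactness argument does not provide. On the other hand, the paper's martingale/tightness route is somewhat more robust: it avoids the need for a second, independently driven system and for exchangeability bookkeeping, and it produces the weak solution of~(\ref{FPK}) as a byproduct rather than presupposing it (your argument leans on the full strength of Theorem~\ref{meanfieldwellposed}). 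The shared ingredients are the stability estimate for the consensus map (the paper's Lemma~\ref{stability}, quoted from \cite{CCTT}) and a uniform-in-$N$ fourth-moment bound, which you correctly identify as the prerequisite for keeping the $\alpha$- and moment-dependent constants controlled. Overall your outline is sound and would constitute a valid alternative proof, with sharper conclusions, though at the price of redoing some of the well-posedness bookkeeping rather than reusing the paper's Fokker--Planck uniqueness lemma directly.
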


\section{Convergence to the constrained minimizer in the mean-field limit}\label{longtime}

In this section, we will analyze the behavior of the weak solution of the Fokker-Planck equation (\ref{FPK}), which charaterizes the mean-field behavior of the proposed finite-particle system. \textcolor{black}{Throughout this section, we assume Problem (\ref{opt1}) admits a unique solution $v^*$, which is a common assumption in both unconstrained and constrained CBO-related literature, e.g., see \cite{BHP,CJLZ,FHPS1,FKR}{}\footnote{In \cite{BHP}{}, the uniqueness assumption is slightly weaker: it requires the penalized loss function to admit a unique global minimizer, which coincides with the constrained minimizer.}. Our primary goal is to establish a key result: under suitable assumptions and the selection of appropriate parameters, the particles will concentrate around $v^*$ with arbitrary closeness, confirming the effectiveness of the method in the mean-field limit.} 

For simplicity and without loss of generality, we assume $ \mathcal{E}(v^{*})=0 $. Throughout this section, we use $ \rho_{t} $ to represent the solution of Equation (\ref{FPK}) as defined in Definition \ref{weakdef}, and assume it admits a density with respect to the Lebesgue measure. With a slight abuse of notation, we also use $ \rho_t $ to refer to its density function.

\subsection{Main Results}
To study the convergence of $ \rho_{t} $ to $ v^{*} $, we define the following energy functional
\begin{equation}\label{energyfunc}
	\mathcal{V}\big(\rho_{t}\big):=\dfrac{1}{2}\int\|v-v^{*}\|_{2}^{2}\,d\rho_{t}(v) .
\end{equation}
The above defined quantity $ \mathcal{V}\big(\rho_{t}\big) $ provides a measure of the distance between the distribution of the particles $ \rho_{t} $ and the Dirac measure at $ v^{*} $, denoted as $ \delta_{v^{*}} $. Specifically, we have the relationship $2\mathcal{V}\big(\rho_{t}\big)=W^{2}_{2}\big(\rho_{t},\delta_{v^{*}}\big)$, where $ W_{2}\big(\rho_{t},\delta_{v^{*}}\big) $ denotes the Wasserstein-2 distance between $ \rho_{t} $ and $ \delta_{v^{*}} $. The diminishing behavior of $ \mathcal{V}\big(\rho_{t}\big) $ indicates that $ \rho_{t} $ is approaching $ \delta_{v^{*}} $, implying that particles are concentrating around $v^*$. In this paper, we establish the following main theorem concerning the decay of $ \mathcal{V}\big(\rho_{t}\big) $.

\begin{theorem}\label{main}
	Suppose $ G $ and $ \mathcal{E} $ satisfy Assumption \ref{wellbehave} (well-behaved). Fix any $ \tau\in(0,1) $ and parameters $ \lambda,\sigma>0 $ with $ 2\lambda>\sigma^{2} $. There exists a function $ I:\mathbb{R}\rightarrow \mathbb{R} $ such that for any error tolerance $ \delta\in\big(0,\mathcal{V}(\rho_{0})\big) $, as long as $\rho_{0}\big(B(v^{*},r)\big)>0$ for all $ r>0 $ and $\int G\,d\rho_{0}(v)\leq I(\delta)$, then one can find $ \alpha $ and $ \epsilon $ so that
	\begin{gather}\label{result}	\min_{t\in[0,T^{*}]}\mathcal{V}\left(\rho_{t}\right)\leq \delta,\quad \text{where}\quad T^{*}=\dfrac{1}{(1-\tau)(2\lambda-\sigma^{2})}\log\left({\mathcal{V}\left(\rho_{0}\right)}/{\delta}\right).
	\end{gather}
	Furthermore, until $ \ener $ reaches the prescribed accuracy $ \delta $, the following exponential decay holds:
	\begin{gather}\label{decayrate}
		\ener\leq \mathcal{V}(\rho_{0})\exp\big(-(1-\tau)(2\lambda-\sigma^{2})t\big).
	\end{gather}

\end{theorem}
\begin{remark}
	In the above theorem, the function $ I $ only depends on $ G $, $ \mathcal{E} $ and parameters $ \tau,\lambda,\sigma $. It does not depend on $ \delta $. The choice of $ \alpha,\epsilon$ will depend on $ \delta $ as described in (\ref{alpha}) and (\ref{epsilon}) respectively. In broad terms, when $\delta$ is fixed, we select a sufficiently large $\alpha$, and subsequently, based on this chosen $\alpha$, we select a small enough $\epsilon$. Additionally, it is worth noting that the selection of $\lambda$ and $\sigma$ remains independent of the dimension $d$, as the only requirement is $2\lambda > \sigma^2$. However, $\alpha$ will exhibit a logarithmic dependence on $d$ as illustrated in (\ref{alpha}). \textcolor{black}{Intuitively, the initial condition $ \int G\,d\rho_{0}(v) \leq I(\delta) $ requires the initial particles to be near the constraint set $\{G = 0\}$. This is not an unusual assumption in constrained CBO literature; for example, in \cite{FHPS1}{}, the convergence result assumes that the initial distribution $\rho_0$ is fully supported on the constraint set.  While this condition is primarily technical, arising from the proof techniques, it is not essential in practice for the algorithm to converge, as demonstrated by the numerical examples in Section \ref{sec:numerical}. Notably, our assumption does not require the particles to be close to the constrained minimizer $v^*$.}

\end{remark}

\subsection{Assumption}\label{subsec:wellbehave}
In this subsection, we define clearly what it means by being \lq\textbf{well-behaved}\rq. $ G $ and $ \mathcal{E} $ are \textbf{well-behaved} if the following Assumption \ref{wellbehave} is satisfied. It is worth noting that Assumption \ref{wellbehave} in this section is independent of Assumption \ref{assump1}. In other words, for the proofs in this section, Assumption \ref{assump1} is not required.

\begin{assumption}\label{wellbehave}
	\textbf{A. Assumptions on $ \mathcal{E} $:}
	\begin{enumerate}
		\item[(A1)] $ \mathcal{E} $ is bounded: $\underline{\mathcal{E}}\leq\mathcal{E}\leq \overline{\mathcal{E}}. $
		\item[(A2)] $ \mathcal{E} $ is locally H\"{o}lder continuous around $ v^{*} $, i.e. there exists $ r_{0}>0 $ such that $ \forall v_{1}, v_{2}\in B^{\infty}(v^{*},r_{0}) $, $
			|\mathcal{E}(v_{1})-\mathcal{E}(v_{2})|\leq C\|v_{1}-v_{2}\|_{\infty}^{\beta}
		$ for some $ C\geq 0 $ and $ \beta >0 $.
	\end{enumerate}
	
	\textbf{B. Assumptions on $ G $:}
	\begin{enumerate}
		\item[(B1)] $ \left\langle \nabla G(v),v-v^{*}\right\rangle\geq0 $ holds for any $ v\in\mathbb{R}^{d} $.
		\item[(B2)]  $ G(v)\in \mathcal{C}^{2}_{*}\big(\mathbb{R}^{d}\big) $ and there exists $ C>0 $ such that $G(v)\leq C  \| \nabla G(v) \|_{2}^{2} $, $ \forall v\in \mathbb{R}^{d} $.
		\item[(B3)]  $ \nabla G(v)\neq 0 $, $ \forall v\in \big\{G(v)\in(0,u_{0})\big\} $ and $ \int_{G(v)\in(0,u_{0})}\frac{1}{\|\nabla G(v)\|_{2}}\,dv<\infty $ for some $ u_{0}>0 $ small enough.
		
	\end{enumerate}
	
	\begin{remark}\label{rmk:ass G}
		Assumption \ref{wellbehave} (B1) is related to the convexity of function $ G $ but is less stringent than the convexity condition. If it is not satisfied, similar to other gradient descent algorithms, there is a possibility for some particles to get trapped in the local minimizers $\hv$ of $G$, i.e. $\nb g(\hv) = 0$. Nevertheless, provided the function values $\mathcal{E}(v)$ at those local minimizers of $G$ do not fall below $\mathcal{E}(v^*)$, a condition attainable by adding a positive scalar multiple of $G$ to $\mathcal{E}$ without altering the solution $v^*$, it will not affect the convergence of the consensus point to the constrained minimizer $v^*$, as evidenced in the experiments detailed in Section \ref{sec:eq1}, Figure \ref{fig: evolution}. It is noteworthy that this slight adjustment on $\mathcal{E}(v)$ differs from the penalization method outlined in \cite{BHP}{}. Here, there is no necessity for the penalty parameter to approach infinity, as the convergence is enforced through the dynamics rather than penalization. The introduction of a positive scalar multiple of $G$ to $\mathcal{E}$ is to avoid the extreme case. Thus a mild penalization would suffice.
		
		Assumption \ref{wellbehave} (B2) is primarily technical in nature. Assumption \ref{wellbehave} (B3) guarantees that the gradient of $ G $  around the constraint $ \{G=0\} $ does not vanish too rapidly.
        
        {Although Assumption \ref{wellbehave} (B) is formulated for an abstract function $G$, it is in fact satisfied by a rather large class of functions
arising from geometric constraints. The next lemma shows that
Assumption \ref{wellbehave} (B) holds for the choice where $G$ is the squared
distance to a compact convex constraint set $K$ of small intrinsic
dimension. This covers in particular many constrained optimization
problems in which the feasible set is contained in a lower–dimensional
affine subspace, while projections onto $K$ remain computable, including common combinations of linear equality/inequality constraints with norm bounds.}

{A practical example is portfolio optimization in finance, where $v\in\mathbb{R}^d$ denotes portfolio weights (with $v_j$ the position in asset $j$), see \cite{portfolio_book}. Typical constraints include a budget constraint $\sum_{i=1}^d v_i + c = 0$, market neutrality $\beta\cdot v = 0$ (with $\beta$ the vector of asset betas), diversification $\|v\|_2^2\leq D$, and leverage $\|v\|_1\leq 2$. Intersecting such constraints yields a compact convex feasible set whose intrinsic dimension can be at most $d-2$. Common optimization objectives in this application include, for example, variance \cite{Markowitz}, $\beta$-Value-at-Risk \cite{Kast,mausser1999beyond}, and Conditional Value-at-Risk \cite{Rockafellar}.}
{\begin{lemma}
\label{lem:assumpB-distance}[Proof in Appendix \ref{appen:proofassumpB}]
Let $K\subset\mathbb{R}^{d}$ be a nonempty compact convex set whose affine
hull has dimension at most $d-2$. Define
$$
g(v) := \dist(v,K),
\qquad
G(v) := g(v)^{2},
\qquad v\in\mathbb{R}^{d}.
$$
Then $G$ satisfies Assumption \ref{wellbehave} (B).
\end{lemma}}
	\end{remark}

	\textbf{C. Assumptions on the coupling of $ \mathcal{E} $ and $ G $:} There exist a non-negative increasing function $\tau_1(x)$ from $ \mathbb{R} $ to $ \mathbb{R} $ with $ \lim_{x\rightarrow 0} \tau_{1}(x)=0 $, $ \eta>0 $, $ \mu>0 $, $ R_{0}>0 $ and $ \mathcal{E}_{\infty}>0 $, such that the following holds $ \forall u\in[0,u_{0}] $ where $u_0>0$ is a small constant.
	
	\begin{enumerate}
		\item[(C1)] There exist $ v_{u}\in \mathbb{R}^{d},\underline{\mathcal{E}_{u}}\in\mathbb{R} $ such that $
			v_{u}=\mathop{\arg\min}\limits_{v\in\{G=u\}}\mathcal{E}(v)\text{ and }\underline{\mathcal{E}_{u}}=\mathcal{E}(v_{u}).$ Moreover, it holds that
		\begin{gather*}
			\|v_{u}-v^{*}\|_\infty\leq \tau_{1}(u)\text{ and}\\
			\partial B^{\infty}\big(v_{u},r\big)\cap \big\{v\mid G(v)=u\big\}\neq \emptyset ,\ \forall \text{ $r$ small}.
		\end{gather*}
		\item[(C2)]\label{subsec:wellbehave:C2} 
        It holds that
		\begin{gather*}
			\|v-v_{u}\|_\infty\leq\dfrac{1}{\eta}\big(\mathcal{E}(v)-\underline{\mathcal{E}_{u}}\big)^{\mu}
		\end{gather*} 
		$ \forall v\in B^{\infty}\big(v_{u},R_{0}\big)\cap\big\{G(v)=u\big\} $ and
		\begin{gather*}
			\mathcal{E}_{\infty}<\mathcal{E}(v)-\underline{\mathcal{E}_{u}}
		\end{gather*}
		$ \forall v\in B^{\infty}\big(v_{u},R_{0}\big)^{c}\cap\big\{G(v)=u\big\} $.
	\end{enumerate}
\end{assumption}
\begin{remark}
The above Assumption \ref{wellbehave} (C1) ensures the geometry of $ \mathcal{E} $ on the set $ \{G=u\} $ is similar among small enough $ u $, i.e., on sets $ \{G=u\} $, the constrained minimizers $v_u$ and constrained minimums $\mathcal{E}(v_u)$ are close among small enough values for $ u $. To illustrate, if this condition is not met, as depicted in Figure \ref{counter} (a), the desired constrained minimizer $v^*$ (depicted as a solid green pentagon) is considerably distant from the minimizer $v_u$ on a nearby level set $\{G=u\}$ (depicted as a solid orange circle) for all sufficiently small $u$. Consider an extreme case where we assume $\mathcal{E}(v_u)$ is significantly smaller than $\mathcal{E}(v^*)$ for all positive but sufficiently small $u$. Due to the nature of gradient descent on $G$ for each particle, which may not precisely enforce each particle to remain on the constraint $\{G=0\}$, these particles will tend to remain in a neighborhood of $\{G=0\}$. Consequently, numerous particles will cluster around $\{G=u\}$ for sufficiently small $u$, as illustrated in Figure \ref{counter} (a). Given that numerous particles are near $v_u$, where the function value is significantly small, the algorithm computes the consensus point around $v_u$ rather than $v^*$. Consequently, the consensus point will gradually lead particles to concentrate around $v_u$ rather than $v^*$, as illustrated in Figure \ref{counter} (b), which leads to a failure in this extreme scenario. To avoid the occurrence of such extreme cases, we proposed Assumption \ref{wellbehave} (C1). In conjunction with other assumptions, the similarity of the local geometry of $ \mathcal{E} $ on the set $ \{G=u\} $ for sufficiently small values of $ u $ is guaranteed as established in Lemma \ref{C1}.  

The Assumption \ref{subsec:wellbehave:C2} (C2) ensures that the constrained minimizer is distinguishable from other points, i.e., on each adjacent level set $\{G=u\}$, there is a unique minimizer $v_u$ and $\mathcal{E}(v)$ behaves like $\|v-v_u\|_\infty^{1/\mu}$ near the $v_u$. \textcolor{black}{This type of local coercivity condition is common in the CBO-related literature, e.g., \cite{BHP,FKR}.} 

\begin{figure}
\centering
\subfloat[]{\includegraphics[width=0.35\textwidth]{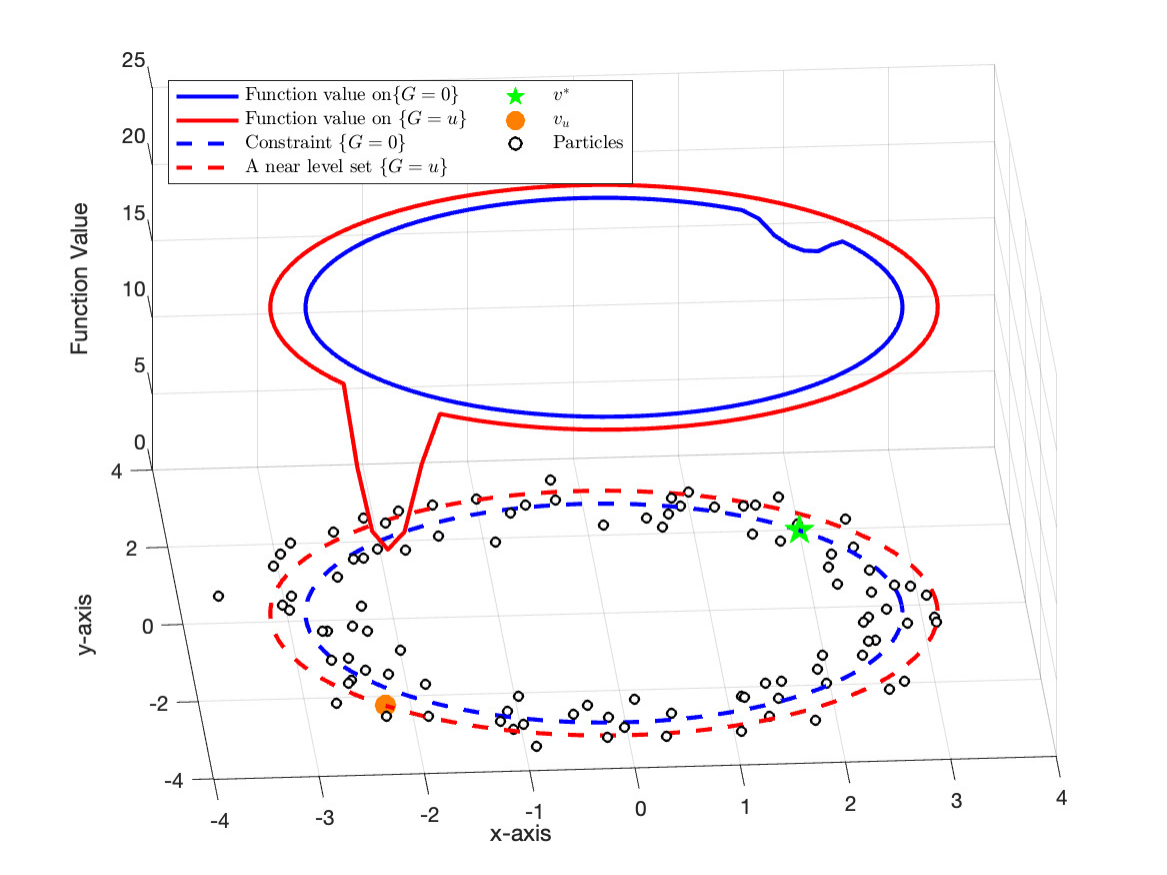}}
\subfloat[]{\includegraphics[width=0.35\textwidth]{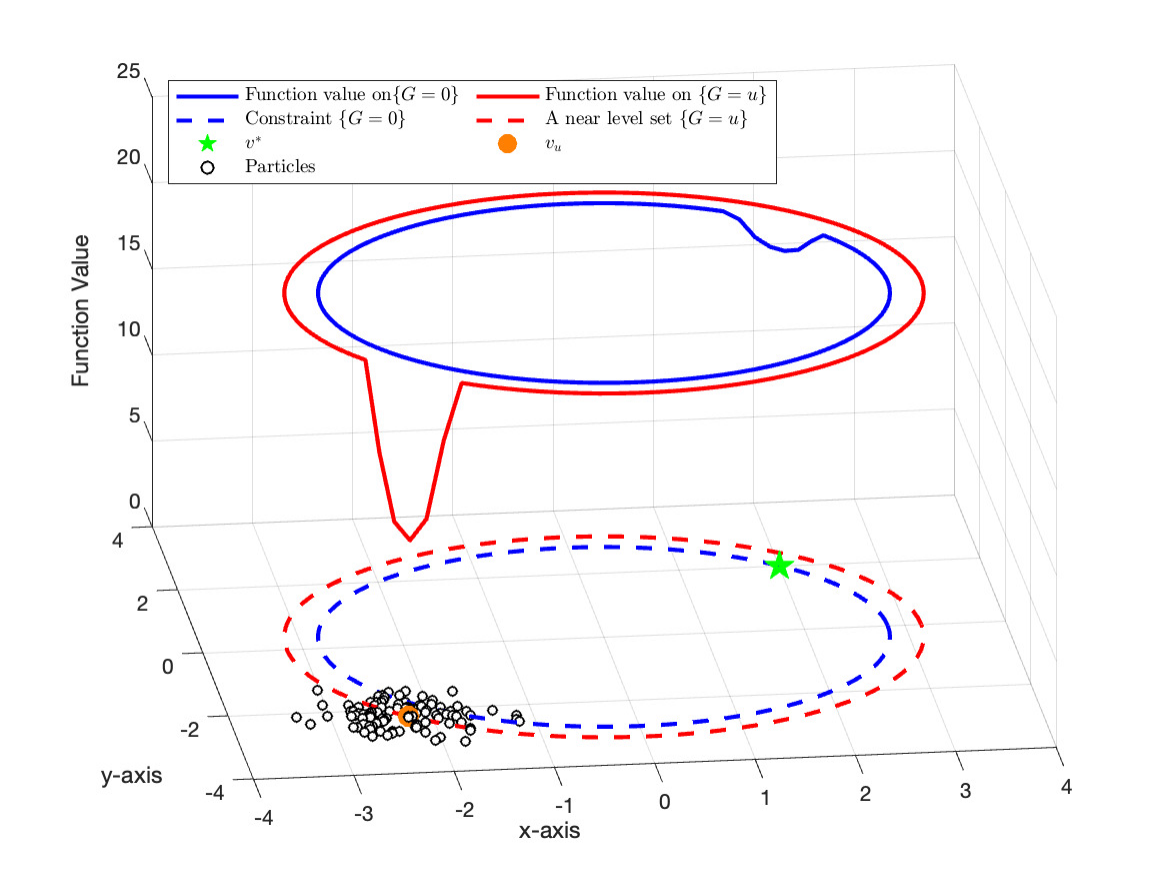}}	
\caption{The blue curves represent function values on the constraint set $\{G=0\}$ and the red curves on the level set $\{G=u\}$. Dashed lines represent corresponding constraint sets. The green pentagon denotes the constrained minimizer $v^*$, and the orange circle represents the minimizer $v_u$ on the nearby level set ${G=u}$. Empty circles represent particles.}
\label{counter}
\end{figure}
\end{remark}

\subsection{Sketch of the Proof}\label{subsec:proofsketch}
In this subsection, we layout  the strategy of the proof and main Lemmas, 
and the complete proof of Theorem  \ref{main}.

We first plug $  \tfrac{1}{2}\|v-v^{*}\|_{2}^{2}$ into Definition \ref{weakdef}, which yields the following differential inequality that describes the dynamics of of the energy functional $ \mathcal{V}\big(\rho_{t}\big) $.
\begin{lemma}\label{energy}
	(Proof in Appendix \ref{appen:energy}) Let $\mathcal{V}\big(\rho_{t}\big)$ be the energy functional defined in (\ref{energyfunc}). Under Assumption \ref{wellbehave}, 
    \vspace{-0.2ex}
	\begin{equation}\label{eq:dyn}
    \begin{aligned}
    \dfrac{d}{dt}\mathcal{V}\big(\rho_{t}\big)\leq& -(2\lambda-\sigma^{2})\mathcal{V}\big(\rho_{t}\big)+\sqrt{2}(\lambda+\sigma^{2})\sqrt{\mathcal{V}\big(\rho_{t}\big)}\|v_{\alpha}\big(\rho_{t}\big)-v^{*}\|_2\\
    &+\dfrac{\sigma^{2}}{2}\|v_{\alpha}\big(\rho_{t}\big)-v^{*}\|_2^{2}-\dfrac{1}{\epsilon}\int \Bigl\langle \nabla G,v-v^{*}\Bigr\rangle\,d\rho_{t}(v).
         \vspace{-1.5ex}
    \end{aligned}
	\end{equation}
    \vspace{-0.5ex}
\end{lemma}
\vspace{-3ex}
It is noteworthy that if $  \|v_{\alpha}\big(\rho_{t}\big)-v^{*}\|_{2}$ could be bounded by a suitable scalar multiple of $ \sqrt{\mathcal{V}\big(\rho_{t}\big)} $, and the last term is negative, we would then obtain the inequality:
\vspace{-2ex}
\begin{gather}\label{gronwall}
	\dfrac{d}{dt}\mathcal{V}\big(\rho_{t}\big)\leq \big(-(1-\tau)(2\lambda-\sigma^{2})\big)\mathcal{V}\big(\rho_{t}\big),
    \vspace{-1ex}
\end{gather}
to which Gronwall's inequality can be applied, ensuring exponential decay. To this end, we first present the following auxiliary lemma, which draws out additional consequences from Assumption \ref{wellbehave} (C).
\begin{lemma}\label{C1}
	(Proof in Appendix \ref{C1details}) There exist non-negative increasing function $ \tau_{2}(x),\tau_{3}(x)  $ and $ \tau_{4}(x) $ mapping from $ \mathbb{R} $ to $ \mathbb{R} $ with $ \lim_{x\rightarrow 0}\tau_{i}(x)=0 $ for $ i=2, 3,4 $ so that the following hold $ \forall u,r\geq0 $ small enough:
	\begin{gather*}
		|\underline{\mathcal{E}_{u}}|=|\mathcal{E}(v_{u})|\leq \tau_{2}(u);\quad |\mathcal{E}_{r}^{u}-\mathcal{E}_{r}|\leq \tau_{3}(\max\{u,r\});\quad 
		|\mathcal{E}^{u}_{r}-\mathcal{E}^{0}_{r}|\leq \tau_{4}(\max\{u,r\}),
	\end{gather*}
	where		$\mathcal{E}^{u}_{r}=\max_{v\in B^{\infty}(v_{u},r)\cap\{G=u\}}\mathcal{E}(v)$ and 
$		\mathcal{E}_{r}=\max_{v\in B^{\infty}(v^{*},r)}\mathcal{E}(v) .$
\end{lemma}
Using the functions given in Lemma \ref{C1}, we establish the following inequality to control $\|v_{\alpha}\big(\rho_{t}\big)-v^{*}\|_{2}$.
\begin{lemma}[A Quantitative Laplace Principle]\label{lp}
	(Proof in Appendix \ref{appendix:laplace_proof}) Fix $ r>0 $ small enough and $ u>0 $ small enough. $  q>0 $ is a constant such that $ q+\mathcal{E}_{r}^{\tilde{u}}-\underline{\mathcal{E}_{\tilde{u}}} <\mathcal{E}_{\infty}$ is true $ \forall \tilde{u}\in[0,u) $. Then
    \begin{equation}
        \begin{aligned}\label{eq_11}
		\|\con-v^{*}\|_2\leq& 2\sqrt{d}\cdot\dfrac{\big(q+\ec+\tau_{2}(u)+\tau_{4}(\max\{u,r\})\big)^{\mu}}{\eta}\\
		&+\dfrac{\sqrt{d}e^{-\alpha\big(q-\tau_{3}(\max\{u,r\})\big)}}{\rt\big(B^\infty(v^{*},r)\big)}\int_{\{G\in(0,u)\}}\|v-v_{G(v)}\|_2\dvt\\
		&+\dfrac{\sqrt{d}e^{-\alpha\big(q-\tau_{3}(r)\big)}}{\rt\big(B^\infty(v^{*},r)\big)}\int_{\{G=0\}}\|v-v^{*}\|_2\dvt\\
		&+\sqrt{d}\tau_{1}(u)
		+\int_{\{G(v)\geq u\}}\dfrac{\|v-v^{*}\|_2}{\|\wt\|_{L^1(\rt)}}e^{-\alpha\mathcal{E}(v)}\dvt.
	\end{aligned}
    \end{equation}
\end{lemma}


It is observed that with an appropriate choice of $ q,u,r $, provided $ \rt(B(v^{*},r)) $ is suitably  bounded from below, as proven in Lemma \ref{ball} in Subsection \ref{subsec:lowerbound}, letting $ \alpha $ be sufficiently large will make the first four terms above small enough. Concerning the last term of \eqref{eq:dyn}, it is related to $ \int G\,d\rho_{t}(v) $, which can be controlled by Lemma \ref{G} in Subsection \ref{subsec:G}. Consequently, we can control $	\|\con-v^{*}\|_2  $ in such a way that (\ref{gronwall}) holds, thereby ensuring exponential decay. 

\color{black}
\begin{remark}\label{innovation}
The framework of the proof strategy was introduced in \cite{FKR}{}. However, in our case, the Laplace principle shown in \cite{FKR} is not directly applicable. There are two challenges in our proof. First, a global valley-like structure around $v^*$ is required to use the Laplace principle. In the constrained case, such a structure does not hold. But one can notice that, on a nearby level set $\{G=u\}$, if a comparable constrained valley-like structure exists for the corresponding constrained minimizer $v_u$, a constrained Laplace principle on $\{G=u\}$ can be established for small $u$. Integrating over all $u$ yields reasonable estimates on $\|\con-v^*\|$. Secondly, there is always mass around the unconstrained global minimum. Consequently, increasing $\alpha$ would attract all particles towards the unconstrained global minimum rather than the constrained one. Therefore, one needs to find the balance between the gradient force and the tendency towards the consensus point. 

Lemma \ref{lp} is one of the key contributions of this paper. This lemma characterizes the distribution of the particles. The particles on the constraint set $\{G = 0\}$ are characterized by the first and third terms. The second and fourth terms account for the particles near the constraint, while the final term describes the particles far from the constraint. Notably, Lemma \ref{lp} is independent of the dynamics and is a property of the landscape. 
By integrating the new quantitative Laplace principle \eqref{eq_11} with the drift towards the constrained set (as indicated by the last term in Lemma \ref{energy}), we are able to quantify the tendency towards the constrained global minimum.

\end{remark}

\color{black}

\subsection{Proof of Theorem \ref{main}}\label{subsec:wellprepared}
In this subsection, we present the complete proof of Theorem \ref{main}. For simplicity, in the following, we assume $ \tau_{1}(u)=\tau_{2}(u)=\tau_{3}(u)=\tau_{4}(u)=u $, where $\tau_1,\ \tau_2,\ \tau_3,\ \tau_4$ are defined in Assumption \ref{wellbehave} (C) and Lemma \ref{C1}. We point out that the proof technique remains valid for any choice of $ \tau_{i} $ that is an increasing function and converges to 0 as $u$ approaches 0. 

Before the proof, we need two more lemmas on the lower bound of $ \rt\big(B^\infty(v^{*},r)\big) $ and the dynamics of $ \int G \dvt $. 

{\bf Lower bound for $ \rt\big(B^\infty(v^{*},r)\big) $}\label{subsec:lowerbound}
We establish a lower bound for $ \rt\big(B^\infty(v^{*},r)\big) $, a crucial element for our subsequent application of the Laplace principle. We first define the mollifier $ \phi_{r}(v) $ as follows
\begin{equation}\label{molli}
	\phi_{r}(v)=\left\{\begin{aligned}
		&\prod_{k=1}^{d}\exp\bigg({1-\dfrac{r^{2}}{r^{2}-(v-v^{*})_{k}^{2}}}\bigg), &&\text{ if $ \|v-v^{*}\|_{\infty}<r $,} \\
		&0,&&\text{ else.}
	\end{aligned}\right.\quad
\end{equation}
Then we can prove the following lemma.
\begin{lemma}\label{ball}
	(Proof in Appendix \ref{balldetails}) Let $ B=\sup_{t\in[0,T]}\|\con-v^{*}\|_{\infty} $. Then for all $ t\in[0,T] $, 
	\begin{gather*}
		\rt\big(B^\infty(v^{*},r)\big)\geq \big(\int \phi_{r}\,d\rho_{0}(v)\big)e^{-at},
	\end{gather*}
	where $a= 2d\max\left\{\tfrac{\lambda(\sqrt{c}r+B)\sqrt{c}}{(1-c)^{2}r}+\tfrac{\sigma^{2}(cr^{2}+B^{2})(2c+1)}{(1-c)^{4}r^{2}},\tfrac{2\lambda^{2}}{(2c-1)\sigma^{2}}\right\}$ and $ c\in(1/2,1) $ is some constant satisfying $(2c-1)c\geq (1-c)^{2}.$
\end{lemma}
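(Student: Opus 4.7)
The plan is to prove the stronger statement $\int \phi_r \, d\rho_t \geq e^{-at}\int \phi_r \, d\rho_0$, which suffices since $0\leq \phi_r \leq \mathbbm{1}_{B^\infty(v^*,r)}$ pointwise. Because $\phi_r$ is smooth and compactly supported, it lies in $\mathcal{C}^{2}_{*}(\mathbb{R}^d)$, so I may use it as a test function in the (extended) weak Fokker--Planck formulation of Definition \ref{weakdef} to obtain
\begin{align*}
\frac{d}{dt}\int \phi_r \, d\rho_t =& -\lambda\int\bigl(v-v_\alpha(\rho_t)\bigr)\cdot\nabla\phi_r \, d\rho_t - \frac{1}{\epsilon}\int \nabla G\cdot\nabla\phi_r \, d\rho_t \\
&+ \frac{\sigma^2}{2}\int \sum_{k=1}^d \bigl(v-v_\alpha(\rho_t)\bigr)_k^2\, \partial_{kk}\phi_r \, d\rho_t,
\end{align*}
and the task becomes producing a pointwise lower bound of the right-hand side by $-a\int \phi_r \, d\rho_t$, after which Gronwall finishes the job.

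Writing $w_k = (v-v^*)_k$ and $q_k(w) = 1 - r^2/(r^2-w^2)$ on $|w|<r$, the product structure $\phi_r = \exp(\sum_k q_k(w_k))$ yields the clean identities $\partial_k\phi_r = q_k'(w_k)\phi_r$ and $\partial_{kk}\phi_r = \bigl[(q_k'(w_k))^2 + q_k''(w_k)\bigr]\phi_r$, so every integrand factors $\phi_r$ out. For the constraint-gradient term, Assumption \ref{wellbehave} (B1) combined with the fact that $-q_k'(w_k)$ has the same sign as $w_k$ gives
\[ -\nabla G\cdot\nabla\phi_r \;=\; \phi_r\,\sum_{k=1}^{d}\frac{2r^2\,w_k\,\partial_k G(v)}{(r^2-w_k^2)^2} \;\geq\; 0,\]
up to a positive weighting of $\langle \nabla G, v-v^*\rangle \geq 0$; hence this term is non-negative and can be dropped, which is precisely why the exponent $a$ in the conclusion does not depend on $\epsilon$.

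Next I partition coordinate-wise into the interior range $\{w_k^2 \leq cr^2\}$ and the exterior range $\{w_k^2 > cr^2\}$. On the interior, $r^2-w_k^2 \geq (1-c)r^2$ gives explicit uniform bounds $|q_k'(w_k)|\leq 2\sqrt{c}/[(1-c)^2 r]$ and $|(q_k')^2 + q_k''|\leq 2(2c+1)/[(1-c)^4 r^2]$; together with $|v_k - (v_\alpha(\rho_t))_k|\leq \sqrt{c}r+B$, the contribution of these coordinates to the drift plus diffusion integrand is bounded below by the negative of the first quantity inside the $\max$ in the definition of $a$, times $\phi_r$. On the exterior, the algebraic condition $(2c-1)c\geq(1-c)^2$ is engineered so that $(q_k'(w_k))^2 + q_k''(w_k)\geq 0$ for $w_k^2 \in (cr^2,r^2)$, rendering the diffusion contribution non-negative; the potentially negative drift on these coordinates is absorbed against the positive diffusion through a Young-type inequality, whose auxiliary parameter is tuned so that the leftover lower bound is exactly $-\frac{2\lambda^2}{(2c-1)\sigma^2}\phi_r$ per coordinate, matching the second branch of the $\max$.

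Summing the per-coordinate bounds over $k=1,\dots,d$ (accounting for the factor $2d$) yields $\frac{d}{dt}\int \phi_r \, d\rho_t \geq -a\int \phi_r \, d\rho_t$, and Gronwall's lemma produces the advertised exponential lower bound. The main obstacle is the exterior analysis: verifying that the golden-ratio-type condition $(2c-1)c\geq (1-c)^2$ genuinely forces $(q_k')^2 + q_k''\geq 0$ uniformly on $w_k^2 \in (cr^2, r^2)$, and simultaneously choosing the Young parameter so that the absorbed drift leaves precisely the constant $\frac{2\lambda^2}{(2c-1)\sigma^2}$ rather than a dimension- or $r$-dependent quantity; everything else is routine bookkeeping around the explicit form of $\phi_r$ and its derivatives.
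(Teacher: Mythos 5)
Your approach is the same as the paper's: plug the tensor-product mollifier $\phi_r$ into the weak Fokker--Planck formulation, split the right-hand side into the drift, diffusion, and constraint-gradient pieces, discard the constraint-gradient piece as non-negative via Assumption (B1), lower-bound the remaining pieces pointwise by $-a\,\phi_r$, and conclude with Gronwall. The only difference is one of presentation: the paper defers the entire drift/diffusion estimate to \cite{FKR1}, Proposition 2, whereas you unpack that argument (the interior/exterior split at $w_k^2 = cr^2$, the role of $(2c-1)c\geq(1-c)^2$ in forcing $(q_k')^2+q_k''\geq 0$ on the exterior, and the Young absorption of the drift into the diffusion), which is precisely the content of the cited reference.

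One subtlety you inherit verbatim from the paper and should not gloss over: your computed expression $-\nabla G\cdot\nabla\phi_r = \phi_r\sum_k \frac{2r^2 w_k\,\partial_k G(v)}{(r^2-w_k^2)^2}$ has \emph{coordinate-dependent} positive weights $\frac{2r^2}{(r^2-w_k^2)^2}$, and Assumption (B1) only gives $\sum_k w_k\,\partial_k G(v)\geq 0$ with uniform weights. Non-negativity of a weighted sum does not follow from non-negativity of the unweighted one when the weights differ across $k$ (a two-coordinate example with one positive and one negative summand breaks it). The paper's displayed formula for $\nabla\phi_r$, written with $\|v-v^*\|^2$ in a single denominator, would sidestep this (a scalar weight), but that formula is not what the product mollifier (\ref{molli}) actually produces, as the paper's own $\partial_{kk}\phi_r$ makes clear. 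So the assertion $T_3\geq 0$ requires either a coordinate-wise version of (B1) or a different argument; this is a gap in the source as much as in your write-up, but your phrase ``up to a positive weighting'' is doing load-bearing work it cannot support.
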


{\bf Dynamics of $ \int G \dvt $}\label{subsec:G}
In Lemma \ref{energy}, we have gained control over $ \|\con-v^{*}\|_2 $, yet the last term in the dynamics \eqref{eq:dyn} remains to be studied, which we do now. 
\begin{lemma}\label{G}
	(Proof in Appendix \ref{Gdetails}) Assume $ \sup_{t\in[0,T]}\|\con-v^{*}\|_{2}<\infty $ and $ \sup_{t\in[0,T]}\ener<\infty $. Then for $ \epsilon>0 $ small enough, $
		\int G \dvt \leq \int G\,d\rho_{0}(v) $ $ \forall t\in [0,T] $.
\end{lemma}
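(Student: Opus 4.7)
The plan is to derive a linear differential inequality for $H(t) := \int G\,\dvt$ by testing the weak Fokker--Planck equation (\ref{FPK}) against $\phi = G$, which lies in the admissible test space $\mathcal{C}^{2}_{*}(\mathbb{R}^{d})$ by Assumption \ref{wellbehave} (B2). This yields
\begin{align*}
\dot H(t) = &\,-\lambda\int(v-\con)\cdot\nabla G(v)\,\dvt \;-\; \frac{1}{\epsilon}\int\|\nabla G(v)\|_{2}^{2}\,\dvt \\
&+ \frac{\sigma^{2}}{2}\int\sum_{k=1}^{d}(v-\con)_{k}^{2}\,\partial_{x_k x_k}G(v)\,\dvt.
\end{align*}
The middle term is the dissipative contribution that I will use to absorb the other two, and Assumption \ref{wellbehave} (B2) supplies the crucial coercivity $\int\|\nabla G\|_{2}^{2}\,\dvt \geq C^{-1}H(t)$.

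For the drift integral, I split $v-\con = (v-v^{*}) + (v^{*}-\con)$. The first piece contributes $-\lambda\int(v-v^{*})\cdot\nabla G\,\dvt \leq 0$ directly by Assumption \ref{wellbehave} (B1); this step is essential since otherwise a naive Cauchy--Schwarz on $\int(v-\con)\cdot\nabla G\,\dvt$ would produce an $O(1)$ contribution that no choice of $\epsilon$ can absorb. The second piece pulls the constant vector $v^{*}-\con$ out of the integral and, after Cauchy--Schwarz and Young's inequality with weight $\tfrac{1}{2\epsilon}$, is majorized by $\tfrac{1}{2\epsilon}\int\|\nabla G\|_{2}^{2}\,\dvt + \tfrac{\epsilon\lambda^{2}}{2}\|\con-v^{*}\|_{2}^{2}$; the first half cancels one-half of the dissipative term. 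For the diffusion integral, the uniform bound $\sup_{v}|\partial_{x_k x_k}G(v)|<\infty$ (also from $G\in\mathcal{C}^{2}_{*}$) reduces it to a constant multiple of $\int\|v-\con\|_{2}^{2}\,\dvt$, which is in turn dominated by $4\ener + 2\|\con-v^{*}\|_{2}^{2}$, uniformly bounded on $[0,T]$ by the lemma's hypotheses.

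Combining and invoking the coercivity from (B2), I arrive at
\[
\dot H(t) \;\leq\; -\frac{1}{2C\epsilon}H(t) + K(\epsilon),
\]
where $K(\epsilon) = K_{1} + K_{2}\epsilon$ and $K_{1},K_{2}>0$ depend only on the uniform bounds of $\ener$ and $\|\con-v^{*}\|_{2}$ on $[0,T]$ and on the structural constants in Assumption \ref{wellbehave}. To conclude, I choose $\epsilon$ small enough so that $2C\epsilon K(\epsilon) \leq \int G\,d\rho_{0}(v)$, which is possible because $K(\epsilon)$ remains bounded as $\epsilon\to 0$ (here I implicitly use that $\int G\,d\rho_{0}(v)>0$; the degenerate case is trivial). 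A barrier argument then concludes: if $H$ first reached the level $H(0)$ at some $t_{*}\in(0,T]$ from below, then at $t_{*}$ we would have $\dot H(t_{*}) \leq -\tfrac{1}{2C\epsilon}H(0) + K(\epsilon) \leq 0$, contradicting the crossing, so $H(t)\leq H(0)$ throughout $[0,T]$.

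The main obstacle I anticipate is the balance in Young's inequality: the $-\tfrac{1}{\epsilon}\int\|\nabla G\|_{2}^{2}\,\dvt$ term must simultaneously absorb an $O(1/\epsilon)$-looking contribution from the drift and deliver a coercive $-H(t)/\epsilon$ estimate through Assumption \ref{wellbehave} (B2). Without the one-sided cancellation $-\lambda\int(v-v^{*})\cdot\nabla G\,\dvt\leq 0$ supplied by (B1), the drift term would leak an $O(1/\epsilon)$ residual into $K(\epsilon)$ and defeat the barrier argument; so the interplay of (B1) and (B2) is the heart of the proof.
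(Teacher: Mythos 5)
Your proposal is correct and follows essentially the same route as the paper: both test the weak Fokker--Planck equation against $\phi = G$, split $v-\con = (v-v^*) + (v^*-\con)$ so that Assumption \ref{wellbehave} (B1) kills the first piece, use the coercivity $G \leq C\|\nabla G\|_2^2$ from (B2) to turn the dissipative term into $-\tfrac{1}{C\epsilon}\int G\,d\rho_t$, and then close via an ODE comparison once $\epsilon$ is small relative to $\int G\,d\rho_0$. Your only deviations are cosmetic: you absorb the $(v^*-\con)$ drift via Young's inequality into half the dissipation (the paper instead bounds it by a constant using the linear growth of $\nabla G$), and you conclude with a barrier argument where the paper solves the comparison ODE explicitly.
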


Now we are ready to prove Theorem \ref{main}. 

\begin{proof}[Proof of Theorem \ref{main}]
	First we use Lemma \ref{energy} to derive the dynamics of $\ener  $:
	\begin{align*}
		\dfrac{d}{dt}\mathcal{V}\big(\rho_{t}\big)\leq  &-(2\lambda-\sigma^{2})\mathcal{V}\big(\rho_{t}\big)+\sqrt{2}(\lambda+\sigma^{2})\sqrt{\mathcal{V}\big(\rho_{t}\big)}\|v_{\alpha}\big(\rho_{t}\big)-v^{*}\|_{2}\\
        &+\dfrac{\sigma^{2}}{2}\|v_{\alpha}\big(\rho_{t}\big)-v^{*}\|^{2}_{2},
	\end{align*}
	where the last term on the right-hand-side of Lemma \ref{energy} is omitted because of its non-positivity due to Assumption \ref{wellbehave} (B1).
	
	Now we define $ C(t) $ as $
		C(t)=\min\left\{\tfrac{\tau}{2}\tfrac{(2\lambda-\sigma^{2})}{\sqrt{2}(\lambda+\sigma^{2})},\sqrt{\tau\tfrac{(2\lambda-\sigma^{2})}{\sigma^{2}}}\right\}\sqrt{\ener} $
	, and  $ T_{\alpha,\epsilon} $ as $
		T_{\alpha,\epsilon}=\sup\Big\{t\geq 0|\text{ }\mathcal{V}\big(\rho_{t'}\big)>\delta,\big\|v_{\alpha}(\rho_{t'}\big)-v^{*}\|_{2}\leq C(t')\text{ for all $ t'\in[0,t] $} \Big\}.$ As long as $ \|v_{\alpha}\big(\rho_{t'}\big)-v^{*}\|_2\leq C(t') $ is true, it is straightforward to verify that $
		\tfrac{d}{dt}\mathcal{V}\left(\rho_{t'}\right)\leq-(1-\tau)(2\lambda-\sigma^{2})\mathcal{V}\left(\rho_{t'}\right).$ Thus by Gronwall's inequality, if $ t\leq \ti $, one has $
		\ener\leq \mathcal{V}(\rho_{0})\exp\left(-(1-\tau)(2\lambda-\sigma^{2})t\right).$
	
	Different choices of $ (\alpha,\epsilon) $ will result in different cases as follows.\\
	{\it Case 1} $\left( {\ti}\geq T^{*}\right)$. 
	
	Notice that $ \mathcal{V}(\rho_{T^{*}})\leq \mathcal{V}(\rho_{0})\exp\big(-(1-\tau)(2\lambda-\sigma^{2})t\big)=\delta. $ So we have $ \min_{t\in[0,T^{*}]}\mathcal{V}\big(\rho_{t}\big)\leq \delta. $ This completes the proof.\\
	{\it Case 2} $\left(\ti<T^{*} \text{ and }\mathcal{V}\left(\rho_{\ti}\right)=\delta \right)$.
	
	In this case, it clear that $\min_{t\in[0,T^{*}]}\mathcal{V}\big(\rho_{t}\big)\leq \mathcal{V}\big(\rho_{\ti}\big)=\delta$, which completes the proof.\\
	{\it Case 3} $ \left( \ti<T^{*},\  \mathcal{V}\left(\rho_{\ti}\right)>\delta \text{ and } \|v_{\alpha}\left(\rho_{\ti}\right)-v^{*}\|_{2}= C\left(\ti\right) \right)$.
	
	Case 3 is the only non-trivial case. We now show that suitable choices of $ \alpha $ and $ \epsilon $ will make Case 3 impossible.
	
	We pick 
	\begin{gather*}
		q=\min\left\{\tfrac{1}{4}\left(\eta \tfrac{C\left(\ti\right)}{8\sqrt{d}}\right)^{1/\mu},\, \tfrac{1}{2\sqrt{d}}\mathcal{E}_{\infty}\right\},\quad
		r=\min\left\{\max_{s\in(0,R_{0})}\{s|\text{ }\mathcal{E}^{0}_{s}\leq \tfrac{q}{4}\},\, \tfrac{q}{4}\right\},\\
		u=\min\left\{\tfrac{1}{4}\left(\eta \tfrac{C\left(\ti\right)}{8\sqrt{d}}\right)^{1/\mu},\, \tfrac{q}{4},\, \tfrac{C\left(\ti\right)}{4\sqrt{d}}\right\}.
	\end{gather*}
	One can verify that this choice of $ q,r $ and $ u $ will satisfy the assumptions of Lemma \ref{lp}, i.e., $ q+\mathcal{E}_{r}^{\tilde{u}}-\underline{\mathcal{E}_{\tilde{u}}} <\mathcal{E}_{\infty} $ (for details, please see Appendix \ref{appendix:verification}). 
	
	Next, in Case 3, one has  $ \mathcal{V}\big(\rho_{\ti}\big)>\delta $. Thus 
    \vspace{-2ex}
	\begin{align}
		C\left(\ti\right)=&\min\left\{\tfrac{\tau}{2}\tfrac{(2\lambda-\sigma^{2})}{\sqrt{2}(\lambda+\sigma^{2})},\sqrt{\tau\tfrac{(2\lambda-\sigma^{2})}{d\sigma^{2}}}\right\}\sqrt{\mathcal{V}\left(\rho_{\ti}\right)}\notag\\&> C_\delta:=\min\left\{\tfrac{\tau}{2}\tfrac{(2\lambda-\sigma^{2})}{\sqrt{2}(\lambda+\sigma^{2})},\sqrt{\tau\tfrac{(2\lambda-\sigma^{2})}{\sigma^{2}}}\right\}\sqrt{\delta}.\label{Cdelta}
	\end{align} 
        Then one can see that $ q $ is  bounded below by $\min\left\{\tfrac{1}{4}\left(\eta \tfrac{C_{\delta}}{8\sqrt{d}}\right)^{1/\mu},\tfrac{1}{2\sqrt{d}}\mathcal{E}_{\infty}\right\}$, denoted by $q_{\delta}$. Then $r$ and $u$ are bounded by
	\begin{gather*}
		\min\left\{\max_{s\in(0,R_{0})}\{s|\text{ }\mathcal{E}^{0}_{s}\leq \tfrac{1}{4}q(\delta)\},\tfrac{q(\delta)}{4}\right\},\ \text{ and } \min\left\{\tfrac{1}{4}\left(\eta \tfrac{C_{\delta}}{8\sqrt{d}}\right)^{1/\mu},\tfrac{q(\delta)}{4},\tfrac{C_{\delta}}{4\sqrt{d}}\right\}
	\end{gather*}
	respectively. We use $ r(\delta)$ and $u(\delta)$ to denote them.
	We now apply Lemma \ref{lp} to $ \rho_{\ti} $ to get 
    \vspace{-2ex}
	\begin{align*}
		&\|\conti-v^{*}\|_2\leq 2\sqrt{d}\cdot\dfrac{\big(q+\ec+\tau_{2}(u)+\tau_{4}(\max\{u,r\})\big)^{\mu}}{\eta}\\
		&+\dfrac{\sqrt{d}e^{-\alpha\big(q-\tau_{3}(\max\{u,r\})\big)}}{\rti\big(B^\infty(v^{*},r)\big)}\int_{\{G\in(0,u)\}}\|v-v_{G(v)}\|_2\dvti\\
		&+\dfrac{\sqrt{d}e^{-\alpha\big(q-\tau_{3}(r)\big)}}{\rti\big(B^\infty(v^{*},r)\big)}\int_{\{G=0\}}\|v-v^{*}\|_2\dvti +\sqrt{d}\tau_{1}(u)\\
		&+\int_{\{G(v)\geq u\}}\dfrac{\|v-v^{*}\|_2}{\|\wt\|_{L^1(\rti)}}e^{-\alpha\mathcal{E}(v)}\dvti.
	\end{align*}
	Each of the five terms on the right-hand side of the above inequality will be individually bounded. 
	
	For the first term, one can  use the definition of $ q,r $ and $ u $ to get
    \begin{equation}\label{pf1}
	\begin{aligned}
		&2\sqrt{d}\cdot\dfrac{\left(q+\ec+\tau_{2}(u)+\tau_{4}(\max\{u,r\})\right)^{\mu}}{\eta}\\
        &\leq 2\sqrt{d}\cdot \dfrac{\left(4\cdot\tfrac{1}{4}\cdot\left(\eta \tfrac{C\left(\ti\right)}{8\sqrt{d}}\right)^{1/\mu}\right)^{\mu}}{\eta}=\dfrac{C\left(\ti\right)}{4},
	\end{aligned}
    \end{equation}
	where the inequality above is because each term in the sum on the numerator is bounded above  by $ \tfrac{1}{4}\cdot\left(\eta \tfrac{C\left(\ti\right)}{8\sqrt{d}}\right)^{1/\mu}$ as determined by the choice of $ q, r $ and $ u $.
	
	For the second term, with the chosen values of $ u $ and $ r $, one can first verify
	\begin{gather}\label{q}
		q-\tau_{3}(\max\{u,r\})=q-\max\{u,r\}\geq q-\dfrac{q}{4}> \dfrac{q}{2}.
	\end{gather}
	Then 
	\begin{equation}
		\begin{aligned}\label{pf2}
			&	\dfrac{\sqrt{d}e^{-\alpha\big(q-\tau_{3}(\max\{u,r\})\big)}}{\rho_{\ti}(B^{\infty}(v^{*},r))}\int_{\{G\in(0,u)\}}\|v-v_{G(v)}\|_{2}\dvti\\
			&\leq \dfrac{\sqrt{d}}{\int\phi_{r(\delta)}\,d\rho_{0}}\cdot e^{a(\delta)T^{*}}\cdot e^{-\alpha q/2}\Big(\int_{\{G\in(0,u)\}}\|v-v^{*}\|_{2}+\|v^{*}-v_{G(v)}\|_{2}\dvti\Big)\\
			&\leq \dfrac{\sqrt{d}}{\int\phi_{r(\delta)}\,d\rho_{0}}\cdot e^{a(\delta)T^{*}}\cdot e^{-\alpha q/2}\Big(\sqrt{2\mathcal{V}\big(\rho_{0}\big)}+\int_{\{G\in(0,u)\}}\sqrt{d}\tau_{1}(u)\dvti\Big)\\
			&\leq \dfrac{\sqrt{d}}{\int\phi_{r(\delta)}\,d\rho_{0}}\cdot e^{a(\delta)T^{*}}\cdot e^{-\alpha q/2}\Big(\sqrt{2\mathcal{V}\big(\rho_{0}\big)}+\sqrt{d}\tau_{1}(u)\Big)\\
			&\leq \dfrac{\sqrt{d}}{\int\phi_{r(\delta)}\,d\rho_{0}}\cdot e^{a(\delta)T^{*}}\cdot e^{-\alpha q/2}\Big(\sqrt{2\mathcal{V}\big(\rho_{0}\big)}+\mathcal{E}_{\infty}\Big),
		\end{aligned}
	\end{equation}
	where $ a(\delta)=2d\max\left\{\tfrac{\lambda\left(\sqrt{c}R_{0}+C(0)\right)\sqrt{c}}{(1-c)^{2}r(\delta)}+\tfrac{\sigma^{2}\left(cR_{0}^{2}+C(0)^{2}\right)(2c+1)}{(1-c)^{4}r(\delta)^{2}},\tfrac{2\lambda^{2}}{(2c-1)\sigma^{2}}\right\}$. In the first inequality above, we used (\ref{q}), the fact that $ \ti<T^{*} $ and Lemma \ref{ball} with parameter $ B=\sup_{t\in[0,\ti]}\|\con-v^{*}\|_{\infty}\leq \sup_{t\in[0,\ti]}\|\con-v^{*}\|_{2}\leq \sup_{t\in[0,\ti]}C(t)\leq C(0) $. In the second inequality above, we used the  Cauchy inequality.  Also, Assumption \ref{wellbehave} (C1) was used to deduce $ \|v-v_{G(v)}\|_{2}\leq \sqrt{d}\|v-v_{G(v)}\|_{\infty}\leq\sqrt{d}\tau_{1}\big(G(v)\big)\leq \sqrt{d}\tau_{1}(u) $.  In the last inequality above, we used the definition of $ u $ to deduce that $ u\leq \tfrac{\mathcal{E}_{\infty}}{\sqrt{d}}. $
	
	For the third term, similarly, one has
    \begin{equation}\label{pf3}
	\begin{aligned}
		&\dfrac{\sqrt{d}e^{-\alpha\big(q-\tau_{3}(r)\big)}}{\rho_{\ti}\big(B(v^{*},r)\big)}\int_{\{G=0\}}\|v-v^{*}\|_{2}\dvti \\
        &\leq \dfrac{\sqrt{d}}{\int\phi_{r(\delta)}\,d\rho_{0}}\cdot e^{a(\delta)T^{*}}\cdot e^{-\alpha q/2}\Big(\sqrt{2\mathcal{V}\big(\rho_{0}\big)}\Big).
	\end{aligned}
    \end{equation}
	
	For the fourth term, one has
	\begin{gather}\label{pf4}
		\sqrt{d}\tau_{1}(u)=\sqrt{d}u\leq \dfrac{C\big(\ti\big)}{4}.
	\end{gather}
	
	Combining $ (\ref{pf1},\ref{pf2},\ref{pf3},\ref{pf4}) $, we can get the following estimate:
	\begin{equation}\label{lpl}
		\begin{aligned}
			\|v_{\alpha}(\rho_{\ti})-v^{*}\|_{2}\leq& \dfrac{C\big(\ti\big)}{2}+2\cdot\dfrac{\sqrt{d}}{\int\phi_{r(\delta)}\,d\rho_{0}}\cdot e^{a(\delta)T^{*}}\cdot e^{-\alpha q/2}\Big(\sqrt{2\mathcal{V}\big(\rho_{0}\big)}+\mathcal{E}_{\infty}\Big)\\
			&+\int_{\{G(v)\geq u\}}\dfrac{\|v-v^{*}\|_2}{\|\wt\|_{L^{1}(\rho_{\ti})}}e^{-\alpha\mathcal{E}(v)}\dvti.
		\end{aligned}
	\end{equation}
	Now we pick $ \alpha $ so that
	\begin{gather}\label{alp}
		2\cdot \dfrac{\sqrt{d}}{\int\phi_{r(\delta)}\,d\rho_{0}}\cdot e^{a(\delta)T^{*}}\cdot e^{-\alpha q/2}\Big(\sqrt{2\mathcal{V}\big(\rho_{0}\big)}+\mathcal{E}_{\infty}\Big)\leq \dfrac{1}{4}C\big(\ti\big).
	\end{gather}
	It turns out that if one  picks $\alpha$ to be
	\begin{gather}\label{alpha}
		\alpha(\delta)= \tfrac{2}{q(\delta)}\log \left(\tfrac{8\sqrt{d}\cdot e^{a(\delta)T^{*}}\cdot \left(\sqrt{2\mathcal{V}\left(\rho_{0}\right)}+\mathcal{E}_{\infty}\right)}{C_{\delta}\int\phi_{r(\delta)}\,d\rho_{0}}\right),
	\end{gather}then 
	\begin{align*}
		&\text{LHS of (\ref{alp})}\\
        &\leq 2\cdot \dfrac{\sqrt{d}}{\int\phi_{r(\delta)}\,d\rho_{0}}\cdot e^{a(\delta)T^{*}}\cdot e^{-\alpha q(\delta)/2}\Big(\sqrt{2\mathcal{V}\big(\rho_{0}\big)}+\mathcal{E}_{\infty}\Big)\leq  \dfrac{1}{4} C_{\delta} \leq\dfrac{1}{4}C\big(\ti\big),
	\end{align*}
where in the first and third inequalities, we used the facts that $ q\geq q(\delta) $ and $C\big(\ti\big)>C_\delta.$  We remark here that $ \alpha(\delta) $ is fixed once $ \delta  $ is fixed. With this choice of $ \alpha $, we have
\begin{gather}\label{lpl2}
	\|v_{\alpha}(\rho_{\ti})-v^{*}\|_{2}\leq \dfrac{3}{4}C\big(\ti\big)+\int_{\{G(v)\geq u\}}\dfrac{\|v-v^{*}\|_{2}}{\|\wt\|_{L^{1}(\rho_{\ti})}}e^{-\alpha\mathcal{E}(v)}\dvti.
\end{gather}
Then we can go back to estimate the last term of (\ref{lpl}). We can deduce
\begin{equation}\label{lastterm}
	\begin{aligned}
		&\int_{\{G(v)\geq u\}}\dfrac{\|v-v^{*}\|_{2}}{\|\wt\|_{L^{1}(\rti)}}e^{-\alpha\mathcal{E}(v)}\dvti \\
        &\leq 
		e^{\alpha(\delta)(\overline{\mathcal{E}}-\underline{\mathcal{E}})}\int _{\{G\geq u\}}\|v-v^{*}\|_{2}\dvti\\
		&\leq e^{\alpha(\delta)(\overline{\mathcal{E}}-\underline{\mathcal{E}})}\sqrt{2\mathcal{V}\big(\rho_{0}\big)}\cdot\sqrt{\rho_{\ti}\big(\{G\geq u\}\big)}\\
		&\leq e^{\alpha(\delta)(\overline{\mathcal{E}}-\underline{\mathcal{E}})}\sqrt{2\mathcal{V}\big(\rho_{0}\big)}\cdot\dfrac{1}{\sqrt{u}}\cdot\sqrt{\int G \dvti}\\
		&\leq e^{\alpha(\delta)(\overline{\mathcal{E}}-\underline{\mathcal{E}})}\sqrt{2\mathcal{V}\big(\rho_{0}\big)}\cdot\dfrac{1}{\sqrt{u(\delta)}}\cdot\sqrt{\int G \dvti},
	\end{aligned}
\end{equation}
where in the second inequality, we used the Cauchy inequality, in the third inequality, we used the  Markov inequality and in the last inequality, we used the fact that $ u\geq u(\delta). $ Thus by  applying Lemma \ref{G} with $ B=C(0) $ and $ \tilde{B}=\mathcal{V}(\rho_{0}) $,  when $ \epsilon $ is small enough {(a quantification is given in Appendix \ref{Gdetails}, Equation~\eqref{epsilon})}, the following holds:
\begin{gather}\label{Glemma}
	\int G \dvti \leq \int G\,d\rho_{0}(v).
\end{gather}
Thus combining (\ref{lastterm}) and (\ref{Glemma}) gives
\begin{equation}\label{lasttermfinal}
\begin{aligned}
	&\int_{\{G(v)\geq u\}}\dfrac{\|v-v^{*}\|_{2}}{\|\wt\|_{L^{1}(\rti)}}e^{-\alpha\mathcal{E}(v)}\dvti\\
    &\leq e^{\alpha(\delta)(\overline{\mathcal{E}}-\underline{\mathcal{E}})}\sqrt{2\mathcal{V}\big(\rho_{0}\big)}\cdot\dfrac{1}{\sqrt{u(\delta)}}\cdot\sqrt{\int G\,d\rho_{0}(v)}.
\end{aligned}
\end{equation}
Now we pick the function $ I(x) $ to be $
	\tfrac{1}{128\mathcal{V}\left(\rho_{0}\right)}C^2_{x}e^{-2\alpha(x)(\overline{\mathcal{E}}-\underline{\mathcal{E}})}u^2(x),$ where $C_x$ and $\alpha(x)$ are defined in (\ref{Cdelta}) and (\ref{alpha}) respectively.
As long as 
\begin{gather}\label{initial}
	\int G \,d\rho_{0}(v)\leq I(\delta),
\end{gather}
combining (\ref{lasttermfinal}) and (\ref{initial}) yield 
\begin{align*}
	&\int_{\{G(v)\geq u\}}\dfrac{\|v-v^{*}\|_{2}}{\|\wt\|_{L^{1}(\rti)}}e^{-\alpha\mathcal{E}(v)}\dvti\\
    &\leq e^{\alpha(\delta)(\overline{\mathcal{E}}-\underline{\mathcal{E}})}\sqrt{2\mathcal{V}\big(\rho_{0}\big)}\cdot\dfrac{1}{\sqrt{u(\delta)}}\cdot\sqrt{\int G\,d\rho_{0}(v)}\\
	&\leq \dfrac{1}{8}C_{\delta}\leq \dfrac{1}{8}C\big(\ti\big).
\end{align*}

By plugging  the above inequality back to (\ref{lpl2}), one gets $
	\|v_{\alpha}(\rho_{\ti})-v^{*}\|_{2}\leq \tfrac{3}{4}C\left(\ti\right)+\tfrac{1}{8}C\left(\ti\right)=\tfrac{7}{8}C\left(\ti\right)<C\left(\ti\right),$ which contradicts with the assumption $\|v_{\alpha}(\ti)-v^{*}\|_{2}= C\left(\ti\right)$ as stated in Case 3. Therefore, we have demonstrated that under the assumptions of Theorem \ref{main}, if one selects $ \alpha $ to be $ \alpha(\delta) $ and chooses $ \epsilon $ to be sufficiently small, Case 3 will not occur.

Thus we have proved that if all the conditions in Theorem \ref{main} are satisfied, the desired decay can be achieved with the specified choices of $ \alpha $ and $ \epsilon $.
\end{proof}

\section{Numerical Experiments}\label{sec:numerical}

In this section, we present the discretized algorithm of the continuous model (\ref{constraincon1}). Throughout this section, we use the anisotropic version (\ref{aniso}) as it is more efficient in solving high-dimensional optimization problems. 

\subsection{Algorithm}\label{subsec:algorithm}
First, one notices that in the time-continuous model (\ref{constraincon1}), the forcing term $\frac{1}{\epsilon}\nabla G$   needs to be relatively large for the particles to remain near the constraint set. However, a straightforward explicit scheme of the dynamics requires the time step $\gamma$ to be of the same order as $\epsilon$. This implies that as $\epsilon$ approaches zero, the algorithm becomes expensive. On the other hand, making the stiff term $\frac{1}{\epsilon} \nabla G(V^{i}_{k+1})$ implicit enhances numerical  stability, but it becomes computationally challenging for complex constraints. To address this, we introduce an  algorithm with better stability for any equality constraints.

The key idea is to employ Taylor expansion to approximate the term $\nabla G(V^{i}_{k+1})$ in the implicit algorithm with its first-order approximation. 
\begin{equation*}
	\begin{aligned}
		\Xj_{k+1} =& \Xj_k - \lambda \gamma(\Xj_k -  v_{\alpha}(\hat{\rho}_{k}))\\
        &- \l(\frac{\gamma}{\e}\nb G(V^j_k) + \frac{\g}{\e}\nb^2 G(V^j_k) (V_{k+1}^j - V^j_k)\r) - \sigma\sqrt{\gamma} (\Xj_k - \bar{v}_{k}) \odot z_k,
	\end{aligned}
\end{equation*}
which leads to the following constrained CBO algorithm,
\begin{equation}\label{eq:implicit1}
	\begin{aligned}
		\Xj_{k+1} = \Xj_k 
        - \l[I+\frac{\gamma}{\e}\nb^2G(\Xj_k)\r]^{-1}\Bigg(&\lambda \gamma(\Xj_k -  v_{\alpha}(\hat{\rho}_{k}))\\
        &+ \frac{\gamma}{\e}\sum_{i=1}^{m}g_i(\Xj_k)\nb g_i(\Xj_k) + \sigma\sqrt{\gamma} (\Xj_k - \bar{v}_{k}) \odot z_k\Bigg),
	\end{aligned}
\end{equation}
where $\nb^2G(v)$ represents the Hessian of $G(v)$, i.e., $\nb^2G(v) = \sum_{i=1}^{m}(\nb g_i)^\top\nb g_i + g_i\nb^2 g_i$ and $\gamma$ is the time step, and $ \odot $ is a point-wise multiplication, i.e., the i-th component of $x\odot y$ is $x_iy_i$. Here $V^j_{k}$ approximates the space location of the $j$-th particle at time $t = k\g$, and $z_k$ is a $d$-dimensional random variable following a standard normal distribution $\mN(0,\mathbb{I}_d)$. During different steps, $z_k$ is sampled independently. The complete algorithm is formulated as in Algorithm \ref{algo}. 

The preliminary results shown in Figure \ref{fig:converge} (a) are obtained using the above scheme with $\e = 0.01$ and $\gamma = 0.1$, which demonstrates the stability of the algorithm. 

We also propose an alternative algorithm when the dimensionality is high, where we introduce independent noise after the particles concentrate. This version introduces additional noises to help the particles explore the landscape better, which is necessary when the dimension of the optimization problem is high. The complete algorithm is formulated as in Algorithm \ref{algo2}.

\begin{algorithm}
	\caption{Constrained CBO Algorithm}
	\label{algo}
	\textbf{Initialization:} Choose hyperparameters $\epsilon$, $\alpha$, time step $\gamma$, stopping threshold $\epsilon_{\text{stop}}$, and sample size $N$. Sample $N$ particles $\Xj$ from distribution $\rho_0(v)$.
	
	\begin{algorithmic}[1]
		\While{$\frac{1}{dN}\sum_{j=1}^N \|V^j - v_{\alpha}(\hat{\rho})\|^2 > \epsilon_{\text{stop}}$}
		\State Calculate $v_{\alpha}(\hat{\rho})$:
		\[
		v_{\alpha}(\hat{\rho})= \frac{1}{Z}\sum_{j=1}^N \mu_jV^{j}, \quad \text{with} \quad Z =  \sum_{j=1}^Ne^{-\alpha\mE(V^{j})}, \quad \mu_j = e^{-\alpha \mE(V^{j})}
		\]
		
		\State Update each particle's position $\{\Xj\}_{j = 1}^N$:\label{step2}
		\begin{align*}
		\ds \Xj \leftarrow \Xj - \l[I+\frac{\g}{\epsilon}\sum_{i=1}^m\nb^2\l[g^2_i(V^j)\r]\r]^{-1}\Bigg(&\lam \g (\Xj - v_{\alpha}(\hat{\rho}))  
        + \frac{\g}{\epsilon}\sum_{i=1}^m\nb\l[g_i^2(\Xj)\r] \\
        &+ \sigma\sqrt{\gamma} (\Xj - v_{\alpha}(\hat{\rho})) \odot z\Bigg),
		\end{align*}
		where $z\sim \mathcal{N}(0,\mathbb{I}_d)$.
		\EndWhile
		\State Output $\bxs, \mE(\bxs)$
	\end{algorithmic}
\end{algorithm}

\begin{algorithm}
	\caption{Constrained CBO Algorithm with Independent Noise}
	\label{algo2}
	\textbf{Initialization:} Choose suitable hyper-parameters $\e, \alpha$, and time step $\gamma$, stopping threshold $\e_{\text{stop}},\e_{\text{indep}}$, independent noise $\s_{\text{indep}}$. Sample $N$ particles $\Xj$ following distribution $\rho_0(v)$ and set $\mathcal{E}^\star$ to be a large constant. 
	\small
	\begin{algorithmic}[1]
		\While{$|\mathcal{E}(\bxs) - \mathcal{E}^\star| \geq \epsilon_{\text{indep}}$}
		\While{$\frac{1}{dN}\sum_{j=1}^N \|V^j - v_{\alpha}(\hat{\rho})\|^2 > \epsilon_{\text{stop}}$}
		\State Calculate $v_{\alpha}(\hat{\rho})$:
		\[
		v_{\alpha}(\hat{\rho})= \frac{1}{Z}\sum_{j=1}^N \mu_jV^{j}, \quad \text{with} \quad Z =  \sum_{j=1}^Ne^{-\alpha\mE(V^{j})}, \quad \mu_j = e^{-\alpha \mE(V^{j})}
		\]
		
		\State Update each particle's position $\{\Xj\}_{j = 1}^N$:
		\begin{align*}
		\ds \Xj \leftarrow \Xj - \l[I+\frac{\g}{\epsilon}\sum_{i=1}^m\nb^2\l[g^2_i(V^j)\r]\r]^{-1}\Bigg(&\lam \g (\Xj - v_{\alpha}(\hat{\rho}))  + \frac{\g}{\epsilon}\sum_{i=1}^m\nb\l[g_i^2(\Xj)\r] \\
        &+ \sigma\sqrt{\gamma} (\Xj - v_{\alpha}(\hat{\rho})) \odot z\Bigg),
		\end{align*}
		where $z\sim \mathcal{N}(0,\mathbb{I}_d)$.
		\EndWhile
		
		\If{$\mathcal{E}(\bxs) < \mathcal{E}^\star$}
		\State \[\mathcal{E}^\star = \mathcal{E}(\bxs), \qd \bxs^\star = \bxs.\]
		
		\EndIf
		\State Each particle does an independent move:
		\[
		\ds \Xj \leftarrow \Xj + \s_{\text{indep}}\sqrt{\g}z, \quad \text{for } 1\leq j\leq N,
		\]
		where $z\sim \mN(0,\mathbb{I}_d)$.
		\EndWhile
		\State Output $\bxs^\star, \mathcal{E}^\star$.
	\end{algorithmic}
\end{algorithm}

\subsection{Numerical examples}
\label{numerics}
\subsubsection{A simple example}\label{sec:eq1}
We first test the {proposed} algorithm on {a simple objective function},
\begin{equation}
	\label{eq: numerics 1}
	\begin{aligned}
		\min_{v\in\mathbb{R}^d} &\|v\|_2^2.&\\
	\end{aligned}
\end{equation}
We consider {three} {types} of constraints. {The first is a line-segment constraint in $\mathbb{R}^3$ ($d=3$), with
\begin{equation}\label{line_segment}
	g(v) = \mathrm{dist}(v, L),
\end{equation}
where $\mathrm{dist}(\cdot,L)$ denotes the Euclidean distance to the line segment $L$ with endpoints $(1.6,0.2,0.4)$ and $(-0.3,-0.7,0.5)$.}
The second case is an ellipse, with
\begin{equation}\label{ellipse}
	g(v) = \frac{(v_1+1)^2}{2} + v_2^2 -1 = 0.
\end{equation}
The third case is a line, with
\begin{equation}\label{line}
	g(v) = v_1 + v_2 - 3 = 0.
\end{equation}
The exact minimizers of those three cases are,
\begin{gather*}
{\text{Line-segment: } v^{*}=(0.2361, -0.446, 0.4718)};\\\text{Ellipse: }v^* =  (\sqrt{2}-1, 0);\quad  \text{Line: } v^* = (3/2, 3/2).
\end{gather*}
We use Algorithm \ref{algo} with $
N = 50,\  \alpha = 50,\   \epsilon= 0.01, \ \lam = 1,\  \s = 5,   \  \g = 0.1,\  \epsilon_{\text{stop}} = 10^{-14},$ and the particles are initially set to follow a uniform distribution in {$[-3,3]^d$}. 
We consider our search for the constrained minimizer successful if, when the algorithm finishes, $\|\bxs - \xs\|_\infty \le 0.1$. The success rate and the average distance are shown in Table \ref{table}, where the average distance to $v^*$ in the table is measured using the following norm
\vspace{-3ex}
\begin{gather}\label{def of norm}
	D(v,v^*) = \dfrac{1}{\sqrt{d}}\ll v - \xs \rl = \left(\dfrac1d\sum_{i=1}^d(v - \xs)_i^2\right)^{1/2}.
\end{gather}
In Figure \ref{fig: eq1_er}, we show the evolution of the objective function value $\mathcal{E}(\bxs)$, the constraint value $g(\bxs)$, and the distance $D(\bxs,\xs)$ over $100$ simulations. It is evident that the consensus point {stabilizes} within 10 steps for all simulations.

In Figure \ref{fig: evolution}, 
the evolution of all the particles and the consensus point are shown in time steps $k=0,5,50,100$. In all cases, after $5$ steps, most of the particles are driven to the constraints by the strong constraint term $\frac1\e\nb G(v)$ and stay there consistently. It is worth noting that, {in contrast to the line-segment case, which satisfies Assumption~\ref{wellbehave} (B) (guaranteed by Lemma~\ref{lem:assumpB-distance}),} in the ellipse case not all particles converge to the consensus point. Instead, some particles remain trapped at a point $\tilde{v}$ satisfying $\nabla g(\tilde{v}) = 0$, rather than reaching the feasible set $\{v: g(v)=0\}$. This behavior occurs when $G(v)=g^2(v)$ fails to satisfy Assumption~\ref{wellbehave} (B), in particular condition (B1). Nevertheless, it does not affect the convergence of the consensus point as long as the loss value at $\tilde{v}$ is not {too} small compared to the constrained minimum (see Remark~\ref{rmk:ass G}).
\vspace{-2ex}
\begin{table}[h]
	\centering
	\caption{The result of Algorithm \ref{algo} on \eqref{eq: numerics 1} with constraints \eqref{line_segment}, \eqref{ellipse}, and \eqref{line}}
	\label{table}
    \vspace{-1ex}
	\begin{tabular}{|c|c|c|}
		\hline
		&success rate & average distance to $\xs$ \\
        \hline
line-segment constraint &$100\%$ & $0.0182$ \\
		\hline
		ellipse constraint &$100\%$ & $0.0147$ \\
		\hline
		line constraint &$100\%$ & $0.0157$ \\
		\hline
	\end{tabular}
\end{table}
\vspace{-2ex}
\begin{figure}
	\centering
    \subfloat[blue lines: $\mE(\bxs)$, red lines: $g(\bxs)$]{\includegraphics[width=0.28\textwidth]{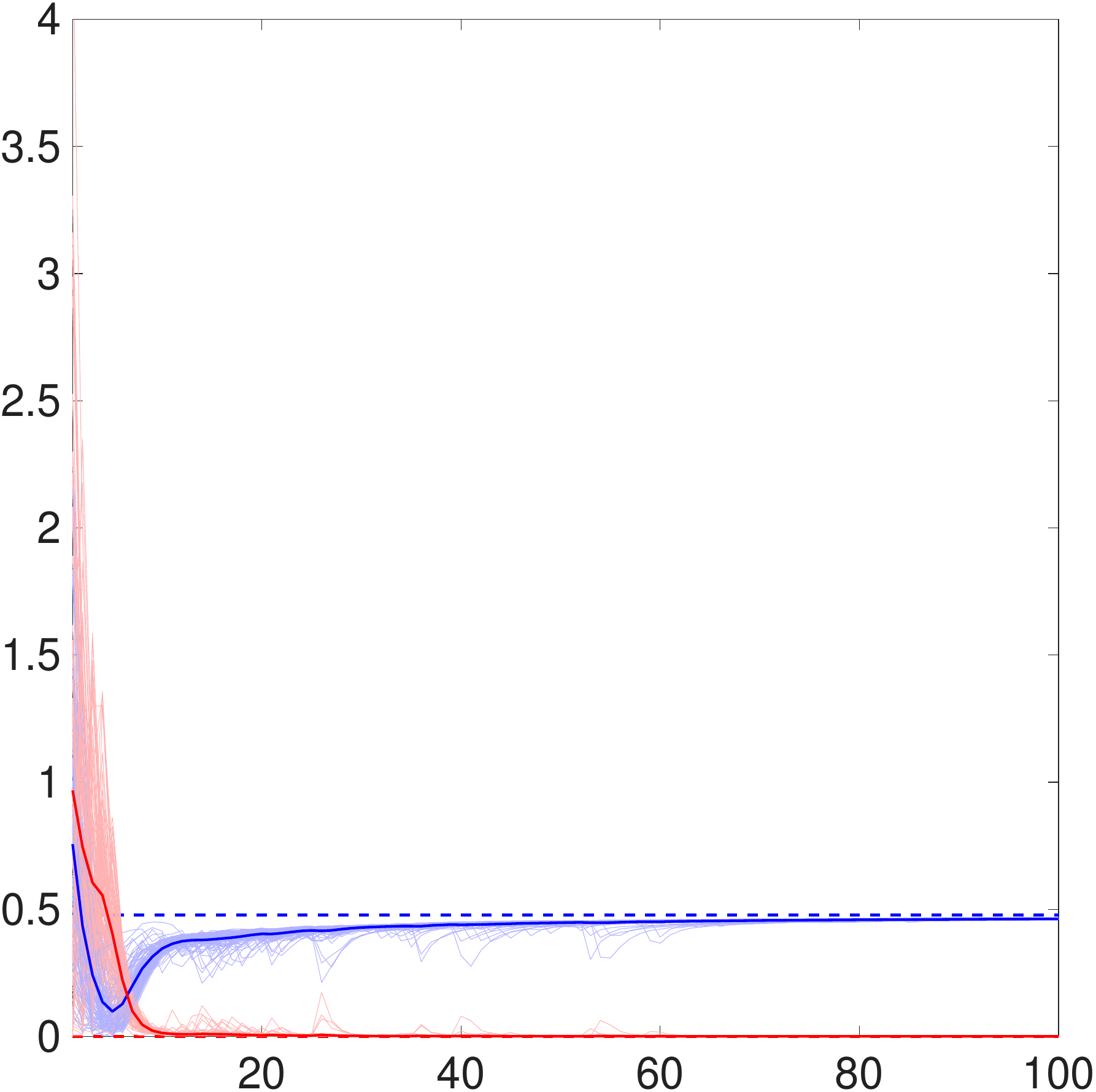}} 
	\subfloat[$D(\bxs,\xs)$]{\includegraphics[width=0.28\textwidth]{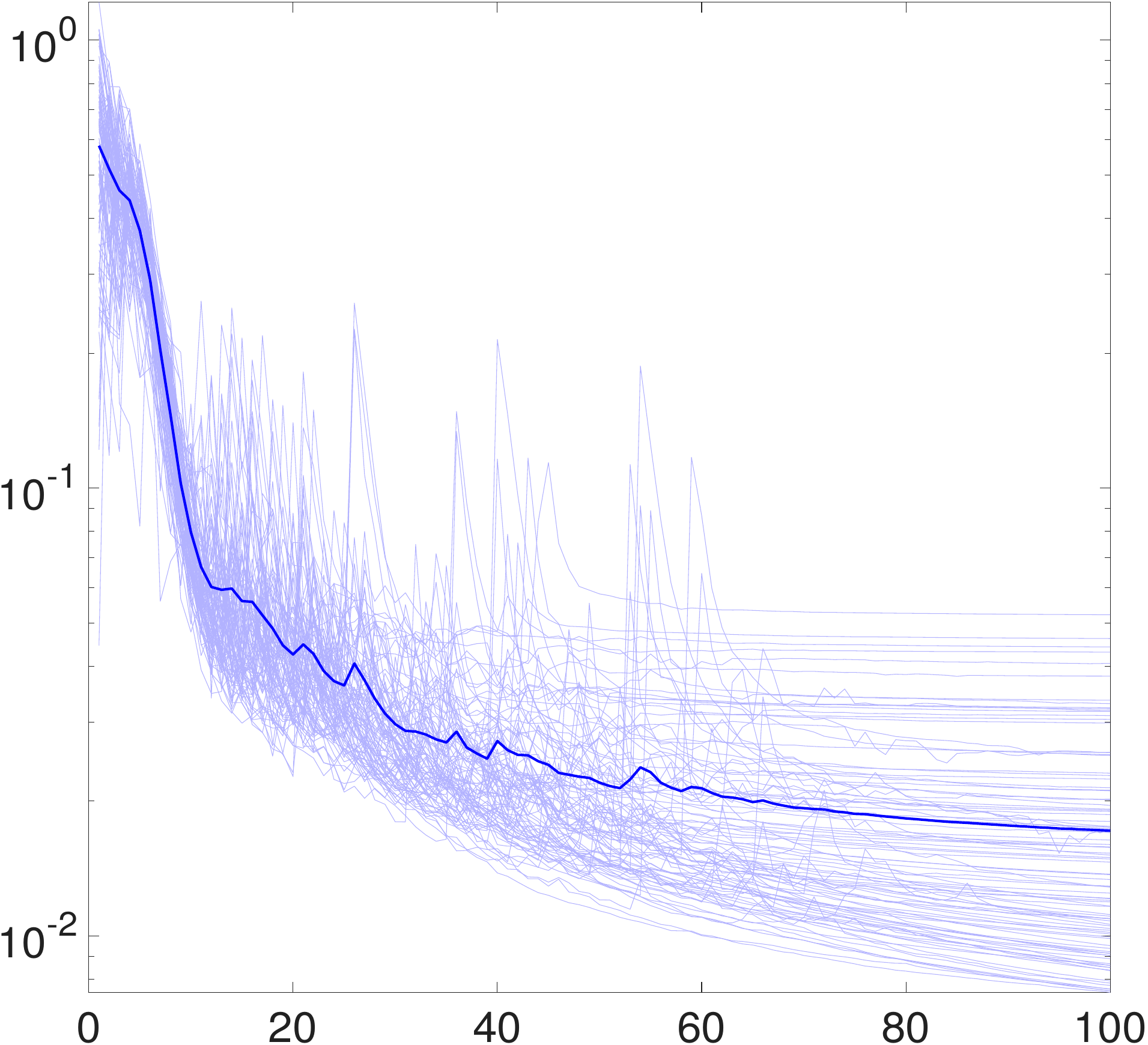}} \\
	\subfloat[blue lines: $\mE(\bxs)$, red lines: $g(\bxs)$]{\includegraphics[width=0.28\textwidth]{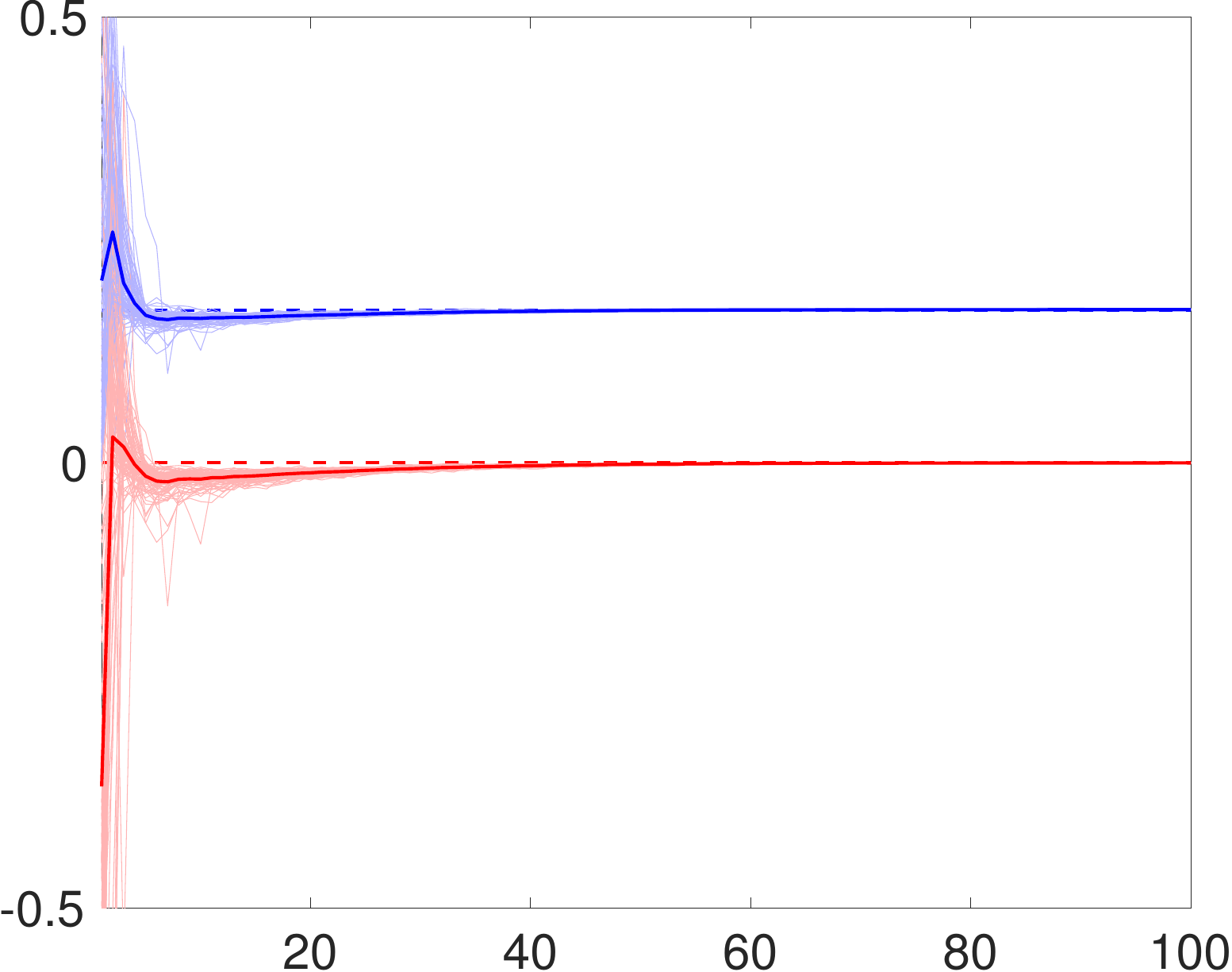}} 
	\subfloat[$D(\bxs,\xs)$ ]{\includegraphics[width=0.28\textwidth]{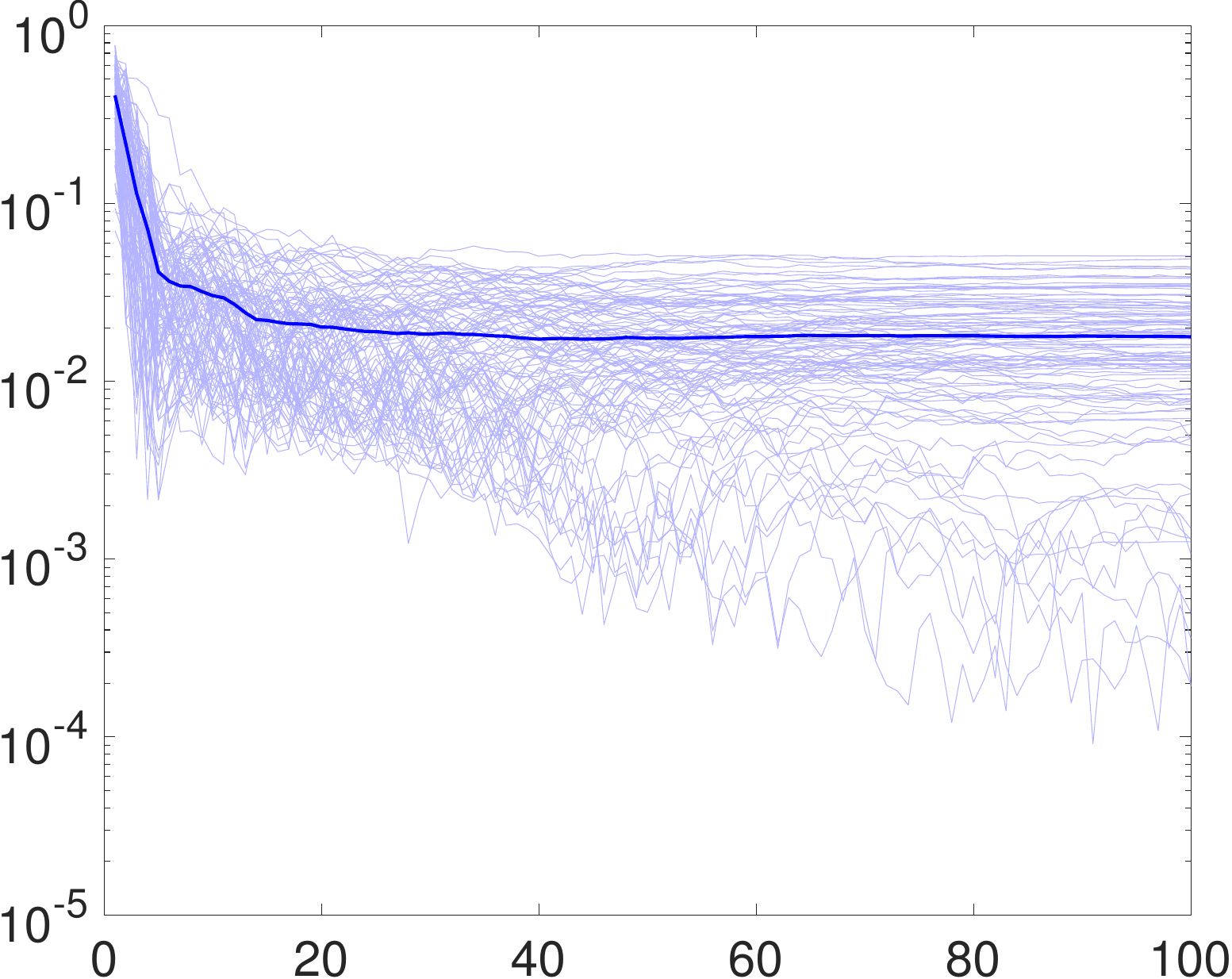}} \\
	\subfloat[blue lines: $\mE(\bxs)$, red lines: $g(\bxs)$]{\includegraphics[width=0.28\textwidth]{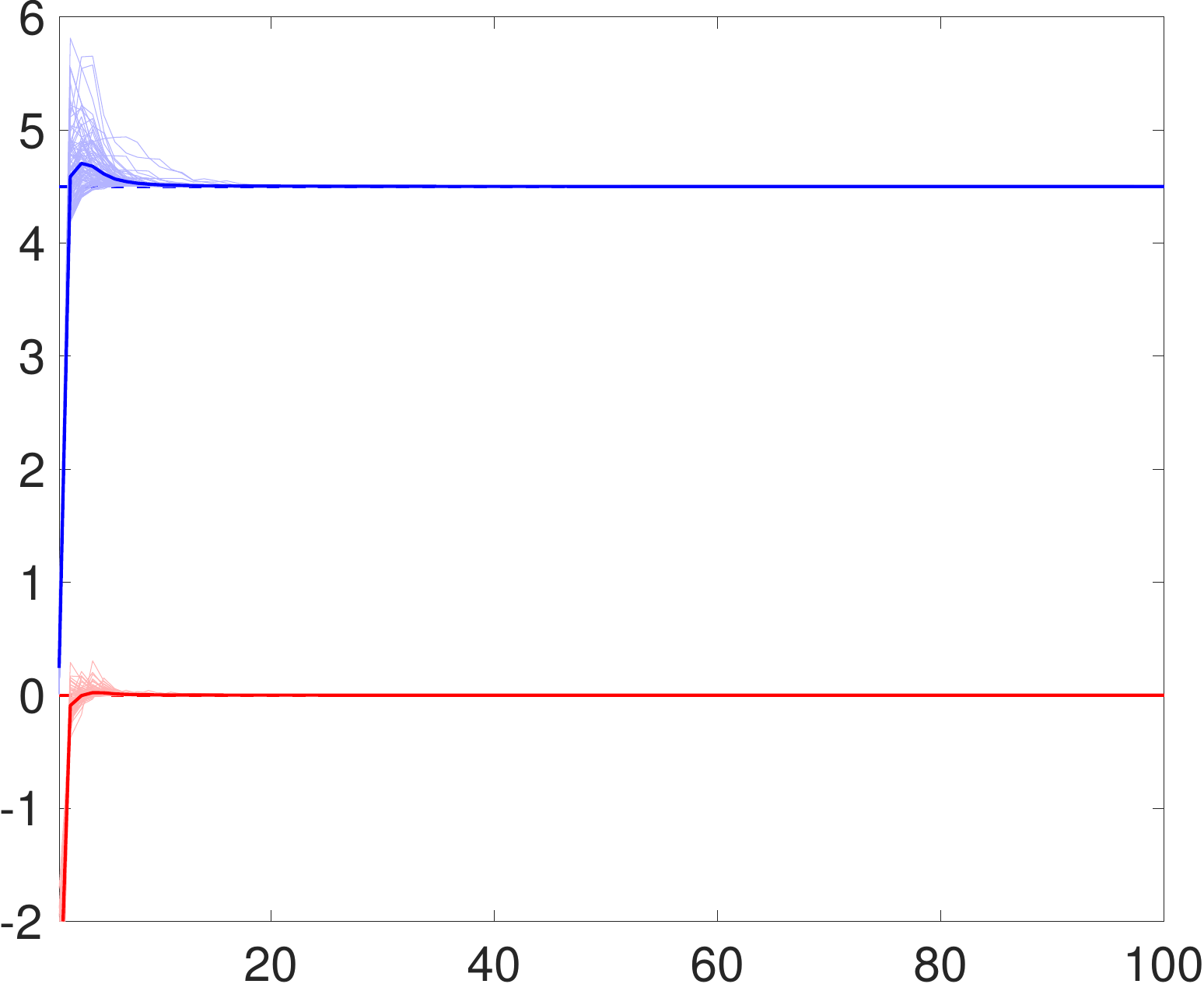}} 
	\subfloat[$D(\bxs,\xs)$]{\includegraphics[width=0.28\textwidth]{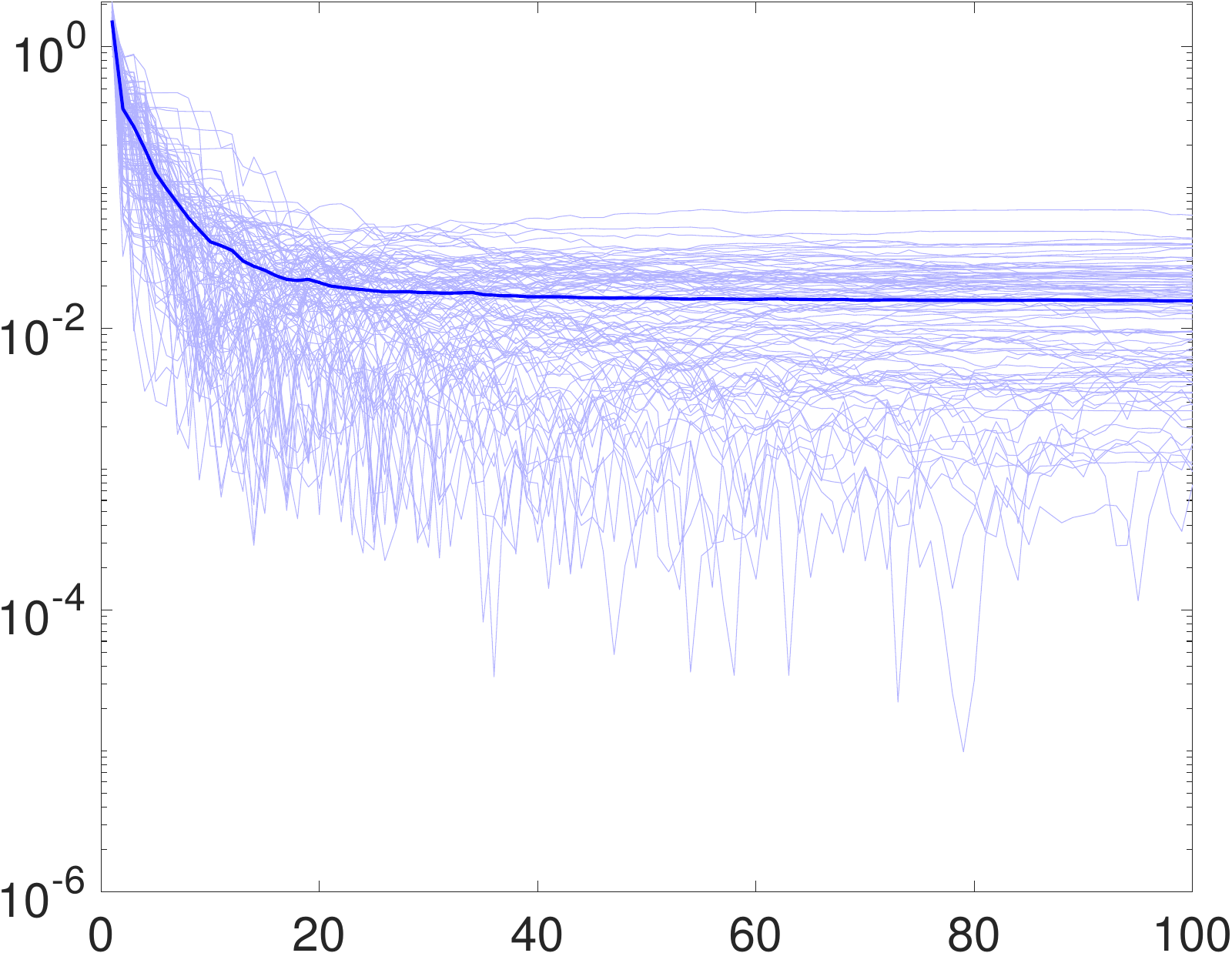}} 
	\caption{
    The first row is the result for the optimization problem \eqref{eq: numerics 1} with line-segment constraint  \eqref{line_segment}, the second row is the result for the ellipse constraint  \eqref{ellipse}, and the third row is for the line constraint \eqref{line}. The left column is the evolution of the objective function value and constraint value, while the right column is the evolution of the distance between the consensus point and the exact minimizer, where the distance is defined in \eqref{def of norm}. The light lines are results from $100$ simulations, while the dark lines are the average values. }
	\label{fig: eq1_er}
\end{figure}

\begin{figure}
	\centering
    \subfloat[k=0]{\includegraphics[width=0.45\textwidth]{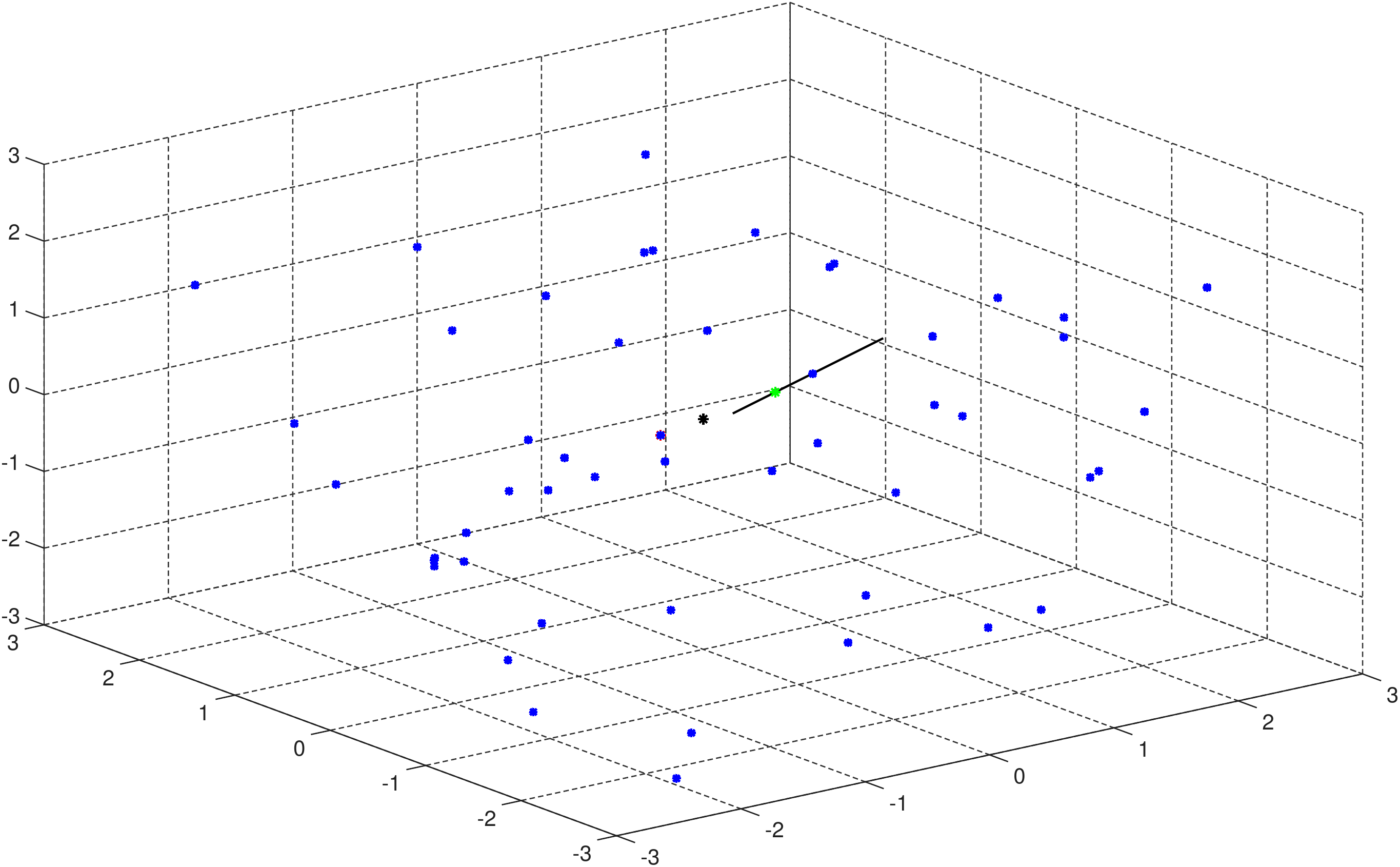}} 
	\subfloat[k=5]{\includegraphics[width=0.45\textwidth]{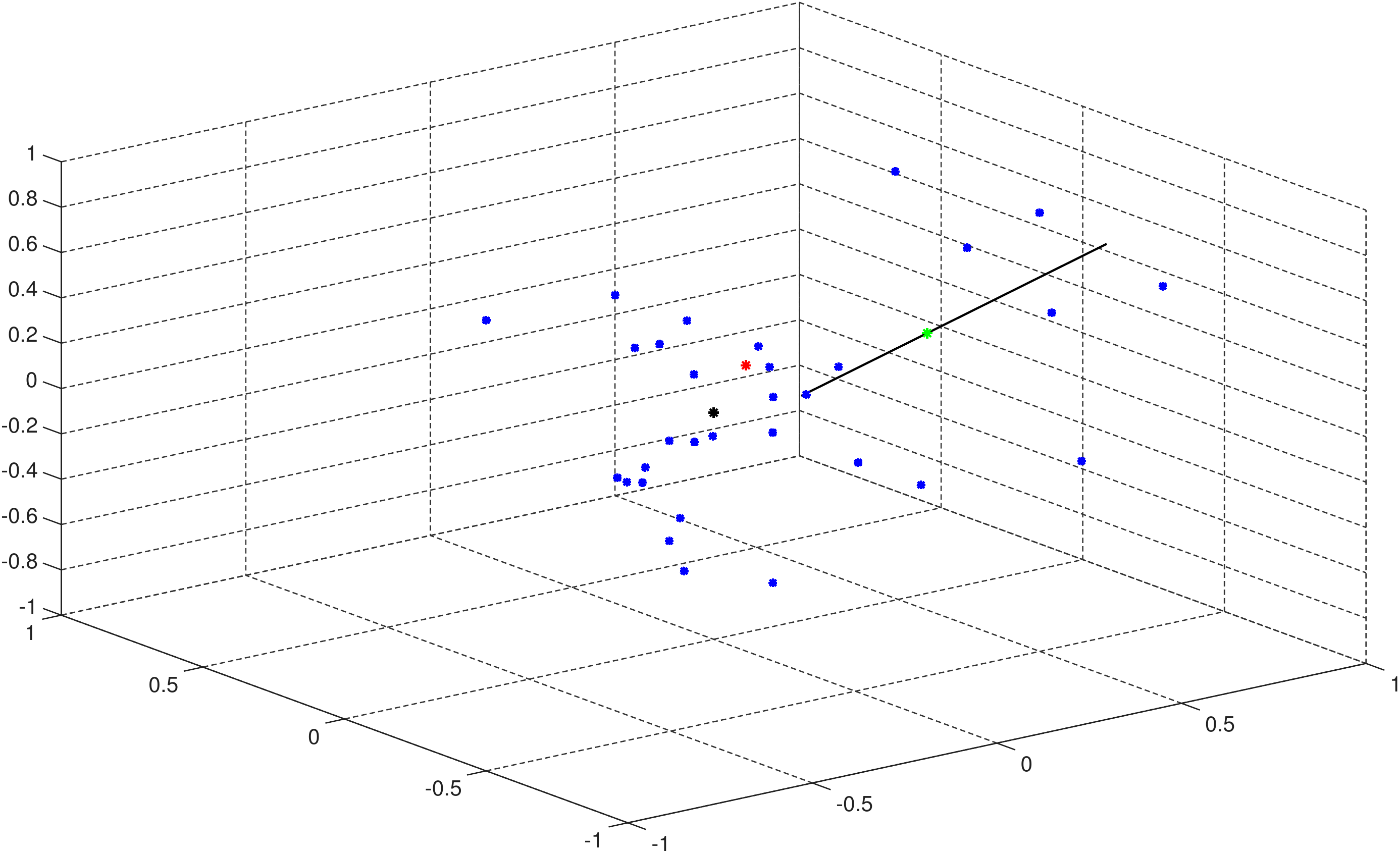}}\\[1ex]
	\subfloat[k=50]{\includegraphics[width=0.45\textwidth]{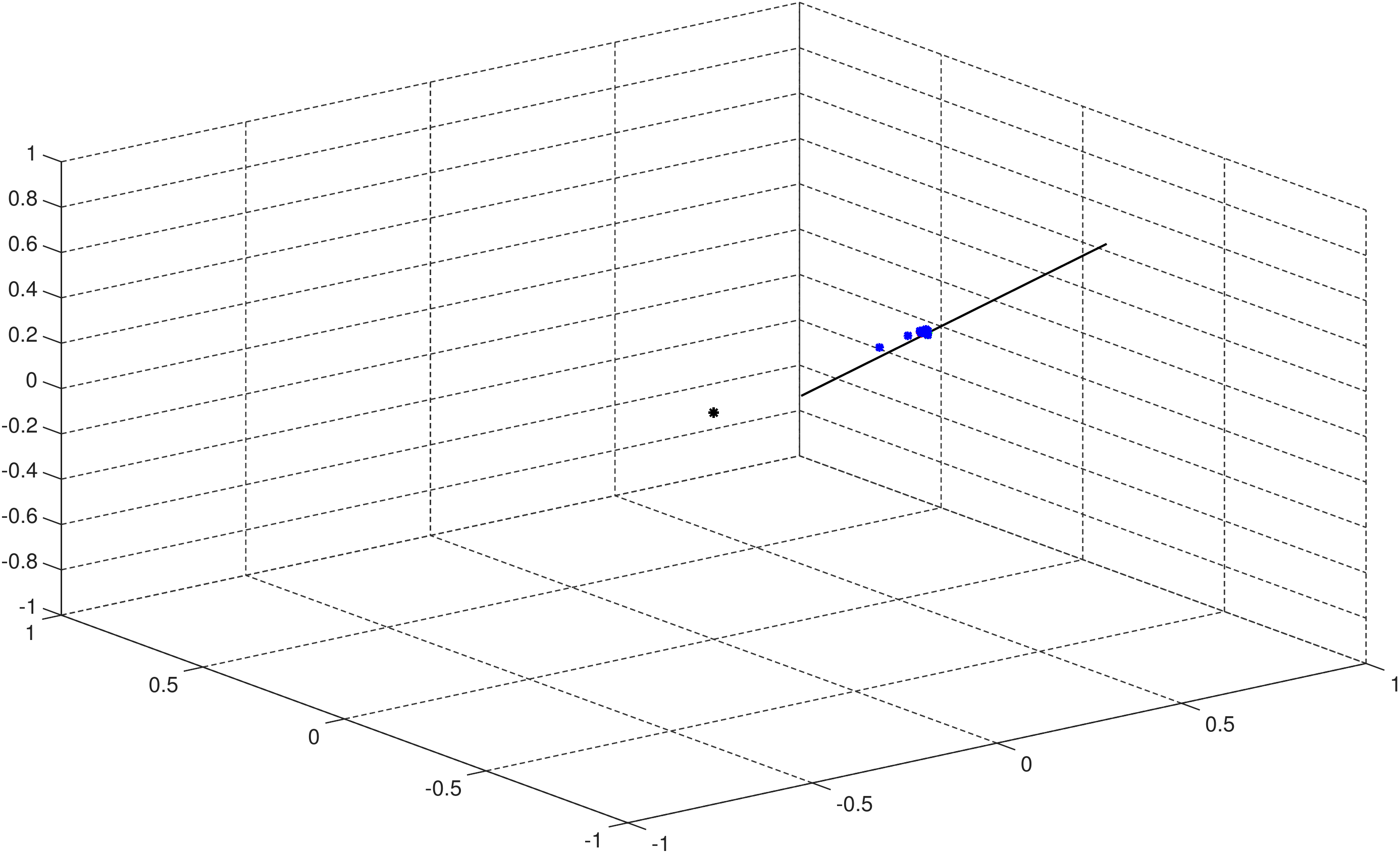}} 
	\subfloat[k=100]{\includegraphics[width=0.45\textwidth]{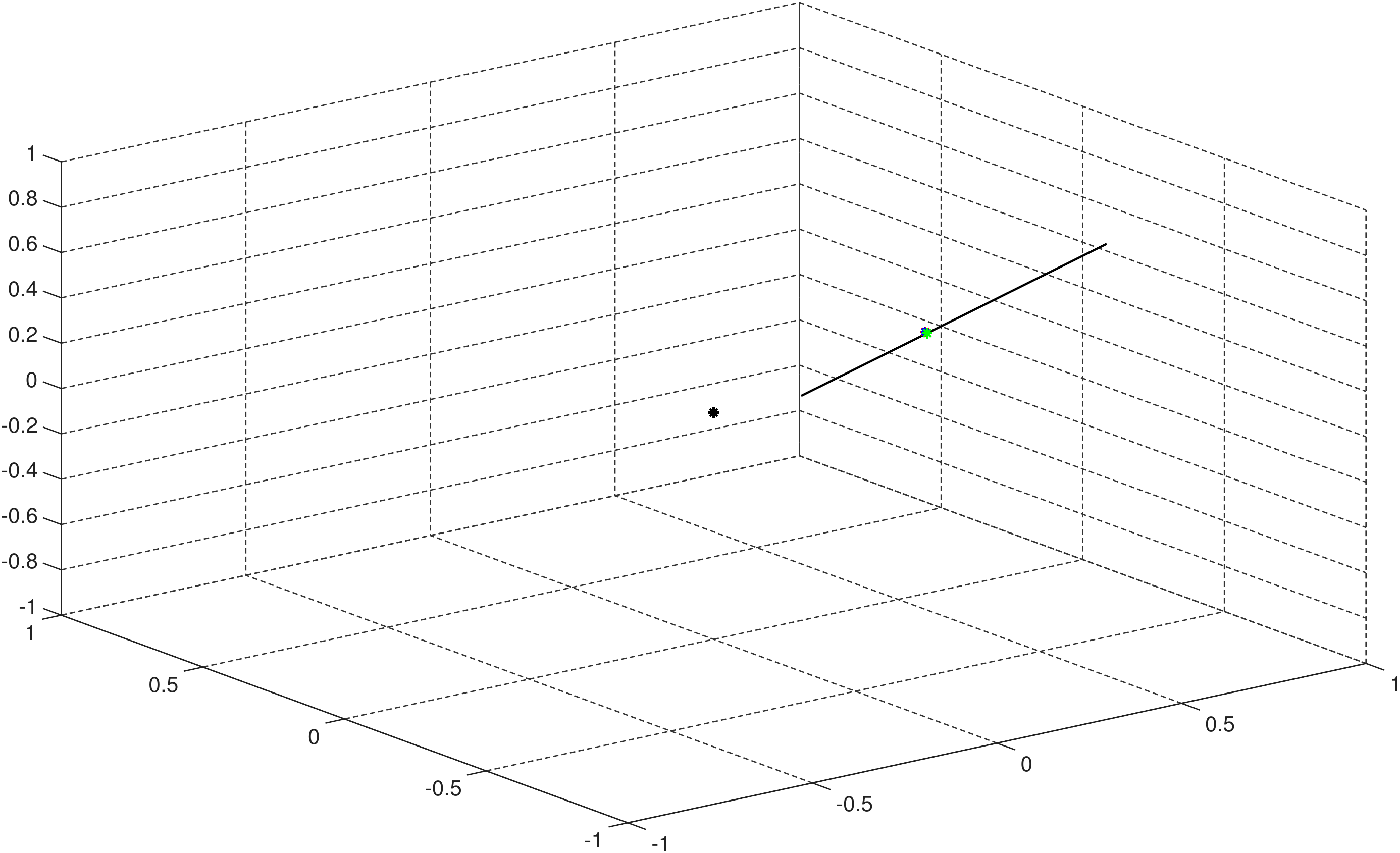}}\\[1ex]
	\subfloat[k=0]{\includegraphics[width=0.2\textwidth]{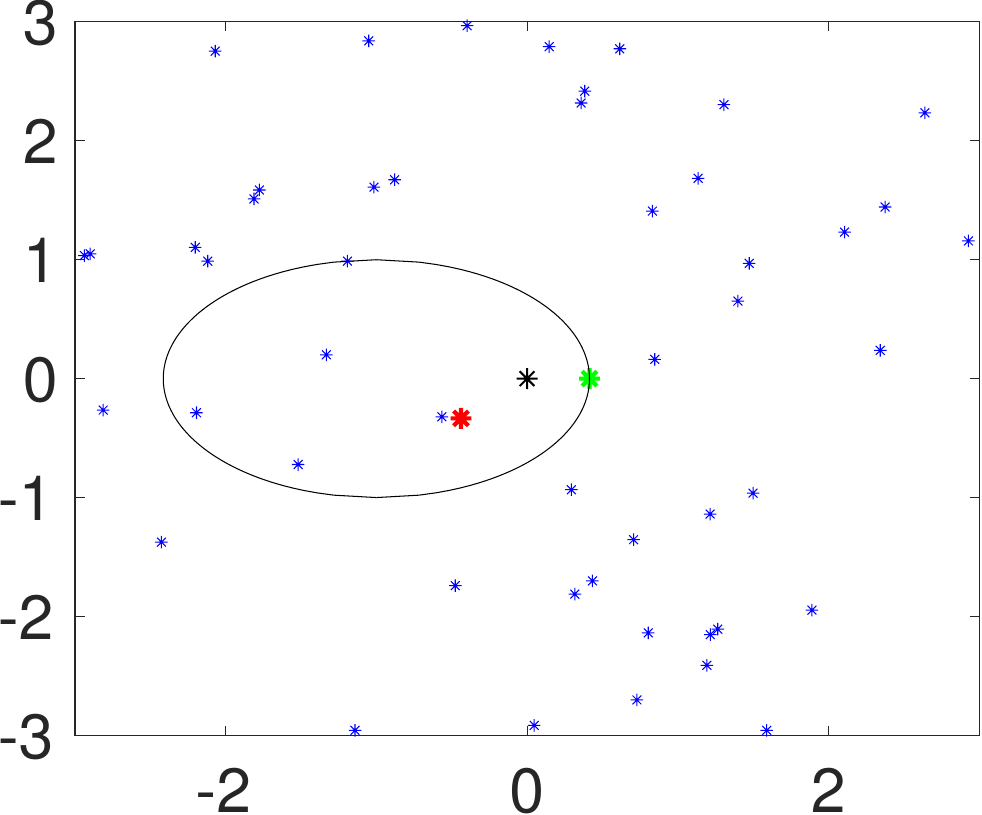}} 
	\subfloat[k=5]{\includegraphics[width=0.2\textwidth]{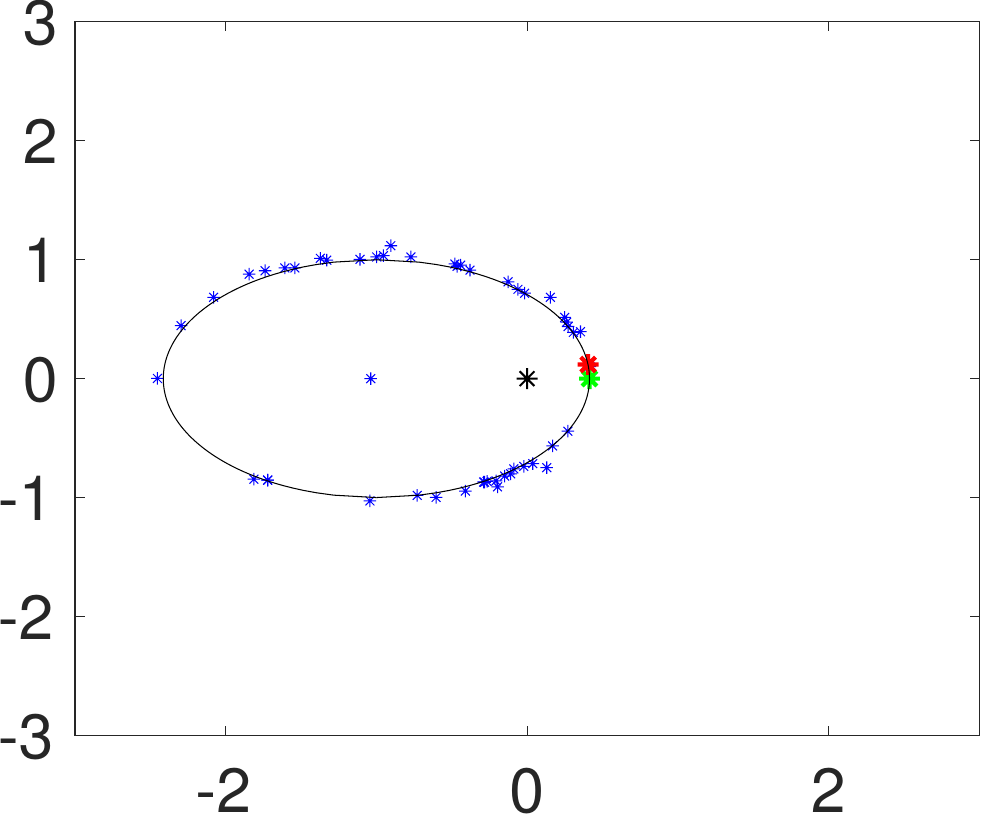}}
	\subfloat[k=50]{\includegraphics[width=0.2\textwidth]{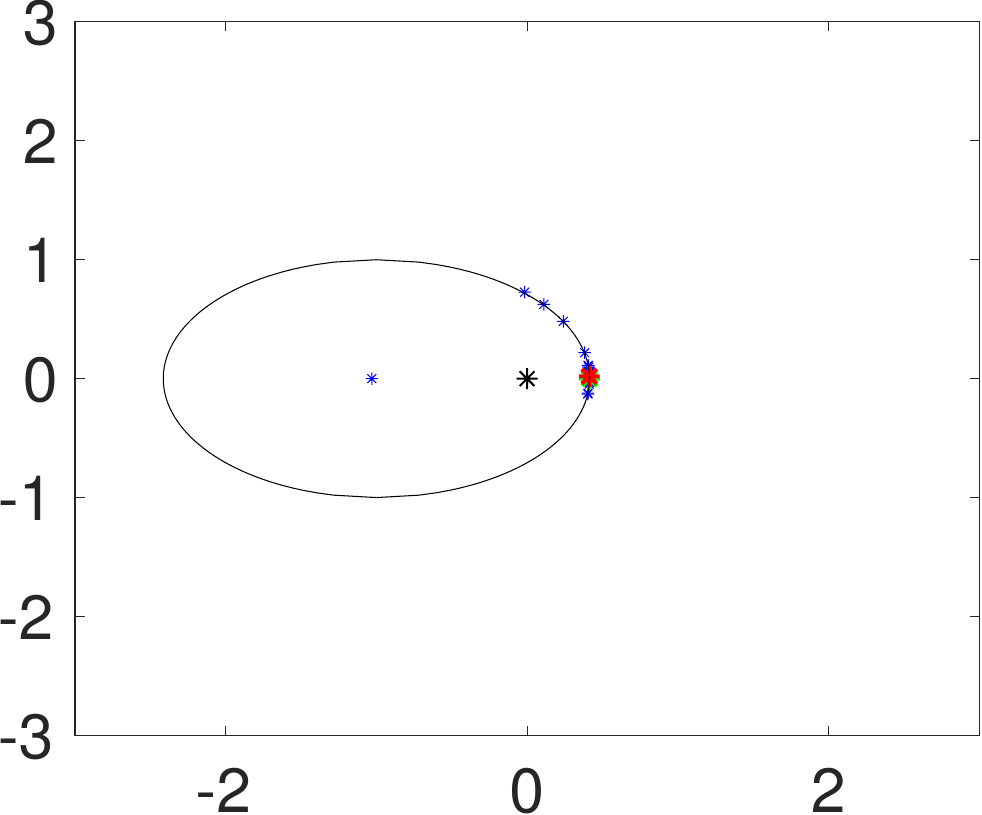}} 
	\subfloat[k=100]{\includegraphics[width=0.2\textwidth]{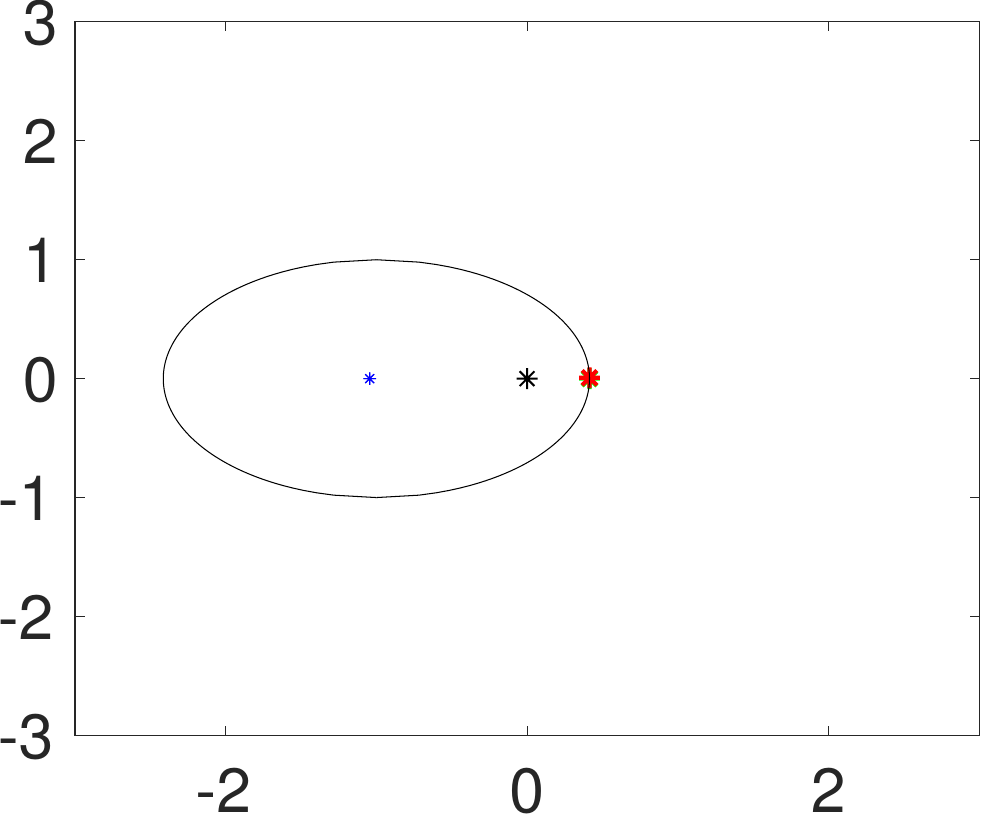}}\\[1ex]
	\subfloat[k=0]{\includegraphics[width=0.2\textwidth]{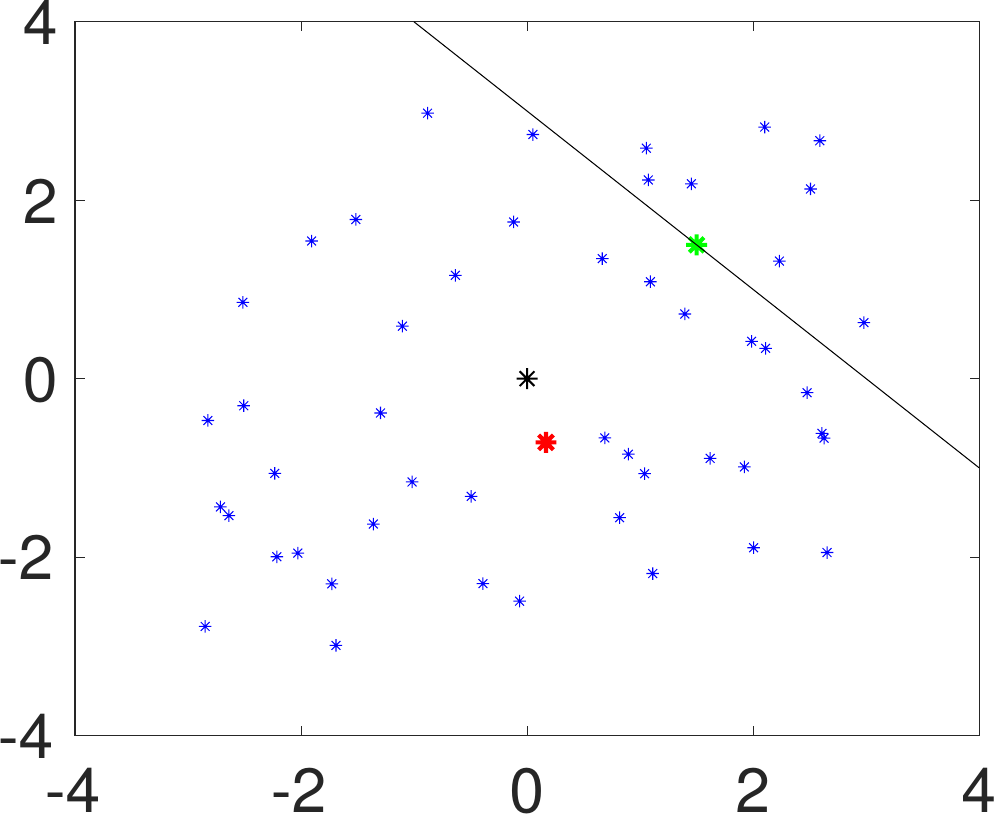}} 
	\subfloat[k=5]{\includegraphics[width=0.2\textwidth]{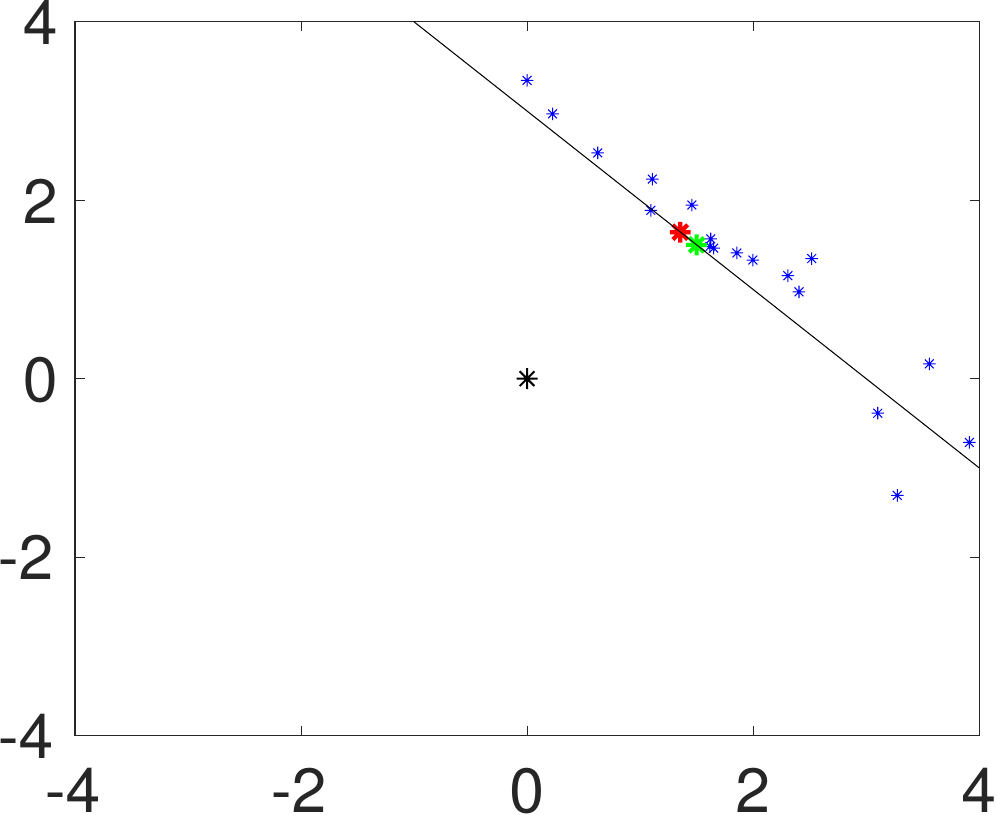}}
	\subfloat[k=50]{\includegraphics[width=0.2\textwidth]{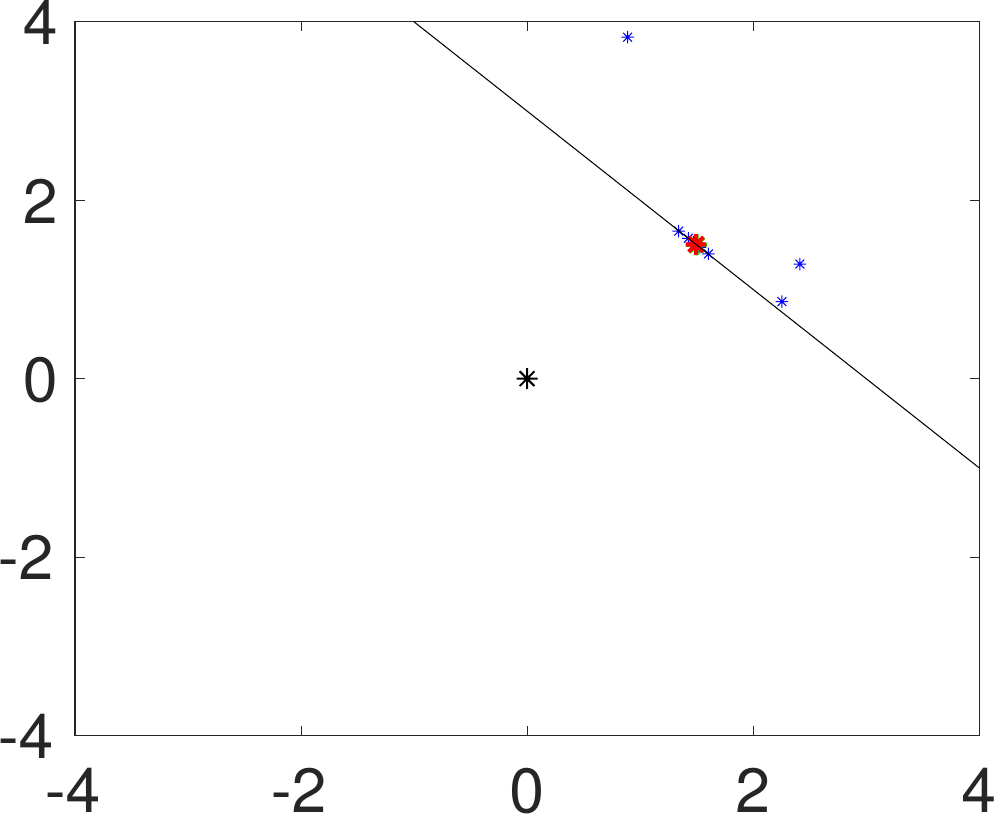}} 
	\subfloat[k=100]{\includegraphics[width=0.2\textwidth]{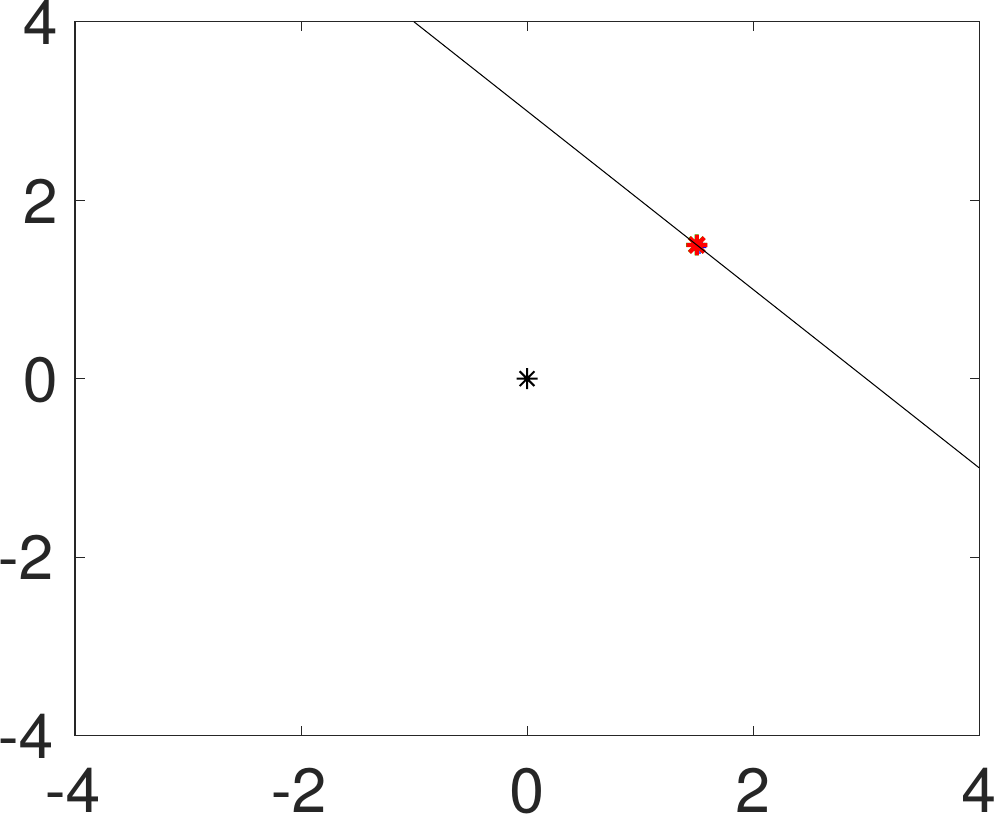}}
	\caption{
    {Evolution of all particles (blue) and the corresponding consensus point (red) for the constrained problems: \eqref{line_segment} with the line-segment constraint (first two rows), \eqref{eq: numerics 1} with the ellipse constraint \eqref{ellipse} (third row), and \eqref{eq: numerics 1} with the line constraint \eqref{line} (fourth row). The constraint set is shown in black. The black marker denotes the unconstrained global minimizer of the objective function, and the green marker denotes the constrained minimizer $v^*$.}}
	\label{fig: evolution}
\end{figure}

\subsubsection{Ackley function}
We now test the proposed algorithms on a highly non-convex objective function. Consider the following Ackley function,

\begin{equation}\label{eq:ackley}
	\begin{aligned}
		&\min_v \qd -A\exp\l(-a\sqrt{\frac{b^2}d\ll v - \hv\rl_2^2}\r)- \exp\l(\frac1d\sum_{i=1}^d\cos(2\pi b(v_i - \hv_i))\r)+e^1+A,
	\end{aligned}
\end{equation}
where $b = 1, A = 20, a = 0.1$, and $\hv$ is the global minimum of the unconstrained problem. The above function in two-dimension is shown in Figure \ref{fig:cbo_obj}. Here we consider five different constraints,
\begin{align}
&{\text{Case 1.}\qd \mathrm{dist}(v,L)=0, \text{ where $L\subset \mathbb{R}^3$ is the line segment with endpoints}} \notag\\
&\qquad {\text{ $(0.2,0.5,0.7)$ and $(0.5,0.2,0.5)$}}\label{case4}\\
&{\text{Case 2.}\qd \mathrm{dist}(v,S)=0,\text{ where $S$ is the $18$-dimensional ball in $\mathbb{R}^{20}$:}} \notag\\
&\qquad {\text{ $\{v \mid v_1^2+\cdots+v_{18}^2\le 0.25,\ v_{19}=v_{20}=0\}$}}\label{case5}\\
&\text{Case 3.}\qd \|v\|_2^2-1=0\ \text{in $\mathbb{R}^3$ and in $\mathbb{R}^{20}$}\label{case1}\\
&\text{Case 4.}\qd \sum_{i=1}^{d-1} v_i^2 - v_d = 0\ \text{in $\mathbb{R}^3$ and in $\mathbb{R}^{20}$}\label{case2}\\
&\text{Case 5.}\qd g(v)=\sum_{i=1}^{d} v_i - 1=0,\qd \ds 2\sum_{i=1}^{d-1} v_i -\frac12 v_d -\frac12 = 0\ \text{in $\mathbb{R}^3$}
\label{case3}
\end{align}
{By by Lemma~\ref{lem:assumpB-distance}, Case 1 and Case 2 satisfy Assumption~\ref{wellbehave} (B).} Here, we set $\hv = (0.4,\cdots, 0.4)$ so that the unconstrained minimizer is not the same as the constrained minimizer. The constrained minimizers for the three-dimensional cases are
\begin{gather*}
{\text{Case 1. }v^* = (0.4003, 0.2997; 0.5665);} \quad
\text{Case 3. } v^* = 1/\sqrt{3}(1,1,1); \\
\text{Case 4. }v^* = (0.4283,0.4283,0.3669); \quad
\text{Case 5. } v^* = (0.2,0.2,0.6).
\end{gather*}
The constrained minimizers for the 20-dimensional case are
\begin{align*}
&{\text{Case 2. } v_i^* = 0.1179,\ 1\le i\le 18,\ v_{19}^* = v_{20}^* = 0}; \\
&\text{Case 3. } v_i^* = 1/\sqrt{20},\ 1\le i\le 20;\\
&\text{Case 4. } v_i^* = 0.3542,\ 1\le i\le 19,\ v_{20}^* = 2.3839.
\end{align*}
For the $3$-dimensional Ackley function, we use Algorithm \ref{algo} with 
\[
N = 100, \ \a = 50, \ \epsilon = 0.01,\  \lam = 1,\ \s = 1,\ \g = 0.1,\  \epsilon_{\text{stop}} = 10^{-14}.
\] 
For the $20$-dimensional Ackley function, we use Algorithm \ref{algo2} with 
\[
\begin{aligned}
&N = 100,\ \a = 50,\ \epsilon = 0.01,\ \lam = 1,\ \s = 1,\ \g = 0.1,\ \epsilon_{\mathrm{indep}} = 10^{-5},\\
&{\text{Case 2.}\ \epsilon_{\min} = 0.05,\ \s_{\mathrm{indep}} = 0.3;}\\
&\text{Case 3.}\ \epsilon_{\min} = 0.01,\ \s_{\mathrm{indep}} = 0.3;\\
&\text{Case 4.}\ \epsilon_{\min} = 0.001,\ \s_{\mathrm{indep}} = 1.
\end{aligned}
\] 
and all the particles initially follow $\Xj \sim $Unif$[-3,3]^d$.

The evolution of the distance $D(\bxs, \xs)$ between the consensus point and the accurate solution is shown in Figure \ref{fig: eq2_er}, where one can see that the consensus point converges to the true minimizer within $100$ steps. Besides, the success rate, averaged distance for the output consensus point $v^*$, and the averaged total steps are reported in Table \ref{table2}. We consider the simulation to be successful if $\max_{k}|\bxs_k - v^*_k| \leq 0.1$, and the distance to $v^{*}$ is measured in $D(\bxs,\xs)$ and averaged over $100$ simulations. {One can see that, except for the $20$-dimensional Cases 2 and 4, the algorithm finds the exact minimizer with a $100\%$ success rate. Moreover, except for the $3$-dimensional Case 1 and the $20$-dimensional Case 4, the algorithm achieves high accuracy within $400$ steps. Even for the $20$-dimensional Cases 2 and 4, although the success rate is slightly below $100\%$, the average distance to $v^*$ is below $0.05$, indicating that the iterates remain relatively close to the exact minimizer $v^*$.}

The reason for the non-smoothness in the later stage of the average line is due to the limited number of samples for the larger steps. In most simulations, the algorithm typically concludes its iterations around the average total steps in the table. {As it is difficult to obtain the exact constrained minimizer of the $20$-dimensional Ackley function under the constraints in \eqref{case3}, we only consider Case~5 in $\mathbb{R}^3$. Moreover, for the $20$-dimensional Case~2, $v^*$ is a reference solution computed by first projecting the unconstrained global minimizer onto $S$ and then using the MATLAB Optimization Toolbox to refine the solution, since the exact constrained minimizer is hard to obtain.}

\begin{table}[h]
	\centering
	\caption{The result of Algorithm \ref{algo} on $3$-dimensional Ackley function and Algorithm \ref{algo2} for $20$-dimensional Ackley function.}
	\label{table2}
	\begin{tabular}{|c|c|c|c|c|}
		\hline
		& &success rate & average distance to $\xs$ & average total steps \\
		\hline
        case 1 & d=3 &$100\%$ & $3.8\times10^{-3}$ & 1000 \\ 
		\hline
        case 2 & d=20 &$98\%$ & $2.4\times10^{-2}$ & 158 \\ 
		\hline
		case 3 & d=3  &$100\%$ & $8\times10^{-3}$ & 295\\
		\hline
		& d=20 & $100\%$ & $1.56\times10^{-2}$ &390\\
		\hline
		case 4 & d=3  & $100\%$ & $4.5\times10^{-3}$&213 \\
		\hline
		& d=20 & $96\%$ & $3.13\times10^{-2}$ &4288\\
		\hline
		case 5 & d=3 &$100\%$ & $2.8\times10^{-3}$ & 163 \\ 
		\hline
	\end{tabular}
\end{table}

\begin{figure}
	\centering
    \subfloat[$d=3$ case 1]{\includegraphics[width=0.29\textwidth]{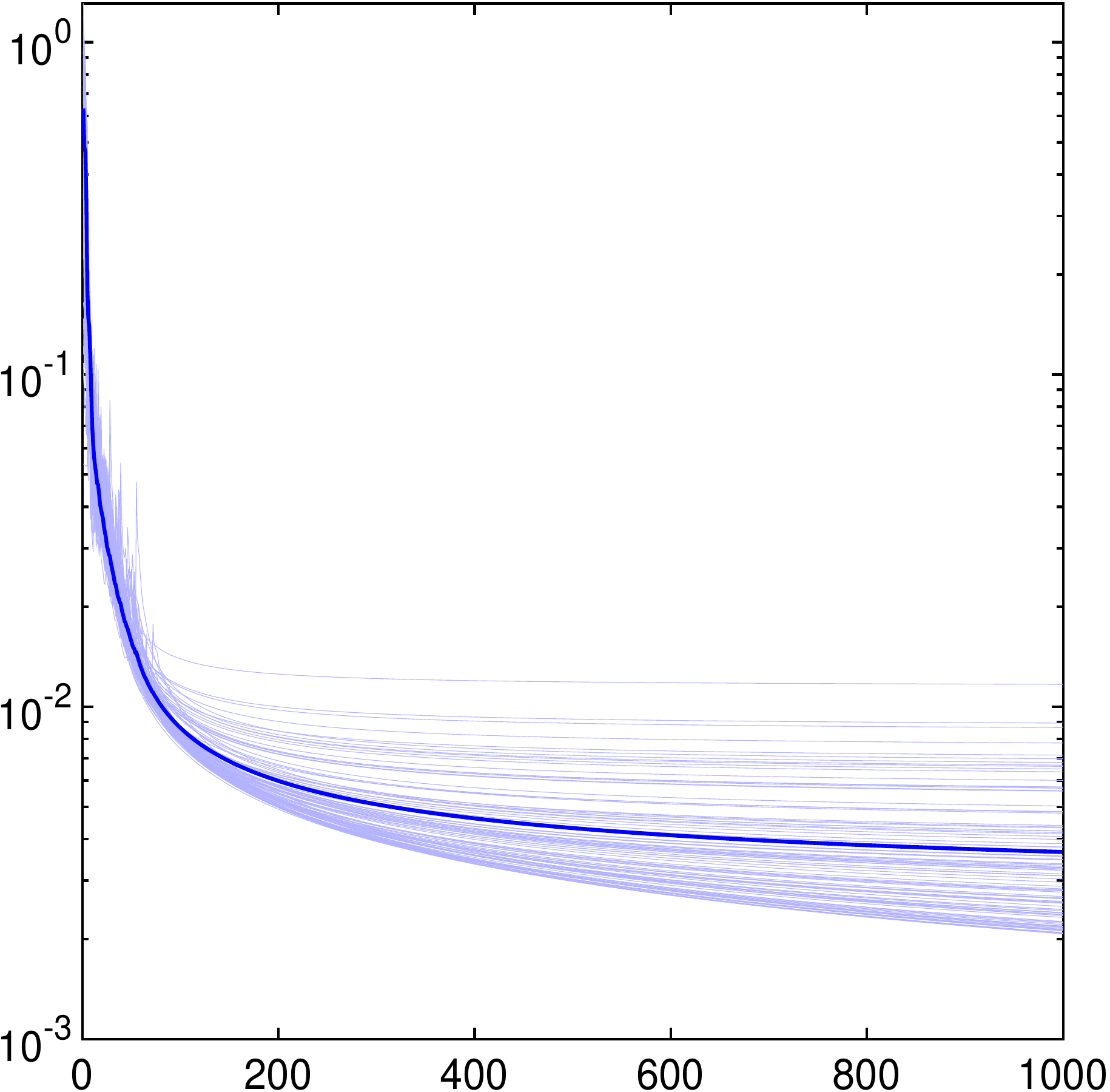}}  
	\subfloat[$d=3$ case 3]{\includegraphics[width=0.33\textwidth]{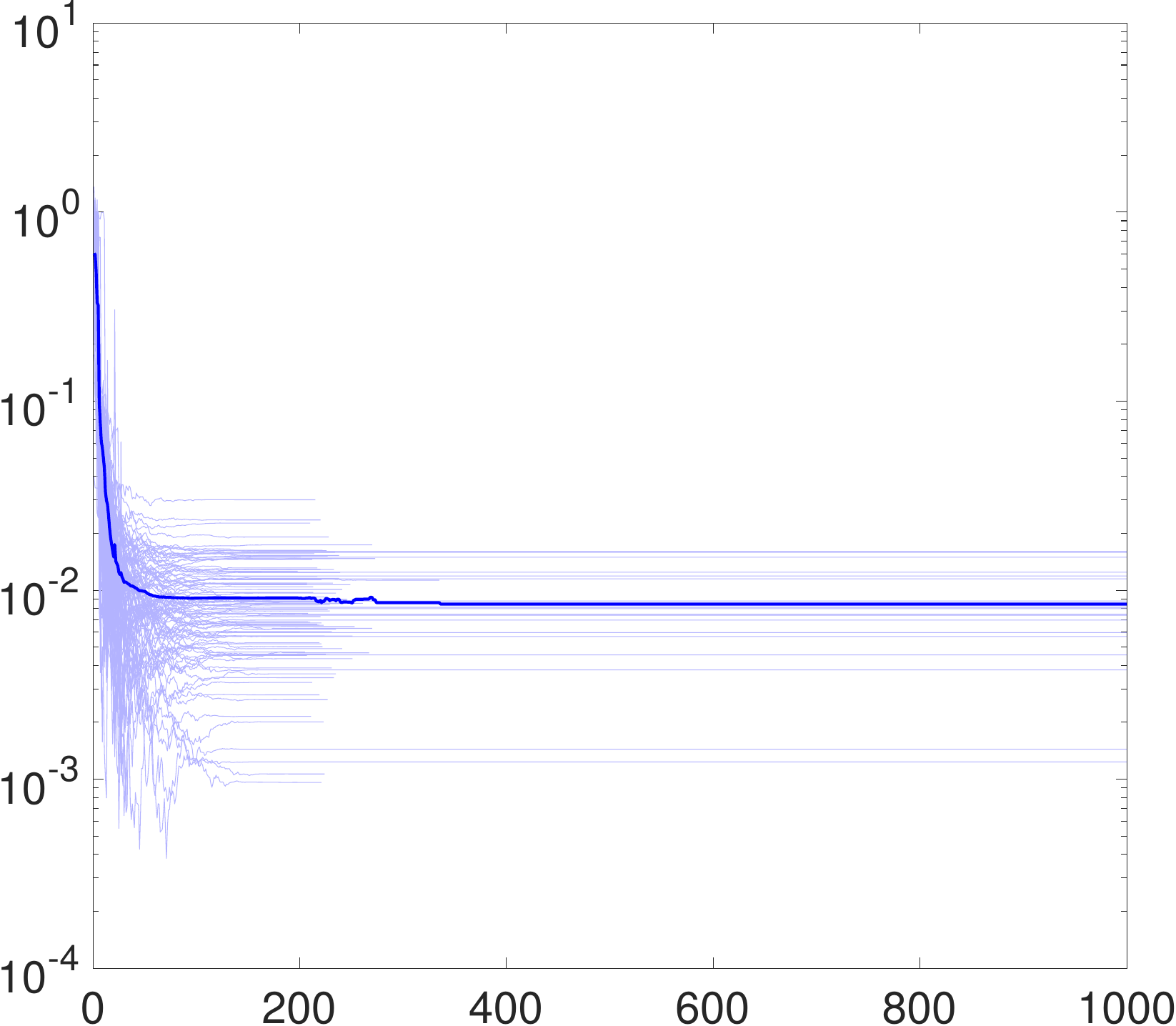}}  
	\subfloat[$d=3$ case 4]{\includegraphics[width=0.33\textwidth]{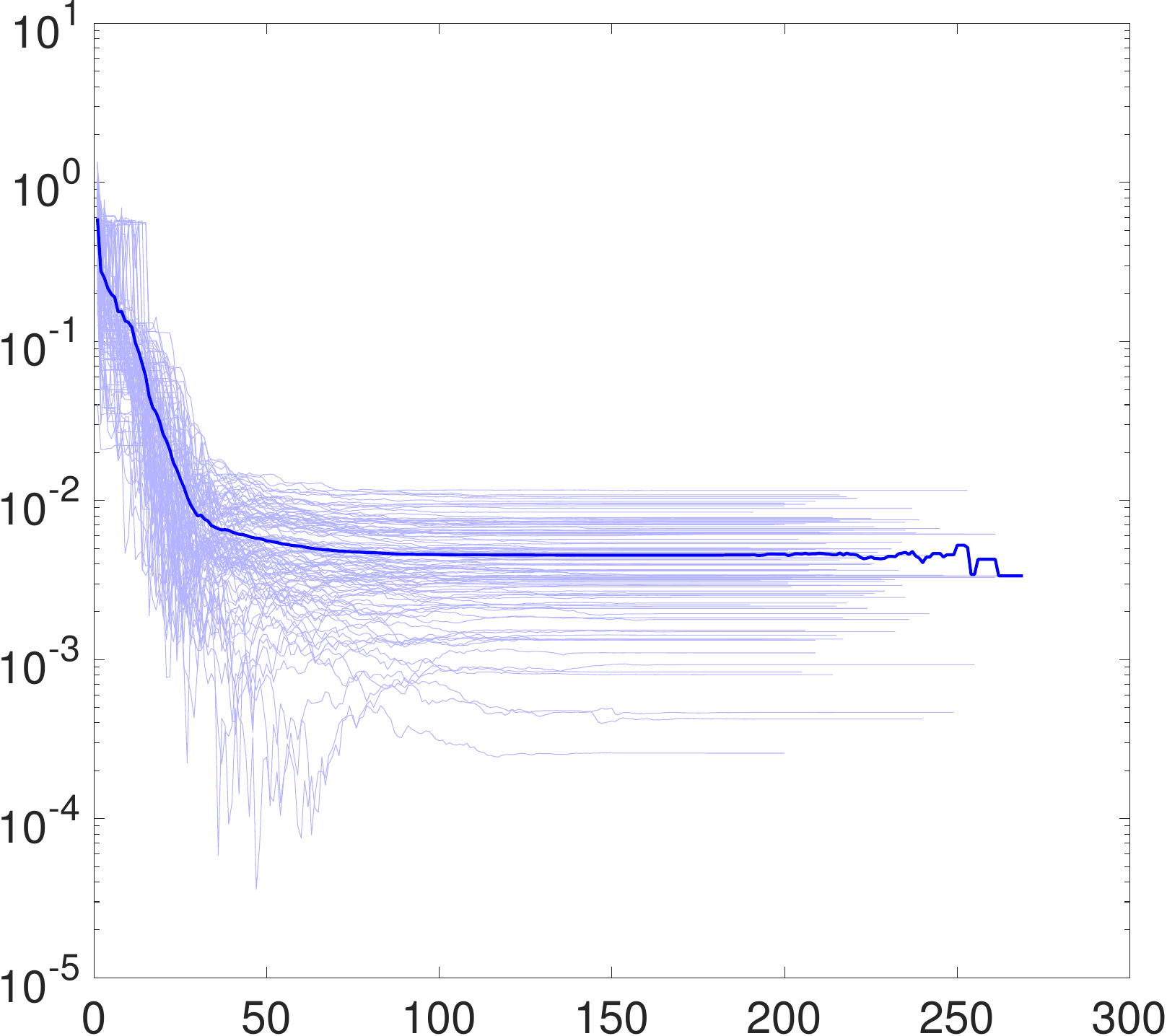}} \\ 
	\subfloat[$d=3$ case 5]{\includegraphics[width=0.33\textwidth]{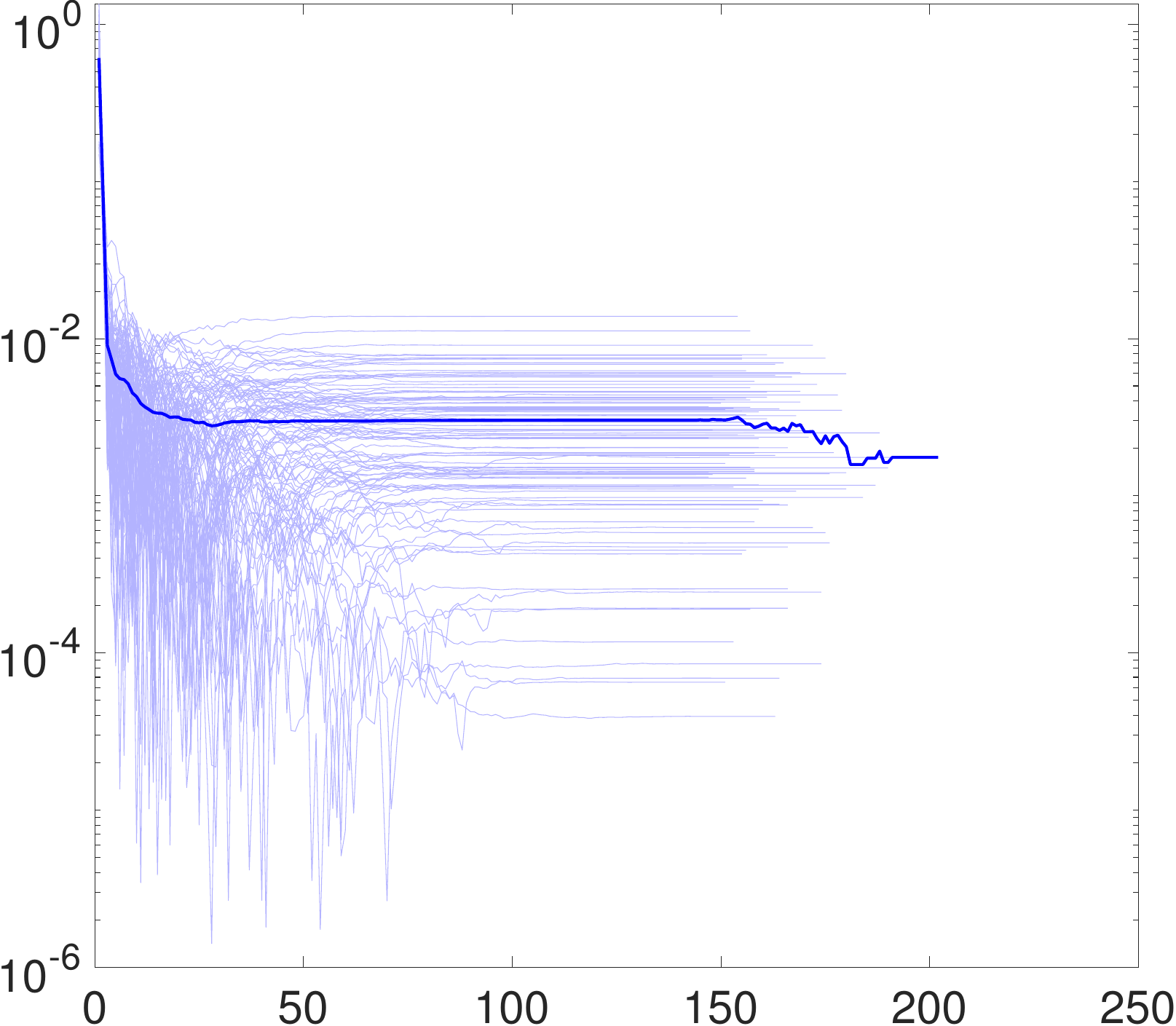}}
    \subfloat[$d=20$ case 2]{\includegraphics[width=0.30\textwidth]{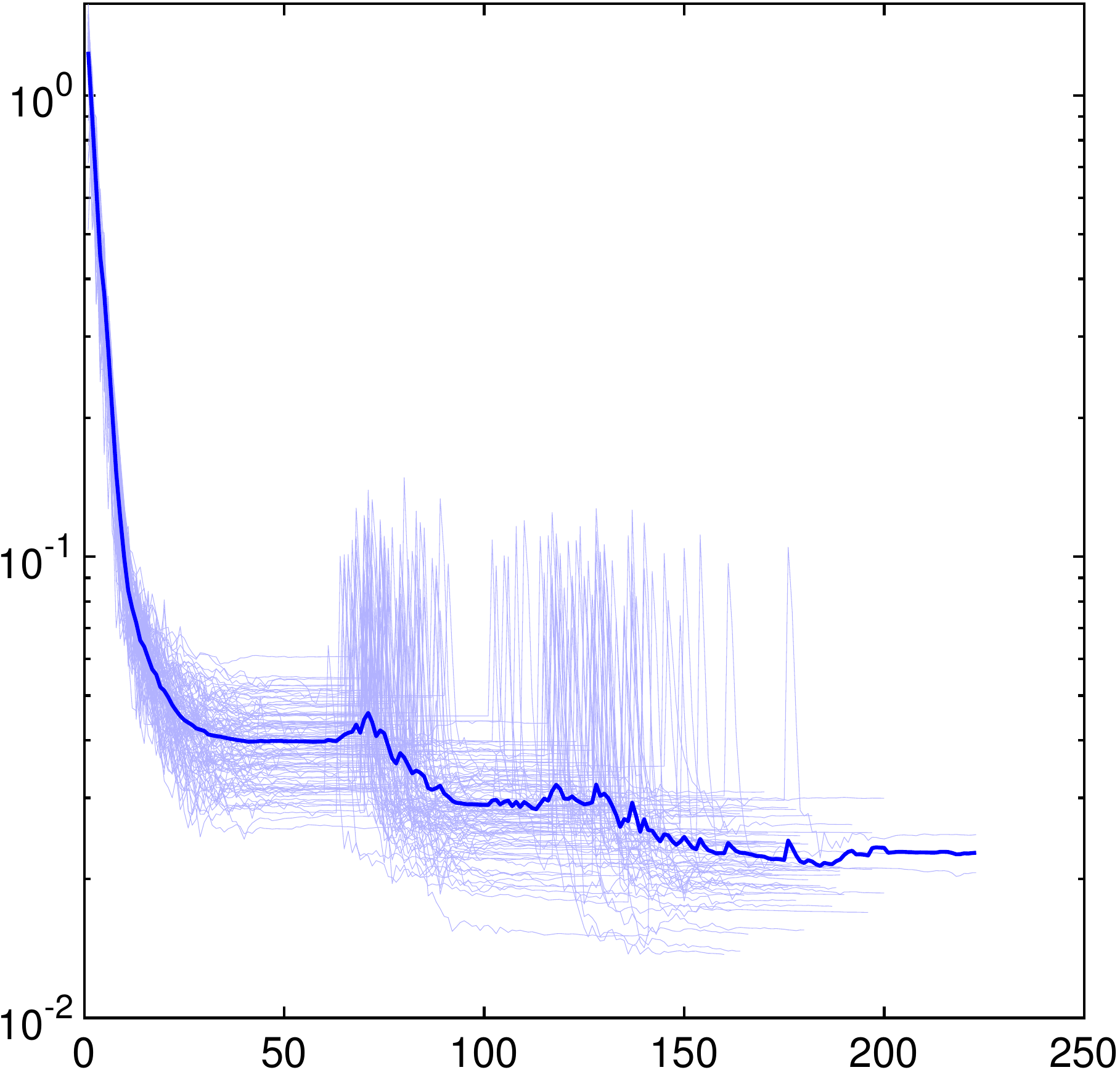}} 
	\subfloat[$d=20$ case 3]{\includegraphics[width=0.32\textwidth]{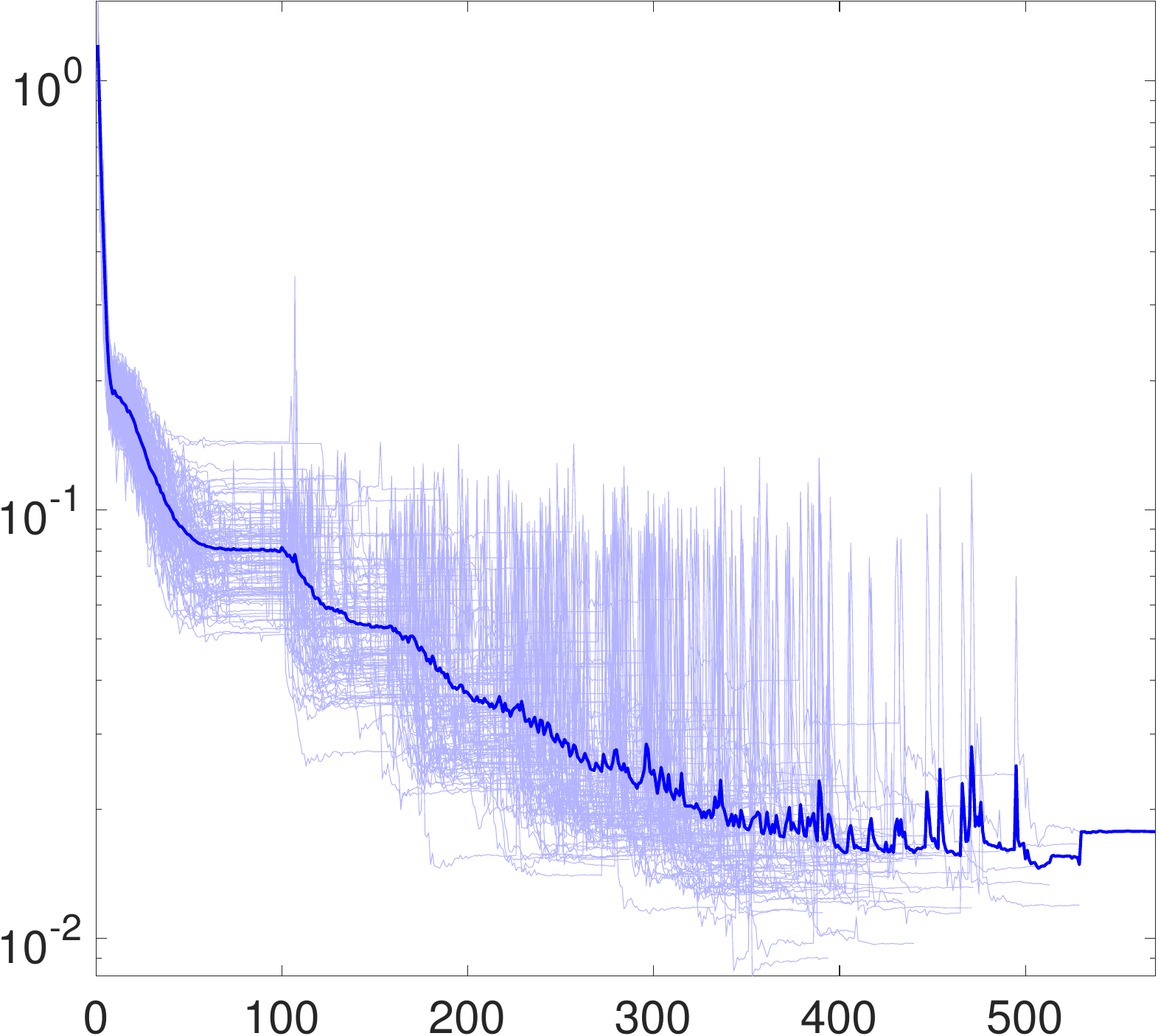}} \\ 
	\subfloat[$d=20$ case 4]{\includegraphics[width=0.33\textwidth]{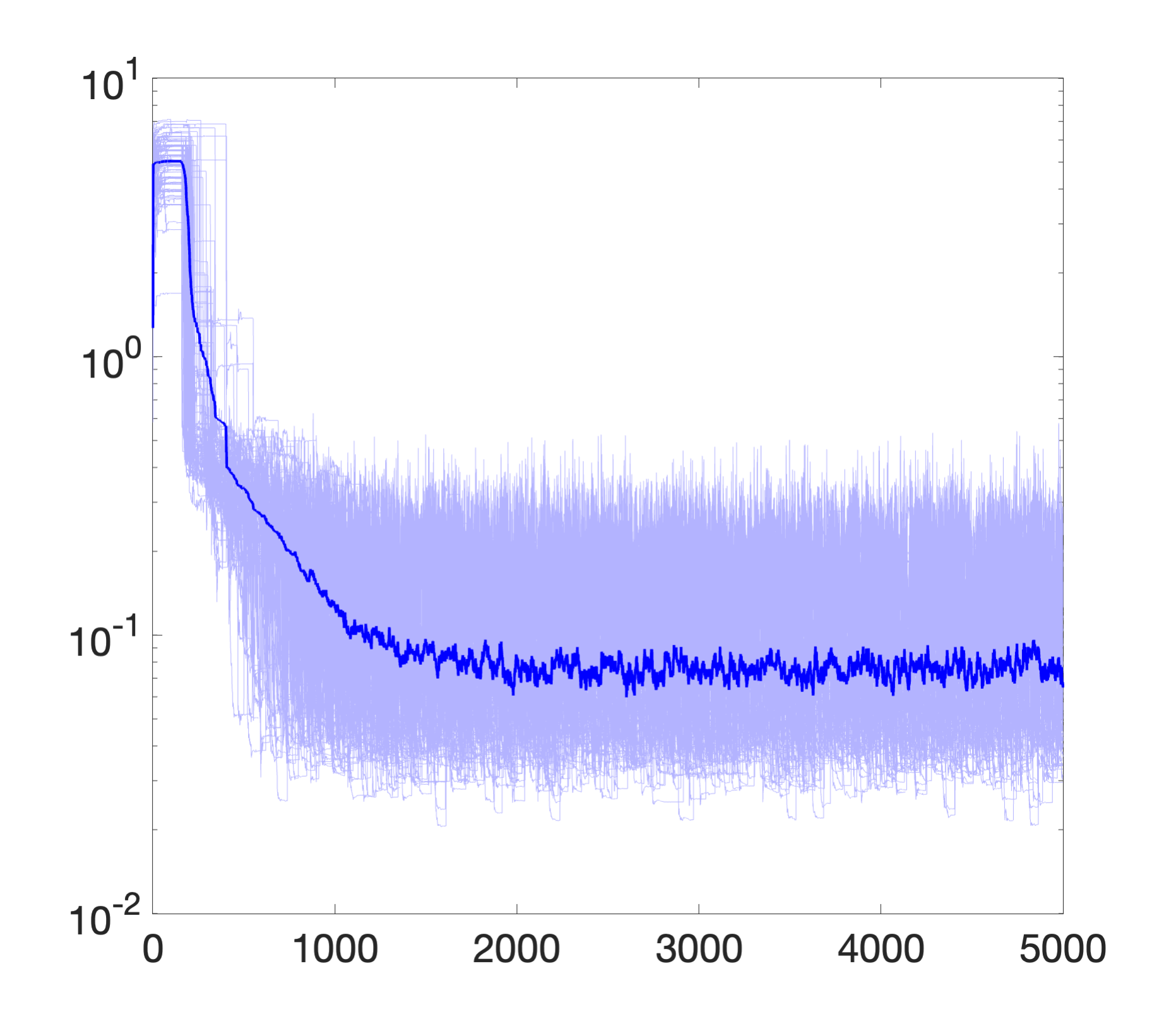}} 
	\caption{The evolution of the distance $D(\bxs,\xs)$ between the consensus point and the exact minimizer. The objective function is the Ackley fuction \eqref{eq:ackley} and the constraints are \eqref{case4} - \eqref{case3}. The light lines are the results from $100$ simulations, while the dark lines are the average values. 
    }
	\label{fig: eq2_er}
\end{figure}
\subsubsection{Thomson's Problem}\label{thompson}
The Thomson problem involves determining the positions for $k$ electrons on a sphere in a way that minimizes the electrostatic interaction energy between each pair of electrons with equal charges. The associated constrained optimization problem is formulated as follows,
\begin{equation*}
	\begin{aligned}
		&\min \qd \mathcal{E}(v_1, ..., v_k) = \frac{1}{k}\sum_{i < j}\frac{1}{\ll v_i - v_j \rl_2} \\
		&\text{s.t.}\qd \ll v_i\rl^2_2 -1=0, \qd \text{for }i = 1, \cdots, k.
	\end{aligned}
\end{equation*}
We use Algorithm \ref{algo2} with 
\[
\begin{aligned}
	&N = 50, \ \a = 50, \ \epsilon = 0.01,\  \lam = \s = 1,\ \g = 0.1,\\& \epsilon_{\text{indep}} = 10^{-14}, \ \epsilon_{\min} = 0.01,\ \s_{\text{indep}} = 0.3,
\end{aligned}
\] 
and all the particle initially follow $\Xj \sim \text{Unif}[-1,1]^{3k}$. 

We run the above algorithm for $k = 2, 3, 8, 15, 56, 470$, which is equivalent to conducting a $3k$-dimensional optimization problem with $k$ constraints. The success rate, averaged relative error, averaged constraints value (value of $\sum_{i=1}^m g_i(v_\alpha\left(\hat{\rho}\right)$) and averaged total steps are summarized in Table \ref{table3}. We define 
\begin{equation}\label{def of rel er}
	\text{relative error} = \frac{|\mathcal{E}(\bxs) - \mathcal{E}(\xs)|}{\mathcal{E}(\xs)} 
\end{equation}
and consider a simulation to be successful if both inequalities are satisfied for the output $\bxs$,
\begin{equation*}
	\text{relative error} \leq 0.05, \qd \sum_{i=1}^k(|\ll v_i \rl^2_2 -1|) \leq 10^{-3}.
\end{equation*}
In Figure \ref{fig: eq3}, the evolution of the relative error across 100 simulations and their average values are depicted, illustrating that all experiments converge to the optimal minimizer within 2000 steps. The nonsmoothness of the average lines is due to the fewer samples in large steps. 
For $k = 56,470$, corresponding to an optimization problem of dimensions $168$ and $1410$ with $56,470$ constraints, the success rate is not $100\%$. However, it remains above $90\%$. Besides, the relative error and constraints value in the third and fourth columns of Table \ref{table3} are over the success simulations, which are very small. This verifies our algorithm has an excellent performance in high dimensions.

\begin{table}[h]
	\centering
	\caption{The result of Algorithm \ref{algo2} on Thomson problem.}
	\label{table3}
	\begin{tabular}{|c|c|c|c|c|c|}
		\hline
		&success rate  & relative error  & constraints value & total steps\\
		\hline
		$k = 2, (d = 6)$ &$100\%$ & $4.4\times 10^{-3}$ & $3.8\times 10^{-11}$ &$382$\\
		\hline 
		$k = 3, (d = 9)$ &$100\%$ & $9.9\times10^{-3}$ & $1.4\times 10^{-10}$ & $407$\\
		\hline 
		$k = 8, (d = 24)$ &$100\%$ & $1.78\times10^{-2}$ & $2.3\times 10^{-10}$ & $567$\\
		\hline 
		$k = 15, (d = 45)$ &$100\%$ & $1.57\times10^{-2}$& $3.4\times 10^{-10}$ & $895$\\
		\hline 
		$k = 56, (d = 168)$ &$97\%$ & $1.44\times10^{-2}$& $2.91\times 10^{-6}$ & $1610$\\
		\hline
		$k = 470, (d = 1410)$ &$93\%$ & $1.95\times10^{-2}$& $4.03\times 10^{-6}$ & $1960$ \\
		\hline
	\end{tabular}
\end{table}

\begin{figure}
	\centering
	\subfloat[k=2]{\includegraphics[width=0.32\textwidth]{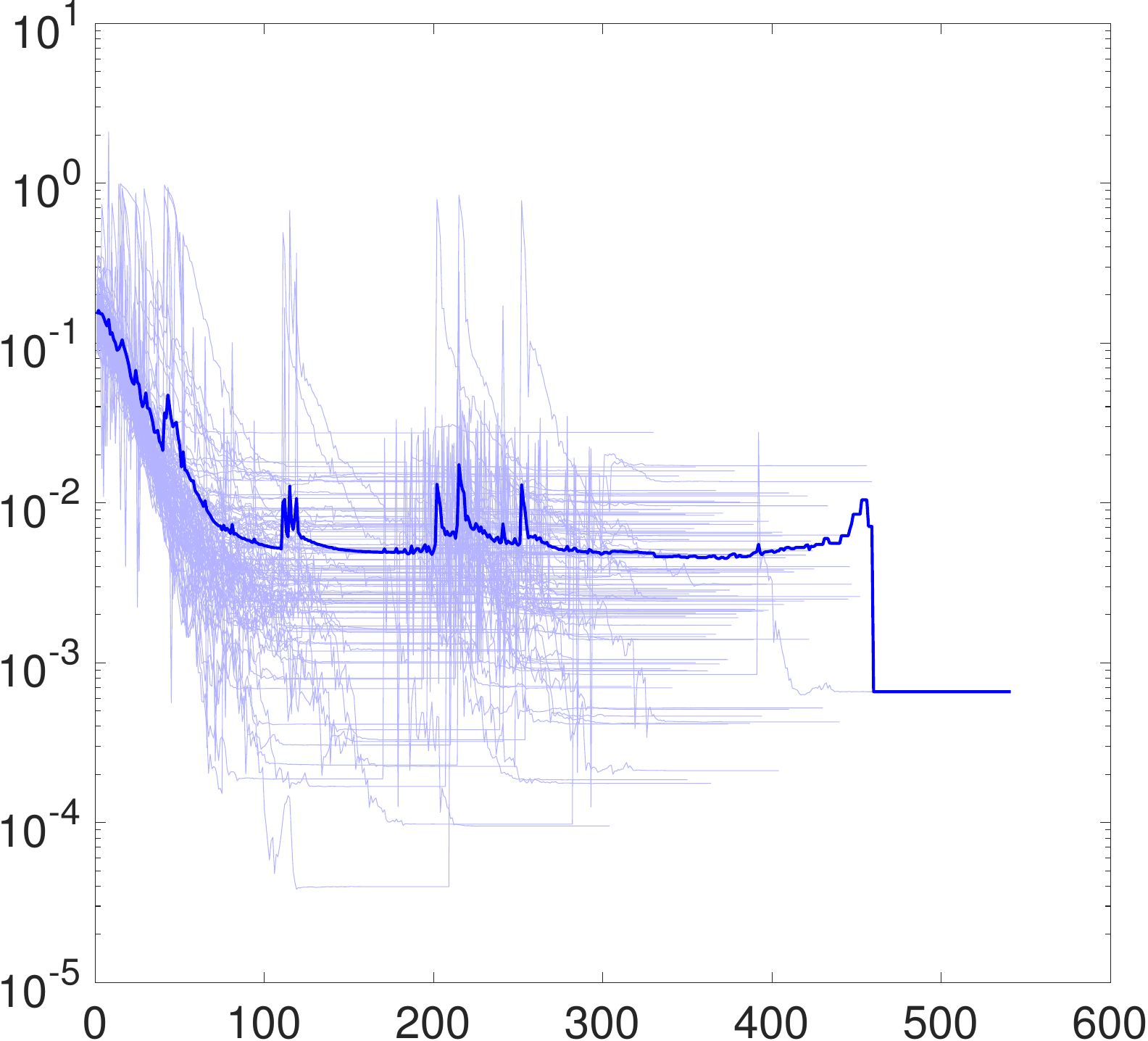}} 
	\subfloat[k=3]{\includegraphics[width=0.32\textwidth]{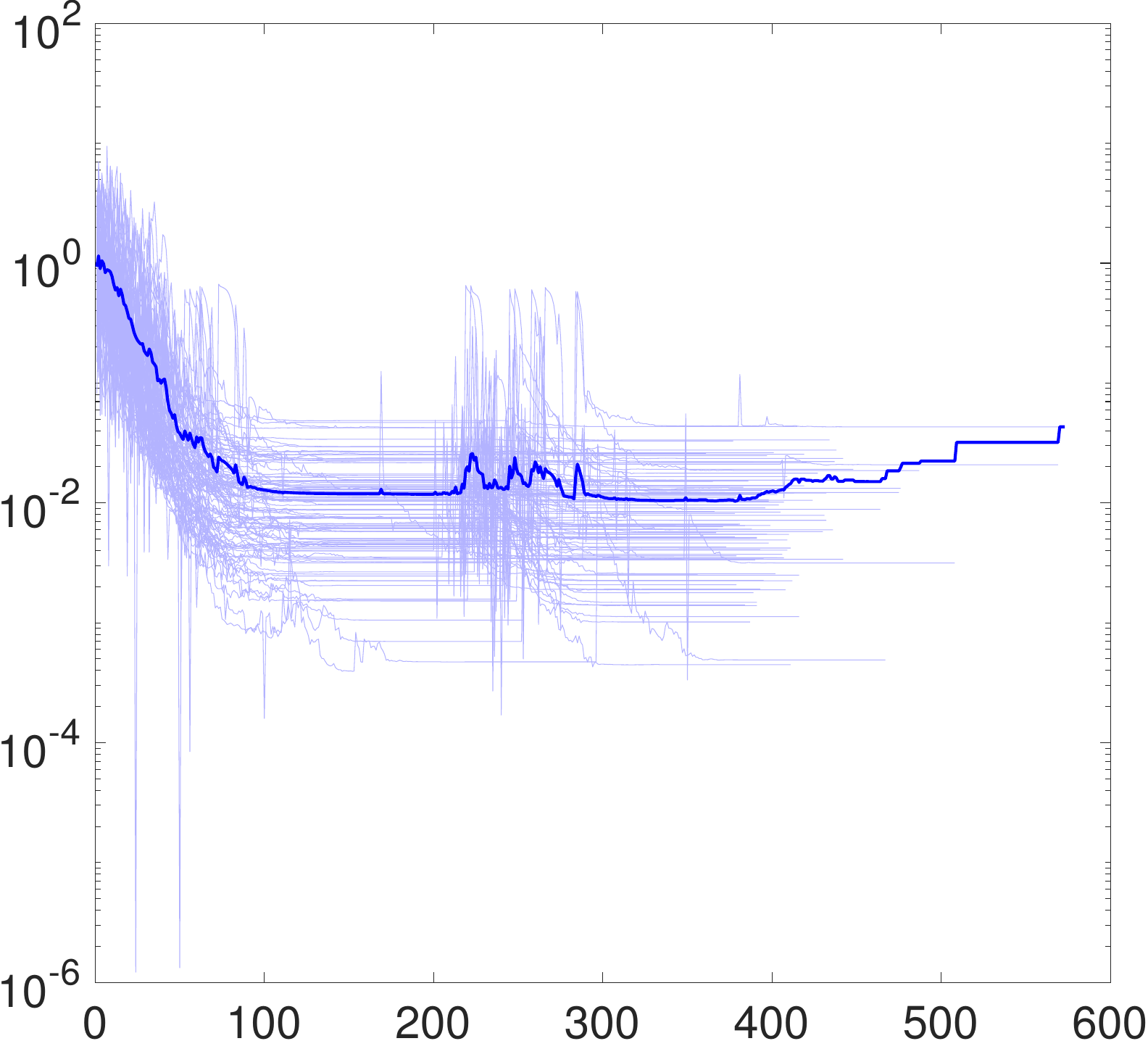}} 
	\subfloat[k=8]{\includegraphics[width=0.32\textwidth]{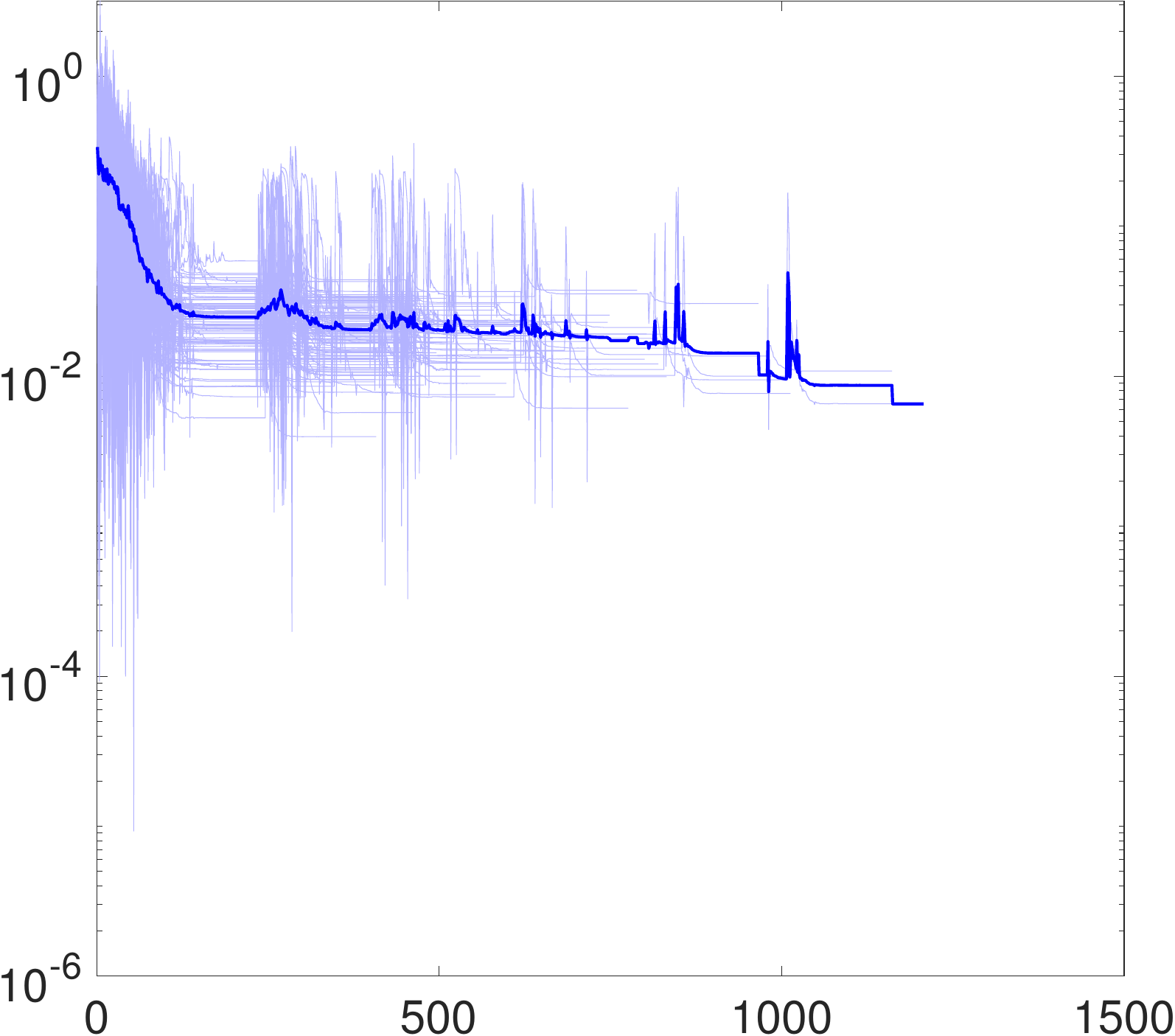}} \\[1ex]
	\subfloat[k=15]{\includegraphics[width=0.32\textwidth]{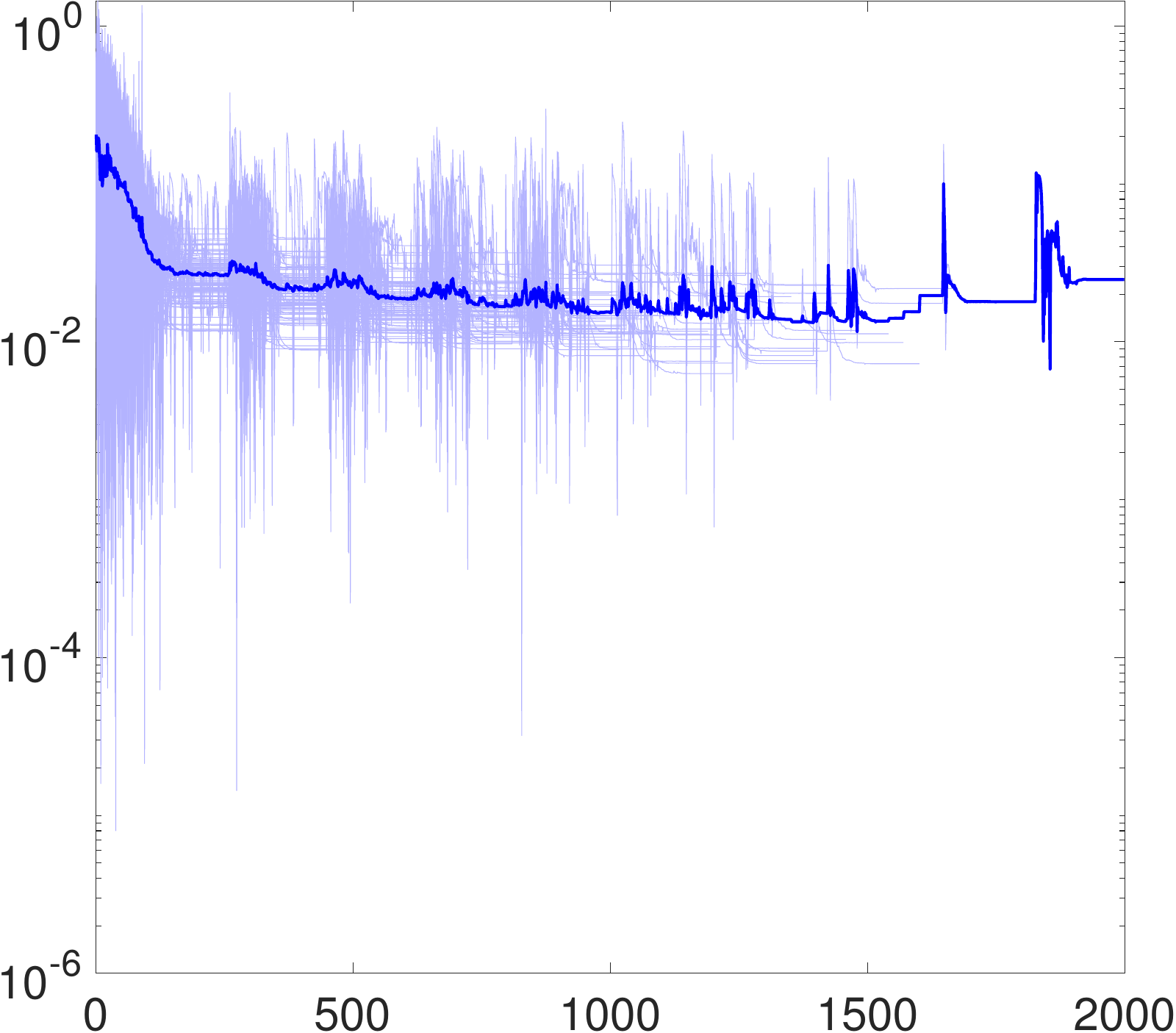}}
	\subfloat[k=56]{\includegraphics[width=0.32\textwidth]{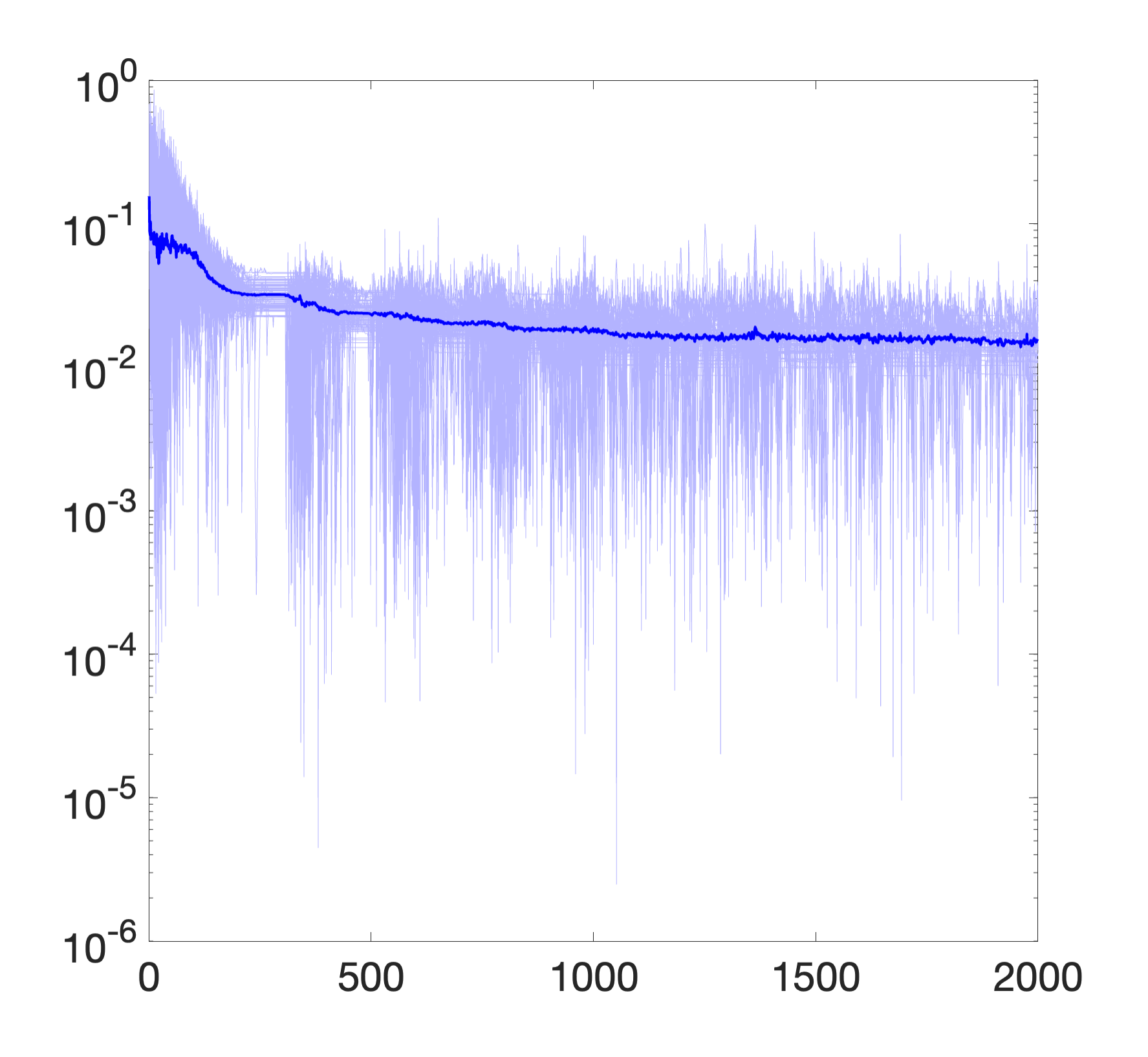}}
	\subfloat[k=470]{\includegraphics[width=0.32\textwidth]{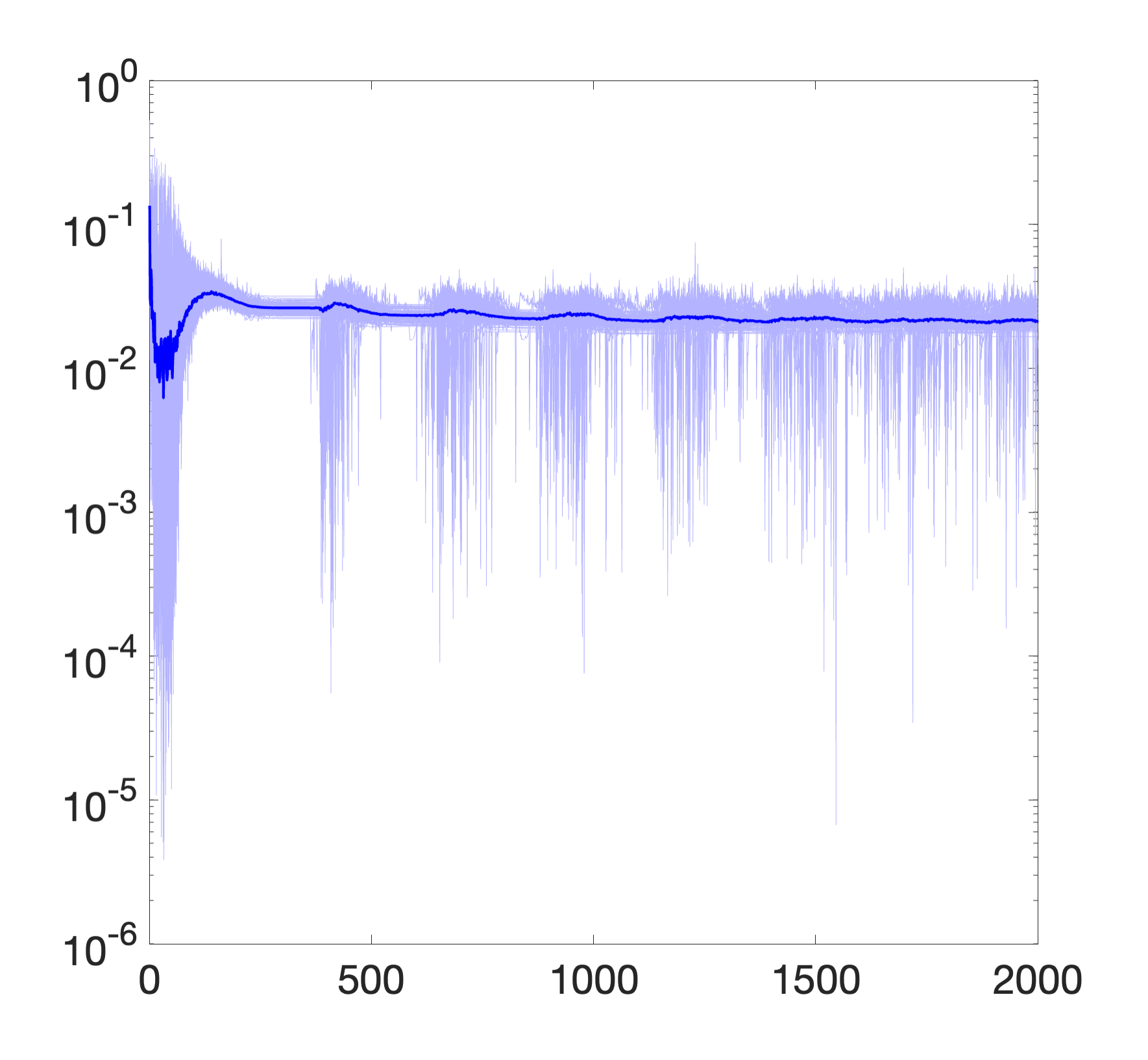}}
	\caption{Thomson Problem: the decay of the relative error over $100$ simulation and its mean.}
	\label{fig: eq3}
\end{figure}
\section{Conclusions}\label{sec:conclusion}

In this paper, we propose a  new CBO-based method for solving constrained non-convex minimization problem with equality constraints and potentially non-differentiable loss functions. Specifically, we augment the original CBO framework with a new forcing  term designed to guide particles toward the constraint set. On the theoretical side, we conduct a rigorous analysis of the mean-field limit for the proposed model (\ref{constraincon}), deriving the corresponding macroscopic model (\ref{SDE})  and establishing well-posedness results for both the microscopic and macroscopic models. To demonstrate the convergence of the method, we study the long-time behavior of the macroscopic model (\ref{SDE}) through an analysis of the associated Fokker-Planck equation (\ref{FPK}). Our results establish that, under Assumption \ref{wellbehave} and with a proper choice of parameters, particles converge to the constrained minimizer $v^*$ with arbitrary closeness. Notably, Assumption \ref{wellbehave} (C) fits well with the basic nature of the algorithm, while Assumption \ref{wellbehave} (B) serves as a technical requirement needed by our proof technique, which might be relaxed further with an alternative proof technique, as suggested by the performance exhibited in numerical experiments where Assumption \ref{wellbehave} (B) may not be strictly satisfied. On the practical side, we proposed a stable algorithm based on the continuous-in-time model. In Section \ref{sec:numerical}, the algorithm's performance is illustrated through a series of experiments, including challenging  high-dimensional problems.

\newpage
\appendix
\begin{center}
{\bf Appendix}
\end{center}

\section{Some details in the Proofs of Well-posedness and Mean-field limit}
\subsection{Proof of Theorem \ref{wellposedmicro}}\label{appenmicro}
Consider the microscopic model, which is governed  by the following equation:
\begin{align*}
	dV^{i,N}_{t}=&-\lambda \left(V^{i,N}_{t}-v_{\alpha}(\hat{\rho}_{t}^N)\right)\,dt-\dfrac{1}{\epsilon} \nabla G\left(V^{i,N}_{ t}\right)\,d t\\
    &+\sigma\text{diag}\left(V^{i,N}_{t} - v_{\alpha}(\hat{\rho}_{t}^{N})\right)\,dB^{i,N}_{t},
\end{align*}
with initial distribution $V^{i,N}_{0}\sim \rho_{0}$,
where $ i=1,\dots,N $. We can concantenate $ \left\{V_{t}^{i,N}\right\}_{i=1}^{N} $ into one vector and put them in one equation. To be specific, we define
\begin{gather*}
	V_{t}=\left(\left(V_{t}^{1,N}\right)^{T},...,\left(V_{t}^{N,N}\right)^{T}\right)^{T}.
\end{gather*}
Then $ V_{t} $ is a vector in $ \mathbb{R}^{Nd} $ for each fixed $ t $ and it will satisfy the following equation:
\begin{gather}\label{oneeq}
	dV_{t}=-\lambda F_{N}(V_{t})\,dt-\dfrac{1}{\epsilon}L_N(V_{t})\,dt+\sigma M_{N}(V_{t})\,dB_{t}^{(N)}.
\end{gather}
Here $ B^{(N)} $ is the standard Wiener process in $ \mathbb{R}^{Nd} $, 
\begin{gather*}
	L_N(V_t)=\left(\left(\nabla G\left(V_t^{1,N}\right)\right)^T,...,\left(\nabla G\left(V_t^{N,N}\right)\right)^T\right)^T\in\mathbb{R}^{Nd},
\end{gather*}
\begin{gather*}
	M_{N}(V_t)=\text{diag}\left(F_{N}^{1}\left(V_t\right),...,F_{N}^{N}\left(V_t\right)\right)\in\mathbb{R}^{Nd\times Nd}
\end{gather*}
and
\begin{gather*}
	F_{N}(V_t)=\left(\left(F_{N}^{1}\left(V_t\right)\right)^{T},...,\left(F_{N}^{N}\left(V_t\right)\right)^{T}\right)^{T}\in\mathbb{R}^{Nd},
\end{gather*}
where
\begin{gather*}
	F_{N}^{i}(V_t)=\dfrac{\sum_{j\neq i}\left(V_t^{i,N}-V_t^{j,N}\right)\omega_{\alpha}\left(V_t^{j,N}\right)}{\sum_{j}\omega_{\alpha}(V_t^{j,N})}\in\mathbb{R}^{d}.
\end{gather*}
Thus it suffices to prove the well-posedness result of equation (\ref{oneeq}). The below theorem gives the well-posedness.

\begin{theorem}
	For each $ n\in\mathbb{N} $, the stochastic differential equation (\ref{oneeq}) has a unique strong solution $ \{V_{t}|t\geq 0\} $  for any initial condition $ V_{0} $ satisfying $ \mathbb{E}[\big\|V_{0}\big\|^{2}] <\infty$.
\end{theorem}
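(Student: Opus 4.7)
The plan is to apply the classical strong existence and uniqueness theorem for It\^o SDEs with locally Lipschitz, linear-growth coefficients (e.g.\ Theorem 3.1 in Chapter IV of Ikeda--Watanabe) to the concatenated system \eqref{oneeq}. I would verify the two hypotheses---local Lipschitz continuity and global linear growth---for the drift $\lambda F_N + \tfrac{1}{\epsilon}L_N$ and the diffusion $\sigma M_N$, after which the conclusion follows at once from the theorem together with the assumed $L^2$ initial moment.

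First I would dispose of the term $L_N$, which is the easiest piece: Assumption \ref{assump1}(3) states that $\nabla G$ is globally Lipschitz, so $L_N$ is globally (hence locally) Lipschitz componentwise, and Assumption \ref{assump1}(4) yields the pointwise bound $\|L_N(V)\|_2 \leq C\|V\|_2$, giving linear growth.

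Next I would establish local Lipschitz continuity and linear growth of $F_N$ and $M_N$. Both reduce to analyzing the weighted mean $v_\alpha(\hat{\rho}^N)$ as a function of $V \in \mathbb{R}^{Nd}$. Writing $v_\alpha = f/g$ with $f(V) = \tfrac{1}{N}\sum_j V^{j,N}\,\omega_\alpha(V^{j,N})$ and $g(V) = \tfrac{1}{N}\sum_j \omega_\alpha(V^{j,N})$, the identity
\[
v_\alpha(V) - v_\alpha(V') = \frac{(f(V)-f(V'))\,g(V') + f(V')\,(g(V')-g(V))}{g(V)\,g(V')}
\]
reduces everything to (i) local Lipschitz control of $f$ and $g$, and (ii) a uniform positive lower bound on $g$. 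For (ii), $\omega_\alpha \geq e^{-\alpha\overline{\mathcal{E}}} > 0$ by Assumption \ref{assump1}(1), so $g(V) \geq e^{-\alpha\overline{\mathcal{E}}}$. For (i), the local Lipschitz property of $\mathcal{E}$ in Assumption \ref{assump1}(2), combined with the boundedness of $\omega_\alpha$ from (1), yields local Lipschitz continuity of $\omega_\alpha = e^{-\alpha\mathcal{E}}$ on compact subsets of $\mathbb{R}^d$, from which local Lipschitz continuity of $f$ and $g$ follows by standard product arguments. For linear growth I would use that $v_\alpha$ is a convex combination of the particle positions, hence $\|v_\alpha(\hat{\rho}^N)\|_2 \leq \max_j \|V^{j,N}\|_2 \leq \|V\|_2$; this propagates to $\|F_N(V)\|_2 \leq C\|V\|_2$ and $\|M_N(V)\|_F \leq C\|V\|_2$ since both are built out of differences $V^{i,N} - v_\alpha$.

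The main obstacle, although essentially technical, is the local Lipschitz estimate for $v_\alpha$: the Lipschitz constant depends on $\alpha$ and on the radius $R$ of the compact set through the exponential weight $\omega_\alpha$, and these constants must be tracked carefully through the ratio identity above to obtain clean local Lipschitz bounds for $F_N$ and $M_N$. Once this is done, the remaining steps of verifying the hypotheses of the quoted SDE well-posedness theorem are standard, and the moment condition $\mathbb{E}[\|V_0\|_2^2]<\infty$ supplies the required initial integrability.
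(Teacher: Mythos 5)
Your proposal is correct and takes essentially the same route as the paper: after concatenating the system into $\mathbb{R}^{Nd}$, both arguments verify local Lipschitz continuity of the coefficients (driven by local Lipschitz continuity of $\omega_\alpha$ and the uniform lower bound $g\geq e^{-\alpha\overline{\mathcal{E}}}$, with the $\nabla G$ term handled directly by Assumption \ref{assump1}(3)--(4)) together with a linear-growth-type bound, and then invoke a standard local-Lipschitz-plus-growth well-posedness theorem. The only cosmetic difference is that the paper verifies the one-sided Khasminskii condition $2V\cdot b(V)+\sigma^2\mathrm{trace}(M_NM_N^T)(V)\leq \tilde{b}_N\|V\|^2$ demanded by the theorem it cites, whereas you verify the (stronger but simpler) termwise linear growth $\|F_N\|,\|L_N\|,\|M_N\|_F\lesssim\|V\|$; both are established from the same underlying estimates and both suffice for non-explosion here.
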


\begin{proof}
	Following the same steps in Theorem 2.1 \cite{CCTT}{}, we obtain
	\begin{gather*}
		-2\lambda V\cdot F_{N}(V)\leq 2\lambda\sqrt{N}\big\|V\big\|^2,\text{ and }
		\text{trace}(M_{N}M_{N}^{T})(V)=\big\|F_N(V)\big\|^2\leq 4N\big\|V\big\|^2.
	\end{gather*}
	Thus \begin{gather*}
		-2\lambda V\cdot F_{N}(V)+\sigma^{2}\text{trace}(M_{N}M_{N}^{T})(V)\leq b_{N}\big\|V\big\|^{2},
	\end{gather*}
	where $ b_{N} $ is a positive number that depends only on $ \lambda,\sigma,d $ and $ N $. Also, we notice that for $ X\in \mathbb{R}^{d} $
	\begin{gather*}
		-\dfrac{2}{\epsilon}X\cdot \nabla G(X)\leq \dfrac{2}{\epsilon} \big\|X\big\|\cdot \big\|\nabla G(X)\big\|\leq \dfrac{2}{\epsilon} \big\|X\big\|\cdot \big\|X\big\| =\dfrac{2}{\epsilon}\big\|X\big\|^{2},
	\end{gather*}
	where we used Assumption \ref{assump1} (4). Thus, 
	\begin{gather*}
		-\dfrac{2}{\epsilon} V\cdot L_N(V) \leq \dfrac{2}{\epsilon}\big\|V\big\|^2.
	\end{gather*}
	This implies that
	\begin{gather*}
		2V\cdot\left(-\lambda F_{N}(V)-\dfrac{1}{\epsilon}L_{N}(V)\right)+\sigma^{2}\text{trace}(M_{N}M_{N}^{T})(V)\leq \tilde{b}_{N}\big\|V\big\|^{2},
	\end{gather*}
	where $ \tilde{b}_{N} $ is some positive number that depends only on $ \lambda,\sigma,d,\epsilon $ and $ N $. Then we apply Theorem 3.1 in \cite{D} to finish the proof.
\end{proof}

\subsection{Proof of Theorem \ref{meanfieldwellposed}}\label{appenmean}
Below is Lemma 3.2 from \cite{CCTT}{}.
\begin{lemma}\label{stability}
	Let $ \CE $ satisfy Assumption \ref{assump1} and $ \mu,\hat{\mu}\in \CP_{2}(\mathbb{R}^{d}) $ with
	\begin{gather*}
		\int \big\|v\big\|^{4}\,d\mu,\int \big\|\hat{v}\big\|\,d\hat{\mu}\leq K.
	\end{gather*}
	Then the following stability estimate holds
	\begin{gather*}
		\big\|v_{\alpha}(\mu)-v_{\alpha}(\hat{\mu})\big\|\leq c_{0}W_{2}(\mu,\hat{\mu}),
	\end{gather*}
	for a constant $ c_{0}>0 $ depending only on $ \alpha,L $ and $ K $, where $W_{2}(\mu,\hat{\mu})$ is the Wasserstein 2-distance between $\mu$ and $\hat{\mu}$.
\end{lemma}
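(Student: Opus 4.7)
The plan is to control $v_\alpha(\mu)-v_\alpha(\hat\mu)$ by decomposing it into two pieces that isolate, respectively, the numerator and denominator of the Gibbs-type average. Using the shorthand $\omega_\alpha(v)=e^{-\alpha\CE(v)}$, I would write
\begin{align*}
v_\alpha(\mu)-v_\alpha(\hat\mu)
&=\frac{\int v\,\omega_\alpha\,d\mu-\int v\,\omega_\alpha\,d\hat\mu}{\|\omega_\alpha\|_{L^1(\mu)}}
+\Bigl(v_\alpha(\hat\mu)\Bigr)\cdot\frac{\|\omega_\alpha\|_{L^1(\hat\mu)}-\|\omega_\alpha\|_{L^1(\mu)}}{\|\omega_\alpha\|_{L^1(\mu)}}.
\end{align*}
So the problem reduces to estimating (i) the denominator from below, (ii) $\int\psi\,d(\mu-\hat\mu)$ for the two integrands $\psi(v)=v\,\omega_\alpha(v)$ and $\psi(v)=\omega_\alpha(v)$, and (iii) $v_\alpha(\hat\mu)$ itself.

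For (i), Assumption \ref{assump1}(1) gives $\omega_\alpha(v)\geq e^{-\alpha\overline{\CE}}$ pointwise, so $\|\omega_\alpha\|_{L^1(\mu)}\geq e^{-\alpha\overline{\CE}}$; this is the uniform lower bound that lets the quotients be controlled. For (iii), the second inequality in Assumption \ref{assump1}(2) combined with the moment bound $K$ gives $\|v_\alpha(\hat\mu)\|\leq C(K,\alpha)$. The main work is (ii): I would show that $v\mapsto v\,\omega_\alpha(v)$ is locally Lipschitz with polynomial-growth constant. Using the local Lipschitz assumption on $\CE$,
\begin{align*}
|\omega_\alpha(v)-\omega_\alpha(\hat v)|\leq \alpha e^{-\alpha\underline{\CE}}|\CE(v)-\CE(\hat v)|\leq \alpha L e^{-\alpha\underline{\CE}}(\|v\|+\|\hat v\|)\|v-\hat v\|,
\end{align*}
and then splitting $v\omega_\alpha(v)-\hat v\omega_\alpha(\hat v)=v(\omega_\alpha(v)-\omega_\alpha(\hat v))+(v-\hat v)\omega_\alpha(\hat v)$ yields
\begin{align*}
\|v\omega_\alpha(v)-\hat v\omega_\alpha(\hat v)\|\leq \bigl[\alpha L e^{-\alpha\underline{\CE}}\|v\|(\|v\|+\|\hat v\|)+e^{-\alpha\underline{\CE}}\bigr]\|v-\hat v\|.
\end{align*}

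The key step is then to realize the difference of integrals through an optimal $W_2$-coupling $\pi\in\Pi(\mu,\hat\mu)$ and apply Cauchy--Schwarz:
\begin{align*}
\Bigl\|\int v\omega_\alpha\,d\mu-\int v\omega_\alpha\,d\hat\mu\Bigr\|
&\leq \int \|v\omega_\alpha(v)-\hat v\omega_\alpha(\hat v)\|\,d\pi(v,\hat v)\\
&\leq \Bigl(\int \bigl[\alpha L e^{-\alpha\underline{\CE}}\|v\|(\|v\|+\|\hat v\|)+e^{-\alpha\underline{\CE}}\bigr]^{2}d\pi\Bigr)^{1/2}W_2(\mu,\hat\mu).
\end{align*}
The integrand on the right is polynomial of degree $\leq 4$ in $\|v\|,\|\hat v\|$; together with the fourth-moment bound $K$ on the marginals this factor is $\leq C(\alpha,L,K)$. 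An identical argument with the integrand $\omega_\alpha$ in place of $v\omega_\alpha$ handles the denominator difference, yielding the same type of $W_2$ bound.

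Assembling these pieces, each of the two terms in the decomposition is bounded by a constant $c_0=c_0(\alpha,L,K)$ times $W_2(\mu,\hat\mu)$, which gives the claim. The main obstacle I foresee is technical rather than conceptual: carefully tracking the $\alpha$-dependent exponentials $e^{-\alpha\underline{\CE}}/e^{-\alpha\overline{\CE}}$ and the polynomial moments to ensure the Cauchy--Schwarz factor is finite uniformly in $\mu,\hat\mu$ (with fourth moments bounded by $K$). Apart from this bookkeeping, the argument is a standard quotient-rule plus optimal-transport estimate and does not require any additional structure beyond Assumption \ref{assump1}.
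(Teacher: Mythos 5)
The paper does not supply its own proof of this lemma; it is quoted verbatim as ``Lemma 3.2 from \cite{CCTT}'' and used as a black box. Your proposal reconstructs, essentially correctly, the standard argument one finds in that reference: a quotient-rule decomposition of $v_\alpha(\mu)-v_\alpha(\hat\mu)$ into a numerator difference plus a ``$v_\alpha(\hat\mu)$ times denominator difference'' term, a uniform lower bound $\|\omega_\alpha\|_{L^1(\mu)}\ge e^{-\alpha\overline{\CE}}$ coming from boundedness of $\CE$, a local Lipschitz estimate for $v\mapsto v\,\omega_\alpha(v)$ and $v\mapsto\omega_\alpha(v)$ driven by the mean-value theorem and Assumption~\ref{assump1}(2), and then a realization of the integral differences via an optimal $W_2$-coupling followed by Cauchy--Schwarz. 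Each of those steps is sound, and the decomposition algebra checks out. One small mismatch worth flagging: as stated, the lemma hypothesizes $\int\|v\|^4\,d\mu\le K$ but only $\int\|\hat v\|\,d\hat\mu\le K$, whereas your Cauchy--Schwarz factor contains mixed fourth-degree terms such as $\|v\|^2\|\hat v\|^2$ and $\|v\|^3\|\hat v\|$, which require a bounded \emph{fourth} moment of $\hat\mu$ as well. This is almost certainly a typo in the quoted statement (the exponent on the $\hat\mu$-moment should be $4$); indeed every place the lemma is invoked in this paper (e.g.\ Appendix~\ref{appenmean}, Step~4) the two measures enjoy symmetric fourth-moment bounds. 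You should either point out the typo and work under the symmetric hypothesis, or rearrange the splitting so the higher powers land on $v$ rather than $\hat v$, but some such care is needed before the Cauchy--Schwarz factor is legitimately $\le C(\alpha,L,K)$. Also note your constant unavoidably involves $\overline{\CE}$ and $\underline{\CE}$ (through $e^{\alpha(\overline{\CE}-\underline{\CE})}$); the lemma's phrase ``depending only on $\alpha,L,K$'' implicitly absorbs these into the $\alpha$-dependence. Apart from this bookkeeping your argument is complete.
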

Also, we recall Theorem 11.3 in \cite{GT}{}.

\begin{theorem}\label{schroader}
	Let $ T $ be a compact mapping of a Banach space $ \mathcal{B} $ into itself, and suppose there exists a constant $ M $ such that
	\begin{gather*}
		\big\|x\big\|_{\mathcal{B}}<M
	\end{gather*}
	for all $ x\in \CB $ and $ \sigma\in[0,1] $ satisfying $ x=\sigma Tx $. Then $ T $ has a fixed point.
\end{theorem}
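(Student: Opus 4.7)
The plan is to deduce the theorem from Schauder's fixed point theorem via the standard radial retraction trick, using the a priori bound hypothesis to exclude the boundary case that would otherwise arise from retracting onto $\overline{B}_M$.

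First I would introduce the radial retraction $\rho : \mathcal{B} \to \overline{B}_M$ defined by $\rho(x) = x$ when $\|x\|_{\mathcal{B}} \le M$ and $\rho(x) = M x / \|x\|_{\mathcal{B}}$ otherwise. A quick check shows that $\rho$ is continuous (in fact $1$-Lipschitz), maps $\mathcal{B}$ into the closed bounded convex set $\overline{B}_M$, and fixes this set pointwise. The composition $T^{*} := \rho \circ T$ is then a continuous self-map of $\overline{B}_M$ whose image is relatively compact: the bounded set $T(\overline{B}_M)$ is relatively compact since $T$ is a compact mapping, and applying the continuous map $\rho$ preserves this property. Schauder's fixed point theorem, applied to the nonempty closed bounded convex set $\overline{B}_M$, then produces an $x_0 \in \overline{B}_M$ with $T^{*} x_0 = x_0$.

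The second step is a case analysis on $\|T x_0\|_{\mathcal{B}}$. If $\|T x_0\|_{\mathcal{B}} \le M$, then $\rho$ acts as the identity on $T x_0$, and one obtains the honest fixed point identity $x_0 = T x_0$. In the opposite case $\|T x_0\|_{\mathcal{B}} > M$, one computes $\rho(T x_0) = (M / \|T x_0\|_{\mathcal{B}})\, T x_0$, which rearranges to $x_0 = \sigma T x_0$ with $\sigma := M / \|T x_0\|_{\mathcal{B}} \in (0,1)$, and simultaneously $\|x_0\|_{\mathcal{B}} = M$ since $x_0$ lies in the image of $\rho$ restricted to the exterior of $\overline{B}_M$. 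This directly contradicts the strict bound $\|x\|_{\mathcal{B}} < M$ guaranteed by the hypothesis for any solution of $x = \sigma T x$ with $\sigma \in [0,1]$, ruling out the second case.

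The only genuinely substantive step is the case analysis: it is the mechanism by which the a priori bound is converted into an existence statement, and it is precisely where the \emph{strict} inequality $\|x\|_{\mathcal{B}} < M$ in the hypothesis is used, since equality would be compatible with the boundary scenario $\|x_0\|_{\mathcal{B}} = M$ produced by the retraction. Everything else is bookkeeping, modulo the availability of Schauder's fixed point theorem, which itself rests on a finite-dimensional approximation of compact operators combined with Brouwer's theorem on convex compact subsets of $\mathbb{R}^n$.
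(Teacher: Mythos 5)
Your proof is correct, and it is exactly the standard argument for the Leray--Schauder fixed point theorem: radial retraction onto $\overline{B}_M$, Schauder's theorem applied to $\rho\circ T$, then a case analysis in which the a priori bound $\|x\|_{\mathcal B}<M$ rules out the boundary scenario. The paper does not prove this result itself but cites it as Theorem 11.3 of Gilbarg--Trudinger, and your argument is precisely the proof given there, so the two approaches coincide.
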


\begin{proof}[Proof of Theorem \ref{meanfieldwellposed}]
	\textbf{Step 1 (construct a map $ T $)}:\\
	Let us fix $ u_{t}\in \mathcal{C}[0,T] $. By Theorem 6.2.2 in \cite{A}{}, there is a unique solution to
	\begin{equation}\label{fakePDE}
		\begin{gathered}
			d V_{t}=-\lambda(V_{t}-u_{t})\,dt-\dfrac{1}{\epsilon}\nabla G\,dt+\sigma\text{diag}\left(V_{t} - u_t\right)\,dB_{t},\\
			V_{0}\sim\rho_{0},
		\end{gathered}
	\end{equation}
	We use $ \rho_{t} $ to denote the corresponding law of the unique solution. Using $ \rho_{t} $, one can  compute $ v_{\alpha}\left(\rho_{t}\right) $, which is uniquely determined by $ u_{t} $ and is in $ \mathcal{C}[0,T] $. Then one can construct a map from $ \mathcal{C}[0,T] $ to $ \mathcal{C}[0,T] $ which maps $ u_{t} $ to $ v_{\alpha}\left(\rho_{t}\right) $.\\
	\textbf{Step 2 ($ T $ is compact)}:\\
	Firstly, from Chapter 7 in \cite{A}{}, we obtain the following inequality for the solution $ V_{t} $ to equation (\ref{fakePDE}):
	\begin{gather*}
		\mathbb{E}\left[\big\|V_{t}\big\|\right]^{4}\leq \left(1+\mathbb{E}\left[\big\|V_{0}\big\|\right]^{4}\right)e^{ct}
	\end{gather*}
	where $ c>0 $. Thus one can deduce
	\begin{gather}\label{moment}
		\mathbb{E}\left[\big\|V_{t}\big\|^{4}\right]\lesssim 1 \text{ and } 	\mathbb{E}\left[\big\|V_{t}\big\|^{2}\right]\lesssim 1.
	\end{gather}
	Now it suffices to prove that $ \mathrm{Im}T $ is in $\mathcal{C}^{1/2}[0,T] $, which is compactly embedded into $ \mathcal{C}[0,T] $. 
	
	By Lemma \ref{stability}, one  obtains
	\begin{gather}\label{lem21}
		\big\|v_{\alpha}\left(\rho_{t}\right)-v_{\alpha}(\rho_{s})\big\|\leq c_{0}W_2(\rho_{t},\rho_{s}).
	\end{gather}
	For $ W_2(\rho_{t},\rho_{s}) $, it holds that
	\begin{gather}\label{wass}
		W_2^2(\rho_{t},\rho_{s})\leq \mathbb{E}\left[\big\|V_{t}-V_{s}\big\|^{2}\right].
	\end{gather}
	Further we can deduce
	\begin{gather*}
		V_{t}-V_{s}=\int_{s}^{t} -\lambda(V_{\tau}-u_{\tau})-\dfrac{1}{\epsilon}\nabla G(V_{\tau})\,d\tau+\sigma \int_{s}^{t}\text{diag}\left(V_\tau-u_\tau\right)\,dB_{\tau}.
	\end{gather*}
	Thus
	\begin{equation}\label{mainestimate}
		\begin{aligned}
			\mathbb{E}\left[\big\|V_{t}-V_{s}\big\|^{2}\right]\lesssim &\mathbb{E}\left[\big\|\int_{s}^{t} (V_{\tau}-u_{\tau})\,d\tau\big\|^{2}\right]+\mathbb{E}\left[\big\|\int_{s}^{t}\nabla G(V_{\tau})\,d\tau\big\|^{2}\right]\\
			&+\mathbb{E}\left[\big\|\int_{s}^{t}\text{diag}\left(V_\tau-u_\tau\right)\,dB_{\tau}\big\|^{2}\right].
		\end{aligned}
	\end{equation}
	Now we  bound from above the three terms on the right hand side respectively. For the first term, we have
	\begin{equation}\label{1}
		\begin{aligned}
			\mathbb{E}\left[\big\|\int_{s}^{t} (V_{\tau}-u_{\tau})\,d\tau\big\|^{2}\right]\leq& \mathbb{E}\left[(\int_{s}^{t} \big\|V_{\tau}-u_{\tau}\big\|\,d\tau)^{2}\right]\\
			\leq&|t-s|\mathbb{E}\left[\int_{s}^{t}\big\|V_{\tau}-u_{\tau}\big\|^{2}\,d\tau\right]\\
			\lesssim&
			|t-s|\left(\int_{s}^{t}\mathbb{E}\left[\big\|V_{\tau}\big\|^{2}\right]\,d\tau+\int_{s}^{t}\big\|u_{\tau}\big\|^{2}\,d\tau\right)
			\lesssim |t-s|,
		\end{aligned}
	\end{equation}
	where in the second inequality we used Cauchy's inequality and in the last inequality, we used (\ref{moment}) and the fact that $ u_{t} $ is continuous thus bounded in $ [0,T] $.
	
	For the second term, we have
	\begin{equation}\label{2}
		\begin{aligned}
			\mathbb{E}\left[\big\|\int_{s}^{t}\nabla G(V_{\tau})\,d\tau\big\|^{2}\right]
			&\leq \mathbb{E}\left[\left(\int_{s}^{t}\big\|\nabla G(V_{\tau})\big\|\,d\tau\right)^{2}\right]\\
			&\leq  \mathbb{E}\left[\left(\int_{s}^{t}\big\|V_{\tau}\big\|\,d\tau\right)^{2}\right]\\
			&\leq |t-s|\mathbb{E}\left[\int_{s}^{t}\big\|V_{\tau}\big\|^{2}\,d\tau\right]\lesssim |t-s|,
		\end{aligned}
	\end{equation}
	where in the second inequality, we used Assumption \ref{assump1} (4) and in the last inequality, we used (\ref{moment}).
	
	For the third term in (\ref{mainestimate}), we have the following estimation:
	\begin{equation}\label{3}
		\begin{aligned}
			\mathbb{E}\left[\big\|\int_{s}^{t}\text{diag}\left(V_\tau-u_\tau\right)\,dB_{\tau}\big\|^{2}\right]
			&=\mathbb{E}\left[\int_{s}^{t}\big\|\text{diag}\left(V_\tau-u_\tau\right)\big\|^{2}_F\,d\tau\right]\\
			&\leq |t-s|\mathbb{E}\left[\int_{s}^{t}\big\|V_{\tau}-u_{\tau}\big\|^{4}\,d\tau\right]\\
			&\lesssim |t-s|(\mathbb{E}\left[\int_{s}^{t}\big\|V_{\tau}\big\|^{4}\,d\tau\right]+\int_{s}^{t}\big\|u_{\tau}\big\|^{4}\,d\tau)\lesssim |t-s|,
		\end{aligned}
	\end{equation}
	where the first equality comes from It\^o's Isometry, while  in the first inequality, we used Cauchy's inequality and in the last inequality, we used (\ref{moment}) and the fact that $ u_{t} $ is bounded.
	
	Finally, we combine (\ref{lem21}), (\ref{wass}), (\ref{mainestimate}), (\ref{1}), (\ref{2}) and (\ref{3}) to deduce
	\begin{gather*}
		\big\|v_{\alpha}\left(\rho_{t}\right)-v_{\alpha}(\rho_{s})\big\|\lesssim |t-s|^{1/2},
	\end{gather*}
	which implies that $ v_{\alpha}\left(\rho_{t}\right)\in\mathcal{C}^{0,1/2}[0,T] $. Thus, $ T $ is compact.\\
	\textbf{Step 3 (Existence):}\\
	We make use of Theorem \ref{schroader}. Let us take $ u_{t} $ satisfying $ u_{t}= \theta Tu_{t} $ for $ \theta\in[0,1] $. We now try to prove $ \big\|u_{t}\big\|_{\infty} \leq q $ for some finite $ q>0 $. 
	
	First, one has
	\begin{gather}\label{utbound}
		\big\|u_{t}\big\|^{2}=\theta^{2}\big\|v_{\alpha}\left(\rho_{t}\right)\big\|^{2}\leq \theta^{2}e^{\alpha(\overline{\CE}-\underline{\CE})}\int \big\|v\big\|^{2}\,d\rho_{t}.
	\end{gather}
	Then, to bound $ \|u_{t}\| $, we try to bound $ \int \|v\|^{2}\,d\rho_{t} $. Since $ \rho_{t} $ is a weak solution to the corresponding Fokker-Planck equation (\ref{FPK}), one has
	\begin{align*}
		\dfrac{d}{dt}\int \big\|v\big\|^{2}\,d\rho_{t}&=\int \sigma^{2}\big\|v-u_{t}\big\|^{2}-2\lambda (v-u_{t})\cdot v-\dfrac{2}{\epsilon}\nabla G(v)\cdot v\,d\rho_{t}\\
		&=\int (\sigma^{2}-2\lambda)\big\|v\big\|^{2}+2(\lambda-\sigma^{2})v\cdot u_{t}+d\sigma^{2}\big\|u_{t}\big\|^{2}-\dfrac{2}{\epsilon}\nabla G(v)\cdot v\,d\rho_{t}.
	\end{align*}
	Since 
	\begin{align*}
		\int	v\cdot u_{t}\,d\rho_{t}&\leq \int\big\|v\big\|\cdot\big\|u_{t}\big\|\,d\rho_{t}\lesssim \int \big\|v\big\|^{2}\,d\rho_{t}+\int \big\|u_{t}\big\|^{2}\,d\rho_{t}
		=\int \big\|v\big\|^{2}\,d\rho_{t}+\big\|u_{t}\big\|^{2}
	\end{align*}
	and
	\begin{gather*}
		\big\|\nabla G(v)\big\|\lesssim \big\|v\big\|,
	\end{gather*}
	one can further deduce
	\begin{gather*}
		\dfrac{d}{dt}\int \big\|v\big\|^{2}\,d\rho_{t}\lesssim  \int \big\|v\big\|^{2}\,d\rho_{t}+\big\|u_{t}\big\|^{2}\lesssim \int \big\|v\big\|^{2}\,d\rho_{t},
	\end{gather*}
	where in the last inequality, we used (\ref{utbound}). Applying Gronwall's inequality yields that $ \int \big\|v\big\|^{2}\,d\rho_{t} $ is bounded, and from the above inequality, the bound does not depend on $ u_{t} $ itself. Thus we'\ have shown that $ \|u_{t}\|_{\infty} $ is bounded by a uniform constant $ q $. Theorem \ref{schroader} then gives the existence.\\
	\textbf{Step 4 (Uniqueness):}\\
	Suppose we are given two fixed points of $ T $: $ u_{t} $ and $ \hat{u}_{t} $. We use $ V_{t} $ and $ \hat{V}_{t} $ respectively to represent the solutions of equation (\ref{fakePDE}) with $ u_{t} $ and $ \hat{u}_{t} $ plugged in. We also assume that $ V_{t} $ and $ \hat{V}_{t} $ are defined in the same probability space. From the steps above, there exist constants $ q>0 $ and $ K>0 $ such that
	\begin{gather}\label{inftynorm}
		\big\|u_{t}\big\|_{\infty},\big\|\hat{u}_{t}\big\|_{\infty}<q
	\end{gather}
	and
	\begin{gather}\label{moment2}
		\sup_{t\in[0,T]}\int\big\|v\big\|^{4}\,d\rho_{t},\sup_{t\in[0,T]}\int\big\|v\big\|^{4}\,d\hat{\rho}_{t}<K,
	\end{gather}
	where $ \rho_{t} $ and $ \hat{\rho}_{t} $ are the distributions of $ V_{t} $ and $ \hat{V}_{t} $ respectively. Let us consider $ Z_{t}=V_{t}-\hat{V}_{t} $. One has
	\begin{align*}
		Z_{t}=&Z_{0}-\lambda \int_{0}^{t}Z_{\tau}\,d\tau+\lambda\int_{0}^{t}(u_{\tau}-\hat{u}_{\tau})\,d\tau-\dfrac{1}{\epsilon}\int_{0}^{t}\left(\nabla G\left(V_{\tau}\right)-\nabla G\left(\hat{V}_{\tau}\right)\right)\,d\tau\\
		&+\sigma\int_{0}^{t}\text{diag}\left(\left(V_{\tau}-u_{\tau}\right)-\left(\hat{V}_{\tau}-\hat{u}_{\tau}\right)\right)\,dB_{\tau}.
	\end{align*}
	Thus
	\begin{equation}\label{step4}
		\begin{aligned}
			&&&\mathbb{E}\left[\big\|Z_{t}\big\|^{2}\right] 
			\\
            &\lesssim &&
			\mathbb{E}\left[\big\|Z_{0}\big\|^{2}\right]+\mathbb{E}\left[(\int_{0}^{t}\big\|Z_{\tau}\big\|\,d\tau)^{2}\right]+\mathbb{E}\left[(\int_{0}^{t}\big\|u_{\tau}-\hat{u}_{\tau}\big\|\,d\tau)^{2}\right]\\          
            &\quad\quad+&&\mathbb{E}\left[\left(\int_{0}^{t}\big\|\nabla G(V_{\tau})-\nabla G(\hat{V}_{\tau})\big\|\,d\tau\right)^{2}\right]
        \\
        &&&\times\mathbb{E}\left[\big\|\int_{0}^{t}\text{diag}\left(\left(V_{\tau}-u_{\tau}\right)-\left(\hat{V}_{\tau}-\hat{u}_{\tau}\right)\right)\,dB_{\tau}\big\|^{2}\right].
		\end{aligned}
	\end{equation}
	For $ \mathbb{E}[(\int_{0}^{t}\big\|Z_{\tau}\big\|\,d\tau)^{2}] $, we have that
    \begin{equation}\label{step41}
	\begin{aligned}
		&\mathbb{E}\left[(\int_{0}^{t}\big\|u_{\tau}-\hat{u}_{\tau}\big\|\,d\tau)^{2}\right]\\
        &=\mathbb{E}\left[(\int_{0}^{t}\big\|v_{\alpha}(\rho_{\tau})-v_{\alpha}(\hat{\rho}_{\tau})\big\|\,d\tau)^{2}\right]\leq t\mathbb{E}\left[\int_{0}^{t}\big\|v_{\alpha}(\rho_{\tau})-v_{\alpha}(\hat{\rho}_{\tau})\big\|^{2}\,d\tau\right],
	\end{aligned}
    \end{equation}
	where in the inequality, we used the fact that $ u_{t} $ and $ \hat{u}_{t} $ are fixed points. For $ \mathbb{E}[(\int_{0}^{t}\big\|\nabla G(V_{\tau})-\nabla G(\hat{V}_{\tau})\big\|\,d\tau)^{2}] $, one has
	\begin{equation}\label{step42}
		\begin{aligned}
			\mathbb{E}\left[(\int_{0}^{t}\big\|\nabla G(V_{\tau})-\nabla G(\hat{V}_{\tau})\big\|\,d\tau)^{2}\right]&\lesssim \mathbb{E}\left[(\int_{0}^{t}\big\|V_{\tau}- \hat{V}_{\tau}\big\|\,d\tau)^{2}\right] \\&=\mathbb{E}\left[(\int_{0}^{t}\big\|Z_{\tau}\big\|\,d\tau)^{2}\right]\leq t\cdot\mathbb{E}\left[\int_{0}^{t}\big\|Z_{\tau}\big\|^{2}\,d\tau\right].
		\end{aligned}
	\end{equation}
	Here we used the Lipschitz property of $ \nabla G $. For $ \mathbb{E}[\big\|\int_{0}^{t}\text{diag}\left(\left(V_{\tau}-u_{\tau}\right)-\left(\hat{V}_{\tau}-\hat{u}_{\tau}\right)\right)\,dB_{\tau}\big\|^{2}] $. Then
	\begin{equation}\label{step43}
		\begin{aligned}		&\mathbb{E}\left[\big\|\int_{0}^{t}\text{diag}\left(\left(V_{\tau}-u_{\tau}\right)-\left(\hat{V}_{\tau}-\hat{u}_{\tau}\right)\right)\,dB_{\tau}\big\|^{2}\right]\\
			&= \mathbb{E}\left[\int_{0}^{t}\big\|\text{diag}\left(\left(V_{\tau}-u_{\tau}\right)-\left(\hat{V}_{\tau}-\hat{u}_{\tau}\right)\right)\big\|^2_F\,d\tau\right]\\
			&\lesssim \mathbb{E}\left[\int_{0}^{t}\big\|V_{\tau}-\hat{V}_{\tau}\big\|^{2}\,d\tau\right]+\mathbb{E}\left[\int_{0}^{t}\big\|u_{\tau}-\hat{u}_{\tau}\big\|^{2}\,d\tau\right]\\
			&=\mathbb{E}\left[\int_{0}^{t}\big\|Z_{\tau}\big\|^{2}\,d\tau\right]+\mathbb{E}\left[\int_{0}^{t}\big\|v_{\alpha}(\rho_{\tau})-v_{\alpha}(\hat{\rho}_{\tau})\big\|^{2}\,d\tau\right],
		\end{aligned}
	\end{equation}
	where in the first equality, we used It\^o's Isometry. Thus combining (\ref{step4}), (\ref{step41}), (\ref{step42}) and (\ref{step43}) yields 
	\begin{gather*}
		\mathbb{E}\left[\big\|Z_{t}\big\|^{2}\right]\lesssim 
		\mathbb{E}\left[\big\|Z_{0}\big\|^{2}\right]+\int_{0}^{t}\mathbb{E}\left[\big\|Z_{\tau}\big\|^{2}\right]\,d\tau+\mathbb{E}\left[\int_{0}^{t}\big\|v_{\alpha}(\rho_{\tau})-v_{\alpha}(\hat{\rho}_{\tau})\big\|^{2}\,d\tau\right].
	\end{gather*}
	We further notice that by Lemma \ref{stability}, 
	\begin{gather*}
		\big\|v_{\alpha}(\rho_{\tau})-v_{\alpha}(\hat{\rho}_{\tau})\big\|\lesssim W_2(\rho_{\tau},\hat{\rho}_{\tau})\leq \sqrt{\mathbb{E}\left[\big\|V_\tau-\hat{V}_\tau\big\|^{2}\right]}=\sqrt{\mathbb{E}\left[\big\|Z_{\tau}\big\|^{2}\right]}.
	\end{gather*}
	So we can deduce
	\begin{align*}
		\mathbb{E}\left[\big\|Z_{t}\big\|^{2}\right]&\lesssim \mathbb{E}\left[\big\|Z_{0}\big\|^{2}\right]+\int_{0}^{t}\mathbb{E}\left[\big\|Z_{\tau}\big\|^{2}\right]\,d\tau+\mathbb{E}\left[\int_{0}^{t}\mathbb{E}\left[\big\|Z_{\tau}\big\|^{2}\right]\,d\tau\right]\\
		&\lesssim \mathbb{E}\left[\big\|Z_{0}\big\|^{2}\right]+\int_{0}^{t}\mathbb{E}\left[\big\|Z_{\tau}\big\|^{2}\right]\,d\tau.
	\end{align*}
	Then applying Gronwall's inequality with the fact that $ \mathbb{E}[\big\|Z_{0}\big\|^{2}]=0 $ gives the uniqueness result.
\end{proof}
\subsection{Proof of Theorem \ref{meanfieldthm}}\label{appenmeanfield}
We first prove the following lemma.
\begin{lemma}\label{momentbound}
	Let $ \CE $ satisfy Assumption \ref{assump1} and $ \rho_{0}\in \CP_{4}(\mathbb{R}^{d}) $. For any $ N\geq 2 $, assume that $ \{(V_{t}^{i,N})_{t\in[0,T]}\}_{i=1}^{N} $ is the unique solution to the particle system (\ref{constraincon}) with $ \rho_{0}^{\otimes N} $ distributed initial data $ \{V_{0}^{i,N}\}_{i=1}^{N} $. Then there exists a constant $ K>0 $ independent of $ N $ such that
	\begin{gather*}
		\sup_{i=1,...,N}\left\{\sup_{t\in[0,T]}\mathbb{E}\left[\big\|V_{t}^{i,N}\big\|^{2}+\big\|V_{t}^{i,N}\big\|^{4}\right]+\sup_{t\in[0,T]}\mathbb{E}\left[\big\|v_{\alpha}(\hat{\rho}_{t}^{N})\big\|^{2}+\big\|v_{\alpha}(\hat{\rho}^{N}_{t})\big\|^{4}\right]\right\}\leq K.
	\end{gather*}
\end{lemma}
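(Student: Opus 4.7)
The plan is to exploit the exchangeability of the particles, reduce the consensus-point moments to particle moments via a Gibbs weight bound, and then close a Gronwall loop through It\^o's formula. Since $\{V_0^{i,N}\}_{i=1}^N$ are i.i.d.\ under $\rho_0$ and the system \eqref{constraincon} is permutation invariant in the particle indices, the marginal law of $V_t^{i,N}$ is independent of $i$; thus it suffices to bound $\mathbb{E}[\|V_t^{1,N}\|^2+\|V_t^{1,N}\|^4]$ uniformly in $N$. For the consensus point, observe that $v_\alpha(\hat{\rho}_t^N)=\sum_{j=1}^N \mu_j V_t^{j,N}$ is a convex combination with deterministic upper bound $\mu_j\leq \frac{1}{N}e^{\alpha(\overline{\CE}-\underline{\CE})}$ provided by Assumption~\ref{assump1}(1). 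Jensen's inequality applied to the convex map $v\mapsto\|v\|^p$ then gives, for $p=2,4$,
\begin{equation*}
\|v_\alpha(\hat{\rho}_t^N)\|^p \leq e^{\alpha(\overline{\CE}-\underline{\CE})}\cdot\frac{1}{N}\sum_{j=1}^N\|V_t^{j,N}\|^p,
\end{equation*}
and after taking expectations and using exchangeability one obtains $\mathbb{E}[\|v_\alpha(\hat{\rho}_t^N)\|^p] \leq e^{\alpha(\overline{\CE}-\underline{\CE})}\,\mathbb{E}[\|V_t^{1,N}\|^p]$. Hence controlling the consensus-point moments reduces to controlling the particle moments.

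Next I would apply It\^o's formula to $\phi(v)=\|v\|^2$ along $V_t^{1,N}$ and take expectation; the stochastic integral is a true martingale because Theorem~\ref{wellposedmicro} supplies a strong $L^2$ solution. This produces
\begin{equation*}
\frac{d}{dt}\mathbb{E}[\|V_t^{1,N}\|^2] = -2\lambda\,\mathbb{E}\!\left[V_t^{1,N}\!\cdot\!(V_t^{1,N}-v_\alpha(\hat{\rho}_t^N))\right] - \frac{2}{\epsilon}\mathbb{E}\!\left[V_t^{1,N}\!\cdot\!\nabla G(V_t^{1,N})\right] + \sigma^2\mathbb{E}[\|V_t^{1,N}-v_\alpha(\hat{\rho}_t^N)\|^2].
\end{equation*}
Assumption~\ref{assump1}(4) bounds the constraint drift by $\frac{2C}{\epsilon}\mathbb{E}[\|V_t^{1,N}\|^2]$, while Young's inequality on the remaining mixed terms, combined with the Gibbs weight bound above, replaces every occurrence of $\mathbb{E}[\|v_\alpha(\hat{\rho}_t^N)\|^2]$ by a constant multiple of $\mathbb{E}[\|V_t^{1,N}\|^2]$. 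The outcome is a linear differential inequality $\frac{d}{dt}\mathbb{E}[\|V_t^{1,N}\|^2]\leq C_1\,\mathbb{E}[\|V_t^{1,N}\|^2]$ with $C_1$ depending on $\lambda,\sigma,\alpha,\underline{\CE},\overline{\CE},C,\epsilon$ but not on $N$. Gronwall's inequality and the initial bound coming from $\rho_0\in\CP_4(\mathbb{R}^d)$ close the estimate on $[0,T]$.

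The $L^4$ bound is obtained by repeating the argument with $\phi(v)=\|v\|^4$, for which $\partial_k\phi = 4\|v\|^2 v_k$ and $\partial_{kk}\phi = 4\|v\|^2 + 8v_k^2$. The anisotropic noise contributes second-order terms of the form $\|V-v_\alpha\|^2\|V\|^2$ and $\sum_k(V_k-v_{\alpha,k})^2 V_k^2$, each of which is dominated by $\|V\|^4+\|v_\alpha\|^4$ through Young's inequality; the drift and constraint contributions are controlled the same way together with Assumption~\ref{assump1}(4). Folding the $\|v_\alpha\|^4$ pieces back into the particle fourth moment via the Gibbs weight bound produces $\frac{d}{dt}\mathbb{E}[\|V_t^{1,N}\|^4]\leq C_2\,\mathbb{E}[\|V_t^{1,N}\|^4]$ with $C_2$ independent of $N$, and Gronwall closes the estimate. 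A final application of the Gibbs weight bound then controls $\mathbb{E}[\|v_\alpha(\hat{\rho}_t^N)\|^2+\|v_\alpha(\hat{\rho}_t^N)\|^4]$, completing the proof. The main obstacle is the careful bookkeeping in the fourth-moment It\^o expansion: the anisotropic cross terms must be arranged so that Young's inequality yields a closed recursion rather than producing mixed moments of $\|V\|$ and $\|v_\alpha\|$ that cannot be absorbed by the Gibbs weight bound.
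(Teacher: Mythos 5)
Your proposal is correct and reaches the same Gronwall closure as the paper's proof, but the route is genuinely different in two places. First, to control the consensus-point moments, the paper applies Cauchy--Schwarz and then invokes Lemma~3.3 of \cite{CCTT}, which delivers a bound of the form $\int \|v\|^2 \frac{\omega_\alpha(v)}{\|\omega_\alpha\|_{L^1(\hat\rho^N_\tau)}}\,d\hat\rho^N_\tau \leq b_1 + b_2 \int\|v\|^2\,d\hat\rho^N_\tau$. You instead use the elementary Gibbs-weight bound $\mu_j \leq \frac{1}{N}e^{\alpha(\overline{\CE}-\underline{\CE})}$ together with Jensen and exchangeability, which under the boundedness Assumption~\ref{assump1}(1) is simpler, self-contained (no external lemma), and actually stronger since it produces no additive constant. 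Second, the paper works with the integral form of the SDE and estimates the drift, Gibbs, and martingale contributions term by term via Cauchy's and H\"older's inequalities plus It\^o isometry / BDG; you propose applying It\^o's formula directly to $\|v\|^{2p}$ and closing a differential inequality. These two bookkeeping styles are equivalent, and your differential formulation is the more transparent one. What the paper's route buys is a bound that also applies under the weaker quadratic-growth hypothesis on $\CE$ that Lemma~3.3 of \cite{CCTT} was designed for; what yours buys is a shorter, more elementary argument that avoids citing an external moment lemma and makes the dependence on $\alpha(\overline{\CE}-\underline{\CE})$ explicit.

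One small technical caveat worth flagging: your justification that the stochastic integral in the fourth-moment It\^o expansion is a true martingale ``because Theorem~\ref{wellposedmicro} supplies a strong $L^2$ solution'' is slightly off-target. Theorem~\ref{wellposedmicro} gives second moments of the solution, whereas for $\phi(v)=\|v\|^4$ the stochastic integral's integrand is of order $\|V\|^2\|V-v_\alpha\|$, so a priori square-integrability of the integrand would require fourth moments --- exactly what you are trying to establish. The standard fix is a localization argument with a sequence of stopping times, apply Gronwall on the stopped processes, and pass to the limit by Fatou. The paper's proof silently relies on the same kind of localization when it writes $\E\|\int_0^t\text{diag}(V^{i,N}_\tau)\,dB^i_\tau\|^{2p} = \E(\int_0^t\|V^{i,N}_\tau\|^2\,d\tau)^p$ (an It\^o isometry for $p=1$ and a BDG estimate for $p=2$), so this is a shared omission rather than a defect specific to your proposal.
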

\begin{proof}
	For each $ i $, we have
	\begin{gather*}
		dV^{i,N}_{t}=-\lambda \left(V^{i,N}_{t}-v_{\alpha}(\hat{\rho}_{t})\right)\,dt-\dfrac{1}{\epsilon} \nabla G(V^{i}_{ t})\,d t+\sigma\text{diag}\left(V^{i,N}_{t}-v_{\alpha}(\hat{\rho}_{t}^{i})\right)\,dB^{i}_{t},\\
		V^{i}_{0}\sim \rho_{0}.
	\end{gather*}
	Now we pick $ p=1 $ or $ p=2 $. Then
	\begin{align*}
		\E \big\|\viti\big\|^{2p}
		\lesssim &\E \big\|V_{0}^{i,N}\big\|^{2p}+\E(\int_{0}^{t}\big\|\vit\big\|\,d\tau)^{2p}+\E(\int_{0}^{t}\big\|\emconst\big\|\,d\tau)^{2p}\\
		&+\E\big\|\int_{0}^{t}\text{diag}\left(\vit\right)\,dB_{\tau}^{i}\big\|^{2p}+\E\big\|\int_{0}^{t}\text{diag}\left(\emconst\right)\,dB_{\tau}^{i}\big\|^{2p}.
	\end{align*}
	Here, we used Assumption \ref{assump1} (4). Now by Cauchy's inequality, 
	\begin{gather*}
		\E(\int_{0}^{t}\big\|\vit\big\|\,d\tau)^{2p}\leq t^{p}\cdot\E(\int_{0}^{t}\big\|\vit\big\|^{2}\,d\tau)^{p}
	\end{gather*}
	and
	\begin{gather*}
		\E(\int_{0}^{t}\big\|\emconst\big\|\,d\tau)^{2p}\leq t^{p}\cdot\E(\int_{0}^{t}\big\|\emconst\big\|^{2}\,d\tau)^{p}.
	\end{gather*}
	Also, by It\^o Isometry, 
	\begin{gather*}
		\E\big\|\int_{0}^{t}\text{diag}\left(\vit\right)\,dB_{\tau}^{i}\big\|^{2p}=\E(\int_{0}^{t}\big\|\vit\big\|^{2}\,d\tau)^{p}
	\end{gather*}
	and
	\begin{gather*}
		\E\big\|\int_{0}^{t}\text{diag}\left(\emconst\right)\,dB_{\tau}^{i}\big\|^{2p}=\E(\int_{0}^{t}\big\|\emconst\big\|^{2}\,d\tau)^{p}.
	\end{gather*}
	Thus 
	\begin{gather*}
		\E \big\|\viti\big\|^{2p}\lesssim \E \big\|V_{0}^{i,N}\big\|^{2p}+\E(\int_{0}^{t}\big\|\vit\big\|^{2}\,d\tau)^{p}+\E(\int_{0}^{t}\big\|\emconst\big\|^{2}\,d\tau)^{p}.
	\end{gather*}
	Further, by H{\"o}lder inequality, 
	\begin{gather*}
		\E(\int_{0}^{t}\big\|\vit\big\|^{2}\,d\tau)^{p}\leq \E\int_{0}^{t}\big\|\vit\big\|^{2p}\,d\tau
	\end{gather*}
    and
    \begin{gather*}
		\E(\int_{0}^{t}\big\|\emconst\big\|^{2}\,d\tau)^{p}\leq \E\int_{0}^{t}\big\|\emconst\big\|^{2p}\,d\tau.
	\end{gather*}
	So we can deduce
	\begin{gather*}
		\E \big\|\viti\big\|^{2p}\lesssim \E \big\|V_{0}^{i,N}\big\|^{2p}+\E\int_{0}^{t}\big\|\vit\big\|^{2p}\,d\tau+\E\int_{0}^{t}\big\|\emconst\big\|^{2p}\,d\tau.
	\end{gather*}
	Thus
	\begin{gather}\label{2pmomentbound}
		\E\int\big\|v\big\|^{2p}\,d\hat{\rho}_{t}^{N}\lesssim \E\int\big\|v\big\|^{2p}\,d\hat{\rho}^{N}_{0}+\int_{0}^{t}(\E\int\big\|v\big\|^{2p}\,d\dempt)\,d\tau+\int_{0}^{t}(\E\big\|\emconst\big\|^{2p})\,d\tau.
	\end{gather}
	Now by Lemma 3.3 in \cite{CCTT}{}, one has
	\begin{gather}\label{lem3.3}
		\int \big\|v\big\|^{2}\dfrac{\wt(v)}{\big\|\wt\big\|_{L^{1}(\tilde{\rho}_{\tau}^{N})}}\,d\hat{\rho}^{N}_{\tau}\leq b_{1}+b_{2}\int\big\|v\big\|^{2}\,d\hat{\rho}^{N}_{\tau}.
	\end{gather}
	Then we can calculate
	\begin{align*}
		\big\|\emconst\big\|^{2p}&=\big\|\int v\cdot \dfrac{\wt(v)}{\big\|\wt\big\|_{L^{1}(\tilde{\rho}_{\tau}^{N})}}\,d\hat{\rho}^{N}_{\tau}\big\|^{2p}\\
		&\leq\left(\int \big\|v\big\|\cdot \dfrac{\wt(v)}{\big\|\wt\big\|_{L^{1}(\tilde{\rho}_{\tau}^{N})}}\,d\hat{\rho}^{N}_{\tau}\right)^{2p}\\
		&\leq  \left(\int \big\|v\big\|^{2}\cdot \dfrac{\wt(v)}{\big\|\wt\big\|_{L^{1}(\tilde{\rho}_{\tau}^{N})}}\cdot \dfrac{\wt(v)}{\big\|\wt\big\|_{L^{1}(\tilde{\rho}_{\tau}^{N})}}\,d\hat{\rho}^{N}_{\tau}\right)^{p}\\
		&\leq \left(\int \big\|v\big\|^{2}\cdot \dfrac{\wt(v)}{\big\|\wt\big\|_{L^{1}(\tilde{\rho}_{\tau}^{N})}}\,d\hat{\rho}^{N}_{\tau}\right)^{p}\\
        &\leq \left(b_{1}+b_{2}\int \big\|v\big\|^{2}\,d\hat{\rho}^{N}_{\tau}\right)^{p}\lesssim 1+\int\big\|v\big\|^{2p}\,d\hat{\rho}^{N}_{\tau},
	\end{align*}
	where in the second inequality, we used Cauchy's inequality and in the fourth inequality, we used (\ref{lem3.3}) and in the last inequality, we used H{\"o}lder inequality. Combine the above inequality and (\ref{2pmomentbound}) leads to 
	\begin{gather*}
		\E\int\big\|v\big\|^{2p}\,d\hat{\rho}_{t}^{N}\lesssim \E\int\big\|v\big\|^{2p}\,d\hat{\rho}^{N}_{0}+\int_{0}^{t}\left(\E\int\big\|v\big\|^{2p}\,d\dempt\right)\,d\tau+1.
	\end{gather*}
	By applying Gronwall's inequality, it follows that $ \E\int|v|^{2p}\,d\hat{\rho}_{t}^{N} $ is bounded for $ t\in[0,T] $, and the bound does not depend on $ N $. Also, we know that
	\begin{gather*}
		\big\|\emconst\big\|^{2p}\lesssim 1+\int\big\|v\big\|^{2p}\,d\hat{\rho}^{N}_{\tau},
	\end{gather*}
	which implies that
	\begin{gather*}
		\E	\big\|v_{\alpha}(\hat{\rho}^{N}_{\tau})\big\|^{2p}\lesssim 1+\E\int\big\|v\big\|^{2p}\,d\hat{\rho}^{N}_{t}.
	\end{gather*}
	So $ \E	\big\|v_{\alpha}(\hat{\rho}^{N}_{t})\big\|^{2p} $ is bounded for $ t\in [0,T]$ and the bound does not depend on $ N $.
\end{proof}
As in \cite{HQ}{}, we then make the following definition.
\begin{definition}
	Fix $ \phi\in\mathcal{C}_{c}^{2}(\mathbb{R}^{d}) $. Define functional $ F_{\phi}:\mathcal{P}(\mathcal{C}[0,T];\mathbb{R}^{d})\rightarrow \mathbb{R} $:
	\begin{align*}
		F_{\phi}(\mu_{t})=&\left\langle \phi,\mu_{t}\right\rangle-\left\langle \phi,\mu_{0}\right\rangle+\lambda \int_{0}^{t}\left\langle \left(v-v_{\alpha}(\rho_{\tau})\right)\cdot\nabla\phi(v),\mu_{\tau}\right\rangle\,d\tau\\&+\dfrac{1}{\epsilon}\int_{0}^{t}\left\langle \nabla G(v)\cdot \nabla \phi(v),\mu_{\tau}\right\rangle\,d\tau\\
        &-\dfrac{\sigma^{2}}{2}\int_{0}^{t}\left\langle \sum_{k=1}^{d}\left(v-v_{\alpha}(\rho_{\tau})\right)_{k}^{2}\partial_{kk}\phi(v),\mu_{\tau}\right\rangle \,d\tau.
	\end{align*}
\end{definition}
We can then prove the following proposition about the functional $ F_{\phi} $ defined above.

\begin{proposition}\label{prop}
	Let $ \mathcal{E} $ satisfy Assumption \ref{assump1} and $ \rho_{0}\in\mathcal{P}_{4}(\mathbb{R}^{d}) $. For any $ N\geq2 $, assume that $ \{(\viti)\}_{i=1}^{N} $ is the unique solution to (\ref{constraincon}) with $ \rho_{0}^{\otimes N} $ distributed initial data $ \{V_{0}^{i,N}\}_{i=1}^{N} $. There exists a constant $ C>0 $ depending only on $ \sigma,K,T $ and $ \big\|\nabla \phi\big\|_{\infty} $ such that 
	\begin{gather*}
		\mathbb{E}\left[|F_{\phi}(\dempti)|^{2}\right]\leq \dfrac{C}{N}.
	\end{gather*}
\end{proposition}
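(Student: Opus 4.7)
The plan is to apply It\^{o}'s formula to $\phi(V_t^{i,N})$ for each particle, average the resulting identities over $i=1,\dots,N$, and recognize that the deterministic integrals exactly reconstruct $F_\phi(\hat{\rho}_t^N)$, leaving only a stochastic integral whose $L^2$ norm is of order $N^{-1/2}$ thanks to independence of the driving Brownian motions.

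First I would apply It\^{o}'s formula to $\phi(V_\tau^{i,N})$ using the SDE (\ref{constraincon}). Since $\phi \in \mathcal{C}_c^2(\mathbb{R}^d)$ and the anisotropic diffusion matrix is diagonal with entries $(V_\tau^{i,N} - v_\alpha(\hat{\rho}_\tau^N))_k$, the quadratic-variation contribution is $\tfrac{\sigma^2}{2}\sum_k \partial_{kk}\phi(V_\tau^{i,N})(V_\tau^{i,N} - v_\alpha(\hat{\rho}_\tau^N))_k^2$. Averaging $\phi(V_t^{i,N}) - \phi(V_0^{i,N})$ over $i$ and using the definition $d\hat{\rho}_\tau^N(v) = \tfrac{1}{N}\sum_i \delta_{V_\tau^{i,N}}(v)$, the drift integrals collapse to integrals against $\hat{\rho}_\tau^N$, matching each term in the definition of $F_\phi$. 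The upshot is the identity
\begin{equation*}
F_\phi(\hat{\rho}_t^N) \;=\; M_t^N \;:=\; \frac{\sigma}{N}\sum_{i=1}^N \int_0^t \sum_{k=1}^d \partial_k\phi(V_\tau^{i,N})\,(V_\tau^{i,N} - v_\alpha(\hat{\rho}_\tau^N))_k \,d(B_\tau^{i,N})_k.
\end{equation*}

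Next I would bound $\mathbb{E}[|M_t^N|^2]$. Because the Brownian motions $B^{i,N}$ are independent across $i$, all cross-terms vanish, and It\^{o}'s isometry gives
\begin{equation*}
\mathbb{E}\big[|M_t^N|^2\big] \;=\; \frac{\sigma^2}{N^2}\sum_{i=1}^N \mathbb{E}\!\left[\int_0^t \sum_{k=1}^d \big(\partial_k \phi(V_\tau^{i,N})\big)^2\,(V_\tau^{i,N} - v_\alpha(\hat{\rho}_\tau^N))_k^2\,d\tau\right].
\end{equation*}
Pulling out $\|\nabla\phi\|_\infty^2$ and using $\|v-w\|^2 \le 2\|v\|^2 + 2\|w\|^2$, this is bounded by $\tfrac{2\sigma^2 \|\nabla\phi\|_\infty^2}{N^2} \sum_i \int_0^t \mathbb{E}[\|V_\tau^{i,N}\|^2 + \|v_\alpha(\hat{\rho}_\tau^N)\|^2]\,d\tau$. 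Applying the uniform second-moment bound from Lemma \ref{momentbound}, the inner expectations are $\le 2K$, so the sum over $i$ contributes a factor $N$, and the remaining $N^{-2}$ yields $\mathbb{E}[|F_\phi(\hat{\rho}_t^N)|^2] = \mathbb{E}[|M_t^N|^2] \le C/N$ with $C = 4\sigma^2 K T\|\nabla\phi\|_\infty^2$, which is the required bound.

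The proof is essentially mechanical once one identifies the martingale structure; the only things to watch are (i) that $v_\alpha(\rho_\tau)$ in the definition of $F_\phi$ should be read as $v_\alpha(\mu_\tau)$ when evaluating on $\mu_t = \hat{\rho}_t^N$, so that the nonlinearity in the drift matches the empirical consensus point appearing in the SDE, and (ii) the use of independence of $\{B^{i,N}\}_i$ to kill off the $N(N-1)$ cross-terms in the squared martingale. The main (minor) obstacle is the latter bookkeeping, since the particles $V^{i,N}$ themselves are not independent under the interacting dynamics; however only independence of the \emph{driving} Brownian motions is needed for It\^{o}'s isometry to produce the $N^{-1}$ gain, and that independence holds by construction of the particle system.
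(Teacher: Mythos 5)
Your proposal is correct and follows essentially the same route as the paper: apply It\^{o}'s formula to $\phi(V^{i,N}_\tau)$, average over $i$ so that the drift and diffusion-coefficient terms reconstruct $F_\phi(\hat\rho^N_t)$ and leave only the stochastic integral, then bound its second moment by It\^{o}'s isometry together with $\|\nabla\phi\|_\infty$ and the uniform moment estimate of Lemma \ref{momentbound}. Your remark (i) about reading $v_\alpha(\rho_\tau)$ as $v_\alpha(\mu_\tau)$ when plugging in $\hat\rho^N_\tau$ is exactly how the paper's proof uses the definition, and your explicit invocation of independence of the $B^{i,N}$ to kill the cross terms is just the justification the paper silently absorbs into its isometry step.
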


\begin{proof}
	First we compute 
	\begin{align*}
		F_{\phi}(\dempti)=&\dfrac{1}{N}\sum_{i=1}^{N}\phi(\viti)-\dfrac{1}{N}\sum_{i=1}^{N}\phi(V_{0}^{i,N})\\
        &+\lambda \int_{0}^{t}\dfrac{1}{N}\sum_{i=1}^{N}\left(V^{i,N}_{\tau}-\emconst\right)\cdot \nabla \phi(\vit)\,d\tau\\
		&+\dfrac{1}{\epsilon}\int_{0}^{t}\sum_{i=1}^{N}\nabla G(\vit)\cdot \nabla \phi(\vit)\,d\tau\\&-\dfrac{\sigma^{2}}{2}\int_{0}^{t}\dfrac{1}{N}\sum_{i=1}^{N}\sum_{k=1}^{d}\left(V^{i,N}_{\tau}-\emconst\right)_{kk}^{2}\partial_{kk}\phi(\vit)\,d\tau.
	\end{align*}
	On the other hand, the It\^o-Doeblin formula gives 
	\begin{align*}
		\phi(\viti)-\phi(V_{0}^{i,N})=&-\lambda \int_{0}^{t}\left(V^{i,N}_{\tau}-\emconst\right)\cdot\nabla\phi(V^{i,N}_{\tau})\,d\tau\\
        &-\dfrac{1}{\epsilon}\int_{0}^{t}\nabla G(V^{i,N}_{\tau})\cdot \nabla \phi(V^{i,N}_{\tau})\,d\tau\\
		&+\sigma\int_{0}^{t}\left(\nabla\phi(V^{i,N}_{\tau})\right)^{T}\left(\text{diag}\left(V^{i,N}_{\tau}-\emconst\right)\,dB^{i}_{\tau}\right)\\
		&+\dfrac{\sigma^{2}}{2}\int_{0}^{t}\sum_{k=1}^{d}\left(V^{i,N}_{\tau}-\emconst\right)_{k}^{2}\partial_{kk}\phi(V^{i,N}_{\tau})\,d\tau.
	\end{align*}
	Then one gets 
	\begin{gather*}
		F_{\phi}(\dempti)=\dfrac{\sigma}{N}\int_{0}^{t}\sum_{i=1}^{N}\left(\nabla\phi(V^{i,N}_{\tau})\right)^{T}\left(\text{diag}\left(V^{i,N}_{\tau}-\emconst\right)\,dB^{i}_{\tau}\right).
	\end{gather*}
	Finally, we can compute
	\begin{align*}
		\E\left[|F_{\phi}(\dempti)|^{2}\right]&=\dfrac{\sigma^{2}}{N^{2}}\sum_{i=1}^{N}\E\left[\Big|\int_{0}^{t}\sum_{i=1}^{N}\left(\nabla\phi(V^{i,N}_{\tau})\right)^{T}\text{diag}\left(V^{i,N}_{\tau}-\emconst\right)\,dB^{i}_{\tau}\Big|\right]^{2}\\
		&=\dfrac{\sigma^{2}}{N^{2}}\sum_{i=1}^{N}\E\left[\int_{0}^{t}\sum_{i=1}^{N}\big\|\left(\nabla\phi(V^{i,N}_{\tau})\right)^{T}\text{diag}\left(V^{i,N}_{\tau}-\emconst\right)\big\|_2^{2}\,d\tau\right]\\
		&\leq \dfrac{\sigma^{2}}{N^{2}}\big\|\nabla \phi\big\|_{\infty}^2\sum_{i=1}^{N}\int_{0}^{t}\E\left[\big\|V^{1,N}_{\tau}-\emconst\big\|_2^{2}\right]\,d\tau\\
		&\lesssim \dfrac{\sigma^{2}}{N^{2}}\big\|\nabla \phi\big\|_{\infty}^2\sum_{i=1}^{N}\int_{0}^{t}K\,d\tau=\dfrac{\sigma^{2}}{N^{2}}\big\|\nabla \phi\big\|_{\infty}^2\sum_{i=1}^{N}tK\leq T\dfrac{\sigma^{2}K}{N}\big\|\nabla \phi\big\|_{\infty},
	\end{align*}
	where in the second equality, we used It\^o's isometry and in the third inequality, we used Lemma \ref{momentbound}. This completes the proof.
\end{proof}
Now we recall the Aldous criteria (Section 34.3\cite{Bass}{}), which can be used to prove the tightness of a sequence of distributions:
\begin{lemma}[The Aldous criteria]\label{aldous}
	Let $ \{V^{n}\}_{n\in\mathbb{N}} $ be a sequence of random variables defined on a probability space $ (\Omega,\mathcal{F},\mathbb{P}) $ and valued in $ \mathcal{C}([0,T];\mathbb{R}^{d}) $. The sequence of probability distributions $ \{\mu_{V^{n}}\}_{n\in\mathbb{N}} $ of $ \{V^{n}\} _{n\in\mathbb{N}} $ is tight on $ \mathcal{C}([0,T];\mathbb{R}^{d}) $ if the following two conditions hold.\\
	(Con1) For all $ t\in[0,T] $, the set of distributions of $ V^{n}_{t} $, denoted by $ \{\mu_{V^{n}}\}_{n\in\mathbb{N}} $, is tight as a sequence of probability measures on $ \mathbb{R}^{d} $.\\
	(Con2) For all $ \epsilon>0 $, $ \eta>0 $, there exists $ \delta_{0}>0 $ and $ n_{0}\in\mathbb{N} $ such that for all $ n\geq n_{0} $ and for all discrete-valued $ \sigma(V^{n}_{\tau};\tau\in[0,T]) $-stopping times $ \beta  $ with $ 0\leq\beta+\delta_{0}\leq T $, it holds that
	\begin{gather*}
		\sup_{\delta\in[0,\delta_{0}]}\mathbb{P}\left(\big\|V^{n}_{\beta+\delta}-V^{n}_{\beta}\big\|\geq\eta\right)\leq \epsilon.
	\end{gather*}
\end{lemma}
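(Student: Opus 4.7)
The plan is to invoke Prokhorov's theorem together with the Arzelà–Ascoli compactness criterion in $\mathcal{C}([0,T];\mathbb{R}^d)$. Since this space is Polish, tightness of $\{\mu_{V^n}\}$ is equivalent to producing, for every $\varepsilon > 0$, a relatively compact $K \subset \mathcal{C}([0,T];\mathbb{R}^d)$ with $\mu_{V^n}(K) \geq 1 - \varepsilon$ uniformly in $n$. By Arzelà–Ascoli, relative compactness in $\mathcal{C}([0,T];\mathbb{R}^d)$ is characterized by pointwise boundedness (at a single point suffices) together with equicontinuity. Thus the task reduces to controlling, uniformly in $n$ and with probability at least $1-\varepsilon$, both the pointwise values of $V^n$ and the modulus of continuity
$$w(V^n,\delta) := \sup\{\|V^n_s - V^n_t\|\,:\, s,t \in [0,T],\ |s-t|\leq \delta\}.$$

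The pointwise control follows from (Con1), which gives tightness of the law of $V^n_0$ (say) on $\mathbb{R}^d$ and hence a radius $R$ with $\mathbb{P}(\|V^n_0\|\leq R)\geq 1-\varepsilon/2$ uniformly in $n$; once the modulus-of-continuity bound below is established, this propagates to a uniform sup-bound on $[0,T]$. Hence the heart of the argument, and the very reason for phrasing (Con2) in terms of stopping times, is to convert the stopping-time increment bound into a modulus-of-continuity estimate. The plan for this step is: fix $\eta,\varepsilon>0$, partition $[0,T]$ by the grid $\{k\delta_0\}_{k=0}^{\lceil T/\delta_0\rceil}$, and on each subinterval introduce the $\sigma(V^n_\tau;\tau\in[0,T])$-stopping time
$$\beta_k^n := \inf\big\{s\geq k\delta_0 \,:\, \|V^n_s - V^n_{k\delta_0}\|\geq \eta\big\}\wedge (k+1)\delta_0.$$
Applying (Con2) at the deterministic times $\beta = k\delta_0$ and at the stopping times $\beta = \beta_k^n$ (with increment $\delta\leq \delta_0$) bounds the probability that $V^n$ makes a first excursion of size $\eta$ from $V^n_{k\delta_0}$ before time $(k+1)\delta_0$. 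A union bound over the finitely many subintervals, combined with the elementary inclusion
$$\{w(V^n,\delta_0)>3\eta\}\subset \bigcup_k \bigl\{\sup_{s\in[k\delta_0,(k+1)\delta_0]}\|V^n_s - V^n_{k\delta_0}\|>\eta\bigr\},$$
then yields $\mathbb{P}(w(V^n,\delta_0)>3\eta) < \varepsilon/2$ for all $n\geq n_0$ provided $\delta_0$ is small enough.

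The main obstacle, and the only nontrivial step, is this translation from stopping-time increments to a uniform modulus of continuity; a naive deterministic discretization is not enough because one needs to capture the event that $V^n$ exits an $\eta$-ball \emph{within} a subinterval, which is precisely what the stopping time $\beta_k^n$ records. The finitely many laws $\{\mu_{V^n}\}_{n<n_0}$ are each individually tight since their sample paths are continuous, so adjusting $\delta_0$ downward absorbs them. Combining the pointwise and equicontinuity controls yields a relatively compact $K$ of $\mu_{V^n}$-mass at least $1-\varepsilon$ uniformly in $n$, and Prokhorov's theorem concludes tightness.
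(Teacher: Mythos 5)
The paper does not prove this lemma; it recalls it as a standard result, citing Bass, Section~34.3, so there is no in-paper argument to compare against. Judged on its own, your sketch correctly identifies the reduction to modulus-of-continuity control via Arzel\`a--Ascoli and Prokhorov, and the inclusion $\{w(V^n,\delta_0)>3\eta\}\subset\bigcup_k\{\sup_{s\in[k\delta_0,(k+1)\delta_0]}\|V^n_s-V^n_{k\delta_0}\|>\eta\}$ is fine. But the two steps you describe as the heart of the argument both have genuine gaps.

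First, (Con2) only bounds $\mathbb{P}\big(\|V^n_{\beta+\delta}-V^n_\beta\|\geq\eta\big)$ over a \emph{deterministic} lag $\delta\leq\delta_0$, whereas to bound $\mathbb{P}(\beta_k^n<(k+1)\delta_0)$ one ends up having to control an increment over a \emph{random} lag: e.g.\ comparing $V^n_{\beta_k^n}$ with $V^n_{(k+1)\delta_0}$ means taking $\beta=\beta_k^n$ and $\delta=(k+1)\delta_0-\beta_k^n$, which is not deterministic. (Also $\beta_k^n$ is a hitting time, not discrete-valued as the hypothesis requires, though that is repairable by rounding.) Passing from the ``one stopping time, deterministic lag'' form to a ``two stopping times'' bound is precisely the technical core of Aldous's theorem and needs the Fubini/averaging trick (average the lag over an interval of deterministic values, change variables inside the expectation, apply Fubini) that appears in every standard proof (e.g.\ Ethier--Kurtz, Thm.~3.8.6; Jacod--Shiryaev, Thm.~VI.4.5). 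Your sketch simply asserts that applying (Con2) at $k\delta_0$ and at $\beta_k^n$ ``bounds the probability,'' without supplying this argument. Second, the concluding union bound is over $\lceil T/\delta_0\rceil$ subintervals, and this count diverges as $\delta_0\to 0$; (Con2) lets you make the per-interval probability $\epsilon_0$ small only at the price of a (possibly rapidly) shrinking $\delta_0(\epsilon_0)$, and nothing forces $\epsilon_0\,T/\delta_0(\epsilon_0)\to 0$. ``Provided $\delta_0$ is small enough'' therefore cannot close the estimate --- shrinking $\delta_0$ makes the union \emph{larger}, not smaller. This is exactly why the standard proof does not partition $[0,T]$ into a fixed grid, but instead iterates the hitting times $\tau_0=0$, $\tau_{j+1}=\inf\{t>\tau_j:\|V_t-V_{\tau_j}\|\geq\eta\}$, uses the two-stopping-time bound to show that consecutive hitting times are separated by at least $\delta$ with high probability, and thereby bounds the \emph{number} of hitting times in $[0,T]$ by a quantity independent of $\delta$, over which the union bound is then taken.
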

We use the above lemma to prove the tightness of $ \{\mathcal{L}(\hat{\rho}^{N})\}_{N\geq 2} $.
\begin{theorem}\label{tight}
	Under the same assumption as in Lemma \ref{momentbound}, the sequence $ \{\mathcal{L}(\hat{\rho}^{N})\}_{N\geq 2} $ is tight in $ \mathcal{P}\left(\mathcal{P}(\mathcal{C}([0,T];\mathbb{R}^{d}))\right) $.
\end{theorem}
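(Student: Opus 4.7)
The plan is to reduce tightness of the empirical-measure laws $\{\mathcal{L}(\hat{\rho}^N)\}_{N\geq 2}$ on $\mathcal{P}(\mathcal{P}(\mathcal{C}([0,T];\mathbb{R}^d)))$ to tightness of the single-particle laws $\{\mathcal{L}(V^{1,N})\}_{N\geq 2}$ on $\mathcal{P}(\mathcal{C}([0,T];\mathbb{R}^d))$, and then verify the two conditions in Lemma \ref{aldous} for the one-particle process. For the reduction, I would invoke Sznitman's classical argument: because the particles are exchangeable (the initial data are i.i.d.\ and the SDE \eqref{constraincon} is symmetric in its indices), tightness of $\mathcal{L}(\hat{\rho}^N)$ is equivalent to tightness of the intensity measures $\mathbb{E}[\hat{\rho}^N] = \mathcal{L}(V^{1,N})$. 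Thus the problem reduces to applying the Aldous criteria to $\{V^{1,N}\}_{N\geq 2}$.

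For condition (Con1), I would use Lemma \ref{momentbound}, which provides an $N$-independent bound $\sup_{N,t}\mathbb{E}[\|V^{1,N}_t\|^2]\leq K$. By Markov's inequality, the collection $\{\mathcal{L}(V^{1,N}_t)\}_{N\geq 2}$ is tight on $\mathbb{R}^d$ for every fixed $t\in[0,T]$: given $\epsilon>0$, pick $R$ with $K/R^2<\epsilon$, so that $\mathbb{P}(\|V^{1,N}_t\|>R)\leq \epsilon$ for all $N$. This handles (Con1).

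The main work is in verifying (Con2), i.e., controlling increments at arbitrary stopping times $\beta$. Writing
\begin{align*}
V^{1,N}_{\beta+\delta}-V^{1,N}_\beta
= &-\lambda\int_\beta^{\beta+\delta}\bigl(V^{1,N}_\tau-v_\alpha(\hat{\rho}^N_\tau)\bigr)\,d\tau
-\frac{1}{\epsilon}\int_\beta^{\beta+\delta}\nabla G(V^{1,N}_\tau)\,d\tau \\
&+\sigma\int_\beta^{\beta+\delta}\mathrm{diag}\bigl(V^{1,N}_\tau-v_\alpha(\hat{\rho}^N_\tau)\bigr)\,dB^{1,N}_\tau,
\end{align*}
I would bound $\mathbb{E}[\|V^{1,N}_{\beta+\delta}-V^{1,N}_\beta\|^2]$ by Cauchy--Schwarz on the drift pieces (using Assumption \ref{assump1}(4) to bound $\|\nabla G\|$ linearly in $\|V\|$) and by It\^o's isometry together with the strong Markov property on the diffusion piece. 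Each term is controlled by $C\delta$ or $C\delta^2$ with $C$ depending on the uniform moment bound of Lemma \ref{momentbound} and the parameters $\lambda,\sigma,\epsilon$, but independent of $N$ and of the stopping time $\beta$. Markov's inequality then yields $\mathbb{P}(\|V^{1,N}_{\beta+\delta}-V^{1,N}_\beta\|\geq \eta)\leq C\delta/\eta^2$, which can be made smaller than any prescribed $\epsilon$ by choosing $\delta_0$ small enough, uniformly in $N$ and $\beta$.

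The main obstacle I anticipate is the bookkeeping on the stochastic integral at the stopping time $\beta$: one must justify applying It\^o's isometry to $\int_\beta^{\beta+\delta}$ rather than a deterministic interval. The clean way is to observe that, since $\beta$ takes countably many values, one can condition on $\{\beta=s\}$, apply It\^o isometry on the shifted process starting from time $s$ (using the strong Markov property), and then sum over $s$; the resulting bound depends only on $\delta$ and the uniform moment bound, not on the particular value of $\beta$. Once (Con1) and (Con2) are in hand, Lemma \ref{aldous} yields tightness of $\{\mathcal{L}(V^{1,N})\}$, and the Sznitman-type reduction then upgrades this to tightness of $\{\mathcal{L}(\hat{\rho}^N)\}_{N\geq 2}$ in $\mathcal{P}(\mathcal{P}(\mathcal{C}([0,T];\mathbb{R}^d)))$.
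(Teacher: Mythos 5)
Your proposal follows essentially the same route as the paper: reduce to tightness of the single-particle laws $\{\mathcal{L}(V^{1,N})\}$ via exchangeability (the paper cites Proposition 2.2(ii) of Sznitman for this), verify (Con1) by Markov's inequality from the uniform second-moment bound of Lemma \ref{momentbound}, and verify (Con2) by decomposing the increment into drift, constraint-forcing, and diffusion pieces and bounding each with Cauchy--Schwarz, Assumption \ref{assump1}(4), and It\^o isometry. The only cosmetic difference is quantitative: you claim the stochastic-integral term is $O(\delta)$, while the paper (citing Lemma 2.1 of \cite{HQ}) records the slightly cruder $O(\sqrt{\delta})$; both suffice, and your handling of the discrete-valued stopping time by conditioning on its value is the standard justification.
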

\begin{proof}
	It suffices to prove that $ \{\mathcal{L}(V^{1,N})\}_{N\geq 2} $ is tight in $ \mathcal{P}\left(\mathcal{C}([0,T];\mathbb{R}^{d})\right) $ due to Proposition 2.2(ii) in \cite{S}{}. By Lemma \ref{aldous}, one only needs to verify the two conditions in it. For condition 1, let us fix $ \epsilon>0 $. Now we consider the  compact set $ U_{\epsilon}=\{\big\|v\big\|^{2}\leq K/\epsilon\} $ ,where $ K $ is the constant in Lemma \ref{momentbound}. Then by Markov's inequality, 
	\begin{gather*}
		\mathcal{L}(V_{t}^{1,N})(U_{\epsilon}^{c})=\mathbb{P}\left(\big\|V_{t}^{1,N}\big\|>\dfrac{\epsilon}{K}\right)\leq\dfrac{\epsilon\mathbb{E}\left[\big\|V^{1,N}_{t}\big\|^{2}\right]}{K}\leq \epsilon
	\end{gather*}
	for any $ N\geq2 $, where in the last inequality we used Lemma \ref{momentbound}. Thus condition 1 is verified.
	
	For condition 2, we fix $ \epsilon>0 $ and $ \eta>0 $. Notice that
	\begin{equation}\label{cond2}
		\begin{aligned}
			V^{1,N}_{\beta+\delta}-V^{1,N}_{\beta}=&-\lambda\int_{\beta}^{\beta+\delta}\left(V_{\tau}^{1,N}-\emconst\right)\,d\tau+\sigma\int_{\beta}^{\beta+\delta}\text{diag}\left(V^{1,N}_{\tau}-\emconst\right)\,dB_{\tau}^{1}\\
			&-\dfrac{1}{\epsilon}\int_{\beta}^{\beta+\delta}\nabla G(V^{1,N}_{\tau})\,d\tau.
		\end{aligned}
	\end{equation}
	Following the same steps in the proof of Lemma 2.1 in \cite{HQ}{}, 
	\begin{gather}\label{I}
		\mathbb{E}\left[\big\|\lambda\int_{\beta}^{\beta+\delta}\left(V^{1,N}_{\tau}-\emconst\right)\,d\tau\big\|^{2}\right]\leq 2TK\lambda^{2}\delta
	\end{gather}
	and
	\begin{gather}\label{II}
		\mathbb{E}\left[\big\|\sigma\int_{\beta}^{\beta+\delta}\text{diag}\left(V^{1,N}-\emconst\right)\,dB_{\tau}^{1}\big\|^{2}\right]\leq \sigma^{2}\sqrt{8\delta T K}.
	\end{gather}
	Also, we can compute
	\begin{align*}
		\big\|\dfrac{1}{\epsilon}\int_{\beta}^{\beta+\delta}\nabla G(V^{1,N}_{\tau})\,d\tau\big\|^{2}&\leq \dfrac{1}{\epsilon^{2}}\int_{\beta}^{\beta+\delta}\big\|\nabla G(V^{1,N}_{\tau})\big\|\,d\tau\\&\lesssim\left(\int_{\beta}^{\beta+\delta}\big\|\vit\big\|\,d\tau\right)^{2}\leq \delta\int_{\beta}^{\beta+\delta}\big\|\vit\big\|^{2}\,d\tau,
	\end{align*}
	where in the second inequality we used Assumption \ref{assump1} (4). Thus 
	\begin{align*}
		\mathbb{E}
		\left[\big\|\dfrac{1}{\epsilon}\int_{\beta}^{\beta+\delta}\nabla G(V^{1,N}_{\tau})\,d\tau\big\|^{2}\right]&\lesssim \delta \mathbb{E}\left[\int_{\beta}^{\beta+\delta}\big\|\vit\big\|^{2}\,d\tau\right]\\&=\delta \int_{\beta}^{\beta+\delta}\mathbb{E}\big\|\vit\big\|^{2}\,d\tau\lesssim\delta \int_{\beta}^{\beta+\delta}\,d\tau=\delta^{2}
	\end{align*}
	where we used Lemma \ref{momentbound}. Combining the above inequality and (\ref{cond2}), (\ref{I}) and (\ref{II}), we can conclude
	\begin{gather*}
		\mathbb{E}\left[\big\|V_{\beta}^{1,N}-V_{\beta+\delta}^{1,N}\big\|^{2}\right]\lesssim O(\sqrt{\delta}).
	\end{gather*}
	Then one can deduce 
	\begin{gather*}
		\mathbb{P}(\big\|V_{\beta}^{1,N}-V_{\beta+\delta}^{1,N}\big\|>\eta)\leq \dfrac{	\mathbb{E}\left[\big\|V_{\beta}^{1,N}-V_{\beta+\delta}^{1,N}\big\|^{2}\right]}{\eta}\lesssim \dfrac{O(\sqrt{\delta})}{\eta}.
	\end{gather*}
	Choose $ \delta_{0} $ small enough  finishes the proof.
\end{proof}
By Shorokhod's lemma, for every convergent subsequence of $ \{\dempti\}_{N\in\mathbb{N}} $, which is denoted by the sequence itself for simplicity and has $ \rho_{t} $ as limit, one can find a probability space space $ (\Omega,\mathcal{F},\mathbb{P}) $ on which $ \dempti $ converges to $ \rho_{t} $ as random variables valued in $ \mathcal{P}(\mathcal{C}[0,T];\mathbb{R}^{d}) $. We use $ V_{N} $ to denote the corresponding random variable of $ \dempti $ and $ V $ to denote the corresponding random variable of $ \rho_{t} $. Moreover, by the dominated convergence theorem, one has
\begin{gather}\label{slemma}
	\left\langle \phi,\dempti-\rho_{t}\right\rangle \rightarrow 0 
\end{gather}
almost surely for fixed $ t\in[0,T] $ and $ \phi\in\mathcal{C}_{b}(\mathbb{R}^{d}) $.

After all these preparations, we now prove Theorem \ref{meanfieldthm}.
\begin{proof}[Proof of Theorem \ref{meanfieldthm}]
	We first show that every convergent sequence converges to a solution of (\ref{FPK}). Now suppose we have a convergent subsequence of $ \{\dempti\}_{N\in\mathbb{N}} $, which is denoted by the sequence itself for simplicity and has $ \rho_{t} $ as limit. Also, we use $ V_{N} $ and $ V $ to denote the corresponding random variables generated by Shorokhod's lemma as mentioned above. We verify that $ \rho_t $ is a solution to the Fokker-Planck equation (\ref{FPK}).
	
	For continuity, we have that for any $ \phi\in\mathcal{C}_{c}^{2}(\mathbb{R}^{d}) $ and $ t_{n}\rightarrow t $:
	\begin{gather*}
		\left\langle\phi,\rho_{t_{n}}\right\rangle=\int \phi\left(V(t_{n})\right)\,d\mathbb{P}\rightarrow \int\phi\left(V(t)\right)\,d\mathbb{P}=\left\langle \phi,\rho_{t}\right\rangle.
	\end{gather*}
	
	To prove $ \rho_{t} $ satisfies the Fokker-Planck equation (\ref{FPK}), we first prove the following four limits:
	\begin{enumerate}
		\item $ \E\left[\left(\left\langle \phi,\dempti\right\rangle - \left\langle \phi, \hat{\rho}_{0}\right\rangle\right)-\left(\left\langle \phi,\rho_{t}\right\rangle-\left\langle \phi,\rho_{0}\right\rangle\right)\right] $ converges to 0 as $ N\rightarrow\infty $.
		\item $ \E\left[\int_{0}^{t}\left\langle \left(v-\emconst\right)\cdot \nabla\phi(v),\dempt\right\rangle\,d\tau-\int_{0}^{t}\left\langle \left(v-\emconst\right)\cdot \nabla\phi(v),\rho_{\tau}\right\rangle\,d\tau\right]  $ converges to 0 as $ N\rightarrow \infty $.
		\item The quantity \begin{align*} \E\Bigg[&\int_{0}^{t}\left\langle \sum_{k=1}^{d}\left(v-\emconst\right)_{k}^{2}\partial_{kk}\phi(v),\dempt\right\rangle\,d\tau\\
        &-\int_{0}^{t}\left\langle \sum_{k=1}^{d}\left(v-v_{\alpha}(\rho_{\tau})\right)_{k}^{2}\partial_{kk}\phi(v),\rho_{\tau}\right\rangle\,d\tau\Bigg] \end{align*} converges to 0 as $ N\rightarrow \infty $.
		\item $ \E\left[\int_{0}^{t}\left\langle \nabla G(v)\cdot \nabla\phi(v),\dempt\right\rangle\,d\tau-\int_{0}^{t}\left\langle \nabla G(v)\cdot \nabla\phi(v),\rho_{\tau}\right\rangle\,d\tau\right]  $ converges to 0 as $ N\rightarrow \infty $.
	\end{enumerate}
	The first three limits can be proved using the same methods as in Theorem 3.3 in \cite{HQ} and the last one is a direct result of (\ref{slemma}). Combining the above four limits gives
	\begin{gather*}
		\E \left[F_{\phi}(\rho_{t})-F_{\phi}(\dempti)\right]=0.
	\end{gather*}
	Then we can deduce
	\begin{gather*}
		\Big|\E \left[F_{\phi}(\rho_{t})\right]\Big|\leq \lim_{N\rightarrow\infty}\Big|\E \left[F_{\phi}(\rho_{t})-F_{\phi}(\dempti)\right]\Big|+\Big|\E\left[F_{\phi}(\dempti)\right]\Big|\leq 0+\lim_{N\rightarrow \infty}\sqrt{\dfrac{C}{N}}=0,
	\end{gather*}
	where in the last inequality, we used Proposition \ref{prop}. Thus $ F_{\phi}(\rho_{t})=0$ almost surely, which implies that $ \rho_{t} $ is a solution to the corresponding Fokker-Planck equation (\ref{FPK}).
	
	Then we utilize Lemma \ref{unique} to establish that every convergent subsequence converges to the same limit: the unique solution to (\ref{FPK}). Combining with Theorem \ref{tight}, we deduce that $ \{\dempti\}_{N\in\mathbb{N}}  $ converges and the limit is exactly the solution to (\ref{FPK}).
\end{proof}

\subsection{Some auxiliary results used in the proof of Theorem \ref{meanfieldthm}}

\begin{theorem}
	$ \forall T>0 $, let $ b_t\in\mathcal{C}\left([0,T];\mathbb{R}^{d}\right) $ and $ \rho_{0}\in\mathcal{P}_{2}(\mathbb{R}^{d}) $. The following linear PDE
	\begin{gather}\label{linear}
		\partial_{t}\rho_{t}=\lambda \nabla\cdot \left(\left((v-b_{t})+\dfrac{1}{\epsilon}\nabla G(v)\right)\rho_{t}\right)+\dfrac{\sigma^{2}}{2}\sum_{k=1}^{d}\partial_{x_kx_k}\left((v-b_{t})_{k}^{2}\rho_{t}\right)
	\end{gather}
	has a unique weak solution $ \rho_{t}\in\mathcal{C}([0,T];\mathcal{P}_{2}(\mathbb{R}^{d})) $.
\end{theorem}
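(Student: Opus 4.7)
The plan is to exploit the fact that \eqref{linear} is \emph{linear} in $\rho_t$, since $b_t$ is prescribed (not a functional of $\rho_t$). I would reduce the problem to the well-posedness of the associated linear SDE
\begin{gather*}
dV_t = -\lambda(V_t - b_t)\,dt - \tfrac{1}{\epsilon}\nabla G(V_t)\,dt + \sigma\,\text{diag}\big(V_t - b_t\big)\,dB_t, \qquad V_0 \sim \rho_0.
\end{gather*}
Under Assumption \ref{assump1} (3)--(4), the drift $v\mapsto -\lambda(v-b_t)-\tfrac{1}{\epsilon}\nabla G(v)$ is globally Lipschitz in $v$ with continuous dependence on $t$ through $b_t$, and the diffusion matrix $\text{diag}(v-b_t)$ is likewise globally Lipschitz and of linear growth. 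Standard SDE theory (Theorem 3.1 in \cite{D}, exactly as used in Appendix \ref{appenmicro}) then yields a unique strong solution with $\sup_{t\in[0,T]}\mathbb{E}\big[\|V_t\|^2\big] < \infty$ via a Gronwall-type moment estimate.

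For existence, I would set $\rho_t := \text{Law}(V_t)$. Applying It\^o's formula to $\phi(V_t)$ for $\phi\in \mathcal{C}^2_c(\mathbb{R}^d)$ and taking expectations yields
\begin{align*}
\tfrac{d}{dt}\langle \phi,\rho_t\rangle = -\lambda\langle(v-b_t)\cdot\nabla\phi,\rho_t\rangle - \tfrac{1}{\epsilon}\langle \nabla G\cdot\nabla\phi,\rho_t\rangle + \tfrac{\sigma^2}{2}\sum_{k=1}^d \langle(v-b_t)_k^2\partial_{kk}\phi,\rho_t\rangle,
\end{align*}
which is precisely the weak form of \eqref{linear}. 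The $\mathcal{C}([0,T];\mathcal{P}_2(\mathbb{R}^d))$ regularity follows from path continuity of $V_t$ together with the second moment bound, via an argument analogous to Step 2 in the proof of Theorem \ref{meanfieldwellposed} (using It\^o isometry to control $\mathbb{E}\|V_t - V_s\|^2$ by $|t-s|$).

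For uniqueness, let $\rho_t^{1}$ and $\rho_t^{2}$ be two weak solutions with the same initial datum. The natural strategy is a duality argument: fix $T>0$ and $\psi\in \mathcal{C}^\infty_c(\mathbb{R}^d)$, solve the backward Kolmogorov equation
\begin{gather*}
\partial_s \phi(s,v) + \mathcal{L}_s \phi(s,v) = 0, \qquad \phi(T,\cdot) = \psi,
\end{gather*}
where $\mathcal{L}_s$ is the generator associated with the SDE above, and test the weak formulation of \eqref{linear} for $\rho^1_t-\rho^2_t$ against $\phi(t,\cdot)$. Since $\rho_0^1=\rho_0^2$, this forces $\langle\psi,\rho^1_T\rangle=\langle\psi,\rho^2_T\rangle$ for all $\psi$, giving $\rho^1=\rho^2$. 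An alternative and perhaps cleaner route is Trevisan's superposition principle: any weak $\mathcal{P}_2$-valued solution of \eqref{linear} can be represented as the law of a process solving the SDE, and strong uniqueness of the SDE then forces uniqueness of $\rho_t$.

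The main obstacle will be the uniqueness half. The diffusion coefficient $(v-b_t)_k^2$ grows quadratically in $v$, so the backward Kolmogorov equation does not have bounded coefficients, and one must justify enough regularity of $\phi(s,\cdot)$ (growing at most polynomially together with its derivatives) to test against $\rho^1-\rho^2$ lying only in $\mathcal{P}_2$. This is where the requirement that test functions can be extended from $\mathcal{C}^2_c$ to $\mathcal{C}^2_*$, as discussed after Definition \ref{weakdef}, becomes essential. Once this is handled, the rest of the argument is standard.
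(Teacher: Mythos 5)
Your proposal is correct and takes essentially the same route as the paper: existence by taking the law of the strong solution to the associated linear SDE and applying It\^o's formula, and uniqueness by a duality argument against the backward Kolmogorov equation with terminal datum $\psi\in\mathcal{C}^{\infty}_{c}(\mathbb{R}^{d})$. Your caveat about the quadratically growing diffusion coefficient --- that one must justify enough regularity and polynomial growth of the backward solution and its derivatives before testing it against $\rho^{1}-\rho^{2}\in\mathcal{P}_{2}$ --- is a point the paper's own proof leaves unaddressed (it simply asserts the backward PDE ``has a classical solution''), so your version is if anything the more careful of the two.
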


\begin{proof}
	We can obtain a solution to (\ref{linear}) as the law of the solution to the associated linear SDE to (\ref{linear}). Thus we have the existence result. For uniqueness, let us fix $ t_{0}\in[0,T] $ and $ \psi\in \mathcal{C}_{c}^{\infty}(\mathbb{R}^{d}) $. We then can solve the following backward PDE
	\begin{gather*}
		\partial_{t}h_{t}=\left(\lambda(v-b_{t})+\dfrac{1}{\epsilon}\nabla G(v)\right)\cdot \nabla h_{t}-\dfrac{\sigma^{2}}{2}\sum_{k=1}^{d}(v-b_{t})_{k}^{2}\partial_{x_kx_k} h_{t},\\
		(t,v)\in[0,t_{0}]\times \mathbb{R}^{d}; h_{t_{0}}=\psi.
	\end{gather*}
	It has a classical solution:
	\begin{gather*}
		h_{t}=\E\left[\psi(V_{t_{0}}^{t,v})\right], t\in[0,t_{0}],
	\end{gather*}
	where $ (V_{\tau}^{t,x})_{0\leq t\leq s\leq t_{0}} $ is the strong solution to 
	\begin{gather*}
		d V_{\tau}^{t,v}=-\left(\lambda (V_{\tau}^{t,v}-b_{\tau})+\dfrac{1}{\epsilon}\nabla G(V_{\tau}^{t,v})\right)\,d\tau+\sigma \text{diag}\left(V_{\tau}^{t,v}-b_{\tau}\right)\,dB_{\tau}, V_{t}^{t,v}=v.
	\end{gather*}
	Suppose $ \rho^{1} $ and $ \rho^{2} $ are two weak solutions to (\ref{linear}). Consider $ \delta \rho=\rho^{1}-\rho^{2} $. Then
	\begin{align*}
		\left\langle h_{t_{0}},\delta \rho_{t_{0}}\right\rangle &=&&\int_{0}^{t_{0}}\left\langle \partial_{\tau}h_{\tau},\delta \rho_{\tau}\right\rangle\,d\tau-\lambda \int_{0}^{t_{0}}\left\langle (v-b_{\tau})\nabla h_{\tau},\delta \rho_{\tau}\right\rangle\,d\tau\\&&&-\dfrac{1}{\epsilon}\int_{0}^{t_{0}}\left\langle \nabla G\cdot \nabla h_{\tau},\delta \rho_{\tau}\right\rangle\,d\tau+\dfrac{\sigma^{2}}{2}\int_{0}^{t_{0}}\left\langle  \sum_{k=1}^{d}(v-b_{\tau})_{k}^{2}\partial_{kk} h_{\tau},\delta \rho_{\tau}\right\rangle\,d\tau\\
		&=&&\int_{0}^{t_{0}}\left\langle \partial_{\tau}h_{\tau},\delta\rho_{\tau}\right\rangle\,d\tau+\int_{0}^{t_{0}}\left\langle -\partial_{\tau}h_{\tau},\delta\rho_{\tau}\right\rangle\,d\tau=0.
	\end{align*}
	This implies that $ \int\psi\delta\rho_{t_{0}}=0 $ for any chosen $ \psi\in\mathcal{C}_{c}^{\infty}(\mathbb{R}^{d}) $ and $ t_{0}\in[0,T] $. Thus $ \delta\rho_{t}=0. $ This proves the uniqueness.
\end{proof}

\begin{lemma}\label{unique}
	Assume that $ \rho^{1},\rho^{2}\in\mathcal{C}\left([0,T];\mathcal{P}_{2}(\mathbb{R}^{d})\right) $ are two weak solutions to PDE (\ref{FPK}) in the sense of Definition \ref{weakdef} with the same initial data $ \rho_{0} $. Then it holds that 
	\begin{gather*}		\sup_{t\in[0,T]}W_{2}\left(\rho^{1}_{t},\rho^{2}_{t}\right)=0,
	\end{gather*}
	where $ W_{2} $ is the 2-Wasserstein distance.
\end{lemma}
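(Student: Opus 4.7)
The plan is to reduce the uniqueness of weak solutions for the nonlinear Fokker--Planck equation (\ref{FPK}) to the uniqueness of strong solutions of an associated linear SDE, and then exploit the stability estimate for the consensus point (Lemma \ref{stability}) via a synchronous coupling.

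First, given a weak solution $\rho^{i}_{t}\in\mathcal{C}\big([0,T];\mathcal{P}_{2}(\mathbb{R}^{d})\big)$ for $i=1,2$, freeze the nonlinearity by setting $b^{i}_{t}:=v_{\alpha}(\rho^{i}_{t})$. Applying Lemma \ref{stability} together with the continuity of $t\mapsto\rho^{i}_{t}$ in $\mathcal{P}_{2}$ gives $b^{i}_{t}\in\mathcal{C}([0,T];\mathbb{R}^{d})$, so the linear Fokker--Planck equation (\ref{linear}) with drift $b^{i}_{t}$ is well posed; by the preceding uniqueness theorem its unique weak solution is exactly $\rho^{i}_{t}$. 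The law of the strong solution $\bar{V}^{i}_{t}$ to the associated linear SDE
\begin{equation*}
d\bar{V}^{i}_{t}=-\lambda\big(\bar{V}^{i}_{t}-b^{i}_{t}\big)\,dt-\tfrac{1}{\epsilon}\nabla G(\bar{V}^{i}_{t})\,dt+\sigma\,\text{diag}\big(\bar{V}^{i}_{t}-b^{i}_{t}\big)\,dB_{t},\qquad \bar{V}^{i}_{0}\sim\rho_{0},
\end{equation*}
therefore coincides with $\rho^{i}_{t}$. Construct both processes on a common probability space driven by the same Brownian motion $B_{t}$ and with the same initial value $\bar{V}^{1}_{0}=\bar{V}^{2}_{0}$ drawn from $\rho_{0}$ (synchronous coupling).

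Next, set $Z_{t}:=\bar{V}^{1}_{t}-\bar{V}^{2}_{t}$. Apply It\^o's formula to $\|Z_{t}\|^{2}$, take expectation (the stochastic integral has zero mean thanks to the second-moment bounds on $\rho^{i}_{t}$), and estimate each term separately: the $\lambda Z_{t}$ drift contributes $-2\lambda\mathbb{E}\|Z_{t}\|^{2}$; the cross term $\lambda(b^{1}_{t}-b^{2}_{t})$ is controlled by $\|Z_{t}\|\cdot\|b^{1}_{t}-b^{2}_{t}\|$; the gradient term uses Assumption \ref{assump1}(3) to yield a Lipschitz bound $L\|Z_{t}\|$; and the diffusion quadratic variation splits into a contribution bounded by $2\sigma^{2}\mathbb{E}\|Z_{t}\|^{2}+2\sigma^{2}\|b^{1}_{t}-b^{2}_{t}\|^{2}$. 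The decisive ingredient is Lemma \ref{stability}, which gives
\begin{equation*}
\|b^{1}_{t}-b^{2}_{t}\|=\|v_{\alpha}(\rho^{1}_{t})-v_{\alpha}(\rho^{2}_{t})\|\leq c_{0}W_{2}(\rho^{1}_{t},\rho^{2}_{t})\leq c_{0}\sqrt{\mathbb{E}\|Z_{t}\|^{2}},
\end{equation*}
provided a uniform fourth-moment bound on $\rho^{i}_{t}$ holds; this bound is obtained by the same Gronwall argument used in Step 3 of the proof of Theorem \ref{meanfieldwellposed} (polynomial growth of $\nabla G$ and at most linear growth of $v_{\alpha}$). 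Combining the estimates gives $\frac{d}{dt}\mathbb{E}\|Z_{t}\|^{2}\leq C\,\mathbb{E}\|Z_{t}\|^{2}$ for some constant $C$ depending only on $\lambda,\sigma,\epsilon,L,c_{0}$ and the moment bound.

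Since $\mathbb{E}\|Z_{0}\|^{2}=0$, Gronwall's inequality yields $\mathbb{E}\|Z_{t}\|^{2}=0$ on $[0,T]$, hence $W_{2}^{2}(\rho^{1}_{t},\rho^{2}_{t})\leq \mathbb{E}\|Z_{t}\|^{2}=0$ for every $t\in[0,T]$, proving the lemma. The main obstacle is not any single estimate but the identification step: certifying that an arbitrary $\mathcal{P}_{2}$-valued weak solution of (\ref{FPK}) is realised as the law of the strong solution of the associated \emph{linear} SDE. This is exactly where the uniqueness theorem for the linear equation (\ref{linear}) proved just above is crucial, since it rules out any spurious weak solution not coming from the SDE; once this identification is in hand, the coupling argument and Gronwall step are routine, apart from verifying the moment bounds required to invoke Lemma \ref{stability}.
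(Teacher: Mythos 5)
Your proof is correct and follows essentially the same route as the paper: the identical ``freeze-the-nonlinearity'' step identifies each weak solution $\rho^i_t$ with the law of the strong solution of the frozen linear SDE via the uniqueness theorem for the linear Fokker--Planck equation, and the subsequent synchronous coupling plus It\^o/Gronwall estimate using Lemma \ref{stability} is exactly the argument the paper packages as an invocation of the pathwise uniqueness part of Theorem \ref{meanfieldwellposed} (Step 4 of its proof). The only difference is presentational: you inline the Gronwall computation rather than citing the already-proved SDE uniqueness, and you correctly flag the fourth-moment bound needed for Lemma \ref{stability}, which the paper implicitly inherits from its $\mathcal{P}_4$ well-posedness framework.
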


\begin{proof}
	Given $ \rho^{1} $ and $ \rho^{2} $, we first solve the following two linear SDEs
	\begin{gather*}
		d\tilde{V}^{i}_{t}=-\lambda\left(\tilde{V}^{i}_{t}-v_{\alpha}(\rho_{t}^{i})\right)\,dt-\dfrac{1}{\epsilon}\nabla G\,dt+\sigma\text{diag}\left(\tilde{V}^{i}_{t}-v_{\alpha(\rho_{t}^{i})}\right)\,dB_{t},\\
		\hat{V}^{i}_{0}\sim\rho_{0}
	\end{gather*}
	for $ i=1,2 $. We use $ \tilde{\rho}_{t}^{i} $ to denote the law of $ \tilde{V}^{i}_{t} $ for $ i=1,2 $. Thus $ \tilde{\rho}_{t}^{i} $ solves 
	\begin{equation*}
		\begin{gathered}
			\partial_{t}\tilde{\rho}_{t}^{i}=\lambda\text{div}\left((v-v_{\alpha}(\rho_{t}^{i})+\dfrac{1}{\epsilon}\nabla G)\tilde{\rho}_{t}^{i}\right)+
			\dfrac{\sigma^{2}}{2}\sum_{k=1}^{d}\partial_{x_kx_k}\left(\big\|v-v_{\alpha}(\rho_{t}^{i})\big\|^{2}\tilde{\rho}_{t}^{i}\right),\\ \tilde{\rho}^{i}_{0}=\rho_{0}
		\end{gathered}
	\end{equation*}
	in the weak sense for $ i=1,2 $. Moreover, $ \rho^{i} $ solves the above PDE since we assumed that $ \rho^{i} $ solves (\ref{FPK}). But from Theorem \ref{unique},  the solution to the above PDE is unique for $ i=1,2 $. This implies that $ \tilde{\rho}^{i}_{t}=\rho^{i}_{t} $ for $ i=1,2 $. As a result, $ \tilde{V}^{1}_{t} $ and $ \tilde{V}^{2}_{t} $ both solve (\ref{SDE}). By Theorem \ref{meanfieldwellposed}, it holds  that
	\begin{gather*}
		\sup_{t\in[0,T]}\E\left[| \tilde{V}^{1}_{t}- \tilde{V}^{2}_{t}|^{2}\right]=0.
	\end{gather*}
	Then one has
	\begin{gather*}
		\sup_{t\in[0,T]}W_{2}^2\left(\rho^{1},\rho^{2}\right)=	\sup_{t\in[0,T]}W_{2}^2\left(\tilde{\rho}^{1},\tilde{\rho}^{2}\right)\leq 	\sup_{t\in[0,T]}\E\left[| \tilde{V}^{1}_{t}- \tilde{V}^{2}_{t}|^{2}\right]=0 
	\end{gather*}
	This completes the proof.
\end{proof}
\section{Proof of Lemma~\ref{lem:assumpB-distance}}\label{appen:proofassumpB}
\begin{proof}We verify Assumption~\ref{wellbehave} (B) one by one.
{\paragraph{Verification of Assumption~\ref{wellbehave} (B1)}We first prove when $K$ is convex, the distance function $g$ is convex. Given $\lambda_1,\lambda_2 > 0$ with $\lambda_1 + \lambda_2 = 1$ and $v_1, v_2\in \mathbb{R}^d$. By the definition of distance function, for any $\epsilon > 0$, there exist $u_1$ and $u_2$ in $K$ such that,
\begin{gather*}
\|v_1 - u_1\|_2 \leq g(v_1) + \epsilon,\\
\|v_2 - u_2\|_2 \leq g(v_2) + \epsilon.
\end{gather*}
Thus one has
\begin{align*}
g(\lambda_1 v_1 + \lambda_2 v_2)&\leq \|\lambda_1 v_1 + \lambda_2 v_2 - \lambda_1 u_1 - \lambda_2 u_2\|_2\\
&\leq \lambda_1 \|v_1 - u_1\|_2 + \lambda_2 \|v_2 - u_2\|_2\\
&\leq \lambda_1 g(v_1) + \lambda_2 g(v_2) + \epsilon.
\end{align*}
The above inequality holds for all $\epsilon > 0$. This gives the convexity of $g$. Also, one knows that $x^2$ is a convex increasing function when $x\geq 0$. As a result, the composition $G = g^2$ is convex, thereby satisfying Assumption~\ref{wellbehave} (B1).}

{\paragraph{Verification of Assumption~\ref{wellbehave} (B2)}
Since we are dealing with a compact convex set $K\subset\R^{d}$, we can
conveniently work with the Moreau envelope. Let $P_K$ denote the metric
projection onto $K$. Then
\[
G(v) = \dist(v,K)^{2} = \|v - P_K(v)\|^{2}, \qquad v\in\R^{d}.
\]
Let $\iota_K$ be the indicator function of $K$, that is,
\[
\iota_K(v) =
\begin{cases}
0, & v\in K,\\
+\infty, & v\notin K.
\end{cases}
\]
The Moreau envelope of $\iota_K$ is defined by
\begin{gather*}
M_{\iota_K}(u)
=
\inf_{v\in\R^{d}}
\biggl\{
\iota_K(v) + \frac{1}{2}\,\|v - u\|^{2}
\biggr\}.
\end{gather*}
Since $\iota_K(v)=0$ for $v\in K$ and $+\infty$ otherwise, we obtain
\begin{gather*}
M_{\iota_K}(u)
=
\inf_{v\in K} \frac{1}{2}\,\|v - u\|^{2}
=
\frac{1}{2}\,\|u - P_K(u)\|^{2}
=
\frac{1}{2} G(u),
\end{gather*}
so $M_{\iota_K} = G/2$.}

{We now use~\cite[Proposition~12.30]{bauschke2017}, which yields
\begin{gather*}
\nabla M_{\iota_K}(u) = u - \mathrm{prox}_{\iota_K}(u),
\end{gather*}
where $\mathrm{prox}_{\iota_K}$ is the proximal operator of $\iota_K$. In the
present case one has $\mathrm{prox}_{\iota_K} = P_K$, and therefore
\begin{gather*}
\nabla G(v)
=
2 \nabla M_{\iota_K}(v)
=
2\bigl(v - P_K(v)\bigr),
\end{gather*}
so that
\begin{gather*}
\|\nabla G(v)\|^{2}
=
4 \|\nabla M_{\iota_K}(v)\|^{2}
=
4 \|v - P_K(v)\|^{2}
=
4 G(v),
\end{gather*}
for all $v\in\R^{d}$. Hence the inequality in Assumption~\ref{wellbehave}
(B2) is satisfied with equality.}

{It remains to prove that $G\in \mathcal{C}^{2}_{*}(\mathbb{R}^{d})$. In
other words, we need to show that $|\partial_k G(x)|\leq C (1+|x_k|)$ and
$\sup_{x\in\mathbb{R}^{d}} |\partial_{kk} G(x)|<\infty$ for all
$k=1,\dots,d$.}

{The first bound follows immediately from the expression
$\nabla G(v) = 2\bigl(v - P_K (v)\bigr)$:
\[
|\partial_k G(v)|
=
2\,|v_k - P_K (v)_k|
\leq
2\bigl(|v_k| + \sup_{u\in K} |u_k|\bigr)
\le C (1+|v_k|),
\]
where $\sup_{u\in K} |u_k|<\infty$ because $K$ is compact, and $C>0$ is a
constant independent of $v$ and of the dimension.}

{For the second bound, we recall that the gradient of a Moreau envelope is
always globally Lipschitz; in particular, $\nabla M_{\iota_K}$ is
$1$–Lipschitz, hence $\nabla G = 2\nabla M_{\iota_K}$ is globally
Lipschitz. By Rademacher's theorem, $G$ is twice differentiable almost
everywhere, and the Hessian $\nabla^{2}G(x)$ exists for almost every $x$
with operator norm bounded by the Lipschitz constant of $\nabla G$. Thus
$\|\nabla^{2}G(x)\|_{\mathrm{op}}$ is essentially bounded, and the
diagonal entries $\partial_{kk} G(x)$ (which are nonnegative due to the
convexity of $G$) are uniformly bounded as well. This yields
\[
\sup_{x\in\R^{d}} |\partial_{kk} G(x)| < \infty,
\qquad k=1,\dots,d.
\]
Hence $G\in\mathcal{C}^{2}_{*}(\mathbb{R}^{d})$, and Assumption~\ref{wellbehave}
(B2) is verified. Moreover, all the constants involved can be
chosen independently of the ambient dimension.
}

\paragraph{Verification of Assumption~\ref{wellbehave} (B3)}
{We first recall the relevant notions from convex geometry.}

\begin{definition}[Intrinsic volumes, {\cite[Definition~1.1]{lotz2020concentration}}]
Let $S\subset\R^d$ be a nonempty compact convex set. For each index
$j=0,1,\dots,d$, let $P_j \in\R^{d\times d}$ be the orthogonal projector
onto a fixed $j$–dimensional subspace of $\R^d$, and let
$Q\in\R^{d\times d}$ be a random rotation matrix distributed according to
the Haar measure on the special orthogonal group $\mathrm{SO}(d)$. The
\emph{intrinsic volumes} of $S$ are defined by
\[
V_j(S):= \binom{d}{j} \frac{\kappa_d}{\kappa_j \kappa_{d-j}}
\,\mathbb{E}_{Q} \bigl[\operatorname{Vol}_j (P_j Q S)\bigr],
\qquad j=0,\dots,d,
\]
where $\kappa_d$ denotes the volume of the $d$–dimensional Euclidean unit
ball and $\operatorname{Vol}_j(\cdot)$ is the $j$–dimensional Lebesgue
measure computed relative to the affine hull. In particular,
$V_d(S)$ coincides with the $d$–dimensional Lebesgue measure of $S$.
\end{definition}

\begin{definition}[Parallel sets]
Let $K\subset\R^d$ be a nonempty compact convex set and let
$\operatorname{dist}(\cdot,K)$ denote the Euclidean distance to $K$. For
$r\ge0$ the \emph{parallel set} of $K$ at distance $r$ is defined by
\[
K_{\le r} := \{v\in\R^d : \operatorname{dist}(v, K)\le r\}.
\]
\end{definition}

We will use the following standard results.

\begin{proposition}[Steiner formula, {\cite[Fact~2.1]{lotz2020concentration}}]\label{res1}
Let $K\subset\R^d$ be a nonempty compact convex set. For every $r\ge0$ one has
\begin{gather*}
V_{d}(K_{\leq r}) = \sum_{j = 0}^d \kappa_{d - j} V_{j}(K)\, r^{d-j}.
\end{gather*}
\end{proposition}

\begin{proposition}[{\cite[Corollary~2.5]{rataj2010volume}}]\label{res2}
Let $K\subset\R^d$ be a nonempty compact convex set. For all $r>0$, except possibly for a countable set of values of $r$, one has
\begin{gather*}
\frac{d}{dr}V_d (K_{\leq r}) = \mathcal{H}^{d-1}(\partial K_{\leq r}),
\end{gather*}
where $\mathcal{H}^{d-1}(\cdot)$ denotes the $(d-1)$–dimensional
Hausdorff measure.
\end{proposition}

We now prove Assumption~\ref{wellbehave}\,(B3) holds. Recall that
$G = g^{2}$ with $g(v) = \operatorname{dist}(v,K)$, and from the
verification of (B2) we have
\[
\|\nabla G(v)\| = \sqrt{4 G(v)} = 2g(v), \qquad v\in\R^d.
\]
At points where $g$ is differentiable and $g(v)>0$,
\[
\nabla G(v) = 2 g(v)\,\nabla g(v),
\]
so
\[
2g(v) = \|\nabla G(v)\| = \|2g(v)\,\nabla g(v)\|
= 2g(v)\,\|\nabla g(v)\|,
\]
and hence $\|\nabla g(v)\| = 1$ at all such points.

By the coarea formula applied to the Lipschitz function $g$ we obtain
\begin{align*}
\int_{\{G\in (0,1)\}} \frac{1}{\|\nabla G(v)\|}\,dv
&= \int_{\{G\in(0,1)\}}  \frac{1}{2 g(v)} \,\|\nabla g(v)\|\,dv \\
&= \int_0^1 \int_{\{g(v) = t\}} \frac{1}{2 g(v)} \,d\mathcal{H}^{d-1}(v)\,dt\\
&= \int_0^1\frac{1}{2t} \,\mathcal{H}^{d-1}\bigl(\{v\in\R^d : g(v)=t\}\bigr)\,dt\\
&= \int_0^1\frac{1}{2t} \,\mathcal{H}^{d-1}(\partial K_{\leq t})\,dt,
\end{align*}
where in the last step we used $\{v : g(v)=t\} = \partial K_{\le t}$.

From Propositions~\ref{res1} and~\ref{res2} we have, for all $t>0$
outside a countable set,
\begin{align*}
\mathcal{H}^{d-1}(\partial K_{\leq t})
&= \frac{d}{dt} V_d (K_{\leq t})\\
&= \sum_{j=0}^{d-1}(d-j)\,\kappa_{d-j}\,V_{j}(K)\, t^{d-j-1}.
\end{align*}
Therefore,
\[
\int_{\{G\in (0,1)\}} \frac{1}{\|\nabla G(v)\|}\,dv
= \frac{1}{2}\sum_{j=0}^{d-1}(d-j)\,\kappa_{d-j}\,V_{j}(K)\int_0^1  t^{d-j-2}\,dt.
\]
For $j\le d-2$ the integral $\int_0^1 t^{d-j-2}\,dt$ is finite, whereas
for $j=d-1$ the integrand behaves like $t^{-1}$ near $0$ and the integral
diverges. Hence finiteness of the above expression requires
\[
V_{d-1}(K) = 0.
\]
A sufficient condition for this is that $K$ is contained in an affine
subspace of dimension less than or equal to $d-2$, in which case the
$(d-1)$–dimensional intrinsic volume $V_{d-1}(K)$ vanishes. Under this
condition, Assumption~\ref{wellbehave}\,(B3) holds.

\end{proof}

\section{Proof of Lemma \ref{C1}}\label{C1details}

\begin{proof}
	We first prove the existence of $\tau_2$. To begin with, one  deduces 
	\begin{gather*}
		|	\underline{\mathcal{E}_{u}}|=|\mathcal{E}(v_{u})|=|\mathcal{E}(v_{u})-0|=|\mathcal{E}(v_{u})-\mathcal{E}(v^{*})|\leq C\big\|v_{u}-v^{*}\big\|_\infty^{\beta}\leq C \tau_{1}^{\beta}(u),
	\end{gather*}
	where the first inequality comes from Assumption \ref{wellbehave} (A2) and the second inequality comes from Assumption \ref{wellbehave} (C1). Then by taking $ \tau_{2}(x) $ to be $ \tau_{1}^{\beta}(x) $ will finish the proof of the existence of $\tau_2$.
	
	For the existence of $ \tau_{3} $, we can first pick $ v_{1}\in B^{\infty}(v_{u},r)\cap\{G=u\} $, $ v_{2}\in B^{\infty}(v^{*},r)$ and then do the following calculation: 
	\begin{align*}
		|\mathcal{E}(v_{1})-\mathcal{E}(v_{2})|&\leq C\big\|v_{1}-v_{2}\big\|_\infty^{\beta}\\
		&\lesssim (\big\|v_{1}-v_{u}\big\|_\infty^{\beta}+\big\|v_{u}-v^{*}\big\|_\infty^{\beta}+\big\|v^{*}-v_{2}\big\|_\infty^{\beta})\\
		&\leq (r^{\beta}+\tau_{1}(u)^{\beta}+r^{\beta})\\
		&\leq \left(\max\{u,r\}^{\beta}+\tau_{1}(\max\{u,r\})^{\beta}+\max\{u,r\}^{\beta}\right),
	\end{align*}
	where in the first inequality, we used Assumption \ref{wellbehave} (A2) and in the third inequality, we used Assumption \ref{wellbehave} (C1). Then one has
	\begin{gather*}
		\sup_{v_{1}\in B(v_{u},r)\cap\{G=u\},\atop v_{2}\in B(v^{*},r)}|\mathcal{E}(v_{1})-\mathcal{E}(v_{2})|\lesssim \left(\max\{u,r\}^{\beta}+\tau_{1}(\max\{u,r\})^{\beta}+\max\{u,r\}^{\beta}\right).
	\end{gather*}
	So
	\begin{gather*}
		|\mathcal{E}_{r}^{u}-\mathcal{E}_{r}|\lesssim \left(\max\{u,r\}^{\beta}+\tau_{1}(\max\{u,r\})^{\beta}+\max\{u,r\}^{\beta}\right).
	\end{gather*}Therefore, selecting $ \tau_{3}(x) $ as a scalar multiple of $ 2x^{\beta}+\tau_{1}^{\beta}(x) $ will suffice. One can apply the same method to prove the existence of $ \tau_{4}(x) $.
\end{proof}

\section{Explanations for expanding the test function space}\label{explain}
We follow the same argument as in \cite{FKR}{}. To start with, for any $ \phi\in\mathcal{C}_{*}^{2}(\mathbb{R}^{d} )$, one apply It\v o's formula to $ \bar{V}_{t} $ to get
\begin{align*}
	d\phi(\bar{V}_{t})=&\nabla \phi\left(\bar{V}_{t}\right)\cdot \left(\left(-\lambda \left(\bar{V}_{t}-\con\right)-\dfrac{1}{\epsilon}\nabla G\left(\bar{V}_{t}\right)\right)\,dt\right)\\&+\dfrac{1}{2}\sigma^{2}\sum_{k=1}^{d}\partial_{kk}\phi\left(\bar{V}_{t}\right)\left(\bar{V}_{t}-\con\right)_{k}^{2}\,dt
	+\sigma\nabla\phi\left(\bar{V}_{t}\right)^{T}\text{diag}\left(\bar{V}_{t}-\con\right)\,dB_{t}.
\end{align*}
Note that $ \E\int_{0}^{t} \sigma\nabla\phi\left(\bar{V}_{t}\right)^{T}\text{diag}\left(\bar{V}_{t}-\con\right)\,dB_{t}=0 $
by applying Theorem 3.2.1 (iii) in \cite{O} due to the facts that $ \phi\in\mathcal{C}_{*}^{2}\left(\mathbb{R}^{d}\right) $ and $ \rho_{t} \in\mathcal{C} \left([0,T],\mathcal{P}_{4}\left(\mathbb{R}^{d}\right)\right)$. Taking the expectation and applying Fubini's theorem gives 
\begin{align*}
	\dfrac{d}{dt}\mathbb{E}\phi\left(\bar{V}_{t}\right)=&-\lambda\mathbb{E}\nabla \phi\left(\bar{V}_{t}\right)\cdot \left(-\lambda \left(\bar{V}_{t}-\con\right)-\dfrac{1}{\epsilon}\nabla G\left(\bar{V}_{t}\right)\right)\\
	&+\dfrac{1}{2}\sigma^{2}\mathbb{E}\sum_{k=1}^{d}\partial_{kk}\phi\left(\bar{V}_{t}\right)\left(\bar{V}_{t}-\con\right)_{k}^{2},
\end{align*}
which is exactly Definition \ref{weakdef} (ii) with $ \phi $ being a function in $ \mathcal{C}_{*}^{2}\left(\mathbb{R}^{d}\right) $.

\section{Proof of Lemma \ref{energy}}\label{appen:energy}

\begin{proof}
	Substituting $ \phi(v)=\dfrac{1}{2}\|v-v^{*}\|^{2} $ into Definition \ref{weakdef} gives
    \begin{equation}\label{lemma1main}
	\begin{aligned}
		\dfrac{d}{dt}\mathcal{V}\left(\rho_{t}\right)
        =&-\lambda \int \Bigl\langle v-v_{\alpha}\left(\rho_{t}\right),v-v^{*}\Bigr\rangle\,d\rho_{t}\\
        &-\dfrac{1}{\epsilon}\int \Bigl\langle\nabla G,v-v^{*}\Bigr\rangle\,d\rho_{t}+\dfrac{\sigma^{2}}{2}\int\|v-v(\rho_{t})\|^{2}\,d\rho_{t}.
	\end{aligned}
	\end{equation}
	Notice that 
	\begin{align*}
		&-\lambda \int \Bigl\langle v-v_{\alpha}\left(\rho_{t}\right),v-v^{*}\Bigr\rangle\dvt\\&=-\lambda\int\Bigl\langle v - v^{*}, v - v^{*}\Bigr\rangle\dvt+\lambda \int \Bigl\langle v-v^{*},\con-v^{*}\Bigr\rangle \dvt\\
		&=-2\lambda\mathcal{V}\left(\rho_{t}\right)+\lambda\Bigl\langle \int (v-v^{*})\,d\rho_{t}(v),v_{\alpha}\left(\rho_{t}\right)-v^{*}\Bigr\rangle.
	\end{align*}
	Then one can deduce
	\begin{equation}\label{lemma1ineq1}
		\begin{aligned}
			&-\lambda \int \Bigl\langle v-v_{\alpha}\left(\rho_{t}\right),v-v^{*}\Bigr\rangle\dvt\\
            &\leq -2\lambda\mathcal{V}\left(\rho_{t}\right)+\lambda\|\int (v-v^{*})\,d\rho_{t}(v)\|_2\cdot\|v_{\alpha}\left(\rho_{t}\right)-v^{*}\|_2\\
			&\leq -2\lambda\mathcal{V}\left(\rho_{t}\right)+\lambda\int \|(v-v^{*})\|_2\,d\rho_{t}(v)\cdot\|v_{\alpha}\left(\rho_{t}\right)-v^{*}\|_2\\
			&\leq-2\lambda\mathcal{V}\left(\rho_{t}\right)+\lambda \sqrt{2\mathcal{V}\left(\rho_{t}\right)}\cdot\|v_{\alpha}\left(\rho_{t}\right)-v^{*}\|_2,
		\end{aligned}
	\end{equation}
	where the first and third inequalities come from Cauchy's inequality and the second inequality is a consequence of Minkowski's inequality.
	
	For the last term on the right-hand side of (\ref{lemma1main}), we can do the following estimate,
	\begin{equation}\label{lemma1ineq2}
		\begin{aligned}
			&\dfrac{\sigma^{2}}{2}\int\|v-v(\rho_{t})\|_2^{2}\dvt\\
			=&\dfrac{\sigma^{2}}{2}\left(\int \|v-v^{*}\|_2^{2}\dv-2\LA \int(v-v^{*})\dv,v_{\alpha}\left(\rho_{t}\right)-v^{*}\RA +\|\con-v^{*}\|_2^{2}\right)\\
			\leq& \sigma^{2}\left(\ener+\int\|v-v^{*}\|_2\dv\cdot\|\con-v^{*}\|_2+\dfrac{1}{2}\|\con-v^{*}\|_2^{2}\right)\\
			\leq& \sigma^{2}\left(\ener+\sqrt{2\ener}\|\con-v^{*}\|_2+\dfrac{1}{2}\|\con-v^{*}\|_2^{2}\right),
		\end{aligned}
	\end{equation}
	where in the first inequality, we use Cauchy's inequality and Minkowski's inequality and in the second inequality, we use Cauchy's inequality again.
	Plugging (\ref{lemma1ineq1}) and (\ref{lemma1ineq2}) back into (\ref{lemma1main}) finishes the proof.
\end{proof}

\section{Lemmas used in Laplace's principle}
In this section, we present the two lemmas (Lemma~\ref{lp1} and Lemma~\ref{lp2}) used in the proof of Lemma~\ref{lp}.
\subsection{Lemma \ref{lp1} and proof}\label{lp1details}
\begin{lemma}\label{lp1}
	Fix $ r\in(0,R_{0}) $ small enough. $ \forall q>0 $ with $ q+\mathcal{E}^{0}_{r}< \mathcal{E}_{\infty} $, 
	\begin{align*}
		&\int_{\{G=0\}}\dfrac{\|v-v^{*}\|_{2}}{\|\wt\|_{L^{1}(\rho_{t})}}e^{-\alpha\mathcal{E}(v)}\dvt\\
        &\leq \dfrac{\sqrt{d}\left(q+\ec\right)^{\mu}}{\eta}+\dfrac{\sqrt{d}e^{-\alpha\left(q-\tau_{3}(r)\right)}}{\rt\left(B^{\infty}(v^{*},r)\right)}\int_{\{G=0\}}\|v-v^{*}\|_{2}\dvt.
	\end{align*}
	Here, $ \ec $, $ \mathcal{E}_{\infty} $ and $ \tau_{3} $ are quantities defined in Assumption \ref{wellbehave} (C) and Lemma \ref{C1}.
\end{lemma}
\begin{proof}
	Let $ \tilde{r}=\dfrac{(q+\ec)^{\mu}}{\eta}. $ One can verify that
	\begin{enumerate}
		\item $ \tilde{r}\geq r $
		\item $ \mathcal{E}(v)-\ec\geq q $ $ \forall v\in\{G=0\}\cap B^{\infty}(v^{*},\tilde{r})^{c} $.
	\end{enumerate}
	For (1), we begin by computing directly:
	\begin{gather*}
		\tilde{r}=\dfrac{(q+\ec)^{\mu}}{\eta}\geq \dfrac{(\ec)^{\mu}}{\eta}=\dfrac{(\ec-\underline{\mathcal{E}_{0}})^{\mu}}{\eta},
	\end{gather*}
	where the last equality is because $ \underline{\mathcal{E}_{0}}=\mathcal{E}(v^{*})=0. $ Then for any $ v\in B^{\infty}(v^{*},r)\cap\{G=0\} $, by the definition of $ \ec, $ in Lemma \ref{C1}, one has
	\begin{gather*}
		\dfrac{(\ec-\underline{\mathcal{E}_{0}})^{\mu}}{\eta}\geq \dfrac{(\mathcal{E}(v)-\underline{\mathcal{E}_{0}})^{\mu}}{\eta}.
	\end{gather*}
	Then we use Assumption \ref{wellbehave} (C2) to get
	\begin{gather*}
		\tilde{r}\geq\dfrac{(\mathcal{E}(v)-\underline{\mathcal{E}_{0}})^{\mu}}{\eta}\geq \big\|v-v^{*}\big\|_{\infty}.
	\end{gather*}
	By Assumption \ref{wellbehave} (C1),  $ \partial B^{\infty}(v^{*},r)\cap \{G=0\}\neq \emptyset $, which leads to $$ \sup_{v\in B^{\infty}(v^{*},r)\cap\{G=0\}}\big\|v-v^{*}\big\|_{\infty}=r. $$ Since the above inequality holds $ \forall v\in B^{\infty}(v^{*},r)\cap\{G=0\} $, one then has $ \tilde{r}\geq r $, which completes the proof of the first one. And for (2), for all $ v\in \{G=0\}\cap B^{\infty}(v^{*},r)^{c} $, we can compute:
	\begin{align*}
		\mathcal{E}(v)-\ec&=\mathcal{E}(v)-\underline{\mathcal{E}_{0}}-(\ec-\underline{\mathcal{E}_{0}})\\
		&\geq (\eta\big\|v-v^{*}\big\|_{\infty})^{1/\mu}-(\ec-\underline{\mathcal{E}_{0}})\geq (\eta\tilde{r})^{1/\mu}-(\ec-\underline{\mathcal{E}_{0}})=q+\underline{\mathcal{E}_{0}}=q,
	\end{align*}
	where the first inequality comes from Assumption \ref{wellbehave} (C2), the second inequality is due to $ v\in B^{\infty}(v^{*},\tilde{r})^{c} $, the third inequality is because of the definition of $ \tilde{r} $ and the last equality is because we assumed $ \mathcal{E}(v^{*})=0 $. This completes the proof of the second one. 
	
	Then we have
	\begin{align*}
		\int_{\{G=0\}}\dfrac{\big\|v-v^{*}\big\|_\infty}{\big\|\wt\big\|_{L^{1}(\rho_{t})}}e^{-\alpha\mathcal{E}(v)}\dvt=&	\int_{\{G=0\}\cap B^{\infty}(v^{*},\tilde{r}) }\dfrac{\big\|v-v^{*}\big\|_\infty}{\big\|\wt\big\|_{L^{1}(\rho_{t})}}e^{-\alpha\mathcal{E}(v)}\dvt\\
		&+\int_{\{G=0\}\cap B^\infty(v^{*},\tilde{r})^{c} }\dfrac{\big\|v-v^{*}\big\|_\infty}{\big\|\wt\big\|_{L^{1}(\rho_{t})}}e^{-\alpha\mathcal{E}(v)}\dvt.
	\end{align*}
	For the former term, we have the following estimate
    \begin{equation}\label{lp11}
	\begin{aligned}
		&\int_{\{G=0\}\cap B^\infty(v^{*},\tilde{r}) }\dfrac{\big\|v-v^{*}\big\|_\infty}{\big\|\wt\big\|_{L^{1}(\rho_{t})}}e^{-\alpha\mathcal{E}(v)}\dvt \\
        &\leq \tilde{r}\int_{\{G=0\}\cap B^\infty(v^{*},\tilde{r}) }\dfrac{1}{\big\|\wt\big\|_{L^{1}(\rho_{t})}}e^{-\alpha\mathcal{E}(v)}\dvt \leq \tilde{r}.
	\end{aligned}
    \end{equation}
	For the latter term, we first notice that
	\begin{align*}
		\big\|\wt\big\|_{L^{1}(\rho_{t})}&=\int e^{-\alpha \mathcal{E}(v)}\dvt\\
        &\geq \int_{B^\infty(v^{*},r)}e^{-\alpha \mathcal{E}(v)}\dvt\\
        &\geq  \int_{B^\infty(v^{*},r)}e^{-\alpha \er}\dvt\\
		&= e^{-\alpha \er}\rt\left(B^\infty(v^{*},r)\right).
	\end{align*}
	Here the second inequality is because of the definition of $ \mathcal{E}_{r} $ in Lemma \ref{C1}. So 
	\begin{gather}\label{wt}
		\big\|\wt\big\|_{L^{1}(\rho_{t})}\geq e^{-\alpha \er}\rt\left(B^\infty(v^{*},r)\right)
	\end{gather}
	holds true for any choice of $ \alpha $ and $ r $.
	Then one can  deduce
	\begin{align*}
		&\int_{\{G=0\}\cap B^\infty(v^{*},\tilde{r})^{c} }\dfrac{\big\|v-v^{*}\big\|_\infty}{\big\|\wt\big\|_{L^{1}(\rho_{t})}}e^{-\alpha\mathcal{E}(v)}\dvt\\
		&\leq\int_{\{G=0\}\cap B^\infty(v^{*},\tilde{r})^{c} }\dfrac{\big\|v-v^{*}\big\|_\infty}{\rt\left(B^\infty(v^{*},r)\right)}e^{-\alpha\left(\mathcal{E}(v)-\er\right)}\dvt\\
		&\leq \int_{\{G=0\}\cap B^\infty(v^{*},\tilde{r})^{c} }\dfrac{\big\|v-v^{*}\big\|_\infty}{\rt\left(B^\infty(v^{*},r)\right)}e^{-\alpha\left(\mathcal{E}(v)-\ec-\tau_{3}(r)\right)}\dvt\\
		&\leq \int_{\{G=0\} }\dfrac{\big\|v-v^{*}\big\|_\infty}{\rt\left(B^\infty(v^{*},r)\right)}e^{-\alpha\left(q-\tau_{3}(r)\right)}\dvt,
	\end{align*}
	where in the second inequality, we used Lemma \ref{C1} and in the third third inequality, we used the fact (2) that $ \mathcal{E}(v)-\ec\geq q $ $ \forall v\in\{G=0\}\cap B^\infty(v^{*},\tilde{r})^{c} $. Thus 
	\begin{gather*}
		\int_{\{G=0\}\cap B^\infty(v^{*},\tilde{r})^{c} }\dfrac{\big\|v-v^{*}\big\|_\infty}{\big\|\wt\big\|_{L^{1}(\rho_{t})}}e^{-\alpha\mathcal{E}(v)}\dvt\leq \int_{\{G=0\} }\dfrac{\big\|v-v^{*}\big\|_\infty}{\rt\left(B^\infty(v^{*},r)\right)}e^{-\alpha\left(q-\tau_{3}(r)\right)}\dvt.
	\end{gather*}
	Combining the above inequality and (\ref{lp11}), we can get
	\begin{align*}
		&\int_{\{G=0\}}\dfrac{\big\|v-v^{*}\big\|_{\infty}}{\big\|\wt\big\|_{L^{1}(\rho_{t})}}e^{-\alpha\mathcal{E}(v)}\dvt\\
        &\leq \dfrac{(q+\ec)^{\mu}}{\eta}+\dfrac{e^{-\alpha\left(q-\tau_{3}(r)\right)}}{\rt\left(B^{\infty}(v^{*},r)\right)}\int_{\{G=0\}}\big\|v-v^{*}\big\|_{\infty}\dvt.
	\end{align*}
	Since $\big\|\cdot\big\|_{\infty}\leq\big\|\cdot\big\|_{2}\leq\sqrt{d}\big\|\cdot\big\|_{2} $, we have
	\begin{align*}
		&\int_{\{G=0\}}\dfrac{\big\|v-v^{*}\big\|_{2}}{\big\|\wt\big\|_{L^{1}(\rho_{t})}}e^{-\alpha\mathcal{E}(v)}\dvt\\
        &\leq \dfrac{\sqrt{d}(q+\ec)^{\mu}}{\eta}+\dfrac{\sqrt{d}e^{-\alpha\left(q-\tau_{3}(r)\right)}}{\rt\left(B^{\infty}(v^{*},r)\right)}\int_{\{G=0\}}\big\|v-v^{*}\big\|_{2}\dvt.
	\end{align*}
	This completes the proof.
\end{proof}

\subsection{Lemma \ref{lp2} and proof}\label{lp2details}
\begin{lemma}\label{lp2}
	Fix $ 0<u<u_{0} $ and $ r>0 $ small. $ \forall q>0 $ satisfying the condition that $ q+\mathcal{E}_{r}^{\tilde{u}}-\underline{\mathcal{E}_{\tilde{u}}} <\mathcal{E}_{\infty}$ is true $ \forall \tilde{u}\in(0,u) $, then
	\begin{align*}
		&\int_{\{G\in(0,u)\}}\dfrac{\|v-v^{*}\|_2}{\|\wt\|_{L^{1}(\rho_{t})}}e^{-\alpha\mathcal{E}(v)}\dvt\\
		&\leq\dfrac{\sqrt{d}\left(q+\ec+\tau_{2}(u)
			+\tau_{4}(\max\{u,r\})\right)^{\mu}}{\eta}\\
		&\quad+\dfrac{\sqrt{d}e^{-\alpha\left(q-\tau_{3}(\max\{u,r\})\right)}}{\rt\left(B^{\infty}(v^{*},r)\right)}\int_{\{G\in(0,u)\}}\|v-v_{G(v)}\|_2\dvt\\
		&\quad+\sqrt{d}\tau_{1}(u).
	\end{align*}
	Here, $ v_{G(v)}= \arg\min_{v'\in\{G(v')=G(v)\}}\mathcal{E}(v')$, $ \underline{\mathcal{E}_{\tilde{u}}}  $ and $\tau_1$ are defined in Assumption \ref{wellbehave} (C1), $ \mathcal{E}_{r}^{\tilde{u}} $,  $ \tau_{3} $ and $ \tau_{4} $ are quantities defined in Lemma \ref{C1}.
\end{lemma}
\begin{proof}
	We first can deduce
	\begin{align*}
		&\int_{\{G\in(0,u)\}}\dfrac{\big\|v-v^{*}\big\|_{\infty}}{\big\|\wt\big\|_{L^{1}(\rho_{t})}}e^{-\alpha\mathcal{E}(v)}\dvt\\
        &=\int_{\{G\in(0,u)\}}\dfrac{\big\|v-v^{*}\big\|_\infty}{\big\|\wt\big\|_{L^{1}(\rho_{t})}\big\|\nabla G\big\|_2}e^{-\alpha\mathcal{E}(v)}\big\|\nabla G\big\|_2\rt\,dv\\
		&=\int_{0}^{u}\,d\tilde{u}\int_{\{G(v)=\tilde{u}\}}\dfrac{\big\|v-v^{*}\big\|_\infty}{\big\|\wt\big\|_{L^{1}(\rho_{t})}\big\|\nabla G\big\|_2}e^{-\alpha\mathcal{E}(v)}\rt\dH.
	\end{align*}
	Here, the first equality is because of Assumption \ref{wellbehave} (B3) that $ \nabla G\neq 0 $ and the second equality comes from the  co-area formula. $ \dH $ is the $ (d-1) $ dimensional Hausdorff measure.
	
	Now we fix $0< \tilde{u}< u $ and study the inner integral. We pick $ \tilde{r}_{\tilde{u}}=\dfrac{(q+\etu-\underline{\mathcal{E}_{\tilde{u}}})^{\mu}}{\eta} $. One can easily use Assumption \ref{wellbehave} (C2) to verify the following facts:
	\begin{enumerate}
		\item $ \tilde{r}_{\tilde{u}}\geq r $.
		\item $ \mathcal{E}(v)-\etu \geq q$ for $ v\in B^\infty(v_{\tilde{u}},\tilde{r}_{\tilde{u}})^{c}\cap \{G(v)=\tilde{u}\}. $
		\item $ \tilde{r}_{\tilde{u}}\leq \tilde{r}= \dfrac{\left(q+\ec+\tau_{2}(u)+\tau_{4}(\max\{u,r\})\right)^{\mu}}{\eta}.$
	\end{enumerate}
	For the proof of the first two facts, one can use the same method we used at the beginning of the proof of Lemma \ref{lp1} and details are omitted. 
	For (3), one can prove it as follows:
	\begin{align*}
		\tilde{r}_{\tilde{u}}&=\dfrac{\left(q+\etu-\underline{\mathcal{E}_{\tilde{u}}}\right)^{\mu}}{\eta}\\
		&=\dfrac{\left(q+\ec+(\etu-\ec)-\underline{\mathcal{E}_{\tilde{u}}}\right)^{\mu}}{\eta}\\
		&\leq \dfrac{\left(q+\ec+\tau_{4}(\max\{\tilde{u},r\})-\tau_{2}(\tilde{u})\right)^{\mu}}{\eta}\leq \dfrac{\left(q+\ec+\tau_{4}(\max\{u,r\})-\tau_{2}(u)\right)^{\mu}}{\eta},
	\end{align*}
	where the two inequalities are because of Assumption \ref{wellbehave} (C2) and Lemma \ref{C1}. 
	
	Then by the triangle inequality, one obtains
	\begin{align*}
		&&&\int_{\{G(v)=\tilde{u}\}}\dfrac{\big\|v-v^{*}\big\|_\infty}{\big\|\wt\big\|_{L^{1}(\rho_{t})}\big\|\nabla G\big\|_2}e^{-\alpha\mathcal{E}(v)}\rt\dH\\&\leq
		&&\int_{\{G(v)=\tilde{u}\}\cap B^\infty(v_{\tilde{u}},\tilde{r}_{\tilde{u}}) }\dfrac{\big\|v-v_{\tilde{u}}\big\|_\infty}{\big\|\wt\big\|_{L^{1}(\rho_{t})}\big\|\nabla G\big\|_2}e^{-\alpha\mathcal{E}(v)}\rt\dH\\
		&&&+\int_{\{G(v)=\tilde{u}\}\cap B^\infty(v_{\tilde{u}},\tilde{r}_{\tilde{u}})^{c} }\dfrac{\big\|v-v_{\tilde{u}}\big\|_\infty}{\big\|\wt\big\|_{L^{1}(\rho_{t})}\big\|\nabla G\big\|_2}e^{-\alpha\mathcal{E}(v)}\rt\dH\\
		&&&+\int_{\{G(v)=\tilde{u}\} }\dfrac{\big\|v^{*}-v_{\tilde{u}}\big\|_\infty}{\big\|\wt\big\|_{L^{1}(\rho_{t})}\big\|\nabla G\big\|_2}e^{-\alpha\mathcal{E}(v)}\rt\dH.
	\end{align*}
	Thus one needs   to bound the above three terms. For the first one, 
	\begin{align*}
		&\int_{\{G(v)=\tilde{u}\}\cap B^\infty(v_{\tilde{u}},\tilde{r}_{\tilde{u}}) }\dfrac{\big\|v-v_{\tilde{u}}\big\|_\infty}{\big\|\wt\big\|_{L^{1}(\rho_{t})}\big\|\nabla G\big\|_2}e^{-\alpha\mathcal{E}(v)}\rt\dH\\
		&\leq \tilde{r}_{\tilde{u}}\int_{\{G(v)=\tilde{u}\} }\dfrac{1}{\big\|\wt\big\|_{L^{1}(\rho_{t})}\big\|\nabla G\big\|_2}e^{-\alpha\mathcal{E}(v)}\rt\dH\\
		&\leq \tilde{r}\int_{\{G(v)=\tilde{u}\} }\dfrac{1}{\big\|\wt\big\|_{L^{1}(\rho_{t})}\big\|\nabla G\big\|_2}e^{-\alpha\mathcal{E}(v)}\rt\dH.
	\end{align*}
	For the second one, 
	\begin{align*}
		&\int_{\{G(v)=\tilde{u}\}\cap B^\infty(v_{\tilde{u}},\tilde{r}_{\tilde{u}})^{c} }\dfrac{\big\|v-v_{\tilde{u}}\big\|_\infty}{\big\|\wt\big\|_{1}\big\|\nabla G\big\|_2}e^{-\alpha\mathcal{E}(v)}\rt\dH\\
		&\leq \int_{\{G(v)=\tilde{u}\}\cap B^\infty(v_{\tilde{u}},\tilde{r}_{\tilde{u}})^{c} }\dfrac{\big\|v-v_{\tilde{u}}\big\|_\infty}{\rt\left(B^\infty(v^{*},r)\right)\big\|\nabla G\big\|_2}e^{-\alpha\left(\mathcal{E}(v)-\er\right)}\rt\dH\\
		&\leq \int_{\{G(v)=\tilde{u}\}\cap B^\infty(v_{\tilde{u}},\tilde{r}_{\tilde{u}})^{c} }\dfrac{\big\|v-v_{\tilde{u}}\big\|_\infty}{\rt\left(B^\infty(v^{*},r)\right)\big\|\nabla G\big\|_2}e^{-\alpha\left(\mathcal{E}(v)-\etu-\tau_{3}(\max\{\tilde{u},r\})\right)}\rt\dH\\
		&\leq \int_{\{G(v)=\tilde{u}\}\cap B^\infty(v_{\tilde{u}},\tilde{r}_{\tilde{u}})^{c} }\dfrac{\big\|v-v_{\tilde{u}}\big\|_\infty}{\rt\left(B^\infty(v^{*},r)\right)\big\|\nabla G\big\|_2}e^{-\alpha\left(\mathcal{E}(v)-\etu-\tau_{3}(\max\{u,r\})\right)}\rt\dH\\
		&\leq \int_{\{G(v)=\tilde{u}\}\cap B^\infty(v_{\tilde{u}},\tilde{r}_{\tilde{u}})^{c} }\dfrac{\big\|v-v_{\tilde{u}}\big\|_\infty}{\rt\left(B^{\infty}(v^{*},r)\right)\big\|\nabla G\big\|_2}e^{-\alpha\left(q-\tau_{3}(\max\{u,r\})\right)}\rt\dH\\
		&\leq\dfrac{e^{-\alpha\left(q-\tau_{3}(\max\{u,r\})\right)}}{\rt\left(B^\infty(v^{*},r)\right)}\int_{\{G(v)=\tilde{u}\}}\dfrac{\big\|v-v_{\tilde{u}}\big\|_\infty}{\big\|\nabla G\big\|_2}\rt\dH,
	\end{align*}
	where in the first inequality above, we used (\ref{wt}) and in the second and third inequalities above, we used Lemma \ref{C1} that $  	|\mathcal{E}_{r}^{u}-\mathcal{E}_{r}|\leq \tau_{3}(\max\{u,r\})$ and the assumption that $ \tau_{3} $ is an increasing function. In the fourth inequality, we used the fact (2) that $ \mathcal{E}(v)-\etu \geq q$ for $ v\in B(v_{\tilde{u}},\tilde{r}_{\tilde{u}})^{c}\cap \{G(v)=\tilde{u}\}. $
	
	For the third term, 
	\begin{align*}
		&\int_{\{G(v)=\tilde{u}\} }\dfrac{\big\|v^{*}-v_{\tilde{u}}\big\|_\infty}{\big\|\wt\big\|_{L^{1}(\rho_{t})}\big\|\nabla G\big\|_2}e^{-\alpha\mathcal{E}(v)}\rt\dH\\
		&= \big\|v^{*}-v_{\tilde{u}}\big\|_\infty\int_{\{G(v)=\tilde{u}\} }\dfrac{1}{\big\|\wt\big\|_{L^{1}(\rho_{t})}\big\|\nabla G\big\|_2}e^{-\alpha\mathcal{E}(v)}\rt\dH\\
		&\leq \tau_{1}(\tilde{u})\int_{\{G(v)=\tilde{u}\} }\dfrac{1}{\big\|\wt\big\|_{L^{1}(\rho_{t})}\big\|\nabla G\big\|_2}e^{-\alpha\mathcal{E}(v)}\rt\dH\\
		&\leq \tau_{1}(u)\int_{\{G(v)=\tilde{u}\} }\dfrac{1}{\big\|\wt\big\|_{L^{1}(\rho_{t})}\big\|\nabla G\big\|_2}e^{-\alpha\mathcal{E}(v)}\rt\dH,
	\end{align*}
	where in the first and second inequalities, we used Assumption \ref{wellbehave} (C1) that $ 	\big\|v_{u}-v^{*}\big\|\leq \tau_{1}(u) $ and the fact that $ \tau_{1} $ is an increasing function. Thus 
	\begin{align*}
		&&&\int_{\{G(v)=\tilde{u}\}}\dfrac{\big\|v-v^{*}\big\|_\infty}{\big\|\wt\big\|_{L^{1}(\rho_{t})}\big\|\nabla G\big\|_2}e^{-\alpha\mathcal{E}(v)}\rt\dH\\
		&\leq&& \tilde{r}\int_{\{G(v)=\tilde{u}\} }\dfrac{1}{\big\|\wt\big\|_{L^{1}(\rho_{t})}\big\|\nabla G\big\|_2}e^{-\alpha\mathcal{E}(v)}\rt\dH\\
		&&&+\dfrac{e^{-\alpha\left(q-\tau_{3}(\max\{u,r\})\right)}}{\rt\left(B^\infty(v^{*},r)\right)}\int_{\{G(v)=\tilde{u}\}}\dfrac{\big\|v-v_{\tilde{u}}\big\|_\infty}{\big\|\nabla G\big\|_2}\rt\dH\\
		&&&+\tau_{1}(u)\int_{\{G(v)=\tilde{u}\} }\dfrac{1}{\big\|\wt\big\|_{1}\big\|\nabla G\big\|_2}e^{-\alpha\mathcal{E}(v)}\rt\dH.
	\end{align*}
	We can integrate the above inequality with respect to $ \tilde{u} $ from 0 to $ u $ to get
	\begin{align*}
		&&&\int_{\{G\in(0,u)\}}\dfrac{\big\|v-v^{*}\big\|_\infty}{\big\|\wt\big\|_{L^{1}(\rho_{t})}}e^{-\alpha\mathcal{E}(v)}\dvt\\
		&\leq&& \left(\tilde{r}+\tau_{1}(u)\right)\int_{0}^{u}\,d\tilde{u}\int_{\{G(v)=\tilde{u}\} }\dfrac{1}{\big\|\wt\big\|_{L^{1}(\rho_{t})}\big\|\nabla G\big\|_2}e^{-\alpha\mathcal{E}(v)}\rt\dH\\
		&&&+\dfrac{e^{-\alpha\left(q-\tau_{3}(\max\{u,r\})\right)}}{\rt\left(B^\infty(v^{*},r)\right)}\int_{0}^{u}\,d\tilde{u}\int_{\{G(v)=\tilde{u}\}}\dfrac{\big\|v-v_{\tilde{u}}\big\|_\infty}{\big\|\nabla G\big\|_2}\rt\dH\\ &=&&\tilde{r}+\tau_{1}(u)+\dfrac{e^{-\alpha\left(q-\tau_{3}(\max\{u,r\})\right)}}{\rt\left(B^\infty(v^{*},r)\right)}\int_{\{G\in(0,u)\}}\big\|v-v_{G(v)}\big\|_\infty\dvt,
	\end{align*}
	where in the equality, we used the co-area formula again and the definition of $ \wt $. Then combining with the fact that $\big\|\cdot\big\|_{\infty}\leq\big\|\cdot\big\|_{2}\leq\sqrt{d}\big\|\cdot\big\|_{2} $ finishes the proof.
\end{proof}
\section{Proof of Laplace Principle: Lemma \ref{lp}}\label{appendix:laplace_proof}
In this section, we present the proof of Lemma~\ref{lp}.
\begin{proof}
	By the definition of the consensus point $ \con $, one has
	\begin{align*}
		\|\con-v^{*}\|_2&=\|\int v\cdot\dfrac{e^{-\alpha\mathcal{E}(v)}}{\|\wt\|_{1}} \dvt - v^{*}\|_2\\
		&= \|\int (v-v^{*})\cdot\dfrac{e^{-\alpha\mathcal{E}(v)}}{\|\wt\|_{L^{1}(\rt)}} \dvt\|_2\leq \int \dfrac{\|v-v^{*}\|_2}{\|\wt\|_{L^1(\rt)}}e^{-\alpha\mathcal{E}(v)}\dvt,\\
	\end{align*}
	where we used Minkowski's inequality. Then we can compute:
	\begin{align*}
		&\|\con-v^{*}\|_2\\
        &\leq \int \dfrac{\|v-v^{*}\|_2}{\|\wt\|_{L^1(\rt)}}e^{-\alpha\mathcal{E}(v)}\dvt\\
		&=\int_{\{G=0\}}\dfrac{\|v-v^{*}\|_2}{\|\wt\|_{L^1(\rt)}}e^{-\alpha\mathcal{E}(v)}\dvt\\
        &\quad+\int_{\{G\in(0,u)\}}\dfrac{\|v-v^{*}\|_2}{\|\wt\|_{L^1(\rt)}}e^{-\alpha\mathcal{E}(v)}\dvt\\
		&\quad+\int_{\{G\geq u\}}\dfrac{\|v-v^{*}\|_2}{\|\wt\|_{L^1(\rt)}}e^{-\alpha\mathcal{E}(v)}\dvt.
	\end{align*}
	For the first term, we can upper bound it using Lemma \ref{lp1}:
	\begin{align*}
		&\int_{\{G=0\}}\dfrac{\|v-v^{*}\|_2}{\|\wt\|_{L^1(\rt)}}e^{-\alpha\mathcal{E}(v)}\dvt\\
        &\leq \dfrac{\sqrt{d}(q+\ec)^{\mu}}{\eta}+\dfrac{\sqrt{d}e^{-\alpha\left(q-\tau_{3}(r)\right)}}{\rt\left(B^\infty(v^{*},r)\right)}\int_{\{G=0\}}\|v-v^{*}\|_2\dvt\\
		&\leq \dfrac{\sqrt{d}\left(q+\ec+\tau_{2}(u)+\tau_{4}(\max\{u,r\})\right)^{\mu}}{\eta}\\
		&\quad+\dfrac{\sqrt{d}e^{-\alpha\left(q-\tau_{3}(r)\right)}}{\rt\left(B^\infty(v^{*},r)\right)}\int_{\{G=0\}}\|v-v^{*}\|_2\dvt.
	\end{align*}
	For the second term, we can upper bound it using Lemma \ref{lp2}:
	\begin{align*}
		&\int_{\{G\in(0,u)\}}\dfrac{\|v-v^{*}\|_2}{\|\wt\|_{L^{1}(\rt)}}e^{-\alpha\mathcal{E}(v)}\dvt\\
		&\leq\dfrac{\sqrt{d}\left(q+\ec+\tau_{2}(u)+\tau_{4}(\max\{u,r\})\right)^{\mu}}{\eta}\\
		&\quad+\dfrac{\sqrt{d}e^{-\alpha\left(q-\tau_{3}(\max\{u,r\})\right)}}{\rt\left(B^\infty(v^{*},r)\right)}\int_{\{G\in(0,u)\}}\|v-v_{G(v)}\|_2\dvt\\
		&\quad+\sqrt{d}\tau_{1}(u).
	\end{align*}
	Finally, We leave the third term unchanged. Combining the estimates for the above three terms, we can finish the proof.
\end{proof}
\section{The Complete Proof of Lemma \ref{ball}}\label{balldetails}
\begin{proof}
	Since $ \phi_{r}\leq 1 $, one can show that 
	\begin{gather*}
		\rt\left(B(v^{*},r)\right)\geq \int \phi_{r}(v)\dv.
	\end{gather*}
	So it suffices to find a lower bound for $ \int \phi_{r}(v)\dv. $ To do this, since $ \phi_{r}\in\mathcal{C}_{*}^{2}(\mathbb{R}^{d}) $, one can plug $ \phi_{r} $ into the Definition (\ref{weakdef}) to get that
	\begin{gather*}
		\dfrac{d}{dt}\int \phi_{r}(v)\dv=\int\left(T_{1}(v)+T_{2}(v)+T_{3}(v)\right)\dv,
	\end{gather*}
	where
	\begin{gather*}
		T_{1}(v)=-\lambda \left(v-\con\right)\cdot\nabla \phi_{r}(v),
	\end{gather*}
	\begin{gather*}
		T_{2}(v)=\dfrac{\sigma^{2}}{2}\sum_{k=1}^d\left(v-\con\right)_k^2\partial_{kk}\phi_{r}(v)
	\end{gather*}
	and
	\begin{gather*}
		T_{3}(v)=-\dfrac{1}{\epsilon}\int\LA \nabla G,\nabla \phi \RA.
	\end{gather*}
	One can calculate directly that
	\begin{gather*}
		\nabla\phi_{r}(v)=-2r^{2}\dfrac{v-v^{*}}{\left(r^{2}-\big\|v-v^{*}\big\|^{2}\right)^{2}}\phi_{r}(v),\\
		\partial_{kk} \phi_{r}(v)=2r^{2}\left(\dfrac{2\left(2(v-v^{*})_{k}^{2}-r^{2}\right)(v-v^{*})_{k}^{2}-d\left(r^{2}-(v-v^{*})_{k}^{2}\right)^{2}}{\left(r^{2}-(v-v^{*})_{k}^{2}\right)^{4}}\right)\phi_{r}(v).
	\end{gather*}
	By the expression of $ \nabla \phi_{r} $, one knows that $ T_{3}\geq 0 $ because of Assumption \ref{wellbehave} (B1). Thus wone only has to find the lower bound of $ T_{1} $ and $ T_{2} $. The details of bounding them are exactly the same as Proposition 2 \cite{FKR1}{}. Following the same steps, it turns out
	\begin{gather*}
		\int\left(T_{1}(v)+T_{2}(v)\right)\dv
		\geq -a \int \phi_{r}(v)\dv,
	\end{gather*}
	where $ a $ is the constant defined in the statement of Theorem \ref{ball}. Thus
	\begin{align*}
		&\dfrac{d}{dt}\int \phi_{r}(v)\dv\\
		&=\int\left(T_{1}(v)+T_{2}(v)+T_{3}(v)\right)\dv\\
        &\geq \int\left(T_{1}(v)+T_{2}(v)\right)\dv\geq -a \int \phi_{r}(v)\dv.
	\end{align*}
	Then  applying Gronwall's inequality will finish the proof.
\end{proof}
\section{Proof of Lemma \ref{G}}\label{Gdetails}

\begin{proof}
	Let $ B=\sup_{t\in[0,T]}\big\|\con-v^{*}\big\|_{2} $ and $ \tilde{B}=\sup_{t\in[0,T]}\ener $. Also, because of Assumption \ref{wellbehave} (B2) that $ G(v)\in C^{2}_{*}(\mathbb{R}^{d}) $ and $G(v)\lesssim  \big\| \nabla G(v) \big\|_2^{2} $, one can find some positive constant $ \tilde{c} $ such that
	\begin{gather}\label{B2}
		|\partial_{kk} G(v)|\leq \tilde{c}\text{, }	\big\|\nabla G(v)\big\|\leq \tilde{c}\left(1+\big\|v-v^{*}\big\|\right)
	\end{gather}
	and
	\begin{gather}\label{B1}
		G(v)\leq \tilde{c}\big\|\nabla G(v)\big\|^{2}.
	\end{gather}
	Plug $ G $ into Definition \ref{weakdef} gives
	\begin{align*}
		\dfrac{d}{dt}\int G \dvt &=&&-\lambda \int\Bigl\langle v-\con , \nabla G \Bigr\rangle \dvt\\
        &&&+\dfrac{\sigma^{2}}{2}\int \sum_{k=1}^{d}\left(v-\con\right)_{k}^{2}\partial_{kk} G\dvt-\dfrac{1}{\epsilon}\int\big\|\nabla G\big\|_2^{2}\dvt\\
		&\leq&&-\lambda \int\Bigl\langle v-v^{*}, \nabla G \Bigr\rangle \dvt-\lambda \int\Bigl\langle v^{*}-\con , \nabla G \Bigr\rangle \dvt\\
		&&&+\sigma^{2}\int \sum_{k=1}^{d}\left((v-v^{*})_{k}^{2}+\left(v^{*}-\con\right)_{k}^{2}\right)\partial_{kk} G\dvt\\
        &&&-\dfrac{1}{\epsilon}\int\big\|\nabla G\big\|_2^{2}\dvt.
	\end{align*}
The first term is non-positive because of Assumption \ref{wellbehave} (B1) and the second term can be bounded as follows
\begin{align*}
	-\lambda \int\Bigl\langle v^{*}-\con , \nabla G \Bigr\rangle\dvt&\leq \lambda \int\big\|v^{*}-\con\big\|\big\|\nabla G\big\| \dvt\\
	& \leq \lambda \int B \big\|\nabla G \big\| \dvt \\
	&\leq \lambda B \tilde{c}\int \left(1+\big\|v-v^{*}\big\|\right)\dvt\\
	&\leq \lambda B \tilde{c} \left(1+\sqrt{2\tilde{B}}\right),
\end{align*}
where the third inequality above is due to (\ref{B2}). The third term is  bounded above by $ \tilde{c}\sigma^{2}(\tilde{B}+B^{2}) $ and the fourth term is upper bounded by  $-\dfrac{1}{\tilde{c}\epsilon} \int G \dvt $ because of (\ref{B1}). Thus one has
\begin{gather*}
	\dfrac{d}{dt}\int G \dvt \leq \lambda B\tilde{c} \left(1+\sqrt{2\tilde{B}}\right)+\tilde{c}\sigma^{2}\left(\tilde{B}+B^{2}\right)-\dfrac{1}{\epsilon\tilde{c}} \int G \dvt .
\end{gather*}
We use $ D $ to denote
$
\lambda B\tilde{c}\left(1+\sqrt{2\tilde{B}}\right)+\tilde{c}\sigma^{2}(\tilde{B}+B^{2}).
$

Now consider $ f $ satisfying
\begin{gather*}
	\dfrac{d}{dt}f=D-\dfrac{1}{\tilde{c}\epsilon}f
\end{gather*}
with initial condition $ f(0)=\int G \,d\rho_{0}(v) $. By the comparison theorem, one knows that before $ T $, $ \int G \dvt $ is dominated by $ f $, i.e., $ \int G\dvt\leq f(t) $. And one has  an explicit expression for $ f $:
\begin{gather*}
	f(t)=\tilde{c}\epsilon D+(\int G\,d\rho_{0}(v)-\tilde{c}\epsilon D)e^{-(1/\tilde{c}\epsilon)t}.
\end{gather*}
When $ \epsilon $ is small enough, i.e.,
\begin{gather}\label{epsilon}
	\epsilon<\frac{\int G\,d\rho_{0}(v)}{\tilde{c}D}, 
\end{gather} 
one can deduce
\begin{align*}
	f(t)
    &=\tilde{c}\epsilon D+\left(\int G\,d\rho_{0}(v)-\tilde{c}\epsilon D\right)e^{-(1/\tilde{c}\epsilon)t}\\
    &\leq \tilde{c}\epsilon D+\left(\int G\,d\rho_{0}(v)-\tilde{c}\epsilon D\right)=\int G\,d\rho_{0}(v).
\end{align*}
Thus for $ t\in[0,T] $, 
\begin{gather*}
	\int G \dvt\leq \int G\,d\rho_{0}(v).
\end{gather*}
This completes the proof.
\end{proof}
\section{Computational Details in the Proof of Theorem \ref{main}: Verification of the Assumptions in Lemma \ref{lp}}\label{appendix:verification}
\begin{proof}
To see this, firstly, by the choice of $ q, r, u $, for any $ \tilde{u}\in[0,u) $:
	\begin{gather*}
		|\underline{\mathcal{E}_{\tilde{u}}}|\leq \tilde{u}<u\leq \dfrac{1}{4}q
	\end{gather*}
	and
	\begin{gather*}
		\mathcal{E}_{r}^{\tilde{u}}=\ec+(\mathcal{E}_{r}^{\tilde{u}}-\ec)\leq \dfrac{1}{4}q+\max\{\tilde{u},r\}\leq\dfrac{1}{4}q+\max\{u,r\}\leq \dfrac{1}{4}q+\dfrac{1}{4}q=\dfrac{1}{2}q.
	\end{gather*}
	Here the first inequality is due to the definition of $ r $ and Lemma \ref{C1}. Thus, one has 
	\begin{gather*}
		q+\mathcal{E}_{r}^{\tilde{u}}-\underline{\mathcal{E}_{\tilde{u}}}\leq q+\dfrac{1}{2}q+\dfrac{1}{4}q=\dfrac{7}{4}q\leq \dfrac{7}{8}\mathcal{E}_{\infty}<\mathcal{E}_{\infty}.
	\end{gather*} This verifies the assumptions in Lemma \ref{lp}. 
\end{proof}
\section{Details of the Numerical Experiments}

\subsection{Figures \ref{fig:cbo_obj} and Figure \ref{fig:converge}}\label{appendix:fig0}
The objective function $\mE(v)$ is the similar to \eqref{eq:ackley} 
\begin{equation*}
\begin{aligned}
	&\min_v \qd -A\exp\l(-a\sqrt{\frac{b^2}d\ll v - \hv\rl_2^2}\r)- \exp\l(\frac1d\sum_{i=1}^d\cos(2\pi b(v - \hv)_i)\r)+e^1+A;
\end{aligned}
\end{equation*}
with $b = 3, A = 20, a = 0.2$. The circular constraint reads,
\[
g_1(v) = \ll v \rl_2^2 -1 ;
\]
and the parabolic constraint reads,
\[
g_2(v) = v_1^2 - v_2.
\]
The first case is a circular constraint, and the unconstrained minimizer is the same as the constrained minimizer. 
\[\hv = v^* = \frac1{\sqrt{2}}(1,-1).\] 
the second case is a circular constraint, and the unconstrained minimizer is different from the constrained minimizer. Therefore, 
\[\hv = (1/2,1/3), \quad v^* = (0.781475;\sqrt{1-0.781475^2}).\] 
The third case is a parabolic constraint, and the unconstrained minimizer is different from the constrained minimizer. Therefore, 
\[\hv = (1/2,1/3), \quad v^* = (0.5428;0.5428^2).\] 
We use Algorithm 1 with 
\begin{equation}\label{f0 parameter}
N = 50, \ \a = 30, \ \epsilon = 0.01,\  \lam = 1,\ \s = 1,\ \g = 0.01,\  \epsilon_{\text{stop}} = 0.
\end{equation}
We set $\epsilon_{\text{stop}}$ to be $0$ to see the iteration evolves until it reaches $300$ steps. All the particles initially follow Unif$[-3,3]^2$.
We consider the algorithm successful in finding the constrained minimizer $ v^{*}$ if the distance between the consensus point $v_{\alpha}$ and $v^{*}$ satisfies $\|v^{*} -v_{\alpha}\|_{\infty} < 0.01$. The distance is measured in terms of \eqref{def of norm}.

We use Algorithm 1 in \cite{FHPS1} for the projected CBO method. For the penalized CBO method, we set the penalty as $\frac1\e G(v)$, and then apply the CBO algorithm to the following unconstrained optimization problem,
\[
\mE^\e(v) = \mE(v) + \frac1\e G(v).
\]
We use the same parameters as \eqref{f0 parameter} for the two alternative algorithms. 
\newpage
\section*{Acknowledgements}
JAC was supported by the Advanced Grant Nonlocal-CPD (Nonlocal PDEs for Complex Particle Dynamics: Phase Transitions, Patterns and Synchronization) of the European Research Council Executive Agency (ERC) under the European Union’s Horizon 2020 research and innovation programme (grant agreement No. 883363).
JAC was also partially supported by the Engineering and Physical Sciences Research Council (EPSRC) under grants EP/T022132/1 and EP/V051121/1.

SJ was  partially supported by funding XDA25010404, the NSFC grants Nos.  12350710181 and  92270001, the Shanghai Municipal Science and Technology Major Projects (Nos. 2021SHZDZX0102 and 23JC1402300), the Shanghai
Municipal Science and Technology Key Project (No. 22JC1401500),  and the Fundamental Research
Funds for the Central Universities.

YZ was supported by the NSF grants No 2529107.

This work was started while JAC and YZ were attending a workshop at the American Institute of Mathematics in 2018 and then developed
further while JAC and YZ were visiting the Simons Institute to participate in the program “Geometric Methods in Optimization and Sampling”
during the Fall of 2021.

\newpage
\bibliographystyle{abbrv}      
\bibliography{references}   

\end{document}